\documentclass[12pt,a4paper]{article}
\usepackage{fullpage}
\usepackage[utf8]{inputenc}
\usepackage[T1]{fontenc}
\usepackage{amsmath}
\usepackage{amsfonts}
\usepackage{amssymb}
\usepackage{graphicx}
\usepackage{amsthm}
\usepackage[all,cmtip]{xy}
\usepackage{tikz-cd}

\usepackage[draft]{say}

\usepackage[bookmarks=true, bookmarksopen=true,%
bookmarksdepth=3,bookmarksopenlevel=2,%
colorlinks=true,%
linkcolor=blue,%
citecolor=blue,%
filecolor=blue,%
menucolor=blue,%
urlcolor=blue]{hyperref}

\numberwithin{equation}{section}
\newtheorem{thm}[equation]{Theorem}
\newtheorem{theorem}[equation]{Theorem}
\newtheorem{lem}[equation]{Lemma}
\newtheorem{lemma}[equation]{Lemma}
\newtheorem{cor}[equation]{Corollary}
\newtheorem{corollary}[equation]{Corollary}
\newtheorem{prop}[equation]{Proposition}
\newtheorem{proposition}[equation]{Proposition}
\newtheorem{thmintro}{Theorem}

\theoremstyle{definition}
\newtheorem{dfn}[equation]{Definition}

\theoremstyle{remark}
\newtheorem{rem}[equation]{Remark}
\newtheorem{remark}[equation]{Remark}
\newtheorem{ex}[equation]{Example}
\newtheorem{example}[equation]{Example}

\newtheorem{question}[equation]{Question}

\newcommand{\Sp}{\mathrm{Sp}}
\newcommand{\A}{\mathcal{A}}
\renewcommand{\P}{\mathcal{P}}
\newcommand{\E}{\mathcal{E}}

\newcommand{\Z}{\mathbf{Z}}

\newcommand{\CC}{\mathcal{C}}
\newcommand{\HH}{\mathcal{H}}
\newcommand{\R}{\mathbf{R}}
\newcommand{\F}{\mathcal{F}}
\newcommand{\LL}{\mathcal{L}}

\newcommand{\N}{\mathbf{N}}
\newcommand{\NN}{\mathcal{N}}
\newcommand{\DD}{\mathcal{D}}
\newcommand{\RR}{\mathcal{R}}

\newcommand{\Map}{\mathrm{Map}}
\newcommand{\del}{\partial}
\newcommand{\s}{\mathbf{s}}
\newcommand{\Sh}{\mathrm{Sh}}
\renewcommand{\S}{\mathbf{S}}

\newcommand{\id}{\mathrm{id}}

\newcommand{\G}{\mathcal{G}}
\newcommand{\II}{\mathcal{I}}
\DeclareMathOperator{\Hom}{Hom}

\DeclareMathOperator{\crit}{crit}

\DeclareMathOperator{\colim}{colim}

\DeclareMathOperator{\pr}{pr}
\renewcommand{\k}{\mathbf{k}}
\newcommand{\GF}{\mathrm{GF}}
\newcommand{\Pic}{\mathrm{Pic}}

\title{A topological classification of generating functions}
\author{Sylvain Courte and Vivek Shende}
\date{}

\usepackage{tocloft}
\begin{document}

\maketitle

\begin{abstract}
    From a generating function for a Legendrian in a $1$-jet bundle, we may extract the following topological information: (1) a trivialization of the
    stable Gauss map, (2) the sheaf of sub-level-set stable cohomotopies, and (3) an identification of the microlocalization of the latter with the J-homomorphism image of the former.  Here we show that in fact (1), (2), (3) completely classify generating functions up to the classical equivalence relations of stabilization and fiberwise diffeomorphism.
\end{abstract}

\small
\setcounter{tocdepth}{2}
\renewcommand\contentsname{\vspace*{-20pt}}
\tableofcontents
\normalsize
\thispagestyle{empty}

\section{Introduction}\label{sec:intro}

The use of generating functions to present Lagrangian submanifolds appeared already in the works of Hamilton on classical mechanics and of Jacobi on geometric optics. Closer to modern times, this method was profitably employed  by Arnol'd \cite{Arnold} and H\"ormander \cite{Hormander}, for purposes of singularity theory and analysis, respectively.

Let us recall the basic idea.  If $M$ is a manifold and $f: M \to \R$ is a function, then the graph of $df$ is a Lagrangian in $T^*M$.  More generally, if $\pi: E \to M$ is any map (for us, a trivial bundle with fiber $\R^n$ or an open set thereof), then the conormal to the graph of $\pi$ is a Lagrangian correspondence, which can be used to transform Lagrangians in $T^*E$ to Lagrangian immersions in $T^* M$.  In particular, given $f: E \to \R$, we may obtain in this way, from the graph of $df$, an immersed Lagrangian $L_f \subset T^*M$ (under a suitable transversality condition), parameterized by $\pi$-fiberwise critical points of $f$.  In fact, such a Lagrangian is necessarily exact, i.e. the image of a Legendrian in the 1-jet bundle $J^1 M$.  To see this, transform the conormal to the graph of $f$ in $T^*(E \times \R)$ by  the correspondence derived as above from $\pi\times \id:E\times \R \to M \times \R$; the result is a conical Lagrangian in $T^*(M \times \R)$.  Fixing the value of the cotangent vector in the $\R$ direction gives the desired Legendrian $\Lambda_f \subset J^1 M$.

There is a classical equivalence relation among generating functions, generated by pullback by $\pi$-fiberwise diffeomorphism, and fiberwise sum with a  quadratic form.  It will be most natural for us to restrict to constant (over $M$) quadratic forms of symmetric signature $(k,k)$.
We are interested in classifying generating function presentations of a given Legendrian $\Lambda$, up to this equivalence. In fact this will lead
us to investigate the homotopy type of a space, denoted $\GF_\Lambda(M)^*$, of all generating functions for $\Lambda$, the connected components
of which will correspond to equivalence classes.

\vspace{2mm}

There is a classification, due to Giroux \cite{Giroux} and Latour \cite{Latour},
of the germs of  equivalence classes of generating functions  along  their critical loci (we refer to such germs as {\em semi-local generating functions}).  Recall the stable Gauss map $\gamma: \Lambda \to U/O$, defined by comparing along $\Lambda$ the polarizations given by the tangent to $\Lambda$ and the cotangent fiber.
A semi-local generating function provides a null-homotopy of this map: the tangent to the graph of $df$ provides a lift of the stable Gauss map to a contractible space of graphical Lagrangians.   We will define a suitable space (simplicial set), denoted $\mu\GF_\Lambda(M)^*$,
of all semi-local generating functions for $\Lambda$ (see Definition~\ref{dfn:muGF}). The result of Giroux and Latour can be phrased as
a homotopy equivalence between this space and the (possibly empty) space of all null-homotopies of the stable Gauss map.
Note that this classification implies that semi-local generating functions cannot detect the subtle global phenomena of symplectic topology, e.g. nonsqueezing, the expected lower bounds on Lagrangian intersections, etc.

Global generating functions, now required to be defined on all of $E = M \times \R^n$ and to be appropriately ``tame at infinity, fiberwise for $\pi: E \to M$''  (a notion whose properties we develop in Appendix~\ref{app:fibinfty}), do detect such global phenomena, as shown in the  works of Chaperon \cite{Chaperon}, Laudenbach and Sikorav \cite{Laudenbach-Sikorav}, Sikorav \cite{Sikorav}, Chekanov \cite{Chekanov},  Eliashberg and Gromov \cite{Eliashberg-Gromov}, and Viterbo \cite{Viterbo}, among many others.  It follows that their classification must be richer than that of semi-local generating functions.  Yet, in the only examples where the complete classification was previously known -- the zero section of a jet bundle \cite{Viterbo, Theret-viterbo-uniqueness} and the 1-dimensional Legendrian unknot \cite{Jordan-Traynor} --  each (equivalence class of) generating function is determined by its underlying semi-local generating function.

\vspace{2mm}

We may obtain additional invariants of global generating functions from the topology of fiberwise sublevel sets of $f$.  Such invariants have been considered since \cite{Viterbo}; we record them using the formalism of sheaf theory.  As we consider generating functions equivalent if they may be related by stabilization, only the stable homotopy types of said sublevel sets are invariant; to capture these, we work with sheaves valued in the category $\Sp$ of spectra.

Let us briefly recall how to produce the sheaf associated to a generating function (see Section \ref{sec:GFtoSh} for more details).  Let $\S$ be the sphere spectrum, and let $\S_{f \le z}$ be the constant sheaf on the  region not below the graph of $f$ in $E \times \R$.  There is a sheaf
on $M \times \R$ whose stalk at $(m, z)$ is the stable cohomotopy spectrum of $\{ y \in \R^n \,|\, f(m, y) \le z\}$, shifted by $n/2$, namely:\footnote{Thus our sheaves may take value in the category of spectra formally shifted by $\frac{1}{2}$. The reader annoyed by this can  restrict their attention to generating functions where the fiber dimension $n$ of $E$ is even.}\footnote{The condition that $f$ is tame allows to perform a fiberwise compactification without changing homotopy  type, so that we can apply proper base change to compute the stalks.}
$$\s'(f) := (\pi\times \id)_* \S_{f \le z}[n/2].$$
Essentially for reasons of convenience and normalization, we prefer to view $\s'(f)$ as living
in Tamarkin's quotient $\Sh^+(M)$  of the category of sheaves on $M\times \R$ (see Section~\ref{sec:Sh}), or equivalently, to replace $\s'(f)$  with the sheaf $\s(f)$ giving the relative cohomotopies with respect to a sufficiently negative sublevel set (because we have assumed tameness, the sublevel set cohomotopy stabilizes as $z \ll 0$).

To any sheaf $F$ on a manifold $M$, there is associated the {\em microsupport} $ss(F) \subset T^*M$, consisting of the ``codirections along which sections fail to propagate'' \cite{Kashiwara-Schapira}.  This locus is always conic, is always co-isotropic \cite[Thm. 6.5.4]{Kashiwara-Schapira}, and is sometimes, only for the simplest sheaves, Lagrangian.
For a tame generating function $f$, the microsupport of $\s(f)$ is precisely $\Lambda_f$ (see Lemma~\ref{gen fn sheaf support}).

More generally, we regard a sheaf whose microsupport similarly characterizes a given Legendrian
(let us call them {\em generating sheaves})
as homological (or, here, stable homotopical) shadows of generating functions.  Classifying generating sheaves is an essentially combinatorial/topological question in terms of the front diagram (see e.g. \cite{STZ}).
Generating sheaves have many applications in symplectic topology, and in particular can be used in place of a generating function when one does not or is not known to exist \cite{Tamarkin, GKS, Guillermou, Shende-conormal, Chiu-nonsqueezing, Zhang-capacities, Asano-heavy}.
The sheaf methods are also of interest lately on account of the comparison theorems with the holomorphic curve methods \cite{Nadler-Zaslow, Nadler, GPS3, Viterbo-sheaf}, and consequent applications in homological mirror symmetry, such as \cite{FLTZ, Kuwagaki, Gammage-Shende, GMW}.

The classification problem for generating functions is thus reduced to two parts: classifying generating sheaves (which we will not discuss here, or said differently, we regard the sheaf itself as a topological invariant of the function) and determining when a generating sheaf arises from a generating function.
Let us mention some obvious obstructions.  One typically restricts attention to generating functions $f$ for which the fiberwise critical locus is transversely cut out and maps diffeomorphically onto $\Lambda$; the corresponding property of $\mathbf{s}(f)$ is that $\mathbf{s}(f)$ is a `simple' sheaf in the sense of \cite[Def. 7.5.4]{Kashiwara-Schapira}.  The notion of simple sheaf captures the idea that, as you cross the microsupport, the dimension of the space of sections changes by one.
In fact, this one-dimensional-change forms a sort of local system along the microsupport, the study of which will bring us again into contact with trivializations of the stable Gauss map.   We recall the formalism for describing this local system.

Given a manifold $N$ (we take $N = M \times \R$), there is a sheaf of categories defined on $T^*N$ by sheafifying the presheaf:
$$\mu \Sh^{pre}(U) := \Sh(N) / \{F\,|\, ss(F) \cap U = \emptyset \}$$
When $N = M \times \R$, we write also $\mu \Sh$ for the pullback of this sheaf of categories to $J^1 M$.  For $\Lambda \subset J^1 M$, we write $\mu \Sh_\Lambda$ for the subsheaf of full subcategories of objects supported on $\Lambda$.  It is the pushforward of a sheaf of categories on $\Lambda$.  The natural map $\Sh(M) \to \mu \Sh(J^1 M)$ carries a sheaf microsupported in $\Lambda$ to a global section of $\mu \Sh_\Lambda$.

If we are working with sheaves with coefficients in some symmetric monoidal category $\mathcal{C}$, there is a canonical group homomorphism \cite[Sec. 11]{Nadler-Shende}:
$$\mu: U/O \to B \Pic(\mathcal{C}).$$
Composing with the stable Gauss map $\Lambda \to U/O$ gives  $\mu_\Lambda: \Lambda \to B\Pic(\mathcal{C})$, or in other words, a $\Pic(\mathcal{C})$ local system over $\Lambda$, which we also denote $\mu_\Lambda$.
Then$$\mu \Sh_\Lambda = \mu_\Lambda \times^{\Pic(\mathcal{C})} \mathcal{C}$$
and as such, an invertible global section of $\mu \Sh_\Lambda$ provides a trivialization of $\mu_\Lambda$ and
$\mu \Sh_\Lambda$.  A sheaf $F$ on $M \times \R$ with $ss(F)|_{J^1 M} = \Lambda$ tautologically provides a section of $\mu \Sh_\Lambda$; invertibility of the section is equivalent to the condition that $F$ is simple in the sense of Kashiwara and Schapira.  In summary, a simple sheaf microsupported in $\Lambda$ provides a null-homotopy of $\mu_\Lambda$. In the case where $\CC=\Sp$, Jin further identified (see \cite{Jin}) the map $U/O\to B\Pic(\CC)$ as (a delooping of) the J-homomorphism
$U/O\simeq B(\Z\times BO)\to B(\Z\times BG)\simeq B\Pic(\CC)$ (recall $BO$ is the classifying space of stable vector bundles
while $BG$ is the classifying space of stable spherical fibrations and the $J$-homomorphism maps
a vector bundle to the spherical fibration obtained by one point-compactifying each fiber).

In case $F = \s(f)$, then the null-homotopy of $\mu_\Lambda$ furnished by $F$ is
the composition of $\mu$ with the null-homotopy of the stable Gauss map for $f$ (e.g. consider both via the symplectic reduction from the graph of $f$).
That is, we have a commutative diagram
\begin{equation}\label{eq:mainsquare}
\begin{tikzcd}
\GF_\Lambda(M)^* \arrow[r,"\s"] \arrow[d] & \Sh^+_\Lambda(M;\S)^* \arrow[d]\\
\mu\GF_\Lambda(M)^* \arrow[r,"\mu^*"] & \mu \Sh^+_\Lambda(M;\S)^*.
\end{tikzcd}
\end{equation}
Here, we write $\GF_{\Lambda}(M)^*$ for the space (simplicial set) of all tame generating functions for $\Lambda$ (we pass to colimit under stabilization by a fixed quadratic form, the ${}^*$ indicates we impose the usual transversality condition, see Definition \ref{dfn:spaceGF})
and $\mu\GF_\Lambda(M)^*$ for the analogous space of semi-local generating functions.
We write $\Sh^+_\Lambda(M;\S)^*$ for the space (simplicial set) of simple sheaves microsupported in $\Lambda$ and $\mu \Sh_\Lambda^+(M;\S)^*$ for the space
of microsheaves for $\Lambda$ (valued in spectra, i.e. $\S$-modules). The spaces $\mu\GF_\Lambda(M)^*$ and $\mu\Sh^+_\Lambda(M)^*$ are well-understood:
they are homotopy equivalent to the spaces of null-homotopies of the stable Gauss map $\Lambda \to U/O$ and of $\Lambda \to U/O \to B \Pic(\Sp)$ respectively
(see Theorem~\ref{thm:spacelevelgiroux-latour} and Theorem~\ref{thm:spaceleveljin}).

Our main result is the following.
\begin{thmintro} \label{thm:main}
 Let $M$ be a manifold and $\Lambda\subset J^1 M$ a closed Legendrian submanifold.
The diagram~\eqref{eq:mainsquare} is a homotopy pullback square.
\end{thmintro}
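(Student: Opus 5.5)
We will show that the map from $\GF_\Lambda(M)^*$ to the homotopy fiber product
$$P := \mu\GF_\Lambda(M)^* \times^h_{\mu\Sh^+_\Lambda(M;\S)^*} \Sh^+_\Lambda(M;\S)^*$$
is a weak equivalence. The natural first reduction is to make both sides into sheaves (or at least functors satisfying descent) in the base $M$. Generating functions restrict to open subsets, so $U\mapsto \GF_{\Lambda|_U}(U)^*$ is a presheaf of spaces. The same holds for $\mu\GF$, for $\Sh^+$ and for $\mu\Sh^+$, and the square \eqref{eq:mainsquare} is natural in $U$. The sheaf side satisfies descent by construction. The space $\mu\GF$ satisfies descent via the space-level Giroux--Latour theorem (Theorem~\ref{thm:spacelevelgiroux-latour}), since null-homotopies of a map form a sheaf. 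A homotopy pullback of sheaves of spaces is again a sheaf.

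Descent for $\GF$ itself is a gluing problem. Given tame generating functions on $U$ and $V$ and a path between them on $U\cap V$ in the colimit over stabilizations, we must produce a glued function on $U\cup V$, and we must also handle higher simplices. We would do this by stabilizing enough that the path is realized by an isotopy of actual functions through fiberwise diffeomorphisms and quadratic stabilization. We would then interpolate with a cutoff in the base, using the fiberwise tameness at infinity from Appendix~\ref{app:fibinfty} so that the interpolation stays tame and transverse. If strict descent for $\GF$ is too hard, it suffices to prove an $h$-principle-type statement: the map $\GF\to P$ is a map of presheaves that is a local equivalence and satisfies a relative (parametric) extension property. This reduces matters to a Mayer--Vietoris induction over a good cover, or equivalently to a handle-by-handle induction on $M$.

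After this reduction, the statement becomes local in $M$: it suffices to check it over small balls $B\subset M$, relative to the boundary, in families. Over a ball, we would further localize in the $\R$ direction and on $\Lambda$, using the front projection. Generically, $\Lambda|_B$ is a finite union of sheets with simple singularities.

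The key comparison is then the following. Compare the homotopy fiber of $\GF\to\mu\GF$, namely the ways to extend a fixed semi-local germ near the critical locus to a tame global function, with the homotopy fiber of $\Sh^+\to\mu\Sh^+$. We expect both to be described by attaching data. For the sheaf, these are gluing morphisms between microstalks across the front, in stable homotopy. For the function, they are the attaching maps of handles in the fiberwise Morse-theoretic sublevel-set filtration. Stabilization turns the space of such attaching data, for functions, into stable homotopy classes. This is the mechanism by which $\S$ appears: stabilizing attaching maps of cells gives stable cohomotopy. Concretely, we would run the induction on critical values. Crossing a critical value of $f$ attaches a cell $D^{k}\times\R^{n-k}$ along a map from the sphere into the sublevel set. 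After stabilization this equals the space of extensions of the sheaf across the corresponding Legendrian sheet, relative to the given microstalk trivialization. This identification would use the Jin equivalence (Theorem~\ref{thm:spaceleveljin}) to match the normalizations given by $J$.

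The main obstacle is this last step: showing that, in families and in the stable range, the space of ways to attach a fiberwise handle realizing prescribed stable attaching data is contractible. This is a parametric stable h-cobordism/pseudo-isotopy-type statement. We would try first to use that after enough stabilization, spaces of fiberwise functions with prescribed critical data and tame at infinity have the homotopy type computed by the stable sublevel-set homotopy types. The tools would be parametrized Morse theory (Cerf/Igusa-type framed function arguments), or a direct argument that generalized Morse functions with stabilization form a space equivalent to the relevant space of stable cell structures.
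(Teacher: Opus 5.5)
\textbf{There is a genuine gap: the step you call the main obstacle is the whole content of the theorem, and it is not carried out.} Your reductions are mostly formal and plausible. Descent for $\Sh^+$, $\mu\Sh^+$ and $\mu\GF$ is fine, and a Moser-plus-cutoff argument as in Proposition~\ref{prop:pi_0=equiv} probably gives descent for $\GF$. But none of this makes the remaining step easier:
\begin{itemize}
\item \emph{Localizing in $M$ does not help.} At a generic point the stalk problem is the theorem for $M$ a point and $\Lambda=\{z_1<\dots<z_N\}\subset\R$. This is already nontrivial: for two points in cancellation position the answer is $\Z\times BO\times G/O$, cf.~\eqref{eq:0-unknot}. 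Reeb chords, which are what separate the global problem from the semi-local one, are invisible in stalks and enter only through gluing.
\item \emph{There is no localization in the $z$-direction for generating functions.} The sublevel-set filtration is global in $z$. Also, generic fronts have non-simple singularities with moduli in high dimensions, so there is no finite list of local models to check.
\item \emph{Your induction on critical values is not uniform over a ball.} Once the order of critical values changes (front crossings), two handles must be exchanged. That means disjoining a descending sphere from an ascending one in families, which is exactly where the obstructions in $\pi^S_{k+j-i-1}$ of Theorem~\ref{thm:crossing-gf=sh} live.
\item \emph{The contractibility you want is of pseudo-isotopy/h-cobordism type.} You need that stabilized handle attachments inside the fixed domain $\R^n$, subject to fiberwise tameness at infinity, are governed only by stable attaching data. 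Such spaces are not contractible even stably (Waldhausen; cf.~Remarks~\ref{rem:waldhausen} and~\ref{rem:hatcherwaldhausen}). So this cannot follow from a general stable-range or framed-function principle; it must exploit the specific situation.
\end{itemize}

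\textbf{The missing idea is the doubling trick, which never builds a global tame function from local handle data.}
\begin{itemize}
\item Replace $\Lambda$ by $\Lambda\times\Lambda_D\subset J^1(M\times\R_t)$. The maps $\RR$ and $\DD$ identify the spaces for $\Lambda$ and for $\Lambda\times\Lambda_D$ on both the generating-function and sheaf sides, via the rotation $\A$ and the restriction $\II_0$ (Theorems~\ref{thm:GF_RD} and~\ref{thm:Sh_RD}).
\item The germ along $t\le 0$ is equivalent to the semi-local data. On the generating-function side this is Theorem~\ref{thm:GFdoubling}, which needs only contractibility of spaces of critical-point-free tame functions and high connectivity of gradient maps to spheres. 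On the sheaf side it is Guillermou's Theorem~\ref{thm:Shdoubling}.
\item For $t>0$ the slices form a Legendrian regular homotopy whose only non-isotopy events are double points, one for each Reeb chord of $\Lambda$. At each crossing, both extension spaces are identified with null-homotopies of Spanier--Whitehead dual crossing obstructions (Theorem~\ref{thm:crossing-gf=sh}). This uses Igusa's disjunction criterion, plus the $r^l$ symmetry trick reducing everything to $\pi_0$.
\item Isotopy lifting (Theorems~\ref{thm:spacelevelchekanov} and~\ref{thm:spacelevelGKS}) handles everything else, and Corollary~\ref{cor:regularhtpy} assembles the pullback square.
\end{itemize}
Since every step only deforms an already existing global function, no pseudo-isotopy input ever enters.
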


This means that we have a homotopy equivalence
\begin{equation}\label{eq:mainfiberedproduct}
\GF_\Lambda(M)^* \simeq \mu \GF_\Lambda(M)^* \times_{\mu \Sh^+_\Lambda(M;\S)^*} \Sh_\Lambda(M)^*
\end{equation}
which we can consider as a ``topological classification'' of generating functions.
The homotopy fibers of the horizontal maps in \eqref{eq:mainsquare} thus agree; so we see that lifts of a sheaf to a generating function
are classified by lifts of the induced null-homotopy of $\mu_\Lambda$ to a null-homotopy of the stable Gauss map of $\Lambda$.

There is a natural ring map from the sphere spectrum $\S$ to the integers $\Z$; tensoring over it carries sheaves over $\S$ to sheaves over $\Z$.
This leads to the  commutative square:
\begin{equation}\label{eq:StoZ}
\begin{tikzcd}
\Sh^+_\Lambda(M;\S)^* \arrow[r]\arrow[d]& \mu\Sh^+_{\Lambda}(M;\S)^*\arrow[d] \\
\Sh^+_\Lambda(M;\Z)^* \arrow[r] &  \mu\Sh^+_\Lambda(M;\Z)^*.
\end{tikzcd}
\end{equation}
In contrast with \eqref{eq:mainsquare}, the square \eqref{eq:StoZ} is not cartesian in general, i.e.
the map
\begin{equation}\label{eq:mainfiberedZtoS}
\Sh_\Lambda^+(M;\S)^* \to  \Sh_\Lambda^+(M;\Z)^* \times_{\mu \Sh^+_\Lambda(M;\Z)^*} \mu\Sh^+_\Lambda(M;\S)^*
\end{equation}
is not a homotopy equivalence, already when $\Lambda$ is the Hopf link (see Example~\ref{ex:hopflink}).

Under some assumptions on the gradings (i.e. Conley-Zehnder indices) of the Reeb chords, we can still say something
using the same method of proof as for Theorem \ref{thm:main}.
Say our convention on grading is so that the $1$-dimensional Legendrian unknot has a single Reeb chord of degree $1$.
For a knot, the grading of Reeb chords is then canonical, however for a link the Maslov potential
of each component can be shifted independently. The datum of a microlocal sheaf $F\in \mu \Sh^+_\Lambda(M;\Z)$
(and a fortiori of a genuine sheaf) gives a well-defined integer-valued grading for the Reeb chords.

\begin{thmintro} \label{thm: Z to S}
   Let $M$ be a manifold and $\Lambda\subset J^1 M$ a closed Legendrian submanifold.
The map \eqref{eq:mainfiberedZtoS} is $l$-connected when restricted over a component of $\mu\Sh^+_\Lambda(M;\Z)^*$
giving degree $\leq 1-l$ to all Reeb chords of $\Lambda$.
\end{thmintro}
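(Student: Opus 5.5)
The plan is to prove Theorem~\ref{thm: Z to S} by the same kind of induction that proves Theorem~\ref{thm:main}. We build the global object, here a simple sheaf over $\S$, by adding one Reeb chord at a time, and then compare the resulting obstruction theory with the one for sheaves over $\Z$.

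\textbf{Setting up the filtration.} Fix a generic contact isotopy (or a Morse-type filtration of $\Lambda$ by action). A simple sheaf in $\Sh^+_\Lambda(M;\CC)^*$ is then assembled from its microlocal data along $\Lambda$, together with gluing data attached to each Reeb chord. Concretely, I would use the action filtration of $M\times\R$ by the level $z$. Crossing a critical level corresponding to a Reeb chord $c$ of degree $d$ amounts to choosing a morphism between the two microstalks at the ends of $c$. This morphism lies in the mapping spectrum
$$\Hom(\mathbf{1}, \mathbf{1}[d-1]) \otimes (\text{coefficients}),$$
possibly with a shift convention fixed by the paper's normalization. The key structural claim is this. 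Over a fixed point of $\mu\Sh^+_\Lambda$, the homotopy fiber of $\Sh^+_\Lambda(M;\CC)^*\to\mu\Sh^+_\Lambda(M;\CC)^*$ is an iterated fibration, one stage per Reeb chord. The stage for a chord $c$ has fiber (the infinite loop space of) the coefficient ring $R\in\{\S,\Z\}$ shifted by the chord degree, roughly $\Omega^{\infty}\Sigma^{1-d_c}R$. I would establish this exactly as in the proof of Theorem~\ref{thm:main}, where the analogous stages for generating functions are matched with these sheaf stages.

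\textbf{Comparing the two coefficient rings.} Map the $\S$-tower to the $\Z$-tower via $\S\to\Z$. The map \eqref{eq:mainfiberedZtoS} is identified with the induced map on total homotopy fibers, taken over a fixed microlocal datum over $\S$ and its image over $\Z$. Stage by stage, the comparison is
$$\Omega^\infty\Sigma^{1-d_c}\S\to\Omega^\infty\Sigma^{1-d_c}\Z.$$
The unit $\S\to\Z$ is $1$-connected: it is an isomorphism on $\pi_0$, and $\pi_1\S=\Z/2$ surjects onto $\pi_1\Z=0$. Hence $\Sigma^{k}\S\to\Sigma^k\Z$ is $(k+1)$-connected. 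With $k=1-d_c\ge l$ when $d_c\le 1-l$, each stage map is at least $(l+1)$-connected, which in particular makes it $l$-connected. A five-lemma argument on the long exact sequences of the towers then shows the map of total fibers is $l$-connected. Since the base is the same after pulling back over $\mu\Sh^+_\Lambda(M;\S)^*$, this gives $l$-connectivity of \eqref{eq:mainfiberedZtoS} over the stated components.

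\textbf{Main obstacle.} The difficult step is proving the stage-wise identification. We need to know that the gluing across each chord is controlled by a \emph{torsor} for the shifted coefficient spectrum, and that this identification is natural in the coefficient change $\S\to\Z$. The torsor identification must also hold for nonempty fibers: a stage could a priori be empty over $\S$ while nonempty over $\Z$. That situation is a $\pi_{-1}$ issue, controlled by the degree bound, and $\pi_{-1}$ of a connective-enough shift vanishes. My first approach would be to reuse the chord-by-chord argument from Theorem~\ref{thm:main} verbatim with $\CC$ general, isolating the local model near a Reeb chord as a sheaf on $\R^2$ near a cusp-crossing. I would then compute the relevant $\Hom$ there by hand.
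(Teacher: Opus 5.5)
Your architecture is essentially the paper's once the stages are taken to be the crossings of the doubled family $\Lambda\pm 2t^{3/2}$, which correspond one-to-one with Reeb chords (Corollary~\ref{cor:regularhtpy}, Theorem~\ref{thm:Shdoubling}), rather than levels of a $z$-filtration. Those crossing extension spaces are then compared along $\S\to\Z$ and assembled by the five lemma.

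However, your stage identification is off by one, and this breaks your treatment of existence. By Theorem~\ref{thm:crossing-gf=sh} and Theorem~\ref{sheaf crossing obstruction}, a chord of degree $d$ contributes a fiber that is either empty or a torsor over $\Omega^{\infty}\Sigma^{-d}R$, so its $\pi_0$ is $\pi_d R$. The obstruction to nonemptiness lies in $\pi_{-1}\Sigma^{-d}R=\pi_{d-1}R$. The fiber is not $\Omega^\infty\Sigma^{1-d}R$. You can test this on the Legendrian unknot, whose single chord has degree $1$: over the image component, the fiber of $\Sh^+_\Lambda(M;\S)^*\to\mu\Sh^+_\Lambda(M;\S)^*$ is $\Omega^\infty\Sigma^{-1}\S$, with $\pi_0=\pi_1^S=\Z/2$. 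Consequently the torsor comparison $\Omega^\infty\Sigma^{-d}\S\to\Omega^\infty\Sigma^{-d}\Z$ is only $(1-d)$-connected, i.e.\ $l$-connected, not $(l+1)$-connected; for the unknot it is $\Z/2\to 0$ on $\pi_0$. This part is harmless, since $l$ is all you need.

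The genuine gap is the case $l=0$. There, chords of degree $1$ are allowed, and their obstruction group is $\pi_0R\cong\Z$, which does not vanish: the unknot carries microsheaves that extend to no sheaf, over $\S$ or over $\Z$. So your claim that emptiness is a $\pi_{-1}$ issue killed by connectivity fails exactly where $0$-connectivity has content. That content is that every $\Z$-sheaf with a compatible $\S$-microsheaf lifts to an $\S$-sheaf.

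The missing idea is naturality of the crossing obstruction $\alpha(F)$ of Theorem~\ref{sheaf crossing obstruction} under $-\otimes_{\S}\Z$. It is a connecting map between relative sections, which are finite limits of stalks and so commute with this exact base change. Hence $\alpha(F\otimes\Z)$ is the image of $\alpha(F)$ under $\pi_0\S\to\pi_0\Z$, which is an isomorphism. The given $\Z$-sheaf, transported along the inductively constructed path, shows $\alpha(F\otimes\Z)=0$. Therefore $\alpha(F)=0$ and the $\S$-stage is nonempty. This is the ``detected by homology with $\Z$-coefficients'' step in the paper's proof.

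For $l\geq 1$ all chords have $d\leq 0$, the obstruction groups $\pi_{d-1}R$ really do vanish, and your five-lemma assembly goes through as you describe.
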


We give examples and applications in Section \ref{sec: examples}.

\vspace{2mm}
{\bf Acknowledgements.}  VS is supported by Villum Fonden Villum Investigator grant 37814, Novo Nordisk Foundation grant NNF20OC0066298, and Danish National Research Foundation grant DNRF157.
SC would like to warmly thank Stéphane Guillermou for countless stimulating discussions about sheaves and generating functions, and also Antoine Fermé who contributed actively at an early stage of the project.
\section{Generating functions}\label{sec:GF}

\subsection{Tame generating functions}
We will say that a function $f: M \times \R^n \to \R$ is {\em fiberwise tame} if there is a closed set $K\subset M\times \R^n$ and a smooth function $a:M\to (0,+\infty)$
such that the projection $K\to M$ is proper and the maps
\[\{(x,v)\in M \times \R^n, |f(x,v)|\geq a(x)\} \to \{(x,z)\in M\times \R, |z|\geq a(x)\},\]
and
\[\{(x,v)\in M \times \R^n, |f(x,v)|\leq a(x)\} \setminus K \to \{(x,z)\in M\times \R, |z|\leq a(x)\}\]
are fibrations. Examples include:
\begin{itemize}
\item (Quadratic at infinity): $f(x,v)=Q(v)+\epsilon(x,v)$ where $Q$ is a non-degenerate quadratic form and $\epsilon$ has proper support over $M$,
\item (Linear at infinity): $f(x,v,w)=w+\epsilon(x,v,w)$ where $\epsilon$ has proper support over $M$.
\end{itemize}

We develop some elementary properties of fiberwise tameness in Appendix~\ref{app:fibinfty}.

\begin{dfn}[Generating functions]\label{dfn:gf}
A \emph{generating function over $M$} is a function
\[f:M\times \R^n\to \R\]
for some $n\in \N$ such that
\begin{itemize}
\item $f$ is \emph{fiberwise tame} with respect to the projection $M\times \R^n \to M$,
\item the map $M\times \R^n\to \R^n$ given by $(x,v) \mapsto \frac{\del f}{\del v}(x,v)$ is transverse to $0$.
\end{itemize}
The subset
\[\Sigma_f=\{(x,v)\in M\times \R^n, \frac{\del f}{\del v}(x,v)=0\}\]
is called the \emph{singular set} of $f$ (with respect to the projection $M\times \R^n \to M$),
it is a submanifold of codimension $n$.
If $M$ has boundary and corners (this will be the case when we will take product by a simplex to define the simplicial set $\GF_\Lambda(M)^*$), we require the same properties for the restriction of $f$ to each stratum $M_i\subset \del M$ of any codimension.
\end{dfn}

The following property is well-known.
\begin{prop}
The map $i_f:\Sigma_f \to J^1 M$ given by
\[i_f(x,v)=(x,\frac{\del f}{\del x}(x,v),f(x,v))\]
is a Legendrian immersion.
\end{prop}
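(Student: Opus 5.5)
The plan is to check the two defining properties of a Legendrian immersion directly: that $i_f$ is an immersion, and that $i_f^*\alpha = 0$ for the contact form $\alpha = dz - p\,dx$ on $J^1 M$. Local coordinates $x$ on $M$ suffice, since both properties are local.

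\emph{Isotropy.} Write $\lambda = dz - \sum_j p_j\,dx_j$, so that
\[ i_f^*\lambda = d(f|_{\Sigma_f}) - \sum_j \frac{\partial f}{\partial x_j}\,dx_j\Big|_{\Sigma_f}. \]
On $M\times\R^n$ we have $df = \sum_j \frac{\partial f}{\partial x_j}dx_j + \sum_k \frac{\partial f}{\partial v_k}dv_k$. On $\Sigma_f$ the coefficients $\frac{\partial f}{\partial v_k}$ vanish, so restricting $df$ to $T\Sigma_f$ kills the second sum. Hence $i_f^*\lambda = 0$.

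\emph{Immersion.} The transversality assumption in Definition~\ref{dfn:gf} says that the map $(x,v)\mapsto \partial_v f$ has surjective differential along $\Sigma_f$. Therefore $\Sigma_f$ is a submanifold of dimension $\dim M$, with
\[ T_{(x,v)}\Sigma_f = \{(\xi,\eta) : \partial_x\partial_v f\cdot\xi + \partial_v^2 f\cdot\eta = 0\}. \]
Suppose $(\xi,\eta)$ lies in the kernel of $di_f$. The $x$-component of $di_f(\xi,\eta)$ is $\xi$, so $\xi = 0$. The $p$-component is $\partial_x^2 f\cdot\xi + \partial_v\partial_x f\cdot\eta$, which then gives $\partial_v\partial_x f\cdot\eta = 0$. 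Tangency to $\Sigma_f$ with $\xi=0$ gives $\partial_v^2 f\cdot\eta = 0$. Together these say that $\eta$ annihilates every row of the full $n\times(\dim M+n)$ matrix $\big(\partial_x\partial_v f \;\; \partial_v^2 f\big)$, read as pairing $\eta$ against the $v$-index. Transversality says this matrix has rank $n$, so $\eta = 0$. Since $\dim\Sigma_f = \dim M$, the immersed image is a Legendrian.

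The only step that needs care is this last linear-algebra point: the transversality condition has to be used through the transpose (symmetric-Hessian) pairing rather than just the tangent-space description. For manifolds with corners, the same argument applies stratum by stratum.
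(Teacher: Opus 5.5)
Your proof is correct. The paper gives no proof of this proposition (it calls it well known). The only argument it gestures at is the introduction's symplectic-reduction picture, in which $\Lambda_f$ is obtained from the graph of $df$ through the conormal correspondence of $M\times\R^n\to M$.

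In that picture:
\begin{itemize}
\item Set $L=\mathrm{graph}(df)\subset T^*(M\times\R^n)$ and let $C=\{\kappa=0\}$ be the coisotropic where the fiber covector vanishes.
\item Then $L\cap C\cong\Sigma_f$, and transversality of $L$ and $C$ is exactly the transversality of $\partial_v f$ to $0$.
\item The identity $L\cap C^{\omega}=(L+C)^{\omega}=0$ shows that $L\cap C\to C/C^{\omega}\cong T^*M$ is an immersion.
\item The Legendrian lift comes from $f$ being a primitive of the Liouville form on $L\cap C$.
\end{itemize}

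Your coordinate computation is this argument unpacked. The kernel conditions you derive ($\xi=0$, $\partial_v\partial_x f\cdot\eta=0$, $\partial_v^2 f\cdot\eta=0$) say exactly that $(0,\eta)$ lies in $L\cap C^{\omega}$, since $C^\omega$ is the $v$-direction. Your transpose pairing step, which uses the symmetry of second derivatives, is the equality $L\cap C^\omega=(L+C)^\omega$ written in coordinates.

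Your version is self-contained and elementary. The reduction version is coordinate-free and explains why transversality is the natural hypothesis. It is also the form in which the construction is reused later, for instance in the reduction map $\RR$ and the rotation $\A$ in Section~\ref{sec:doubling}.

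One small wording fix. Your two conditions say $J^{T}\eta=0$ for $J=(\partial_x\partial_v f\;\;\partial_v^2 f)$. That is, $\eta$ is orthogonal to every \emph{column} of $J$, equivalently the $\eta$-weighted combination of the rows vanishes. It is not that $\eta$ "annihilates every row." Rank $n$ of $J$ is exactly what rules this out.
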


\begin{dfn}[Excellent generating functions]
A generating function $f:M\times \R^n\to \R$ is \emph{excellent} if the map $i_f$ is an embedding. In this case we say that
$i_f(\Sigma_f)$ is generated by $f$, and sometimes denote $i_f(\Sigma_f)=\Lambda_f$.
If $M$ has boundary and corners, we require the same property for the restriction of $f$ to each stratum $M_i\subset M$ of any codimension.
\end{dfn}

It can be proven using Thom's transversality theorem that a generic fiberwise tame function
$M\times \R^n\to \R$ is an excellent generating function.

\begin{ex}
If $M$ is a single point, $f:\R^n\to \R$ is excellent if and only if it is a Morse function with distinct critical values.
If $M=[0,1]$, $(f_t)_{t\in[0,1]}:\R^n \to \R$ is excellent if and only if the restrictions to $\{0\}\times \R^n$, $\{1\}\times \R^n$ and  $(0,1)\times \R^n$ are excellent. This says in particular that the Cerf diagram of the path $(f_t)_{t\in [0,1]}$ (projection of $i_f(\Sigma_f)$ onto $J^0 M=M\times \R$)
is the front of a Legendrian submanifold transverse to $t=0$ and $t=1$.
\end{ex}

\begin{rem}
There is a more general notion of generating functions where we replace $M\times \R^n\to M$ by any fibration $p:E\to M$
and require $f$ to be fiberwise tame with respect to $p$. For the comparison with sheaves to be simpler, we stick in this paper
to the case where $p$ is a trivial bundle with $\R^n$-fibers.
\end{rem}

\begin{dfn}[Stabilization]
We fix once and for all a non-degenerate quadratic form $h$ on $\R^2$ of signature $0$ (e.g. $h(u,v)=uv$), and denote $h^k:\R^{2k}\to \R$ with $k \geq 0$
the direct sum of $k$ copies of $h$. Given a generating function $f:M\times \R^n$, the function
\[\sigma^k f : M\times \R^n\times \R^{2k} \to \R\]
defined by
\[\sigma^k f(x;u,v)=f(x;u)+h^k(v)\]
is called the \emph{$k$-fold stabilization} of $f$.
\end{dfn}

The $k$-fold stabilization of $f$ is also fiberwise tame (e.g. per Lemma~\ref{lem:tamesum}) and satisfies the transversality condition of Definition~\ref{dfn:gf},
hence it is a generating function over $M$. The singular set is
\[\Sigma_{\sigma^k f}= \Sigma_f \times \{0\}\]
and the generated Legendrian immersion is unchanged: for all $(x;u)\in \Sigma_f$,
\[i_{\sigma^k f}(x,u,0)=i_f(x,u).\]
Moreover, $f$ is excellent if and only if $\sigma^k f$ is excellent.

\begin{dfn}[Equivalence]
We say that two generating functions $f:M\times \R^n\to \R$ and $g:M\times \R^m\to \R$
are \emph{equivalent} if there exists a diffeomorphism $\psi : M\times \R^n\to M\times \R^m$
fibered over $M$ such that $g\circ \psi=f$ (note that this implies $n=m$).
\end{dfn}

\begin{dfn}[Stable equivalence]
We say that two generating functions $f:M\times \R^n\to \R$ and $g:M\times \R^m \to \R$ are \emph{stably equivalent} if there exists $k,l\in \N$ such that $\sigma^k f$
and $\sigma^l g$ are equivalent (note that this implies $n-m$ is even).
We sometimes write isomorphic to mean stably equivalent.
\end{dfn}

Stably equivalent excellent generating functions generate the same Legendrian submanifold.

The following celebrated result shows that generating function persist under Legendrian isotopy.

\begin{thm}[Chekanov, Chaperon-Théret]\label{gen fn homotopy lifting 1}
Let $f:M\times \R^n\to \R$ be an excellent generating function for a Legendrian submanifold $\Lambda$.
Let $(\Lambda_t)_{t\in [0,1]}$ be a compactly supported Legendrian isotopy starting at $\Lambda_0=\Lambda$.
Then there exists an excellent generating function $f:M\times[0,1]\times \R^n\times \R^{2k}\to \R$ over $M\times [0,1]$
such that $f_0=\sigma^k f$ and for each $t\in[0,1]$, the restriction $f_t$ to $M\times \{t\}\times \R^n\times \R^{2k}$ is an excellent generating function for $\Lambda_t$.
\end{thm}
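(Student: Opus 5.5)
The strategy is the classical Chekanov/Sikorav route: realize the isotopy by a contact Hamiltonian flow, factor that flow into many small pieces, and compose a generating function for each piece with $f$.

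\emph{Step 1: realize the isotopy ambiently.} Since $(\Lambda_t)$ is a compactly supported Legendrian isotopy, it extends to a compactly supported contact isotopy $\phi_t$ of $J^1 M$ with $\Lambda_t=\phi_t(\Lambda)$. Lifting to the symplectization, this is equivalently a homogeneous Hamiltonian isotopy of $T^*(M\times\R)$ (conic away from the zero section) that is compactly supported in the base directions. Next we subdivide $[0,1]$ as $0=t_0<t_1<\dots<t_N=1$ so that each step $\phi_{t_{j+1}}\circ\phi_{t_j}^{-1}$ is $C^1$-close to the identity.

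\emph{Step 2: generate each small step.} A contact diffeomorphism $C^1$-close to the identity has graph, in suitable Darboux-type coordinates, that is a section of a jet bundle. It therefore admits a generating function with no auxiliary variables, i.e.\ a function $S_j(x,p,z)$ on a product of $M$ with fiber coordinates. This is the standard Chekanov lemma, and I take it as known. Because everything depends smoothly on $t$, we obtain a family $S_{j,t}$ for $t\in[t_j,t_{j+1}]$ with $S_{j,t_j}$ generating the identity.

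\emph{Step 3: the composition formula.} If $f(x,v)$ generates $\Lambda'$ and $S$ generates a small contact map $\psi$, then a function of the form
\[
g(x;v,y,p)=f(y,v)+\langle p,x-y\rangle+S(x,p,f(y,v))
\]
generates $\psi(\Lambda')$. In the contact setting the $z$-slot must be handled, hence the dependence on $f(y,v)$. The new fiber variables are $(v,y,p)$, taken locally in a chart. Transversality of $g$ follows from that of $f$ together with the graph property of $S$. Excellence, meaning that $i_g$ is an embedding, holds because $\psi$ is a diffeomorphism carrying the embedded $\Lambda'$ to an embedded Legendrian, and the fiberwise critical set of $g$ maps bijectively onto it. Iterating over $j=0,\dots,N-1$ with $t$ as a parameter gives a family $f_t$ over $M\times[0,1]$ that generates $\Lambda_t$ and is excellent on each stratum.

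\emph{Step 4: tameness and normalization (the main obstacle).} Two things remain. First, the extra variables $(y,p)$ range over $M\times\R^{\dim M}$ rather than a vector space. Second, $g$ must be fiberwise tame. Outside the compact support of the isotopy we have $S$ equal to the identity generator, so there $g$ reduces, after the fiberwise change of variables eliminating $(y,p)$, to $f$ plus a nondegenerate quadratic form of type $\langle p,x-y\rangle$, which has signature $0$. Globalizing requires either embedding $M$ in $\R^K$ and using Sikorav's trick, or working with $M=\R^m$ compactly supported and patching. I would first try to arrange that $\Sigma_g$ lies over a fixed compact region, and then use a fiberwise diffeomorphism, via Lemma~\ref{lem:tamesum}-type arguments and the appendix on fiberwise tameness, to put $g$ into the form $f_t+h^k$ plus a perturbation with proper support. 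In particular, at $t=0$ we get exactly $\sigma^k f$: since $S_{0,0}$ generates the identity, $g_0$ is the stabilization of $f$ by a quadratic form of signature $(k,k)$, and a linear change of coordinates identifies that form with $h^k$. Controlling tameness uniformly in $t$ through this change of variables is where I expect the real work to lie.
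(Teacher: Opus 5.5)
The paper does not prove this theorem; it quotes it from Chekanov and Chaperon--Th\'eret. Your outline (cut the contact isotopy into $C^1$-small steps, generate each step by a function $S(x,p,z)$, compose) is essentially the scheme of those sources. Your composition formula is correct. At a fiberwise critical point of $g$ one gets $\partial_v f=0$, $p=(1+\partial_z S)\,\partial_y f$, $y=x+\partial_p S$, and $(x,\partial_x g,g)=(x,\,p+\partial_x S,\,z+S-\langle p,\partial_p S\rangle)$ with $z=f(y,v)$. This is exactly the near-identity contactomorphism encoded by $S$, with conformal factor $1+\partial_z S$, and transversality and excellence then go through as you say.

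The main gap is fiberwise tameness, and the route you sketch for Step~4 does not work as stated. The perturbation $S(x,p,f(y,v))$ is \emph{not} supported in a set proper over $M$. It lives on $\{|x|,|p|\le C,\ |f(y,v)|\le C\}$, which for fixed $x$ is non-compact, since $y$ is unconstrained. On this set $g$ is unbounded because of $\langle p,x-y\rangle$, so the perturbation even reaches the region $\{|g|\ge a\}$ where tameness demands a fibration. Thus ``outside the support of the isotopy, $g$ is a stabilization of $f$'' holds in $J^1M$-coordinates, not in the domain of $g$. Consequently Lemma~\ref{lem:tamesum} and the ``tame plus properly supported perturbation'' principle do not apply. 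Placing $\Sigma_g$ over a compact region is beside the point, since tameness concerns the fibers at infinity. Also, ``$f_t+h^k$ plus a perturbation'' is circular, because $f_t$ is what you are constructing.

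An actual argument is needed, for instance a Moser deformation in the spirit of Proposition~\ref{prop:tamedeformation}. Set $g^0=f(y,v)+\langle p,x-y\rangle$ and $g^s=g^0+sS(x,p,f(y,v))$, and solve $dg^s(X_s)=-S(x,p,f(y,v))$ by a complete vertical field off a fiberwise compact set:
where $|y-x|$ is large, use the $p$-direction, since $\partial_p g^s=x-y+s\,\partial_p S$ is bounded away from $0$ there;
where $|y-x|$ is bounded, use $-S/(1+s\,\partial_z S)$ times the horizontal lift of $\partial_z$ for the fibration $(y,f)$ furnished by tameness of $f$.
The equation is affine in $X_s$, so a partition of unity glues these. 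The time-one flow $\Phi_1$ makes $g\circ\Phi_1$ a properly supported perturbation of $g^0$, hence tame. The Hadamard shear $s=y-x$, $p'=p-G$ from the proof of Proposition~\ref{prop:Aequiv} conjugates $g^0$ to $f(x,v)-\langle p',s\rangle$. Since this uses only tameness of the input, it iterates and works in families.

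The second gap is globalization. Here generating functions live on trivial bundles $M\times\R^N$, and the theorem asks for $f_0=\sigma^k f$ exactly. But $\langle p,x-y\rangle$ needs a linear structure on $M$, and ``embed $M$ in $\R^K$ or patch'' is not yet an argument: neither the isotopy nor $f$ transports trivially, and normal bundles are non-trivial in general. One way that works:
fragment each small step, smoothly in $t$, into contactomorphisms supported in $J^1B$ for coordinate balls $B\cong\R^m$;
apply your formula to $f|_{B\times\R^n}$, which is tame by Lemma~\ref{lem:tamepullback};
precompose with the inverse of the shear over $B$.
The result equals $f(x,v)-\langle p',s\rangle$ wherever $x$ lies outside the $x$-support of $S$, hence extends over all of $M\times\R^{n+2m}$.

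Finally, avoid working on the intervals $[t_j,t_{j+1}]$, where the number of variables jumps and smoothness at each $t_j$ needs gluing. Instead take the steps $\psi_{j,t}=\phi_{(j+1)t/N}\circ\phi_{jt/N}^{-1}$, which are $C^1$-close to the identity uniformly in $t$ for $N$ large. All steps are then present for every $t$, the family is smooth, and every $S_{j,0}$ vanishes. So $f_0$ is $f$ plus a constant split quadratic form, which a linear change of fiber coordinates turns into $h^k$.
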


\subsection{Semi-local generating functions}

\begin{dfn}[semi-local generating function, version 1]\label{dfn:semilocalgf1}
A \emph{semi-local generating function} over $M$ is an open set $U \subset M\times \R^n$
and a function $f:U\to \R$ (not necessarily tame) such that $\frac{\del f}{\del v}:U\to \R^n$
is transverse to $0$ and $\Sigma_f=(\frac{\del f}{\del v})^{-1}(0)\to M$ is proper.
It is excellent if $i_f:\Sigma_f \to J^1 M$ is an embedding.
\end{dfn}

The germ of $f$ near $\Sigma_f$ can always be extended to a fiberwise tame function $\tilde{f}:M\times \R^n\to \R$
in order to obtain a generating function such that $\Lambda_f=\Lambda_{\tilde{f}}$ near $\Lambda_f$. Indeed
we may take a generic function $\tilde{f}$ which satisfies $\tilde f(x,v)=\|v\|^2$ at infinity and
$\tilde{f}(x,v)=f(x,v)$ near $\Sigma_f$.
This suggests the following variant, which we take as our official definition.

\begin{dfn} [semi-local generating function, version 2]\label{semilocal gf def}
A \emph{semi-local generating function} over $M$ is a couple $(f,\Sigma)$ where $f:M\times \R^n\to \R$
is a generating function (hence fiberwise tame) and $\Sigma \subset \Sigma_f$ is a closed and open subset.
It is excellent if $i_f:\Sigma \to J^1 M$ is an embedding.
\end{dfn}

Given such $(f,\Sigma)$, there is an open set $U\subset M\times \R^n$ such that $\Sigma_f\cap U=\Sigma$
and the restriction of $f$ to $U$ is a semi-local generating function in the sense of Definition~\ref{dfn:semilocalgf1}.

\begin{dfn}\label{dfn:semilocalequiv}
Two semi-local generating functions $(f,\Sigma),(g,\Sigma')$ are \emph{semi-locally equivalent} if there exists
a fibered diffeomorphism $\psi:M\times \R^n\to M\times \R^n$ such that $\psi(\Sigma)=\Sigma'$
and $g\circ \psi=f$ near $\Sigma$.
\end{dfn}

In particular, this equivalence relation identifies semi-local generating functions with their germs
along their singular set and the two versions above agree at the level of equivalence classes.

\begin{dfn}\label{dfn:stable_sl_equiv}
Two semi-local generating functions $(f,\Sigma)$ and $(g,\Sigma')$ over $M$ are \emph{stably semi-locally equivalent}
if $(\sigma^k f,\Sigma)$ is semi-locally equivalent to $(\sigma^l g,\Sigma')$ for some $k,l \in \N$. We sometimes write
isomorphic to mean semi-locally stably equivalent.
\end{dfn}

In fact, if the fibered diffeomorphism $\psi$ in Definition~\ref{dfn:semilocalequiv} is only defined between neighborhoods
$V$ and $V'$ of $\Sigma$ and $\Sigma'$ respectively (that would be the natural equivalence relation following Definition~\ref{dfn:semilocalgf1}),
then, after stabilization, it can be extended to a global fibered diffeomorphism of $M\times \R^n$.
Indeed the restriction of $\psi$ to $\Sigma$ is fibered isotopic to the inclusion provided $n$ is large enough
and moreover we can arrange the framing by further stabilization so that $\psi$ is fibered isotopic
to the identity map in a tubular neighborhood of $\Sigma$.

Of course a genuine generating function $f$ can be considered as a semi-local generating function $(f,\Sigma)$ with $\Sigma=\Sigma_f$
and stably equivalent generating functions induce semi-locally stably equivalent semi-local generating functions.

Recall that any compact Legendrian submanifold $\Lambda \subset J^1 M$ (or more generally a compact Lagrangian immersion
$L\to T^*M$) has a well-defined (up to homotopy)
stable Lagrangian Gauss map
\[\gamma_\Lambda : \Lambda \to U/O\]
which records how the tangent spaces $T_x \Lambda$ (stably) turn with respect to the vertical distribution (fibers of $J^1 M \to J^0 M$).
Giroux and Latour observed that this map is null-homotopic if $\Lambda$ admits a semi-local generating function,
and proved the converse. They moreover showed that semi-local generating functions for $\Lambda$ can be classified up to stable semi-local equivalence
by homotopy classes of null-homotopies of $\gamma_\Lambda$. The latter set can be identified (non-canonically) with
$[L,\Z\times BO]$ in view of the Bott periodicity homotopy equivalence $\Omega (U/O) \simeq \Z\times BO$.

\begin{thm}[Giroux \cite{Giroux}, Latour \cite{Latour}] \label{thm:girouxlatour}
Let $M$ be a manifold and $\Lambda\subset J^1(M)$ a closed Legendrian submanifold.
\begin{enumerate}
\item
$\Lambda$ admits an excellent semi-local generating function if and only if the stable Gauss map
$\Lambda\to U/O$ is null-homotopic.
\item There is a free and transitive action of the group of (unbased) homotopy classes
of maps $[\Lambda,\Z\times BO]$ on the set of stable semi-local equivalence classes of excellent semi-local generating
functions for $\Lambda$.
\end{enumerate}
\end{thm}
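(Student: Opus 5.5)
The plan is to reduce everything to the local algebra of quadratic forms along $\Lambda$, using a fiberwise Morse lemma and stable linear algebra. The key identification is between the tangent line field of the graph of $df$ along $\Sigma_f$ and a lift of the Gauss map into a contractible space.

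\textbf{Step 1: a lift of $\gamma_\Lambda$ from a semi-local generating function.} Let $(f,\Sigma)$ be an excellent semi-local generating function with $i_f(\Sigma)=\Lambda$. At each point $p=(x,v)\in\Sigma$, the Lagrangian plane $T\Lambda_f$ is the symplectic reduction of the graph of $d^2f(p)$ through the coisotropic $T^*_xM\times\{0\}\times T_xM$ direction. By the transversality hypothesis, this plane is determined by the fiber Hessian $\partial^2_{vv}f$ together with the mixed derivative $\partial^2_{xv} f$. The family of graphical Lagrangians $\{\Gamma_{d^2 f(p)}\}$ is a map from $\Sigma$ to a contractible space, namely symmetric forms on $T_xM\oplus\R^n$. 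Reduction is continuous on the locus where the transversality condition holds. This gives a null-homotopy of $\gamma_\Lambda$, which is stable under stabilization since $h$ is nondegenerate. This proves the easy direction of (1). For the converse, take a null-homotopy and realize it by a family of quadratic forms $Q_\lambda$ on $T_xM\oplus\R^n$ for $n$ large. Here I use $U/O=\colim U(n)/O(n)$ and the fact that any Lagrangian plane transverse to the fiber of $T^*(T_xM\oplus\R^n)$ reduces to a prescribed one. Then define $f$ near the zero section of the normal directions by a formula of the form $f(x,v)=z(\lambda)+\langle y(\lambda),x-x(\lambda)\rangle+\tfrac12 Q_\lambda(x-x(\lambda),v-v(\lambda))$. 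This uses a tubular neighbourhood of $\Sigma\cong\Lambda$ embedded in $M\times\R^n$ for $n$ large, interpolated with a partition of unity. The result is excellent near $\Sigma$ because $\Lambda$ is embedded.

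\textbf{Step 2: classification, part (2).} I would show that two semi-local generating functions with homotopic induced null-homotopies are stably semi-locally equivalent. First, after stabilization, isotope the embeddings $\Sigma\hookrightarrow M\times\R^n$ over $M$ to coincide; this is possible for $n$ large by general position, since both are graphs over $\Lambda$ of maps to $\R^n$. Next, apply a fibered parametric Morse lemma. This says a germ along $\Sigma$ with nondegenerate fiber Hessian is equivalent, by a fibered diffeomorphism fixing $\Sigma$, to its quadratic part in the normal directions. The germ is then determined by the vector bundle $\nu\Sigma$ with its nondegenerate form, modulo the Legendrian data. Homotopies of null-homotopies then give paths of forms, and a Moser-type argument integrates them to fibered diffeomorphisms. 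Finally, the freedom is identified. The set of null-homotopies of $\gamma_\Lambda$ is a torsor over $[\Lambda,\Omega(U/O)]=[\Lambda,\Z\times BO]$. Geometrically, the action adds to $f$ a fiberwise quadratic form on an auxiliary stable bundle $V$ over $\Lambda$, embedded in a trivial bundle. The index of the form gives the $\Z$ factor and the negative eigenbundle gives the $BO$ class. This identifies the action as free and transitive.

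\textbf{Main obstacle.} I expect the main obstacle to be the careful matching between homotopy classes of null-homotopies and stable equivalence classes of germs. Two points need care here. One is that fibered diffeomorphisms defined only near $\Sigma$ extend after stabilization, as discussed after Definition~\ref{dfn:stable_sl_equiv}. The other is controlling the framing ambiguity of the normal bundle, which is exactly absorbed by stabilization with $h^k$. Transitivity is handled by the construction in Step 1. Freeness is where the parametric Morse lemma must be applied in families over $\Lambda\times[0,1]$.
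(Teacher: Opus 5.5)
The paper does not prove this statement. It quotes Giroux and Latour, and only sketches why a generating function yields a null-homotopy of $\gamma_\Lambda$ and how $[\Lambda,\Z\times BO]$ acts by fiberwise quadratic forms. Your overall architecture is reasonable: an invariant read off from the graph of $df$, isotoping the singular sets to coincide after stabilization, a Moser--Hadamard path argument, and an action by quadratic forms. However, the local tool you invoke fails where it matters, and the step carrying the actual content is missing.

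\textbf{The fibered Morse lemma does not apply.} Along $\Sigma$ one has $\ker \partial^2_{vv}f = T\Sigma\cap(0\times\R^n)$. So the fiber Hessian degenerates exactly at the caustic points of $\Lambda$, i.e.\ where $\Lambda\to M$ fails to be a local diffeomorphism. These points are unavoidable: every nonempty closed Legendrian in $J^1\R^m$, $m\ge 1$, has them, and the paper's $D(t,w)=w^3-3tw$ at the origin is the model. Transversality only says that $(\partial^2_{xv}f,\partial^2_{vv}f)$ is surjective, and no fibered Morse lemma puts such a germ in ``quadratic normal form''.

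What does work is the argument of Proposition~\ref{prop:pi_0=equiv}, or H\"ormander's local uniqueness of generating families. Once $\Sigma_f=\Sigma_g$ (your isotopy step is fine), $f$ and $g$ automatically agree to first order along $\Sigma$. The path $(1-t)f+tg$ then integrates to fibered diffeomorphisms \emph{provided every intermediate function is still transverse along $\Sigma$}. So the real datum is the transverse second-order jet along $\Sigma$. This is a section of a bundle $\mathcal H\to\Sigma$ whose fiber is the open set of symmetric forms $H$ with $H(w,\cdot)$ prescribed by $dy(w)$ for $w\in T\Sigma$ and with $u\mapsto H(u,(0,\cdot))$ surjective. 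These fibers are neither convex nor contractible: for $M$ a point, $\mathcal H_p$ is the space of nondegenerate quadratic forms, $\coprod_k\mathrm{Gr}_k(\R^n)$. This is exactly why two of your steps need an argument you have not given: the partition-of-unity interpolation in Step~1, and the claim in Step~2 that homotopies of null-homotopies give paths of forms.

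\textbf{The missing theorem.} You need an identification, stably in $n$ and compatible in families over $\Lambda$, of $\mathcal H_p$ with the space of paths in $U/O$ from $\gamma_\Lambda(\lambda)$ to the base point. Equivalently, reduction from graphs transverse to the reducing coisotropic to $\mathrm{LGr}(T_\lambda T^*M)$ should stably be the homotopy fiber of $\mathrm{LGr}\to U/O$. For $M$ a point this is Bott periodicity $\Omega(U/O)\simeq\Z\times BO$ via negative eigenspaces. The relative version is the real content of the theorem, and it is what converts sections of $\mathcal H$ into null-homotopies of $\gamma_\Lambda$ and back.

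For the converse of (1) you use only pointwise surjectivity of reduction. That gives no global section, and the obstruction to one is precisely $\gamma_\Lambda$. In the forward direction, similarly, you cannot simply reduce a contraction of the space of graphs. The graphs transverse to the reducing coisotropic are those with $u\mapsto H(u,(0,\cdot))$ surjective, and they do not form a contractible set. The null-homotopy comes instead from comparing the stabilized $T\Lambda$ with the graph inside the stable Lagrangian Grassmannian.

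\textbf{Freeness and transitivity are reversed in your bookkeeping for (2).} Suppose the invariant is well defined on stable semi-local classes, and adding a fiberwise form $q$ shifts it by the Bott class of $q$. Then freeness is formal. Transitivity, by contrast, is equivalent to injectivity of the invariant, i.e.\ to the Step~2 argument. The existence construction of Step~1 only bears on part (1).
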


A class in $[L,\Z\times BO]$ acts by stabilizing a generating function $f:U\to \R$
(where $U$ is a tubular neighborhood of $\Sigma_f\simeq L$) by a fiberwise non-degenerate quadratic form $q:U\times \R^n\to \R$
(the $\Z$-factor corresponding to the fiberwise signature of $q$ and the $BO$-factor corresponding to the fiberwise negative eigenspace of $q$).

\subsection{Spaces of generating functions}

Let $N$ be a manifold possibly with boundary and corners, such as $M\times\Delta^k$ where $M$ is a manifold without boundary
and $\Delta^k$ is the standard $k$-dimensional simplex. The space $J^1 N=T^* N \times \R$ is then also a manifold
with boundary and corners, with boundary stratification pulled back from the projection $\pi_N:J^1 N\to N$.

\begin{dfn}[Sets of Legendrian submanifolds]\label{dfn:setLeg}
We define $\LL(N)^*_0$ to be the set of all compact Legendrian submanifolds $\Lambda\subset J^1(N)$
such that $\del L=L\cap \pi_N^{-1}(\del N)$ and for each boundary stratum $N_i\subset \del N$,
$\Lambda$ is transverse to $\pi_N^{-1}(N_i)$ and the symplectic reduction of $\Lambda\cap \pi_N^{-1}(N_i)$
in $J^1 (N_i)$ is an embedded Legendrian submanifold.
\end{dfn}

For instance if $N=M\times [0,1]$ then $\LL(N)^*_0$ is the set of all compact Legendrian
cobordisms in $J^1(M\times[0,1])$ (in the sense of Arnol'd) between closed Legendrian submanifolds $\Lambda_0, \Lambda_1 \subset J^1 M$.

\begin{dfn}[Spaces of Legendrian submanifolds]\label{dfn:spaceLeg}
We define the simplicial sets $\LL(M)_\bullet$ and $\LL(M)_\bullet^*$ whose set of $k$-simplices
are respectively:
\begin{itemize}
\item $\LL(M)_k=\LL(M\times \Delta^k)_0^*$,
\item $\LL(M)_k^*$ is the subset of $\LL(M)_k$ consisting of all Legendrian submanifolds
which fiber over $\Delta^k$ and which reduce to a Legendrian submanifold $\Lambda_s$ in $J^1(M)$ for each $s\in \Delta^k$.
\end{itemize}
The structure maps are given by pullbacks, namely given $\Lambda \subset J^1(M\times \Delta^l)$
and a map $\alpha:\Delta^k\to \Delta^l$, we consider the induced map $\varphi_\alpha:J^1(M\times \Delta^l)\to J^1(M\times \Delta^k)$
and set $\alpha^* L=\varphi_\alpha(L)$.
The geometric realizations are denoted $\LL(M)$ and $\LL(M)^*$.
\end{dfn}
We can check that the simplicial sets $\LL(M)_\bullet$ and $\LL(M)^*_\bullet$ are well-behaved, namely they are Kan complexes.
The connected components of $\LL(M)^*$ are Legendrian isotopy classes of closed Legendrian submanifolds
in $J^1(M)$ while the connected components of $\LL(M)$ are Legendrian cobordism classes.

\begin{dfn}[Sets of generating functions]\label{dfn:setGF}
We denote $\GF(N)_0$ the set of all generating functions $f:N\times \R^n \to \R$ over $N$ with $n$ an even integer
where we identify $f$ with $\sigma^k f$ for all $k\geq 0$.
We denote $\GF(N)_0^*$ the subset of excellent ones and $\GF_\Lambda(N)_0^*$ the subset of those that generates a given
Legendrian submanifold $\Lambda\subset J^1 N$.
\end{dfn}

Let $M$ be a manifold without boundary and $\Lambda$ a Legendrian submanifold of $J^1 M$.

\begin{dfn}[Spaces of generating functions]\label{dfn:spaceGF}
We define the simplicial sets $\GF(M)_\bullet$, $\GF(M)^*_\bullet$ and $\GF_\Lambda(M)^*_\bullet$ whose sets of $k$-simplices are respectively:
\begin{itemize}
\item $\GF(M)_k=\GF(M\times \Delta^k)^*_0$,
\item $\GF(M)_k^*$ is the subset of $\GF(M\times \Delta^k)^*_0$ consisting of functions which are excellent
over each $M\times \{s\}$ for $s\in \Delta^k$ and induce Legendrian submanifold in $J^1(M\times \Delta^k)$ which fiber over $\Delta^k$,
\item $\GF_\Lambda(M)_k^*=\GF_{\Lambda\times \Delta^k}(M\times \Delta^k)^*_0$, where $\Lambda\times \Delta^k\subset J^1(M\times \Delta^k)=J^1(M) \times T^*\Delta^k$ is the product of $\Lambda$ with the zero-section of $T^*\Delta^k$.
\end{itemize}

The structure maps are given by pullbacks, namely for $\alpha : \Delta^k \to \Delta^l$ and $f\in \GF(M)_l$, we define $\alpha^* f \in \GF(M)_k$ by $(\alpha^* f)(x,t;v)=f(x,\alpha(t);v)$.

The geometric realization of these spaces are denoted simply $\GF(M)$, $\GF(M)^*$ and $\GF_\Lambda(M)^*$ respectively.

\end{dfn}

The simplicial sets $\GF(M)_\bullet$, $\GF(M)^*_\bullet$ and $\GF_\Lambda(M)^*_\bullet$ are Kan complexes.

\begin{rem}\label{rem:waldhausen}
The $\GF(\mathrm{point})$ can be thought of as the space of possible behaviour at infinity
for tame generating functions and it should be intimately related to Waldhausen's partition space $\P_\infty$ (see \cite{waldhausen_1982}).
In general $\GF(M)$ should be compared with the mapping space $\Map(M,\P_\infty)$.
\end{rem}

\begin{dfn}[Fibered diffeomorphism group]
We denote $\G(M)_0$ the group of diffeomorphisms of $M\times \R^n$ which lift the identity map of $M$
with even $n$ and where we identify $\varphi:M\times \R^n\to M\times \R^n$ with
$\varphi\times \id_{\R^2}:M\times \R^n\times \R^2 \to M\times \R^n\times \R^2$.
\end{dfn}

\begin{dfn}[Fibered diffeomorphism simplicial group]
We let $\G(M)_\bullet$ be the simplicial set whose set of $k$-simplices is $\G(M\times \Delta^k)_0$
and whose structure maps are given by pullbacks. We denote $\G(M)$ the geometric realization of $\G(M)_\bullet$.
\end{dfn}

\begin{prop}
The group $\G(M)$ is homotopy equivalent to $\Map(M,O)$ where $O=\colim_n O_n$ is the infinite orthogonal group.
\end{prop}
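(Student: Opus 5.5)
We prove that $\G(M)$ is homotopy equivalent to $\Map(M,O)$ by comparing fibered diffeomorphisms of $M\times\R^n$ with families of diffeomorphisms of $\R^n$, and then using the linearization (Alexander trick) to reduce to the linear group. Concretely, a $k$-simplex of $\G(M)_\bullet$ is a smooth map $M\times\Delta^k\to \mathrm{Diff}(\R^n)$, where $\mathrm{Diff}(\R^n)$ carries the $C^\infty$ topology on compacta. Since only diffeomorphisms fibered over $M$ are allowed, with no compact-support condition, $\G(M)$ is the realization of the smooth singular complex of $\Map(M,\colim_n \mathrm{Diff}(\R^n))$. The colimit is taken along $\varphi\mapsto\varphi\times\id_{\R^2}$, so only even $n$ occur.

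The first step shows that $\Map(M,\mathrm{Diff}(\R^n))$ deformation retracts onto $\Map(M,GL_n(\R))$, with the deformation natural in the parameters so that it is compatible with simplicial pullbacks. We use the standard two-stage homotopy.
\begin{itemize}
\item First translate, replacing $\varphi$ by $\varphi-\varphi(0)$, to reach origin-fixing diffeomorphisms.
\item Then apply the Alexander/Kister linearization $\varphi_t(v)=\varphi(tv)/t$ for $t\in(0,1]$, with $\varphi_0=d\varphi(0)$.
\end{itemize}
This formula is continuous in $t$ in the $C^\infty$ topology on compacta, and it is a diffeomorphism for every $t$. It is also defined pointwise in $(x,s)\in M\times\Delta^k$, so it applies to families and gives a simplicial homotopy. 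Both homotopies commute with $\times\id_{\R^2}$, since the linearization of $\varphi\times\id$ is $d\varphi(0)\times\id$. They therefore pass to the colimit. Finally, Gram–Schmidt retracts $GL_n$ onto $O_n$, again naturally and compatibly with stabilization.

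The second step identifies the simplicial set of smooth maps $M\times\Delta^k\to O_n$ with the singular complex of $\Map(M,O_n)$. The realization of smooth singular simplices of a mapping space into a manifold is equivalent to the usual one by smooth approximation (relative version on the boundary of the simplex). We then pass to the colimit over even $n$. Homotopy groups commute with filtered colimits of simplicial sets, and $\colim_n O_{2n}=O$ with $\Map(M,O)\simeq\colim\Map(M,O_{2n})$ on homotopy groups when $M$ is compact. For noncompact $M$ we argue with homotopy groups via compact exhaustions, or we restrict to the stable range.

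The main obstacle is the topology. We must make sure the linearization homotopy is continuous with smooth dependence on simplex parameters, and that the colimit and mapping-space equivalences hold for noncompact $M$. I would handle the latter by interpreting $\Map(M,O)$ as $\colim_n\Map(M,O_n)$, which is the natural meaning here.
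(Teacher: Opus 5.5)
Your proposal is correct and follows the paper's route: the paper's entire proof is the Alexander-trick linearization $\varphi(x,tv)/t \to \partial_v\varphi(x,0)\,v$. Your translation to origin-fixing maps, the Gram--Schmidt retraction onto $O_n$, the comparison of smooth and continuous singular complexes, and the colimit over stabilization only fill in details the paper leaves implicit. (Two of the points you flag as obstacles are easily settled. Smoothness in $t$ at $t=0$ follows from Hadamard's lemma, $\varphi(tv)/t=\int_0^1 d\varphi(\tau t v)\,v\,d\tau$ once $\varphi(0)=0$. For noncompact $M$, finite dimensionality makes $\Map(M,O_n)\to\Map(M,O)$ roughly $(n-1-\dim M)$-connected, so you do not need to reinterpret $\Map(M,O)$.)
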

\begin{proof}
This follows from Alexander's trick: any fibered diffeomorphism $f(x,v)$ can be linearized by the formula $f(x,tv)/t$ for $t\in(0,1]$
and $\frac{\del f}{\del v}(x,0) v$ for $t=0$.
\end{proof}

We are interested in the homotopy type of $\GF_\Lambda(M)^*$. The connected components correspond
to the usual equivalence classes under stabilizations and fibered diffeomorphisms.

\begin{prop}\label{prop:pi_0=equiv}
Let $M$ be a manifold and $\Lambda$ a closed Legendrian submanifold of $J^1(M)$.
Two generating functions $f_0,f_1\in \GF_\Lambda(M)^*_0$ are stably equivalent if and only if they lie
in the same connected component of $\GF_\Lambda(M)^*$.
\end{prop}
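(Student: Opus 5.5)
The plan is to prove the two implications separately. For ``stably equivalent $\Rightarrow$ same component'' I will build an explicit $1$-simplex out of a fibered isotopy. For ``same component $\Rightarrow$ stably equivalent'' I will integrate a Moser-type fibered vector field along a $1$-simplex. Since $\GF_\Lambda(M)^*_\bullet$ is Kan, two vertices lie in the same component of the realization if and only if they are joined by a single $1$-simplex. So it suffices to work with $f\in\GF_{\Lambda\times\Delta^1}(M\times\Delta^1)^*_0$.

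\emph{Stable equivalence gives a path.} Stabilizations are already identified in $\GF(N)_0$, so assume $f=g\circ\psi$ with $\psi$ a fibered diffeomorphism of $M\times\R^n$. The Alexander trick $\psi_t(x,v)=\psi(x,tv)/t$ gives a path in $\G(M)$ from $\psi$ to the linear map $A(x)=\frac{\del\psi}{\del v}(x,0)$. (If $\psi(x,0)\neq 0$, first translate by a path of fiberwise translations.) Polar decomposition then deforms $A$ to a map $M\to O(n)$. Precomposing $g$ with a fibered diffeomorphism preserves tameness (Appendix~\ref{app:fibinfty}), transversality, excellence, and the generated Legendrian. Hence $t\mapsto g\circ\psi_t$ is a $1$-simplex of $\GF_\Lambda(M)^*$, and $f$ is joined to $g\circ A$ with $A$ orthogonal.

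To remove $A$, I use the signature-zero form. Write $h^n$ on $\R^{2n}$ as $|a|^2-|b|^2$ with $a,b\in\R^n$. The fibered linear map $\Phi=A\oplus(A^{-1}\text{ on }a)\oplus(\id\text{ on }b)$ preserves $h^n$, so $(g\circ A)\oplus h^n=(\sigma^n g)\circ\Phi$. Its class in $[M,O]$ is $[A]-[A]=0$, so $\Phi$ is homotopic to the identity through fibered orthogonal maps $\Phi_t$. The path $(\sigma^n g)\circ\Phi_t$ then connects $\sigma^n(g\circ A)$ to $\sigma^n g$, and these are identified with $g\circ A$ and $g$ in $\GF_0$.

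\emph{A $1$-simplex gives stable equivalence.} Let $f:M\times[0,1]\times\R^n\to\R$ generate $\Lambda\times[0,1]$. Since this Legendrian lies over the zero section of $T^*[0,1]$, we have $\del_t f=0$ on $\Sigma_f$. I look for a time-dependent fibered vector field $X_t$, tangent to the $\R^n$ fibers, such that
\[\del_t f+d_vf(X_t)=0.\]
Its flow $\psi_t$ would then satisfy $f_t\circ\psi_t=f_0$.
\begin{itemize}
\item Near $\Sigma_f$, transversality of $\frac{\del f}{\del v}$ to $0$ together with $\del_tf|_{\Sigma_f}=0$ gives a Hadamard-type decomposition $\del_t f=\sum_i a_i\,\del_{v_i}f$ with smooth $a_i$. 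I set $X=-\sum_i a_i\del_{v_i}$.
\item Away from $\Sigma_f$, I set $X=-\del_tf\,\nabla_vf/|\nabla_vf|^2$.
\item These local choices are glued with a partition of unity; the equation is linear in $X$, so the glued field still solves it.
\end{itemize}

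\emph{Main obstacle: completeness at infinity.} The flow of $X$ must exist for all $t\in[0,1]$, which requires control of $X$ as $|v|\to\infty$. My first approach is to use the fibration properties in the definition of tameness to normalize $f$ near infinity. Concretely, I will use the results of Appendix~\ref{app:fibinfty} to modify $f$ by a fibered isotopy, after stabilization if necessary, so that $f_t$ is independent of $t$ outside a set proper over $M\times[0,1]$. This uses that the fibration $\{|f|\ge a\}\to\{|z|\ge a\}$ is trivializable over the contractible $[0,1]$ direction, and also the fibration on the complement of $K$. Once $f_t$ is $t$-independent there, $\del_tf=0$ in that region, so one can take $X=0$ there and the flow is complete. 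If a direct trivialization is not available, the fallback is to choose $X$ as a lift of the vector field $\del_t$ through these fibrations in the region where $|f|\ge a$ or we are outside $K$. This lift is complete because the fibrations are locally trivial and $K$ is proper over the base.
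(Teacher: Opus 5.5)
Your proposal is correct and is essentially the paper's proof. For one direction you use the Alexander trick followed by a compensating $h^n$-preserving linear map; your explicit $A^{-1}\oplus\id$ on $\R^{2n}$ plays the role of the paper's $B_x$. For the other direction you use Moser's trick with Hadamard division near $\Sigma_f$, with Proposition~\ref{prop:tamedeformation} controlling the behaviour at infinity. The differences are cosmetic. You apply Hadamard's lemma directly on $M\times[0,1]\times\R^n$, using $\del_t f|_{\Sigma_f}=0$ and transversality of $\del f/\del v$ on the total space, where the paper first straightens $\Sigma_{f_t}$ to $\Sigma_{f_0}$ by a fibered isotopy. Your primary normalization at infinity is exactly Proposition~\ref{prop:tamedeformation}, so the fallback is not needed.
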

\begin{proof}
Since $\GF_{\Lambda}(M)^*$ is a Kan complex, $f_0$ and $f_1$ are in the same connected component if and only if there
is an edge joining them, namely a generating function $f:M\times [0,1]\times \R^n\to \R$
such that $f(x,0,v)=f_0(x,v)$ and $f(x,1,v)=f_1(x,v)$.
If we have such a function $f=(f_t)_{t\in [0,1]}$, we apply Moser's trick and look for a fibered isotopy $(\varphi_t)_{t\in[0,1]}:M\times \R^n \to M\times \R^n$
generated by a vector field $X_t=a_t\frac{\del}{\del v}$ such that $f_t\circ \varphi_t=f_0$. This translates into the equation
\[\frac{d f_t}{dt} + a_t\frac{\del f_t}{\del v}=0.\]
Outside of a compact set of $\R^n$ this can be solved using the tameness property (see Proposition~\ref{prop:tamedeformation}).
Since each $f_t$ generates the same Legendrian submanifold $\Lambda$, the singular sets $\Sigma_{f_t}$
move by a fibered isotopy (explicitly $\Sigma_{f_t}=i_{f_t}^{-1}\circ i_{f_0}(\Sigma_{f_0}))$. By extending this fibered isotopy
we are reduced to the case where $\Sigma_{f_t}=\Sigma_{f_0}$ for all $t$. Now the function $\frac{d f_t}{d t}$ vanishes
along $\Sigma_{f_0}\times [0,1]$ because all $f_t$ generate the same Legendrian submanifold, and $\frac{\del f_t}{\del v}$
vanish transversely along $\Sigma_{f_0}$, so by Hadamard's division lemma we can find the required vector field $X_t$.

Conversely if $f_0$ and $f_1$ are equivalent, atfer stabilization there is a fibered diffeomorphism $\varphi_1$
such that $f_1\circ \varphi_1=f_0$. If $\varphi_1$ is isotopic to the identity map, namely if there is an edge
$\varphi_t$ in $\G(M)_1$ joining $\varphi_1$ and $\varphi_0=\id$, we can set $f_t=f_0\circ \varphi_t^{-1}$
to obtain an edge in $\GF_\Lambda(M)^*$ joining $f_0$ and $f_1$. We claim that $\varphi_1$ can be made isotopic to the identity
by stabilization as follows. First, we can assume that $\varphi_1(0)=0$ and then the Alexander trick:
\[\varphi_t(x,v)=\frac{\varphi_1(x,tv)}{t}\]
allows us to reduce $\varphi_1$ to its differential at the origin $\varphi_0(x,v)=d_{(x,0)}\varphi_1(0,v)$.
Now there is a smooth family of linear automorphism $(B_x)_{x\in M}:\R^{2k}\to \R^{2k}$ preserving the quadratic form
$h^k$ and such that the family $(\varphi_0)_x\oplus B_x$ is homotopic to the constant family equal to the identity map of $\R^n\times \R^{2k}$.
Hence $\sigma^k f_0$ and $\sigma^k f_1$ are equivalent by the fibered diffeomorphism $\varphi_1 \oplus B$ which is isotopic
to the identity as desired.
\end{proof}

There are natural (simplicial) maps
\[\GF(M)\to \LL(M) \quad \text{ and } \GF(M)^* \to \LL(M)^*\]
which map an excellent generating function to the Legendrian submanifold that it generates.
The persistence of generating functions under Legendrian isotopy translates into the following fibration statement.

\begin{thm}[Chekanov, Chaperon-Théret]\label{thm:spacelevelchekanov}
Let $M$ be a manifold without boundary. The map $\GF(M)^* \to \LL(M)^*$ is a Kan fibration onto its image.
In particular, if $\Lambda\subset J^1(M\times[0,1])$ is
a compact Legendrian cobordism which is the trace of a Legendrian isotopy $(\Lambda_t)_{t\in[0,1]}$,
then restriction at $t=0$ induces a homotopy equivalence
\[\GF_\Lambda(M\times [0,1])\to \GF_{\Lambda_0}(M).\]
\end{thm}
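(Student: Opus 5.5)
The plan is to verify the Kan lifting property for horn inclusions $\Lambda^k_i \hookrightarrow \Delta^k$. By the standard reduction, this amounts to the following relative lifting problem. Suppose we are given a Legendrian $\Lambda \in \LL(M)_k^*$ that fibers over $\Delta^k$. Suppose also that we have an excellent generating function $f$ defined over $M\times \Lambda^k_i$, compatible on faces and generating $\Lambda|_{\Lambda^k_i}$. We must extend $f$ to an excellent generating function over $M\times \Delta^k$ generating $\Lambda$; stabilizing first is allowed, since elements of $\GF$ are identified with their stabilizations. Since $(\Delta^k,\Lambda^k_i)$ is diffeomorphic, as a manifold with corners, to $(\Lambda^k_i\times[0,1], \Lambda^k_i\times\{0\})$ up to smoothing, it suffices to treat a parametric version of Theorem~\ref{gen fn homotopy lifting 1}. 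The parameter space is a compact manifold with corners $P$, and the Legendrian $\Lambda\subset J^1(M\times P\times[0,1])$ fibers over $P\times [0,1]$. The data is a generating function for $\Lambda|_{P\times\{0\}}$, and we must extend it over $P\times[0,1]$, relative to the already prescribed restriction near $\partial$-faces where needed. One caveat: the collar identification must be made compatible with the face structure of the horn, and I expect this to be routine but fiddly.

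For the parametric lifting itself I would follow the Chekanov/Chaperon–Théret argument. First, since $\Lambda$ fibers over $P\times[0,1]$ and each slice is a closed Legendrian, there is a compactly supported contact isotopy $\phi_{s,t}$ of $J^1 M$, depending smoothly on $s\in P$, with $\Lambda_{s,t}=\phi_{s,t}(\Lambda_{s,0})$. This follows from the Legendrian isotopy extension theorem with parameters. Second, I would subdivide $[0,1]$ so that each step $\phi_{s,t_{j+1}}\circ\phi_{s,t_j}^{-1}$ is $C^1$-small, uniformly in $s$ by compactness of $P$. A $C^1$-small compactly supported contactomorphism of $J^1M$ admits a generating function $S_{s,t}(x,\xi)$ (of the form $x\cdot\xi$ plus a compactly supported correction) via the standard graph construction in $J^1(M\times M)$ or $T^*$ of a neighborhood of the diagonal. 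Third, I would apply Chekanov's composition formula: if $f(x,v)$ generates $\Lambda_{s,t_j}$, then
\[ g(x;v,y,\eta) = f(y,v) + S(x,\eta) - y\cdot\eta \]
(suitably written in local or trivialized coordinates) generates the image. After a fiberwise change of variables, the added variables contribute a quadratic form of signature $(m,m)$ outside a compact set. Hence $g$ is fiberwise tame by Lemma~\ref{lem:tamesum} and Proposition~\ref{prop:tamedeformation}, and $g$ is stably equivalent to a function of the form $\sigma^m f + (\text{proper-support perturbation})$. Excellence is preserved because the critical locus of $g$ maps bijectively onto the critical locus of $f$ composed with the contactomorphism.

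The main obstacle is making this compatible with the prescribed data on the horn. The composition formula changes the fiber dimension, so we must check two things. First, at $t=t_j$ the new family restricts to $\sigma^m f$ exactly, up to a fibered diffeomorphism isotopic to the identity. Second, on faces where $f$ was already given over all of $[0,1]$, the construction must agree with it. For the first point I would use the Moser argument of Proposition~\ref{prop:pi_0=equiv}, run parametrically: the path of generating functions for a fixed Legendrian is trivialized by a fibered isotopy, and by stabilization and the Alexander trick together with the $\G(M)\simeq\Map(M,O)$ computation, the resulting fibered diffeomorphism can be connected to the identity. This lets us glue the pieces along $t=t_j$ and along existing faces using a collar in which we interpolate via $f_t\circ\varphi_t^{-1}$. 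For the second point I would instead lift relative to a collar of the given faces. There the given $f$ already generates $\Lambda$, so the Moser/Hadamard argument identifies the constructed lift with the given one up to fibered isotopy, and we can splice. Once the Kan property holds, the ``in particular'' follows: it is fibration over a contractible $1$-simplex path in $\LL(M)^*$, so restriction to $t=0$ is a homotopy equivalence between fibers.
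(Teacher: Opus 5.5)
Your first paragraph is the right reduction, and it is essentially all the paper relies on. The paper gives no proof of this statement beyond attributing it to Chekanov and Chaperon--Th\'eret. The point is that a $P$-family of Legendrians (resp.\ generating functions) over $M$ is literally one Legendrian in $J^1(M\times P)$ (resp.\ one generating function over $M\times P$). So the ``parametric version'' you need is just Theorem~\ref{gen fn homotopy lifting 1} applied with base $M\times P$; your family of contact isotopies $\phi_{s,t}$ assembles into a Legendrian isotopy in $J^1(M\times P)$.

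Note also that once the horn is collared onto $P\times\{0\}$, every prescribed face lies in $P\times\{0\}$. There are therefore no further relative constraints, and your ``second point'' is moot. That is fortunate, because the argument you give for it is not justified. The Moser/Hadamard argument of Proposition~\ref{prop:pi_0=equiv} only trivializes a \emph{path} of generating functions for a fixed Legendrian. It does not show that two lifts of the same isotopy which merely agree at $t=0$ are fibered-isotopic; that uniqueness of lifts is itself a consequence of the fibration property you are proving.

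The genuine gap is in your re-derivation of the persistence theorem. A generating function $S(x,\eta)$ and the composite $f(y,v)+S(x,\eta)-y\cdot\eta$ only describe contactomorphisms of $J^1M$ that commute with $z$-translation. These are the lifts $(q,z)\mapsto(\psi(q),z+F(q))$ of exact symplectomorphisms $\psi$ of $T^*M$, which is exactly what the critical-point equations of your $g$ produce. Such maps carry Reeb chords to Reeb chords of the same length. Hence no subdivision of $[0,1]$ lets them realize a Legendrian isotopy that changes a chord length, e.g.\ rescaling a flying saucer, or the isotopy $\Lambda\pm 2t^{3/2}$ of the doubling construction.

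A $C^1$-small contactomorphism needs a generating function that depends on $z$ as well. The composite is then only implicitly defined: you solve $z=f(y,v)+S(x,\eta,z)-y\cdot\eta$ for $z$ by the implicit function theorem. Equivalently, compose the homogeneous generating function $\lambda(f(y,v)-w)$ on $T^*(M\times\R)$ with that of the homogeneous lift, then dehomogenize. This is where the contact, rather than symplectic, nature of the problem enters.

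Two further steps are also unsupported as written:
\begin{itemize}
\item Fiberwise tameness of the composite does not follow from Lemma~\ref{lem:tamesum} and Proposition~\ref{prop:tamedeformation}. The latter presupposes tameness. Moreover, the compactly supported correction to $x\cdot\eta$ is supported on a set $\{\eta\in K\}$, which is not proper over $M$ in the fiber variables $(v,y,\eta)$.
\item For general $M$ the new variable $y$ must be made to range over a vector space, e.g.\ via an embedding $M\subset\R^N$, to land in the paper's $\GF$, which uses trivial $\R^n$-bundles.
\end{itemize}
The cleanest repair is to drop the re-derivation entirely and invoke the cited theorem over the base $M\times P$.
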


The map $\GF(M)^* \to \LL(M)^*$ is typically not surjective: its image consists of Legendrian submanifolds
that admit tame generating functions, and e.g. any stabilized Legendrian does not.
The fiber over a given $\Lambda\subset J^1 M$
is $\GF_\Lambda(M)^*$, its homotopy type is thus invariant under Legendrian isotopy
of $\Lambda$.

\begin{rem}
Generating functions in general do not extend over Legendrian cobordisms that are not induced by a Legendrian isotopy
and hence $\GF(M)\to \LL(M)$ is not a fibration.
\end{rem}

Let $M$ be a manifold and $\Lambda$ a Legendrian submanifold of $J^1(M)$ that properly maps to $M$.

\begin{dfn}
We define the set $\mu\GF(M)_0^*$ to be the quotient of the set of pairs $(f,\Sigma)$ where $f\in \GF(M)_0^*$,
$\Sigma \subset \Sigma_f$ is a proper codimension zero submanifold, where we identify two pairs $(f,\Sigma)$ and $(f',\Sigma')$
if $\Sigma=\Sigma'$, $i_f(\Sigma)=i_{f'}(\Sigma')$ and $f=f'$ near $\Sigma$.

We denote $\mu\GF_\Lambda(M)_0^*$ the subset of $\mu\GF(M)_0^*$ consisting of pairs $(f,\Sigma)$ such that $i_f(\Sigma)=\Lambda$.
\end{dfn}

\begin{dfn}\label{dfn:muGF}
We define the simplicial set $\mu \GF(M)^*_\bullet$ whose $k$-simplices
are elements $(f,\Sigma)$ of $\mu\GF(M\times \Delta^k)_0^*$ such that $\Sigma$ fibers over $\Delta^k$
and for each $s\in \Delta^k$, $i_{f_s} : \Sigma_s \to J^1(M)$ is a Legendrian embedding,
where we identify $(f,\Sigma)$ and $(f',\Sigma')$ if they coincide (stably) in a neighborhood of
$\Sigma=\Sigma'$.

We denote $\mu \GF_\Lambda(M)^*_\bullet$ the simplicial subset obtained by
restricting the set of $k$-simplices to $\mu\GF_\Lambda(M\times\Delta^k)_0^*$.
\end{dfn}

Again we can check that these simplicial sets are Kan complexes. The same proof as for Proposition~\ref{prop:pi_0=equiv}
gives the following.

\begin{prop}
Two elements $(f_0,\Sigma_0),(f_1,\Sigma_1)\in \mu\GF_\Lambda(M)^*_0$ are semi-locally stably equivalent if and only if they
lie in the same connected component of $\mu \GF_\Lambda(M)^*$.
\end{prop}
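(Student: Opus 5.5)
The plan is to mimic the proof of Proposition~\ref{prop:pi_0=equiv}, now working with germs along the singular sets. There are two implications to prove.

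\textbf{Connected component implies stable semi-local equivalence.} Since $\mu\GF_\Lambda(M)^*_\bullet$ is a Kan complex, $(f_0,\Sigma_0)$ and $(f_1,\Sigma_1)$ lie in the same component if and only if they are joined by an edge. Such an edge is a pair $(f,\Sigma)$ over $M\times[0,1]$, where $\Sigma$ fibers over $[0,1]$ and each $i_{f_t}(\Sigma_t)=\Lambda$. Given an edge, I would proceed in three steps.

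\begin{enumerate}
\item The subsets $\Sigma_t=i_{f_t}^{-1}(\Lambda)\cap\Sigma$ move by a fibered isotopy, namely $i_{f_t}^{-1}\circ i_{f_0}$. This isotopy is compactly supported over compact subsets of $M$, because $\Sigma\to M\times[0,1]$ is proper.
\item Extend this isotopy to a fibered isotopy of $M\times\R^n$. After pulling back by it, we may assume $\Sigma_t=\Sigma_0$ for all $t$.
\item Apply Moser's trick, but only near $\Sigma_0$. We solve $\frac{df_t}{dt}+a_t\frac{\partial f_t}{\partial v}=0$ on a neighborhood of $\Sigma_0\times[0,1]$. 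Two facts make this possible:
\begin{itemize}
\item $\frac{df_t}{dt}$ vanishes on $\Sigma_0$, since every $f_t$ generates the same $\Lambda$ there. Indeed, $f_t|_{\Sigma_0}$ is the $z$-coordinate of $i_{f_t}$, which is constant in $t$.
\item $\frac{\partial f_t}{\partial v}$ cuts out $\Sigma_0$ transversally.
\end{itemize}
Hadamard's division lemma then gives a smooth $a_t$ near $\Sigma_0$. Cutting $a_t$ off by a bump function produces a globally defined fibered vector field, whose flow $\varphi_t$ exists for all time because it is supported in a region proper over $M$.
\end{enumerate}

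The flow satisfies $f_t\circ\varphi_t=f_0$ near $\Sigma_0$, and $\varphi_t(\Sigma_0)=\Sigma_t$. Hence $\varphi_1$ realizes a semi-local equivalence in the sense of Definition~\ref{dfn:semilocalequiv}. No tameness-at-infinity argument is needed here, unlike in Proposition~\ref{prop:pi_0=equiv}.

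\textbf{Stable semi-local equivalence implies connected component.} Suppose that, after stabilization, there is a fibered diffeomorphism $\psi$ with $\psi(\Sigma_0)=\Sigma_1$ and $f_1\circ\psi=f_0$ near $\Sigma_0$.

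\begin{enumerate}
\item Translate fiberwise so that $\psi(x,0)=(x,0)$.
\item Use the Alexander trick $\psi_t(x,v)=\psi(x,tv)/t$ to connect $\psi$ to its fiberwise linearization.
\item Stabilize by a family $B_x$ preserving $h^k$, so that the linear part becomes homotopic to the identity through fibered diffeomorphisms. This gives a path $\psi_t$ in $\G(M)$ from $\id$ to $\psi\oplus B$.
\item Set $f_t=(\sigma^kf_0)\circ\psi_t^{-1}$ and $\Sigma_t=\psi_t(\Sigma_0)$.
\end{enumerate}

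Then each $(f_t,\Sigma_t)$ generates $\Lambda$, and $f_t$ is tame because $f_0$ is. The endpoint $f_1$ agrees with the stabilization of $f_1$ near $\Sigma_1$, so it represents the same element of $\mu\GF_\Lambda(M)^*_0$, since those elements are germs. The path is therefore an edge in $\mu\GF_\Lambda(M)^*$ joining the two points.

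\textbf{Expected obstacles.} The main point needing care is the extension step in the first implication, at Step 2: extending the isotopy of $\Sigma_t$ to an ambient fibered isotopy. This is done by the isotopy extension theorem applied fiberwise to the proper family $\Sigma\to M\times[0,1]$. The extension must be kept supported near $\Sigma$, so that the pullback remains a generating function and stays in the same equivalence class.

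A secondary subtlety is that the identification of elements is up to stabilization. One should therefore check that stabilization commutes with all the constructions above, which holds because $\Sigma_{\sigma^k f}=\Sigma_f\times\{0\}$.
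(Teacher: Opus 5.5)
Your proposal is correct and follows the route the paper intends. The paper proves this only by saying that ``the same proof as for Proposition~\ref{prop:pi_0=equiv}'' applies: Moser's trick with Hadamard division after straightening the singular sets for one direction, and the Alexander trick plus an $h^k$-preserving stabilization $B_x$ for the other; your remark that the tameness-at-infinity step (Proposition~\ref{prop:tamedeformation}) can be dropped, because semi-local equivalence only requires $f_1\circ\psi=f_0$ near $\Sigma_0$, is the one adaptation needed.
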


There is by construction a (simplicial) map
\[\mu\GF(M)^* \to \LL(M)^*\]
and the homotopy lifting property also holds in the semi-local case: this map is a Kan fibration.

The homotopy type of $\mu \GF_\Lambda(M)^*$ is invariant under Legendrian isotopy of $\Lambda$, and in fact
is completely determined by the following (enhanced version of) the theorem of Giroux and Latour.

\begin{thm}[Giroux-Latour]\label{thm:spacelevelgiroux-latour}
Let $\Lambda$ be a closed Legendrian submanifold of $J^1(M)$. The space $\mu GF_\Lambda(M)^*$
is non-empty if and only if the stable Gauss map $\Lambda \to U/O$ is null-homotopic. If non-empty,
$\mu GF_\Lambda(M)^*$ is a torsor over the space $\Map(L, \Z\times BO)$.
\end{thm}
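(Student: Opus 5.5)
We prove Theorem~\ref{thm:spacelevelgiroux-latour} by comparing semi-local generating functions with fiberwise quadratic data along $\Lambda$. The main idea is that an element of $\mu\GF_\Lambda(M)^*$ is, up to contractible choices, a germ of a function along $\Sigma\cong\Lambda$ that is fiberwise Morse--Bott in the normal directions. Such a germ is determined by two pieces of data: the Lagrangian lift of the Gauss map obtained from the graph of $df$, and the fiberwise Hessian.

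\textbf{Step 1: the map to lifts of the Gauss map.} We build a map from $\mu\GF_\Lambda(M)^*$ to the space of null-homotopies of $\gamma_\Lambda:\Lambda\to U/O$. At a point of $\Sigma$, the tangent space to the graph of $df$ reduces, under the linear symplectic reduction by the fiber directions, to $T\Lambda$. This reduction is expressed as a Lagrangian in a stabilized space transverse to a fixed vertical polarization. Lagrangians transverse to a fixed one form a contractible space of symmetric forms, so this gives a path in $U/O$ from $\gamma_\Lambda$ to the constant map, i.e.\ a null-homotopy. The construction is compatible with stabilization by $h$ and with fibered diffeomorphisms, and it works simplicially over $\Delta^k$, so it defines a simplicial map. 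In particular, non-emptiness of $\mu\GF_\Lambda(M)^*$ forces $\gamma_\Lambda$ to be null-homotopic.

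\textbf{Step 2: the reverse direction.} Given a null-homotopy of $\gamma_\Lambda$, we produce a semi-local generating function. Locally, Hörmander's theorem gives a generating function near any point of $\Lambda$. The null-homotopy supplies a global choice of the linear data: a stable vector bundle $V$ over $\Lambda$ together with a symmetric form whose reduction yields $T\Lambda$. We realize $V$ as a subbundle of a trivial $\R^n$ after stabilization, and interpolate the local generating functions using the fact that the fiberwise diffeomorphism group is $\Map(-,O)$, so that gluing obstructions vanish once the linear data agree. Run parametrically over $\Delta^k$, this same argument gives the homotopy-lifting property of the map in Step~1.

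\textbf{Step 3: the torsor structure.} The group $\Map(\Lambda,\Z\times BO)\simeq\Omega\Map(\Lambda,U/O)$ acts on $\mu\GF_\Lambda(M)^*$ by fiberwise stabilization with a nondegenerate quadratic form $q$ on $\Sigma$, as described after Theorem~\ref{thm:girouxlatour}. It acts compatibly on the space of null-homotopies by loop concatenation. The space of null-homotopies of $\gamma_\Lambda$, when non-empty, is a torsor over $\Omega\Map(\Lambda,U/O)$. So it suffices to show that the map of Step~1 is a weak equivalence.

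\textbf{Step 4: the main obstacle, computing the fibers.} We must show the homotopy fiber of the Step~1 map is contractible. By a fibered Morse lemma with parameters, any germ along $\Sigma$ is fiberwise equivalent to $f|_\Sigma+q_x(w)$ on a tubular neighborhood, where $w$ is a normal coordinate. The space of such normal forms, modulo fibered diffeomorphisms and stabilization, is computed as follows. Positive-definite forms modulo the orthogonal action are contractible. The remaining data is the classifying map of the negative eigenbundle together with the signature, i.e.\ a point of $\Map(\Lambda,\Z\times BO)$, matching the loop-space data on the other side. We would make this precise through a comparison of fibrations over $\LL(M)^*$, using that both sides are Kan fibrations invariant under Legendrian isotopy. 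The delicate point is checking that stabilization by $h$ realizes the Bott equivalence $\Omega(U/O)\simeq\Z\times BO$ compatibly with the Step~1 map; we would verify this first for $\Lambda$ the zero section and then transport it via the local normal form.
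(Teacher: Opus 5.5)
Your outline (Gauss-map lift from $T(\mathrm{graph}\,df)$, equivariance for stabilization by fiberwise quadratic forms, reduction to showing the comparison map is an equivalence) has the right shape. But Step~4, which carries all the content, rests on a normal form that fails exactly where the theorem is hard.

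Since $T\Sigma=\ker d(\partial_v f)$, the projection $\Sigma\to M$ fails to be immersive precisely where the fiber Hessian $\partial_v^2 f$ degenerates, i.e.\ along the caustic of $\Lambda\to M$. There $T\Sigma$ contains vertical vectors, so there is no fibered tubular neighborhood with a normal fiber coordinate $w$. The germ is then not fibered-equivalent to $f|_\Sigma+q_x(w)$ with $q_x$ non-degenerate: for the doubling function $D(t,w)=w^3-3tw$, the number of fiberwise critical points jumps from $0$ to $2$ across $t=0$. So your opening description of a germ as ``fiberwise Morse--Bott in the normal directions'' is also wrong there. Moreover, the index of the fiber Hessian jumps across the caustic: for $D$ it is $0$ on one half of $\Sigma_D$ and $1$ on the other. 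Hence ``signature plus negative eigenbundle'' does not define a map $\Lambda\to\Z\times BO$ at all, only a map on the complement of the caustic. Caustics cannot be isotoped away in general: over $M=\R^m$ with $m\geq 1$, every nonempty closed $\Lambda$ has one. Checking the zero section and ``transporting via the local normal form'' therefore misses the essential case.

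The missing ingredient is H\"ormander's local uniqueness theorem for generating families. It says that two non-degenerate phase functions for the same Legendrian germ, with the same number of fiber variables and the same signature of the fiber Hessian at the base point (degenerate or not), are fibered-equivalent near that point. You need it with parameters. Then the space of stabilized germs at a point of $\Lambda$ is an orbit of the group of germs of fibered diffeomorphisms, and can be identified, through the Gauss-map lift, with the local space of null-homotopies $\simeq\Z\times BO$. Only then does a local-to-global argument over $\Lambda$ give both existence and contractibility of the fibers.

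Step~2 silently depends on this as well. Interpolating local generating functions with a partition of unity does not preserve the generated Legendrian unless they already agree near $\Sigma$. And $\G(M)\simeq\Map(M,O)$ concerns global fibered diffeomorphisms of $M\times\R^n$, not equivalences between two germs along $\Sigma$.

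For comparison, the paper gives no independent argument. It cites Giroux and Latour and treats the space-level statement as the classical classification applied to families over $M\times\Delta^k$, the same promotion it spells out for Jin's theorem on the sheaf side. Once the parametric local uniqueness is supplied, your route would be a self-contained alternative; as written, it is not a proof.
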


There are natural maps from $\GF$ to $\mu\GF$. We can check that
the maps $\GF(M)^* \to \mu\GF(M)^*$ and $\GF_\Lambda(M)^*\to \mu \GF_\Lambda(M)^*$ are Kan fibrations.

We end this section with the following result which is a good sanity check for the definitions we use for spaces of generating functions.

\begin{prop}\label{prop:emptyleg}
Let $M$ be a manifold and $\Lambda$ the empty Legendrian submanifold of $J^1(M)$. Then $\GF_\Lambda(M)^*$ is contractible.
\end{prop}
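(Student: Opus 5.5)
We must show $\GF_\Lambda(M)^*$ is contractible when $\Lambda=\emptyset$. Its $k$-simplices are fiberwise tame functions $f:M\times\Delta^k\times\R^n\to\R$, up to stabilization, with no fiberwise critical points. The plan is to prove that the Kan complex is nonempty and that every map $\partial\Delta^k\to\GF_\emptyset(M)^*$ extends over $\Delta^k$. Nonemptiness is immediate: the linear function $f(x,v)=v_1$ (with $n$ even) is tame, in the "linear at infinity" example, and has $\Sigma_f=\emptyset$.

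For the extension problem, fix a $k$-simplex boundary datum, i.e.\ $f$ defined near $M\times\partial\Delta^k\times\R^n$ with $\partial f/\partial v\neq 0$ everywhere. After stabilizing, replace $\R^n$ by $\R^n\times\R^{2}$ and $f$ by $f+h$. The key idea is to deform every critical-point-free tame function to the standard linear one $g(x,v,w)=w$ through critical-point-free tame functions, canonically enough to work in families. I would use the straight-line path
\[f_t=(1-t)f+t\,\ell,\]
with $\ell$ a linear coordinate chosen so that $f_t$ has no fiberwise critical points. This cannot work naively.

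The better route is a "swindle" by adding a new variable. Consider $F(x,s,v,w)=\rho(s)f(x,v)+w\,\phi(s)$ type interpolations, where the new variable $w$ comes from a stabilization. The obstruction is that the stabilization uses the quadratic form $h$ rather than a linear function, so I must first show that $f+h(u,w)$ is equivalent, through a path of critical-point-free tame functions, to a function linear in an extra coordinate. This is plausible because $f$ has no critical points, so $f+uw$ has critical points only where $w=0$, $u=0$, and $df=0$, i.e.\ none. On the region $|f|$ large, tameness gives a fibration structure (Proposition~\ref{prop:tamedeformation}), and we can flow along a gradient-like vector field of $f$ to trivialize $\R^n\cong \R^{n-1}\times\R$ with $f$ becoming the last coordinate. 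Concretely, Moser's trick as in Proposition~\ref{prop:pi_0=equiv}, with $\Sigma=\emptyset$ so that the Hadamard division step is vacuous, shows that any path of critical-point-free tame functions is induced by a fibered isotopy. Moreover, a critical-point-free tame function is fiberwise diffeomorphic to $v_1$: the sublevel and superlevel fibration conditions in the definition of tameness, with $K$ contributing nothing since there are no critical points, make $f:M\times\R^n\to M\times\R$ a fiber bundle with fiber $F\simeq\R^{n-1}$ by contractibility of the total space fibers. After stabilization, h-cobordism-type arguments, or simply the $\R^2$ factor, give $F\cong\R^{n-1}$.

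Thus $\GF_\emptyset(M)^*$ is identified with the homogeneous space $\G(M)/\mathrm{Stab}(v_1)$. Here $\mathrm{Stab}(v_1)$ consists of fibered diffeomorphisms of $M\times\R^{n-1}\times\R$ preserving the last coordinate, i.e.\ families of diffeomorphisms of $\R^{n-1}$ parametrized by $M\times\R$. By the Alexander trick, as in the proposition $\G(M)\simeq\Map(M,O)$, this stabilizer is $\simeq\Map(M,O)$ as well. The inclusion $\mathrm{Stab}\to\G(M)$ corresponds to $O_{n-1}\to O_n$, which stably becomes an equivalence, so the quotient is contractible.

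The main obstacle is making the identification with $v_1$ work in families over $\Delta^k$ relative to the boundary. Specifically, one must show that the fiber of the level-set bundle is stably $\R^{n-1}$ uniformly, and that the orbit map is a Kan fibration. I would handle this by proving the relative version directly with Moser's trick on $M\times\Delta^k$.
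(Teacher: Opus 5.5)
Your final argument is the paper's proof: with no fiberwise critical points $(p,f)$ is a fiber bundle, and one stabilization makes $f$ fiberwise equivalent to the projection $M\times\R^{n-1}\times\R\to\R$. Hence $\G(M)$ acts transitively with stabilizer the fibered diffeomorphisms preserving the last coordinate, which via the Alexander trick corresponds to $O(n-1)\subset O(n)$ and is therefore stably all of $\G(M)$. The opening detours (straight-line interpolation, the ``swindle'') can simply be deleted. The point you flag as the main obstacle---that the orbit map is a fibration, via a parametric Moser argument with the Hadamard step vacuous---is one the paper also relies on implicitly.
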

\begin{proof}
The group $\G(M)$ acts transitively on $\GF_\Lambda(M)^*$. Indeed for any function $f:M\times \R^n\to \R$
generating the empty Legendrian, $f$ is fiberwise tame without any fiberwise critical points, so $(p,f):M\times \R^n \to M \times \R$ is a fibration. Its fibers are manifolds which become diffeomorphic to $\R^n$ after taking a product by $\R$, and hence after stabilization they are diffeomorphic to
$\R^{n-1}$ and we obtain that $f$ is stably equivalent to the projection $M\times \R^{n-1}\times \R \to \R$.

The stabilizer consists of fibered diffeomorphisms of $M\times \R^{n-1} \times \R$
which preserve the projection to $\R$. This subgroup is stably homotopy equivalent to the whole $\G(M)$ as the inclusion $O(n-1)\to O(n)$
becomes an equivalence in the colimit $n\to \infty$. Hence we obtain the contractibility of $\GF_\Lambda(M)^*$.
\end{proof}

\section{Sheaves}\label{sec:Sh}
\subsection{Recollections on sheaves}
For a topological space $X$ and stable presentable symmetric monoidal category $\mathcal{C}$, we write $\Sh(X, \mathcal{C})$ for sheaves in $X$ with coefficients in $\mathcal{C}$.
The basic operations of sheaf theory function well in this context \cite{Volpe}, as do, if $\mathcal{C}$ is compactly generated, the microlocal considerations \cite{Robalo-Schapira}.   When $\mathcal{C}$ is clear from context or fixed and irrelevant, we omit it from the notation.

For the present article, the main relevant case is when $\mathcal{C}$ is the $(\infty-)$category of spectra $Sp$.  There are various explicit descriptions of this category; modern foundations are given via an abstract characterization in \cite[Chap. 1]{Lurie-HA} (making precise the notion that a spectrum is a generalized homology theory).  Let us recall some main properties.  There is a map from the category of pointed spaces $\mathcal{S}$, sometimes denoted $\Sigma^\infty: \mathcal{S} \to Sp$.  We omit this notation: if we write a pointed space when a spectrum is expected, we mean to apply this map; for an unpointed space, we mean to first add a disjoint basepoint, and then apply this map.  The shift is given by $X[-1] := S^1 \wedge X$; if we write $\S$ for the zero sphere (with some chosen basepoint), then $\S[-n] = S^n$.
For $X \to Y$, the mapping cone provides an exact triangle $X \to Y \to \mathrm{Cone}(X \to Y) \xrightarrow{[-1]}$; in particular, when $X \subset Y$, then $X \to Y \to Y/X \xrightarrow{[-1]}$.  When $X \subset Y \subset Z$, one has (from the octahedron) $Y/X \to Z/X \to Z/Y \xrightarrow{[-1]}$.
There is a symmetric monoidal structure which descends from the smash product of pointed spaces, and has unit given by the zero-sphere $\S$; and there is an internal Hom.  Whenever we write Hom of spectra, we mean the internal Hom.

For the constant sheaf of sphere spectra $\S_X$ on a space $X$, one has, essentially by definition, $\Gamma(X, \S_X) = \Hom(X, \S)$.  More generally, for a map $f: X \to Y$ and open set $U \subset Y$,
one has
$(f_* \S_X)(U) = \Hom(f^{-1}(U), \S)$.
This dualization is the price of using sheaves rather than cosheaves, but loses no information if $X$ is a finite CW complex.  Indeed, these are well known to be dualizable,
$X \xrightarrow{\sim} \Hom(\Hom(X, \S), \S)$, as follows from the fact that, by definition, a finite CW complex admits a finite filtration whose associated graded pieces are pointed spheres.

We recall the notion of microsupport:  for a sheaf $F \in \Sh(M)$, there is a conical co-isotropic locus $ss(F) \subset T^*M$ measuring the failure of propagation of sections, as developped in \cite{Kashiwara-Schapira}.  For conic subsets $\Lambda \subset T^*M$, we write $\Sh_\Lambda(M)$ for the full subcategory of sheaves whose microsupport is contained in $\Lambda$.  Similarly, for $\Lambda \subset S^*M$, we write $\Sh_\Lambda(M)$ for the sheaves $F$ such that $ss(F) \cap S^*M \subset \Lambda$.

The following crucial result is the sheaf counterpart of Theorem~\ref{gen fn homotopy lifting 1}:

\begin{thm}[Guillermou-Kashiwara-Schapira \cite{GKS}] \label{sheaf homotopy lifting}
Let $\Lambda\subset S^*(M\times [0,1])$ be a compact Legendrian cobordism that is the trace of
a Legendrian isotopy $(\Lambda_t)_{t\in [0,1]}$.

Let $F_0\in \Sh_{\Lambda_0}(M;\k)$. Then there is $F\in \Sh_\Lambda(M\times [0,1])$ and an isomorphism $F|_{M\times \{0\}} \simeq F_0$.
Moreover, if we have $F,F'\in \Sh_\Lambda(M\times [0,1])$ and an isomorphism $u_0 : F|_{M\times \{0\}} \to F'|_{M\times \{0\}}$
then the isomorphism extends over $M\times [0,1]$.
\end{thm}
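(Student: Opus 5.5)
The statement just made is the theorem of Guillermou--Kashiwara--Schapira, and the plan is to deduce it from their sheaf quantization of Hamiltonian isotopies. Existence comes from a kernel, and the extension of isomorphisms comes from an invertibility/propagation argument along $[0,1]$.

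\textbf{Step 1: reduce to a homogeneous isotopy.} Since $\Lambda_t$ is a compactly supported Legendrian isotopy in $S^*M$, we extend it to a compactly supported contact isotopy $\varphi_t$ of $S^*M$. Equivalently, we obtain a homogeneous Hamiltonian isotopy $\Phi_t$ of $\dot T^*M$ with $\Phi_t(\Lambda_0)=\Lambda_t$, generated by a homogeneous Hamiltonian $H_t$. We let $\Lambda_\Phi\subset \dot T^*(M\times M\times I)$ be the conic Lagrangian given by the graph of $\Phi$, with the $\tau$-coordinate equal to $-H_t$.

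\textbf{Step 2: construct the GKS kernel.} This is the main obstacle. We want $K\in \Sh(M\times M\times I)$ with $ss(K)\subset \Lambda_\Phi\cup 0$ and $K|_{t=0}\simeq \k_\Delta$. Moreover $K$ should be invertible under convolution, with inverse built from $\Phi^{-1}$.

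1. For $\Phi$ close to the identity on a short time interval, we write $K$ explicitly as a constant sheaf on a closed set $\{(x,y,t): d(x,y)\le \text{small function of }t\}$ in local charts. Its microsupport is computed directly.
2. We then compose such small-time kernels by convolution to cover $[0,1]$.
3. Uniqueness of $K$ follows from the fact that $\Hom(K,K')$ is locally constant in $t$. This uses the microlocal propagation estimate $ss(\mathcal{H}om(K,K'))\subset$ a set with $\tau$-component controlled, together with the non-characteristic deformation lemma of Kashiwara--Schapira applied to the projection to $I$.

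\textbf{Step 3: existence.} We set $F:=K\circ F_0$, convolving over the middle copy of $M$. The microsupport estimates for proper pushforward and tensor products give $ss(F)\subset \Lambda$, since $\Phi_t(\Lambda_0)=\Lambda_t$. Restricting to $t=0$ gives $F|_{M\times\{0\}}\simeq \k_\Delta\circ F_0\simeq F_0$.

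\textbf{Step 4: extension of isomorphisms.} Let $F,F'\in \Sh_\Lambda(M\times I)$. We consider $G=\mathcal{H}om(F,F')$, or better its pushforward $\pi_*G$ to $I$, where $\pi: M\times I\to I$ is the projection.

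1. The microsupport of $G$ is contained in $ss(F')\hat{+}(-ss(F))$. Because $\Lambda$ is the trace of an isotopy, this lies in $\{\tau\ \text{determined by}\ H\}$ and contains no nonzero covector of the form $(0,\tau)$ with $\tau\ne 0$.
2. Since $\pi$ is proper on the relevant supports (compact support of the isotopy), $\pi_*G$ has microsupport in the zero section, so it is locally constant on $I$.
3. Hence $\Gamma(M\times I;G)\to \Gamma(M\times\{0\};G|_0)$ is an equivalence. Here we also need $F|_0$ and $F'|_0$ to compute $G|_0$; this holds by non-characteristicity of $M\times\{0\}$.
4. The isomorphism $u_0$ therefore lifts to some $u:F\to F'$. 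The cone of $u$ lies in $\Sh_\Lambda$ and vanishes at $t=0$, so by the same propagation it vanishes everywhere, and $u$ is an isomorphism.

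Alternatively, Step 4 follows from Step 3 by writing $F\simeq K\circ F|_0$, using the uniqueness of $K$ in Step 2.

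For spectrum coefficients, the same arguments apply verbatim by Robalo--Schapira.
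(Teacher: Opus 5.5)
The paper gives no proof of this statement; it is quoted from GKS, and your outline (quantize the isotopy by a kernel $K$, set $F=K\circ F_0$, propagate morphisms along $I$) is the GKS strategy. Two steps fail as written, although both can be repaired.

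\emph{Step 2.1.} The short-time model there is wrong in general. A kernel $\k_{\{d(x,y)\le\rho(t)\}}$ quantizes a positive, metric-type flow such as a geodesic flow. For a general compactly supported $H_t$, however, there are points $x$ where $H_t(x,\cdot)$ vanishes on an open set of directions but not on all of them. There the front of $\Phi_t(S^*_xM)$ contains $x$ itself as the image of an open subset. Hence $\Lambda_{\Phi_t}$ is not the conormal of the boundary of any ball-like region around the diagonal. The existence of $K$ has to be imported from GKS, which is all the paper does.

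\emph{Step 4.2.} The properness claim is false, and Steps 2.3 and 4.4 rely on it implicitly. Compactness of $\Lambda$ only makes $F,F'$ locally constant off the compact set $\pi(\Lambda)$; it does not make them vanish there. In the paper's own use the base is $M\times\R$, and for instance $\s(y^2)=\S_{M\times[0,\infty)}[-1/2]$.

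Without some such control, non-characteristic propagation genuinely fails. Take the closed set $Z=\{t>1/2,\ x\ge 1/(t-1/2)\}\subset\R^2$. Then $\dot{SS}(\k_Z)$ contains no covector of the form $(0,\tau)$, yet $\k_Z$ vanishes on $\{t=0\}$ and not on $\{t=1\}$.

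One repair is a cutoff argument:
\begin{itemize}
\item Pick compact $C\subset\mathrm{int}\,C'\subset M$, with $C'$ a domain with smooth boundary, such that $\pi(\Lambda)\subset C\times I$.
\item On $(M\setminus C)\times I$ the sheaf $G=\mathcal{H}om(F,F')$ is locally constant, hence pulled back from $t=0$.
\item The sheaf $G_{C'\times I}$ has support proper over $I$. It only acquires covectors $(\xi,0)$ with $\xi\neq 0$ along $\partial C'\times I$, so your argument applies to it.
\item Glue the two pieces by Mayer--Vietoris.
\end{itemize}

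Alternatively, use your ``alternative'' route, which is the one GKS use. Let $q\colon M\times I\to M$ be the projection. Then $K^{-1}\circ_I F$ has microsupport in $T^*M\times 0_I$, hence equals $q^{-1}(F|_{M\times\{0\}})$ with no hypothesis on $\mathrm{supp}\,F$. Only the supports of $K^{\pm1}$ need to be proper over each factor, and the compact support of the isotopy gives this. Thus $F\simeq K\circ F|_{M\times\{0\}}$, and $u_0$ extends as $K\circ u_0$. Note that this uses the invertibility $K\circ K^{-1}\simeq\k_{\Delta\times I}$, not merely uniqueness of $K$.
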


To deal with Legendrian submanifolds of the jet bundle $J^1 M$, we embed $J^1 M$ as the open subset of $S^*(M\times \R)$
consisting of covectors which are positive on the $\R$-factor.
In this context it is often convenient, following Tamarkin \cite{Tamarkin, Guillermou-Schapira-about-Tamarkin}, to consider the category
$$\Sh^+(M):=\Sh(M\times \R)/ \Sh_{T^*M \times T^-\R}(M \times \R),$$ where $T^- \R$ is the locus of nonpositive covectors.  This quotient admits an adjoint embedding it as the full subcategory  of $\Sh(M\times \R)$ on objects with non-negative microsupport and vanishing stalks at $M \times -\infty$.

\subsection{Microsheaves}

Let us recall the notion of microlocalization.  Following \cite[Chap. 6]{Kashiwara-Schapira}, consider the following
presheaf of ($\mathcal{C}$-linear) categories on $T^*M$:
$$\mu \Sh^{pre}(U) := \Sh(M) / \{F\, | \, ss(F) \subset T^*M \setminus U\}.$$
We write $\mu \Sh$ for its sheafification.  For $\Lambda \subset T^*M$ or $\Lambda \subset S^*M$, one has the corresponding  subsheaf of full subcategories
$\mu \Sh_\Lambda$ on objects (micro)supported in $\Lambda$; this sheaf of categories is (the pushforward of a sheaf) supported on $\Lambda$.

It is immediate from the definition that there is a natural map
$\Sh_\Lambda(M) \to \mu \Sh_\Lambda(\Lambda)$.  This map is not generally surjective.

It is shown already in \cite{Kashiwara-Schapira} that at a smooth Lagrangian/Legendrian point $p \in \Lambda$, there is a non-canonical isomorphism of $(\mu \Sh_\Lambda)_p$ with the coefficient category $\mathcal{C}$.  Fixing such an isomorphism, the composition
$$\Sh_\Lambda(M) \to \mu \Sh_\Lambda(\Lambda) \to (\mu \Sh_\Lambda)_p \cong \mathcal{C}$$
is termed a microstalk functor at $p$.

\begin{dfn}
    A sheaf $F \in \Sh_\Lambda(M)$ or microsheaf $F \in \mu \Sh_\Lambda(\Lambda)$ is said to be {\em simple} at a smooth Lagrangian/Legendrian point $p \in \Lambda$ if a microstalk functor at $p$ carries $F$ to an invertible object of $\mathcal{C}$.  If $\Lambda$ is a smooth Lagrangian or Legendrian, an element of $\Sh_\Lambda(M)$ or $\mu \Sh_\Lambda(\Lambda)$ is said to be {\em simple} if it is simple at all points of $\Lambda$.
\end{dfn}

It follows from the above discussion that if $\Lambda$ is Lagrangian/Legendrian, the sheaf of categories  $\mu \Sh_\Lambda$ is a twist of the sheaf of categories of local systems.  Abstractly, possible twistings are classified by maps $\Lambda \to B\Pic(\mathcal{C})$. Let us write $\mu_\Lambda: \Lambda \to B\Pic(\mathcal{C})$ for the specific twisting giving rise to $\mu \Sh_\Lambda$.  In \cite{Kashiwara-Schapira}, the degree $0$ part of $\mu_\Lambda$ was shown to be given by the Maslov index.  When $\mathcal{C}$ is the category of modules over a (discrete) ring $R$, $\Pic(\mathcal{C})$ is truncated in degree 1, and Guillermou described $\mu_\Lambda$ in its entirety \cite{Guillermou}. In particular:

\begin{thm}[Guillermou \cite{Guillermou}]
Take $\mathcal{C} = \Z-mod$, and fix a closed Legendrian submanifold $\Lambda \subset S^*(M)$.
Then $\mu \Sh_\Lambda(\Lambda)$ contains a simple object iff the first Maslov class $\mu_1(\Lambda) \in H^1(\Lambda;\Z)$ and a relative Stiefel Whitney class
$rw_2(\Lambda) \in H^2(\Lambda;\Z/2)$ both vanish. These classes correspond to the composition
of the Gauss map $\Lambda\to U/O$ with the determinant $B(\Z\times BO) \to B(\Z\times B(\Z/2))$ under
the Bott periodicity isomorphism $U/O \simeq B(\Z\times BO)$.

Isomorphism classes of simple microlocal sheaves for $\Lambda$ form a torsor over
\[[\Lambda, \Z\times B(\Z/2)]\simeq H^0(\Lambda;\Z)\oplus H^1(\Lambda;\Z/2).\]
\end{thm}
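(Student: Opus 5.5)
The plan is to reduce the statement to Guillermou's description of the twisting $\mu_\Lambda:\Lambda\to B\Pic(\Z\text{-mod})$, together with the identification of $\Pic(\Z\text{-mod})$. The theorem is quoted from \cite{Guillermou}, so what I sketch is how one would recover it from the formalism just recalled.

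\emph{Step 1: identify the Picard space.} For $\mathcal{C}=\Z$-mod, the invertible objects are the shifts $\Z[n]$, and their automorphisms are $\Z^\times=\Z/2$. Higher homotopy vanishes because $\Z$ is discrete. Hence $\Pic(\mathcal{C})\simeq \Z\times B(\Z/2)$, and $B\Pic(\mathcal{C})\simeq B\Z\times B^2(\Z/2)$. A null-homotopy of $\mu_\Lambda$ is therefore a pair of conditions: the vanishing of a class in $H^1(\Lambda;\Z)$ and of a class in $H^2(\Lambda;\Z/2)$.

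\emph{Step 2: existence and the torsor.} By the discussion preceding the theorem, $\mu\Sh_\Lambda=\mu_\Lambda\times^{\Pic}\mathcal{C}$. A simple global object is the same thing as a global section of the associated $\Pic$-torsor, i.e.\ a trivialization of $\mu_\Lambda$. So simple objects exist if and only if $\mu_\Lambda$ is null-homotopic. When they exist, their isomorphism classes form a torsor over $\pi_0\Map(\Lambda,\Pic(\mathcal{C}))=[\Lambda,\Z\times B(\Z/2)]=H^0(\Lambda;\Z)\oplus H^1(\Lambda;\Z/2)$. The group acts by tensoring with a rank-one local system placed in a locally constant degree.

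\emph{Step 3: identify the two classes.} This is the main obstacle. I would first use the homomorphism $\mu:U/O\to B\Pic(\mathcal{C})$ of \cite{Nadler-Shende}, through which $\mu_\Lambda$ factors via the Gauss map, and show that it factors through the determinant/truncation $B(\Z\times BO)\to B(\Z\times B(\Z/2))$. For this, note that $\Pic(\Z\text{-mod})$ is a $1$-truncated grouplike $E_\infty$ space, so any $E_\infty$ map $\Z\times BO\to\Pic$ factors through the $1$-truncation $\Z\times B(\Z/2)$ of $\Z\times BO$. The map is then pinned down by its effect on $\pi_0$ and $\pi_1$, up to possible extension data in the $k$-invariant.

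To fix $\pi_0$, I would invoke Kashiwara--Schapira's result that the degree part is the Maslov index. This gives $\mu_1$. To fix $\pi_1$, I would compute the local model of a loop in $O$ with nontrivial determinant. Such a loop induces the sign automorphism $-1$ of the microstalk, coming from orientation reversal of the one-dimensional Morse-type local model. This identifies the second component with $w_1$ of the negative-eigenspace bundle, i.e.\ with the relative Stiefel--Whitney class $rw_2$ after delooping.

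The remaining delicate point is the $k$-invariant of $\Z\times B(\Z/2)$ as an $E_\infty$ space: the Koszul sign in the symmetric monoidal structure of $\Pic(\Z\text{-mod})$. It must match that of $\tau_{\le1}(\Z\times BO)$, and checking this is where I expect most of the care to be needed.
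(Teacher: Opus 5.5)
The paper does not prove this statement. It is quoted from Guillermou, and the only argument in the text is the framing you reproduce in Steps~1--2: twistings of $\mu\Sh_\Lambda$ are maps $\Lambda\to B\Pic(\mathcal{C})$, $\Pic$ of a discrete ring is $1$-truncated, and Kashiwara--Schapira identified the degree part with the Maslov class. Your proposal is correct in outline.

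Your Step~3 supplies Guillermou's input by computing the homomorphism $\mu\colon U/O\to B\Pic(\Z\text{-mod})$ on $\pi_1$ and $\pi_2$. Guillermou instead analyses the Kashiwara--Schapira stack directly, and his argument predates that homomorphism. Your route makes the statement a formal consequence of the $U/O\to B\Pic$ formalism; his is independent of it.

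Inside this paper there is an even shorter route: specialize Jin's theorem along $\S\to\Z$.
\begin{itemize}
\item The $J$-homomorphism is an isomorphism on $\pi_0,\pi_1$, since a reflection goes to a degree $-1$ map.
\item $\Pic(\Sp)\to\Pic(\Z\text{-mod})$ is an isomorphism on $\pi_0,\pi_1$, since $\pi_0(G)=\{\pm1\}=\Z^\times$.
\end{itemize}
Hence $\Omega\mu\colon\Z\times BO\to\Pic(\Z\text{-mod})$ is the $1$-truncation.

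Two corrections to Step~3.

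First, the class to test is the generator of $\pi_2(U/O)\cong\pi_1(\Z\times BO)\cong\pi_0(O)$. This is a loop in $BO$, concretely a loop of quadratic forms whose negative eigenbundle is a M\"obius bundle. It is not ``a loop in $O$ with nontrivial determinant'': a loop in $O$ has constant determinant. For the correct loop your sign $-1$ is right. You do need to grant that $\Omega\mu$ evaluated on a quadratic form $Q$ is the microstalk of $\s(Q)$; this is the compatibility of $\mu$ with $\s$ underlying \eqref{eq:mainsquare}.

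Second, the $k$-invariant is not a remaining difficulty. The spaces $\tau_{\le 2}(U/O)$ and $B\Pic(\Z\text{-mod})$ are connected and $2$-truncated. So a map inducing isomorphisms on $\pi_1$ and $\pi_2$ is an equivalence. The $E_\infty$ $k$-invariants (the Koszul sign on one side, $\eta$ on the other) are then forced to agree; there is nothing to check.

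In any case the theorem only concerns underlying spaces. The composite $\mu\circ\gamma_\Lambda$ is null if and only if $\Lambda\to\tau_{\le 2}(U/O)$ is null. As spaces, $\tau_{\le 2}(U/O)\simeq K(\Z,1)\times K(\Z/2,2)$, because $H^3(K(\Z,1);\Z/2)=0$. Its two components are $\mu_1$ and $rw_2$. The torsor statement needs only Step~2.
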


More generally, $\mu_\Lambda$ is determined by the universal case when $\mathcal{C}$ is the category of spectra, where it was calculated by Jin:

\begin{thm}[Jin \cite{Jin}]
A closed Legendrian submanifold $\Lambda \subset S^*(M)$ admits a microlocal sheaf over $\S$
if and only if the composition of the Gauss map $\Lambda\to U/O$ with the $J$-homomorphism
\[U/O\simeq B(\Z\times BO) \to B(\Z\times BG)\]
is null-homotopic.

Microlocal equivalence classes of microlocal sheaves for $\Lambda$ form a torsor over
\[[\Lambda, \Z\times BG].\]
\end{thm}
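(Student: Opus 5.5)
The plan is to first show that $\mu \Sh_\Lambda$ is a locally constant sheaf of categories on $\Lambda$ with fiber $\Sp$, and then to identify its classifying map. By the Kashiwara--Schapira local normal form, every smooth point of a Legendrian $\Lambda\subset S^*M$ has a neighbourhood which a contact transformation carries to the conormal of a smooth hypersurface. These contact transformations are quantized by Guillermou--Kashiwara--Schapira kernels, which are available in the spectral setting by \cite{Volpe, Robalo-Schapira}. This gives local equivalences $\mu\Sh_\Lambda|_V \simeq \mathrm{Loc}(V;\Sp)$. The transition functions are $\Sp$-linear autoequivalences of $\Sp$, that is, tensoring with invertible spectra. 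So $\mu\Sh_\Lambda$ is classified by a map
$$\mu_\Lambda : \Lambda \to B\mathrm{Aut}_{\Sp}(\Sp) = B\Pic(\Sp), \qquad \Pic(\Sp)\simeq \Z\times BGL_1(\S)\simeq \Z\times BG.$$
Granting the identification $\mu_\Lambda \simeq B J\circ\gamma_\Lambda$, both assertions are formal. A simple object of $\mu\Sh_\Lambda(\Lambda)$ is a global section that is everywhere invertible. Such a section is exactly a trivialization of the $\Pic(\Sp)$-torsor $\mu_\Lambda$, that is, a null-homotopy of $BJ\circ\gamma_\Lambda$. When one exists, the space of such sections is a torsor over $\Map(\Lambda,\Pic(\Sp))=\Map(\Lambda,\Z\times BG)$. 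Taking $\pi_0$ gives the torsor over $[\Lambda,\Z\times BG]$.

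The real content is the identification of $\mu_\Lambda$. I would get it by a universality argument, following \cite[Sec. 11]{Nadler-Shende}. The twist at a point depends only on the tangent Lagrangian relative to the vertical polarization, and it depends on it stably, because of compatibility with products with $T^*\R^k$ (the Thom isomorphism for conormals). This yields a map $\mu: U/O \to B\Pic(\Sp)$ through which $\mu_\Lambda$ factors as $\mu\circ\gamma_\Lambda$. Since the construction is compatible with direct sums of Lagrangian pairs, and this corresponds to external products of sheaves, $\mu$ is a map of $E_\infty$-spaces.

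By Bott periodicity $U/O\simeq B(\Z\times BO)$, so it suffices to identify the looped map $\Z\times BO \to \Z\times BG$ as an $E_\infty$-map with $J$. For this I would use families of quadratic forms. Take a family of nondegenerate quadratic forms $q_b$ on $\R^n$ over a base $B$, with negative eigenbundle $V^-$. The graph of $dq_b$ traces a loop of Lagrangians in $U/O$, and under Bott this loop corresponds to $(\mathrm{index}, V^-)$. The associated sheaf $(\pi\times\id)_*\S_{q\le z}$ has microstalk along the critical locus equal to the fiberwise local cohomotopy at the origin. That microstalk is the Thom spectrum, i.e. the fiberwise one-point compactification $S^{V^-}$, up to shift. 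This is precisely $J(V^-)$, with the $\Z$-factor given by the index.

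The step I expect to be the main obstacle is making this identification coherent, so that it holds as maps of spaces (indeed of infinite loop spaces) and not only on $\pi_0$. My first attempt would be to realize both sides on the universal example, namely the Grassmannian of negative subspaces with its tautological family of quadratic forms, where the microstalk computation is natural in $B$. I would then use that an $E_\infty$-map out of $\Z\times BO$ is determined by its restriction along the universal family, which reduces the comparison to the Thom-spectrum computation above.
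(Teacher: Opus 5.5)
The paper gives no proof of its own here. It quotes Jin, inside the framework recalled in the introduction and in Section~\ref{sec:Sh}: the Nadler--Shende homomorphism $\mu: U/O\to B\Pic(\mathcal{C})$, the identity $\mu\Sh_\Lambda=\mu_\Lambda\times^{\Pic(\mathcal{C})}\mathcal{C}$, and simple objects as trivializations of $\mu_\Lambda$. Your first two steps reproduce exactly this framework and are fine. They already prove the torsor assertion completely, because that only uses that $\mu\Sh_\Lambda$ is locally $\mathrm{Loc}(-;\Sp)$, not which map $\mu_\Lambda$ is. The existence criterion, however, rests entirely on $\mu\simeq BJ$ as maps $U/O\to B\Pic(\Sp)$, and the resolution you propose for that step does not work as stated.

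Because $U/O$ is connected, $\mu$ is the same datum as the $E_1$-map $\Omega\mu:\Z\times BO\to\Pic(\Sp)$. The underlying map of spaces of $\Omega\mu$ does not determine $\mu$ up to homotopy; for instance, decomposable cohomology classes of a classifying space loop to zero. Your quadratic-form computation, even done on the universal Grassmannians and natural in the base $B$, yields only a homotopy between the underlying maps $\coprod_n BO(n)\to\Pic(\Sp)$ of $\Omega\mu$ and of $J$.

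The principle you invoke, that an $E_\infty$-map out of $\Z\times BO$ is determined by its restriction along the universal family, is true only when that restriction is taken as an $E_\infty$-map out of the monoid $\coprod_n BO(n)$ (universal property of group completion). Comparing two such maps then requires a \emph{symmetric monoidal} natural equivalence between $V\mapsto$ (microstalk of the quadratic form with negative space $V$) and $V\mapsto S^{V}$. Concretely, you need a Thom--Sebastiani identification $\s(q_1\oplus q_2)\simeq \s(q_1)\star\s(q_2)$ (cf.\ Lemma~\ref{s and sum}) matched with the Thom isomorphism $S^{V\oplus W}\simeq S^V\wedge S^W$. This matching must hold coherently in isometries, associativity, symmetry and units. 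It must also be compatible with the $E_\infty$-structure you put on $\mu$ via products of Lagrangian pairs, transported through Bott periodicity as an equivalence of infinite loop spaces.

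This coherent multiplicative identification is the actual content of Jin's theorem, and nothing in your proposal supplies it. Without it you obtain $\Omega\mu\simeq J$ only as maps of spaces. That is not enough to conclude $\mu\circ\gamma_\Lambda\simeq BJ\circ\gamma_\Lambda$, so the null-homotopy criterion does not follow.

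A smaller slip: the microstalk of $(\pi\times\id)_*\S_{q\le z}$ is the cohomotopy of the pair $(D(V^-),S(V^-))$, i.e.\ $S^{-V^-}$, the inverse of $J(V^-)$, not $S^{V^-}$. This is harmless for the statement, since inversion is an automorphism of $\Z\times BG$, but it has to be tracked in the comparison.
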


\subsection{Spaces of sheaves}

Let us recall that we may form a topological space from an ordinary (not $\infty$) category, called its nerve, as follows: the objects become 0-simplices, the 1-morphisms become 1-simplices, compositions become 2-simplices, etc.  For $(\infty,1)$-categories (i.e. all categories we consider here), the description is simpler: such a category is by definition a simplicial set with certain properties; the nerve is just the underlying simplicial set.  Given an $(\infty, 1)$-category $\mathcal{C}$, we write $\mathcal{C}^*$ for the $(\infty,0)$-category obtained by forgetting all non-invertible morphisms, and understand it as the homotopy type of a space.

When working with categories of sheaves, microsheaves, etc., it is often convenient to use a more internal description.  For specificity let us give it for $\Sh(M)^*$: the  $n$-simplices are sheaves on $\Delta^n \times M$ whose microsupport is contained in $T^*_0(\Delta^n) \times T^*M$, which hence are locally constant in the $\Delta$ direction.  Thus 0-simplices are just sheaves on $M$; and from a 1-simplex, you can reconstruct a morphism by parallel transport in the $\Delta^1 = [0,1]$ direction.

This identification allows many theorems which are originally stated for e.g. isomorphism classes of sheaves to be promoted to assertions about the homotopy type $\Sh(M)^*$ by simply applying the original theorem as stated to all $M \times \Delta^n$.  (See e.g. \cite{li-nadler-shende} for  discussions of a similar nature.)
In particular, we have the following:

\begin{thm}\label{thm:spaceleveljin} (Space level version of \cite{Jin}.)
Let $\Lambda$ be a closed Legendrian submanifold of $J^1(M)$ and $\gamma_\Lambda:\Lambda\to U/O$ its stable Lagrangian Gauss map.
The space $\mu\Sh^+_\Lambda(M)^*$ is non-empty if and only if the composition $J\circ \gamma_\Lambda : L\to B(\Z\times BG)$ is null-homotopic.
If non-empty the space $\mu \Sh^+_\Lambda(M)^*$ is a torsor over the space $\Map(L,\Z\times BG)$.
\end{thm}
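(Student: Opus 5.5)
The plan is to deduce the space-level statement from Jin's theorem, which is stated for isomorphism classes. We apply it to families over $\Delta^k$ and then assemble the resulting pieces with the internal simplicial description of $\mu\Sh^+_\Lambda(M)^*$. The $k$-simplices of $\mu\Sh^+_\Lambda(M)^*$ are simple microsheaves on $\Lambda\times\Delta^k\subset J^1(M\times\Delta^k)$ whose microsupport in the $\Delta^k$-direction lies in the zero section. Equivalently, they are sections over $\Lambda\times\Delta^k$ of the sheaf of categories $\mu\Sh_{\Lambda\times\Delta^k}$ that are invertible at every point.

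First we identify the sheaf of categories. By the discussion preceding Jin's theorem, $\mu\Sh_\Lambda\simeq \mu_\Lambda\times^{\Pic(\Sp)}\Sp$, where $\mu_\Lambda=\mu\circ\gamma_\Lambda$. By Jin, $\mu$ is the delooped $J$-homomorphism $U/O\to B(\Z\times BG)=B\Pic(\Sp)$. The invertible objects of this twisted sheaf form a sheaf of spaces on $\Lambda$, namely the sheaf of sections of the $\Pic(\Sp)$-torsor $P\to\Lambda$ classified by $J\circ\gamma_\Lambda$. We use that invertible objects of $\Sp$ together with their isomorphisms form exactly the space $\Pic(\Sp)\simeq\Z\times BG$, i.e.\ $\Pic(\Sp)^*$.

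Next we pass to global sections. The space of invertible global sections is the space of sections $\Gamma(\Lambda,P)$. This space is nonempty if and only if $P$ is trivializable, that is, if and only if $J\circ\gamma_\Lambda$ is null-homotopic. When it is nonempty, choosing one section $s_0$ gives an equivalence $\Map(\Lambda,\Z\times BG)\to\Gamma(\Lambda,P)$, $g\mapsto g\cdot s_0$. This is the torsor structure, coming from the pointwise action of the group-like $E_\infty$-space $\Pic(\Sp)$ by tensoring microstalks with invertible spectra. For the $k$-simplices, the same argument on $\Lambda\times\Delta^k$, combined with homotopy invariance $\Lambda\times\Delta^k\simeq\Lambda$, shows that the simplicial set has the same homotopy type as $\Gamma(\Lambda,P)$.

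The main obstacle is matching the simplicial model with the $\infty$-categorical mapping space. Concretely, we must show that a microsheaf on $\Lambda\times\Delta^k$ that is locally constant in the $\Delta^k$-direction is the same datum as a $\Delta^k$-family of sections, i.e.\ a $k$-simplex of the core $\mu\Sh_\Lambda(\Lambda)^*$. We would argue this exactly as for $\Sh(M)^*$ in the preceding paragraph of the paper: by the Künneth/homotopy-invariance property of sheaves with microsupport in $T^*_0\Delta^k\times T^*M$, such objects are pulled back from $\Delta^k=$ point. The second point to check is that the condition "positive microsupport and vanishing at $-\infty$" in $\Sh^+$ does not alter the microlocal category near $\Lambda\subset J^1M$. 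This holds because $J^1M$ is an open subset of $S^*(M\times\R)$ on which the quotient defining $\Sh^+$ is invisible.
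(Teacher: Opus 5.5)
Your proof is correct, but its core runs differently from the paper's. The paper gives no separate argument beyond the remark preceding the statement: take the internal simplicial model, whose $n$-simplices are objects over $M\times\Delta^n$ that are locally constant in the $\Delta^n$-direction, and promote Jin's isomorphism-class theorem to a space-level statement by applying it verbatim over every $M\times\Delta^n$. You instead rest the argument on the structural input: $\mu\Sh_\Lambda\simeq\mu_\Lambda\times^{\Pic(\Sp)}\Sp$, together with Jin's identification of $\mu$ with the delooped $J$-homomorphism. With these, simple microsheaves form the sheaf of sections of the $\Pic(\Sp)$-torsor $P$ classified by $J\circ\gamma_\Lambda$. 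Since the core functor preserves limits, the space of simple global sections is $\Gamma(\Lambda,P)$.

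Nonemptiness and the torsor structure over $\Map(\Lambda,\Z\times BG)$ then come out directly as statements about spaces, with an explicit action map given by tensoring with local systems of invertible spectra. You use the $\Delta^k$-families only to match $\Gamma(\Lambda,P)$ with the simplicial model, which is exactly the identification the paper invokes.

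The trade-off is as follows. The paper black-boxes the $\pi_0$-level theorem. Read literally, though, that promotion needs relative versions (over $\Delta^n$ rel $\partial\Delta^n$, or over $\Lambda\times S^n$) plus naturality of the action to reach higher homotopy groups, since $\Lambda\times\Delta^n\simeq\Lambda$. Your route avoids this, at the cost of using Jin's result in its map-level form $\mu\simeq BJ$ rather than its $\pi_0$ form.

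One small point: your sentence that the levelwise argument on $\Lambda\times\Delta^k$ ``shows that the simplicial set has the same homotopy type as $\Gamma(\Lambda,P)$'' carries no real content by itself, because levelwise equivalence is automatic from contractibility of $\Delta^k$. The actual step is the one in your final paragraph, identifying the simplicial set as a singular-type model of the core, and you should present it that way.
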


\begin{thm}\label{thm:spacelevelGKS} (Space level version of \cite{GKS}.)
The map $\Sh(M)^*\to \LL(M)^*$ is a Kan fibration. In particular, if $\Lambda\subset J^1(M\times[0,1])$ is
a compact Legendrian cobordism which is the trace of a Legendrian isotopy $(\Lambda_t)_{t\in[0,1]}$,
then restriction at $t=0$ induces a homotopy equivalence
\[\Sh_\Lambda(M\times [0,1])^*\to \Sh_{\Lambda_0}(M)^*.\]
\end{thm}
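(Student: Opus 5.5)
The plan is to promote the GKS theorem (Theorem~\ref{sheaf homotopy lifting}) to the space level. We use the internal model of $\Sh(M)^*$, whose $k$-simplices are sheaves on $M\times\Delta^k$ that are locally constant in the $\Delta^k$ direction. For the map $\Sh(M)^*\to\LL(M)^*$ we take the model whose $k$-simplices are pairs $(\Lambda,F)$, where $\Lambda\in\LL(M)^*_k$ and $F\in\Sh^+(M\times\Delta^k)$ is a simple sheaf with $ss(F)\cap J^1(M\times\Delta^k)=\Lambda$. The projection sends $(\Lambda,F)$ to $\Lambda$. To prove the Kan fibration property, we must solve the following lifting problem for every horn inclusion $\Lambda^k_i\subset\Delta^k$. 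We are given a family $\Lambda\subset J^1(M\times\Delta^k)$ that fibers over $\Delta^k$, and a sheaf $F_\Lambda$ on $M\times\Lambda^k_i$, compatible on faces and microsupported in the restriction of $\Lambda$. We must extend $F_\Lambda$ to a sheaf $F$ on $M\times\Delta^k$ with $ss(F)\subset\Lambda$.

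First, we use the standard retraction $r:\Delta^k\to\Lambda^k_i$, which can be taken piecewise linear and then smoothed near the faces. Via $r$, the isotopy $\Lambda$ over $\Delta^k$ becomes an isotopy parametrized by a collar coordinate. Concretely, we choose a diffeomorphism $\Delta^k\cong\Lambda^k_i\times[0,1]$ up to corners, with $\Lambda^k_i\times\{0\}$ the horn. In the corner-smoothing we keep the conditions of Definition~\ref{dfn:setLeg}.

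Second, we view $\Lambda$ as the trace of a Legendrian isotopy in $J^1(M\times\Lambda^k_i)$ parametrized by $t\in[0,1]$. This is legitimate because $\Lambda$ fibers over $\Delta^k$, so its Legendrian isotopy data are given by a contact Hamiltonian, compactly supported since $\Lambda$ is compact. We then apply Theorem~\ref{sheaf homotopy lifting}, with $M\times\Lambda^k_i$ in place of $M$, working in $\Sh^+$ via Tamarkin's embedding $J^1\subset S^*(\cdot\times\R)$. This extends $F_\Lambda$ to $F$ on $M\times\Lambda^k_i\times[0,1]$. Since GKS acts by convolution with a kernel, simplicity is preserved and restrictions to faces commute with the construction. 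Hence $F$ restricts correctly to the faces, and on the horn it recovers $F_\Lambda$ up to the canonical isomorphism.

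Third, for the consequence, the fiber of $\Sh_\Lambda(M\times[0,1])^*\to\Sh_{\Lambda_0}(M)^*$ is the space of lifts along a path whose target is contractible. Restriction at $t=0$ is therefore a trivial fibration, by the uniqueness part of GKS applied in families, i.e.\ to $M\times\Delta^k$. The main obstacle is the corner and horn geometry: the GKS kernel is defined for isotopies over a manifold without boundary, so it must be constructed functorially on the horn, with its face restrictions agreeing strictly as simplicial data. I would handle this by building the kernel once over $\Delta^k$ from a single Hamiltonian chosen compatibly with all faces, and then restricting it.
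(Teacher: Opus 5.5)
Your proposal is correct and is essentially the paper's own justification. The paper only remarks that, in the internal model where $k$-simplices are sheaves over $M\times\Delta^k$, the space-level statement follows by applying the GKS theorem as stated over $M\times\Delta^n$ for all $n$; your reduction of horn filling to a GKS extension over $M\times\Delta^{k-1}\times[0,1]$, via $(\Delta^k,\Lambda^k_i)\cong(\Delta^{k-1}\times[0,1],\Delta^{k-1}\times\{0\})$, is a more explicit version of that, and the corner-smoothing and strict-versus-up-to-isomorphism face issues you flag are left implicit in the paper as well.
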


\subsection{The sheaf of sublevel set cohomotopies}\label{sec:GFtoSh}

Let $f:M\times \R^n\to \R$ be a function. Consider the epigraph
\[\mathrm{Above}(f)=\{(x,y,z)\in M\times \R^n\times \R, f(x,y)\leq z\}.\]
For $U \subset M \times \R$ we write
$$\mathrm{Above}(f)(U) := \{(x,z) \in U,\, y \in \R^n, f(x,y)\leq z\}.$$
It is obvious that if $\phi$ is a fiberwise diffeomorphism of $M \times \R^n \to \R^n$, then
$\phi$ induces diffeomorphisms
$\mathrm{Above}(f)(U) \cong \mathrm{Above}(f \circ \phi)(U)$.

Denoting the projection $q:M\times \R^n\times \R\to M\times \R$, we introduce the sheaf
$q_* \S_{\mathrm{Above}(f)}$ on $M \times \R$.  Essentially by definition,
$$\Gamma(U, q_* \S_{\mathrm{Above}(f)}) = \Hom(\mathrm{Above}(f)(U) ,\S)$$

For $V \subset M$ and $a: V \to \R$, we write
$$V \rtimes (-\infty, a] := \{(x, z) \in V \times \R\,|\, z \le a(x)\}.$$
Similarly for functions $a \le b$, we write
$$V \rtimes (a, b] := \{(x, z) \in V \times \R\,|\, a(x) < z \le b(x)\}.$$
We will be
interested in the
pointed space
$$\mathrm{Above}(f)(V; (a, b] ) :=
\mathrm{Above}(f)(V \rtimes (-\infty, b]) / \mathrm{Above}(f)(V \rtimes (-\infty, a])$$
Note the corresponding spectrum is encoded by
$q_* \S_{\mathrm{Above}(f)}$:
\begin{align} \label{Above f sheaf versus space}
\nonumber & \Gamma(V \ltimes (a, b]; q_* \S_{\mathrm{Above}(f)}) \\
\nonumber & =
\mathrm{Cone}(\Gamma(V \rtimes (-\infty, b]; q_* \S_{\mathrm{Above}(f)}) \to \Gamma(V \rtimes (-\infty, a]; q_* \S_{\mathrm{Above}(f)}))  \\
\nonumber & = \mathrm{Cone}( \Hom(\mathrm{Above}(f)(V \rtimes (-\infty, b]), \S )  \to \Hom ( \mathrm{Above}(f)(V \rtimes (-\infty, a]), \S))\\
\nonumber
& =  \Hom(\mathrm{Cone}(\mathrm{Above}(f)(V \rtimes (-\infty, a]) \to \mathrm{Above}(f)(V \rtimes (-\infty, b]) ) , \S)
\\
& = \Hom(\mathrm{Above}(f)(V; (a, b]), \S)
\end{align}

The stalks near $M\times \{-c\}$ for large $c\gg 0$ may be nontrivial, but assuming that $f$ is fiberwise tame, the restriction to $M\times\{-c\}$ is a locally constant
sheaf $F_-$, and there is a natural map  $u_-: q_* \S_{\mathrm{Above}(f)} \to F_-\boxtimes \S_\R$.

\begin{dfn} \label{associated sheaf}
We write $\s(f) := Cone(u_-)[n/2]$.
\end{dfn}

The use of $Cone(u_-)$ is more invariantly expressed in terms of Tamarkin's category $\Sh^+(M)$, which is the quotient of sheaves on $M \times \R$ by those sheaves with negative microsupport \cite{Tamarkin}.  This quotient carries a section $\Sh^+(M) \to \Sh(M \times \R)$ with image in the sheaves with nonnegative microsupport.  For sheaves which already have non-negative microsupport and are constant in a neighborhood of $M \times - \infty$, the composition of the quotient and the section is well known to be given by the cone on this $u_-$.  Thus $\s(f)$ is obtained by taking the image of $q_* \S_{\mathrm{Above}(f)}$ in $\Sh^+(M)$, after re-embedding
$\Sh^+(M) \hookrightarrow \Sh(M \times \R)$.

As we have already noted in the introduction,  if we fix the embedding $J^1(M) \subset T^*(M \times \R)$ as the locus where the $\R$ codirection has unit length, then
$ss(\s(f)) \cap J^1(M)$ is the Legendrian generated by $f$.
\begin{lem} \cite{Viterbo-introduction} \label{gen fn sheaf support}
    Let $E \to M$ be a bundle and let $f : E \to \R$ be a fiberwise tame function.  Then $ss(\mathbf{s}(f))|_{J^1 M} \subset \Lambda_f$, with equality when the fiberwise critical locus is transversely cut out (namely $f$ is a generating function).
\end{lem}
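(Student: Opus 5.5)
We prove Lemma~\ref{gen fn sheaf support} by first computing the microsupport of $q_*\S_{\mathrm{Above}(f)}$, and then checking that passing to $\s(f)$ does not change it over $J^1M$.

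\textbf{Step 1: the sheaf upstairs.} Work on $E\times\R$. The set $\mathrm{Above}(f)$ is a closed half-space with smooth boundary, namely the graph of $f$. Hence the microsupport of $\S_{\mathrm{Above}(f)}$ is the zero section on the interior, together with the outward (closed-half-space) conormal ray along the graph. Concretely, this is the set of covectors $\tau(df(x,y)-dz)$ with $\tau\le 0$ or the opposite sign; fixing conventions so the $dz$ component is positive gives $\{(x,y,z;\,\lambda(-df,\ 1)) : z=f(x,y),\ \lambda\ge 0\}$, up to the sign convention. This is the standard computation for the constant sheaf on a closed set with smooth boundary in \cite{Kashiwara-Schapira}.

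\textbf{Step 2: pushforward along $q$.} Tameness implies that $q$ is not proper, but after the fiberwise compactification mentioned in the introduction's footnote we may replace $q$ by a proper map. Equivalently, the fibration conditions outside $K$ show that no microsupport escapes to infinity in the fiber direction. One then applies the pushforward estimate \cite[Prop. 5.4.4]{Kashiwara-Schapira},
\[ ss(q_*F)\subset q_\pi\bigl(dq^{-1}(ss F)\bigr). \]
A covector $\lambda(-df,1)$ lies in the image of $dq^*$ exactly when its $dy$ component vanishes, that is, $\lambda\,\partial_y f=0$. With $\lambda>0$ this means $(x,y)\in\Sigma_f$, and the resulting covector on $M\times\R$ is $(x,z;\,-\lambda\partial_x f,\lambda)$ with $z=f(x,y)$. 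After normalizing $\lambda=1$, this is precisely $i_f(x,y)$, so it lies in $\Lambda_f$. The $\lambda=0$ part contributes only to the zero section, which does not meet $J^1M$.

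The main obstacle is making this step rigorous at infinity. I would use the tameness fibration property to show that, over $\{|z|\ge a(x)\}$ and away from $K$, the family of sublevel sets is locally trivial. Hence $q_*$ of the restriction is locally constant there, and the only contributions come from the proper part $K$, where the proper pushforward estimate applies directly.

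\textbf{Step 3: passing to $\s(f)$.} $\s(f)$ is a shifted cone of $u_-$, whose target $F_-\boxtimes\S_\R$ has microsupport in $T^*M\times 0_\R$. That locus avoids $J^1M$, so by the triangle inequality for microsupports, $ss(\s(f))|_{J^1M}=ss(q_*\S_{\mathrm{Above}(f)})|_{J^1M}\subset\Lambda_f$.

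\textbf{Step 4: equality under transversality.} Take a point $p=i_f(x,y)$ with $(x,y)\in\Sigma_f$ where the transversality condition holds. Then $\Sigma_f$ is locally a smooth graph over $\Lambda_f$, and near $p$ the fiberwise function has a nondegenerate critical point in the directions transverse to $\Sigma_f$. By the fiberwise Morse lemma with parameters, near $(x,y)$ the function $f$ is equivalent to $g(x,u)+Q(w)$ with $Q$ nondegenerate, and only finitely many other branches lie over the image of $p$. The microstalk at $p$ is therefore the cone of a relative sublevel-set map, which is $\S$ shifted by the index of $Q$. This is nonzero, so $p\in ss(\s(f))$, and the inclusion of Step 3 is an equality.
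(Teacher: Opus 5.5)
Your Steps 1--3 follow the paper's argument. The constant sheaf on the epigraph is microsupported on the upward conormal of the graph of $f$ (plus the zero section). Tameness stands in for properness: the paper extends $f$ to a disk bundle on which it remains a fibration near the new boundary. The pushforward estimate \cite[Prop.~5.4.4]{Kashiwara-Schapira} then applies the conormal correspondence of $q$, which is how $\Lambda_f$ is defined. Your Step 3 observes that the cone on $u_-$ only adds zero-section microsupport; this correctly fills in a point the paper leaves implicit.

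The problem is Step 4, the equality, which the paper only says is ``readily checked by hand.'' Your check is valid only at points where $\Lambda_f$ is locally a graph over $M$ and $p$ has a single preimage.

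\begin{itemize}
\item Transversality of $\partial f/\partial v$ to $0$ does not make the fiberwise critical point nondegenerate in any direction. The fiberwise Hessian $\partial_v^2 f$ is degenerate exactly where $\Sigma_f\to M$ fails to be a local diffeomorphism, i.e.\ over cusps and worse front singularities.
\item At such points the splitting lemma leaves a residual $g(x,u)$ whose critical point in $u$ is degenerate (e.g.\ $u^3-3xu$, as for $D$). So the microstalk is not ``$\S$ shifted by the index of $Q$'', and your argument says nothing about whether it vanishes.
\item At double points of $i_f$, several preimages contribute, so the microstalk is not a single shifted $\S$ either.
\item Closedness of $ss$ does not rescue these points. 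Take $M=\R$ and $f(x,v)=xv+\rho(v)$ with $\rho''\equiv 0$ on an interval and $\rho(v)=v^3$ for $|v|$ large. This is a tame generating function whose $\Lambda_f$ contains an arc lying over a single $x$, along which $\partial_v^2 f\equiv 0$. Interior points of that arc are not limits of front-regular points.
\end{itemize}

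The missing idea is to test with a function that is not fiberwise. For $p=(x_0,\xi_0,z_0)$, take $\phi(x,z)=z-z_0-\psi(x)$ with $\psi(x_0)=0$ and $d\psi(x_0)=\xi_0$ (signs as in your Step 2). By \eqref{relative sections of Sf}, the stalk of $\Gamma_{\{\phi\ge 0\}}(\s(f))$ at $(x_0,z_0)$ is dual to the local Morse data of $f(x,v)-\psi(x)$, viewed as a function of all variables $(x,v)$. The relevant critical points are those lying over $x_0$ at level $z_0$, which are exactly the finitely many preimages $i_f^{-1}(p)$. At such a point the Hessian is
\[\begin{pmatrix}\partial_x^2 f-\partial_x^2\psi & \partial_x\partial_v f\\ \partial_v\partial_x f & \partial_v^2 f\end{pmatrix}.\]
Transversality says exactly that the bottom block row $(\partial_v\partial_x f,\ \partial_v^2 f)$ has rank $n$. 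That is what lets you choose $\partial_x^2\psi(x_0)=\pm\lambda\,\mathrm{Id}$ with $\lambda\gg 0$ so that all these critical points become nondegenerate. The Morse data is then a wedge of spheres, so the microstalk is nonzero and $p\in ss(\s(f))$.

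This is also where the hypothesis is genuinely needed. If the block row drops rank, some $(0,w)$ lies in the kernel of the Hessian for every $\psi$, and equality can fail: $f=v^3$ over a point has $\s(f)=0$ while $\Lambda_f$ is a point.
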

\begin{proof}
    The microsupport of $\S_{f \le z}$ is the upward conormal to the graph of $f$.
    The standard estimate of the microsupport of a pushforward along $\pi$ require properness of $\pi$, but tameness of $f$ is a good enough substitute, since then we may extend $f$ to a disk bundle while remaining a fibration near the new boundary.

    The pushforward estimate \cite[Prop. 5.4.4]{Kashiwara-Schapira} asserts that the microsupport of the pushforward along $\pi$ is contained in the locus obtained by applying the Lagrangian correspondence given by the conormal of $\pi$.  This is exactly how we described the Legendrian determined by a generating function above.  The fact that this containment is an equality is readily checked by hand.
\end{proof}

We have
\begin{equation}
\label{relative sections of Sf}
\Gamma(V \rtimes (a, b]; \s(f)) =
\Gamma(V \rtimes (a, b]; q_* \S_{\mathrm{Above}(f)}[n/2])
= \Hom(\mathrm{Above}(f)(V; (a, b]), \S)[n/2].
\end{equation}

\begin{lemma}
For any excellent generating function $f$, the sheaf $\s(f)$ is simple.
\end{lemma}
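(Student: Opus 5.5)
We have to show that at every point $p=i_f(x_0,v_0)\in\Lambda_f$ the microstalk of $\s(f)$ is an invertible spectrum, namely a shifted sphere. Simplicity is a local property on $\Lambda_f$, so we first reduce to a computation near one fiberwise critical point. Excellence means $i_f$ is injective. Hence the fiberwise critical points over $x_0$ are finitely many, with distinct critical values $z_1<\dots<z_r$, and $p$ corresponds to exactly one of them, say at level $z_j=f(x_0,v_0)$. Transversality of $\partial f/\partial v$ to $0$ means $(x_0,v_0)$ is a nondegenerate critical point of $f_{x_0}=f(x_0,\cdot)$. We will compute the microstalk at $p$ by the standard recipe for the covector $dz$ at $(x_0,z_j)$: take a small ball $B$ around $x_0$ and small $\epsilon$, and compute the sections of $\s(f)$ on $B\rtimes(z_j-\epsilon,z_j+\epsilon]$, equivalently the cone of restriction across the front. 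We must check that this recipe computes the microstalk. Since $ss(\s(f))$ is contained in $\Lambda_f$ by Lemma~\ref{gen fn sheaf support}, and since near $p$ the set $\Lambda_f$ is a smooth Legendrian, it suffices to test the microstalk at a point of $\Lambda$ whose front projection is a smooth sheet. After an arbitrarily small perturbation of the point $p$ within $\Lambda_f$, which changes nothing about invertibility because microstalks form a local system, we may assume the front is a graph $z=\phi(x)$ near $x_0$ with covector $dz-d\phi$. There the microstalk is the cone of $\Gamma(B\times(z_j-\epsilon,\infty))\to\Gamma(\{z>\phi(x)\})$, up to a shift by conventions.

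By \eqref{relative sections of Sf}, this cone is $\Hom(\mathrm{Above}(f)(B;(a,b]),\S)[n/2]$, with $a$ just below $\phi$ and $b$ just above $\phi$ on $B$. The space $\mathrm{Above}(f)(B;(a,b])$ is the relative space of the pair $(\{f\le b\},\{f\le a\})$ fibered over $B$. The fibers over $B$ are contractible and, by excellence, no other critical value crosses the band. So by fiberwise Morse theory the pair is, up to homotopy, a single Morse handle attachment: the quotient is homotopy equivalent to $B_+\wedge S^{\mathrm{ind}}$, hence to $S^{k}$ with $k$ the Morse index of $f_{x_0}$ at $v_0$. Tameness of $f$ is what allows us to use fiberwise Morse deformation. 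Away from a proper set $K$ we have fibrations, so the gradient-like flow, with a cutoff near infinity, gives a deformation retraction of $\{f\le b\}$ onto $\{f\le a\}\cup$ handle. This is the same argument as in Lemma~\ref{gen fn sheaf support} and Proposition~\ref{prop:tamedeformation}. We conclude that the microstalk is $\Hom(S^k,\S)[n/2]=\S[k-n/2]$ up to sign conventions, which is invertible in $\Sp$.

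The main obstacle is making the identification of the microstalk with this relative cohomotopy rigorous with spectral coefficients and with non-proper $q$. Our plan is to compactify fiberwise using tameness, to use proper base change to get the stalk formula, and then to invoke the Kashiwara--Schapira characterization of microstalks at a smooth front point via the cone of sections across a half-space, which holds in $\Sp$ by \cite{Robalo-Schapira}. The deformation to the single-handle model is then routine.
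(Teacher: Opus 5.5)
Your route is the paper's: its proof is the one-line remark that simplicity can be read off from \eqref{relative sections of Sf}, since the relative sections of $\s(f)$ across one sheet of the front are dual to a single fiberwise Morse handle. But two steps fail as you wrote them.

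\textbf{The microstalk is on the wrong side of the sheet.} Since $ss(\s(f))\subset\{\tau\ge 0\}$, moving the boundary of an upper region $\{z>c(x)\}$ is non-characteristic: the relevant covector $-(dz-dc)$ has $\tau<0$. So $\Gamma(B\times(z_j-\epsilon,\infty);\s(f))\to\Gamma(\{z>\phi(x)\};\s(f))$ is an isomorphism and your cone is zero. For example, for $\s(y^2)=\S_{[0,\infty)}[-1/2]$ both sides are $\S[-1/2]$. The microstalk at $dz-d\phi$ is the stalk of $\Gamma_{\{z\ge\phi(x)\}}(\s(f))$, i.e.\ the fiber of restriction to the region \emph{below} the sheet. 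That is what $\Gamma(B\rtimes(a,b];\s(f))$ computes, because it is built from $B\rtimes(-\infty,b]$ and $B\rtimes(-\infty,a]$, i.e.\ from the sublevel sets $\{f\le b\}$ and $\{f\le a\}$. So your sentence ``this cone is $\Hom(\mathrm{Above}(f)(B;(a,b]),\S)[n/2]$'' is false. With the lower-side cone, the handle computation goes through.

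\textbf{Your first paragraph's claims do not follow from the hypotheses.} Transversality of $\partial f/\partial v$ to $0$ does not make $(x_0,v_0)$ a nondegenerate critical point of $f(x_0,\cdot)$; the doubling function $D$ of Section~\ref{sec:doubling} has a cusp at $t=0$. The fiber of $\Sigma_f\to M$ over $x_0$ need not be finite. Excellence only makes $i_f$ injective, so two fiberwise critical points over $x_0$ may share a critical value with different $\partial f/\partial x$, as at the front crossings of $\Lambda_{k,m}$, $k\ge 1$, in Section~\ref{sec:crossing}. At such a point your band contains two handles and the relative space is not a sphere, so ``by excellence, no other critical value crosses the band'' is wrong.

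You must choose the test point where $\Lambda_f\to M$ is a local diffeomorphism (equivalently, the fiberwise critical point is nondegenerate) \emph{and} no other sheet passes through its front point. Embeddedness enters only here: a second sheet $z=\psi(x)$ agreeing with $\phi$ on an open set would have $d\psi=d\phi$ there, hence the same points of $J^1M$.

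Such good points need not exist arbitrarily close to a given $p$, since the front projection can be degenerate on an open subset of $\Lambda_f$. So ``a small perturbation of $p$'' is not available in general. You need local constancy of the microstalk along the component together with a good point on it. A cleaner fix is to compute directly at any $p$ with the test function $z-\psi(x)$, where $\psi(x_0)=f(x_0,v_0)$, $d\psi(x_0)=\partial_x f(x_0,v_0)$, and the Hessian of $\psi$ is chosen so that $(x_0,v_0)$ is a nondegenerate critical point of $f-\psi$ on $M\times\R^n$. Then run the same handle argument for that function.
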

\begin{proof}
    This follows readily from \eqref{relative sections of Sf}.
\end{proof}

\begin{example}
    Consider  $M = \mathrm{point}$, $n=1$, and the functions $f= \pm y^2: \R_y \to \R_z$.  Because $q_* \S_{\mathrm{Above}(y^2)}$ is $\S_{[0, \infty)}$, and so $\s(y^2) = \mathrm{Cone} (\S_{[0, \infty)} \to 0 )[1/2] = \S_{[0, \infty)}[-1/2]$.  Meanwhile, $q_* \S_{\mathrm{Above}(-y^2)}$ is a sheaf whose stalk is $\S \oplus \S$ for $z < 0$ and $\S$ for $z \ge 0$; one checks $\s(-y^2) = \S_{[0, \infty)}[1/2]$.

    Similarly, for general $M$ and $f= \pm y^2: M \times \R_y \to \R_z$, we have
    $$\s(-y^2) = \S_{M \times [0, \infty)}[1/2] \qquad \qquad \s(y^2) = \S_{M \times [0, \infty)}[-1/2]$$
\end{example}

For sheaves on $\R \times \R$, writing $a: \R \times \R \to \R$ for addition,
there is the convolution product given by
$F \star G :=  a_* (F \otimes G)$.  Similarly, for $F, G$ sheaves on $M \times \R$, there is a convolution product given by tensoring in the $M$ factor, and convolving in the $\R$ factor.  (These products are studied in e.g. \cite{Tamarkin, Guillermou-Schapira-about-Tamarkin}.)
Convolution restricts to the category of sheaves with nonnegative microsupport; on this category, the monoidal unit is $\S_{M \times [0, \infty)}$.

For generating functions
$f: M \times \R^n \to \R$ and $g: M \times \R^m \to \R$, we write $f \oplus g: M \times \R^n \times \R^m \to \R$ for the sum of the compositions of $f$ and $g$ with the corresponding projections.

\begin{lemma} \label{s and sum}
    $\s(f\oplus g)\simeq \s(f)\ast\s(g).$
\end{lemma}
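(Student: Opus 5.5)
We would prove the statement by a fiberwise Thom–Sebastiani (Künneth) argument. First we reduce to the unnormalized sheaves $q_*\S_{\mathrm{Above}(f)}$. The convolution $\star$ on sheaves on $M\times\R$ with nonnegative microsupport descends to Tamarkin's quotient $\Sh^+(M)$, and there it is computed by the same formula; $\s(f)$ is by definition the image of $q_*\S_{\mathrm{Above}(f)}[n/2]$ in $\Sh^+(M)$. The shifts are additive: $n/2+m/2=(n+m)/2$. So it suffices to produce an isomorphism
\[
q_*\S_{\mathrm{Above}(f\oplus g)} \simeq q_*\S_{\mathrm{Above}(f)}\star q_*\S_{\mathrm{Above}(g)}
\]
in $\Sh^+(M)$, i.e.\ modulo sheaves with negative microsupport.

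Next we rewrite both sides as pushforwards of constant sheaves.
\begin{itemize}
\item The right side is $a_*\big( (q\times q)_*(\S_{\mathrm{Above}(f)}\boxtimes_M \S_{\mathrm{Above}(g)})\big)$, with $a$ the addition $M\times\R\times\R\to M\times\R$ and $\boxtimes_M$ the external product relative to $M$.
\item For this we need a Künneth formula $q_*F\boxtimes_M q_*G\simeq (q\times q)_*(F\boxtimes_M G)$.
\end{itemize}
This Künneth step is where tameness enters. Ordinarily one needs proper pushforward, and $\Hom(X\times Y,\S)\simeq \Hom(X,\S)\otimes\Hom(Y,\S)$ holds only for finite complexes. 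So we first use fiberwise tameness (Appendix~\ref{app:fibinfty}) to replace $\R^n$ and $\R^m$ by disk bundles, on whose boundaries $f$ and $g$ are fibrations, without changing the sheaves. The sublevel sets are then compact manifolds with boundary, hence finite CW complexes. On this compact model $q$ becomes proper, and the Künneth formula follows from proper base change and the projection formula.

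So the right side is the pushforward, along $(x,y,y',z,z')\mapsto (x,z+z')$, of the constant sheaf on $P=\{f(x,y)\le z,\ g(x,y')\le z'\}$. The left side is the pushforward along $(x,y,y',w)\mapsto (x,w)$ of the constant sheaf on $\{f(x,y)+g(x,y')\le w\}$. The map $(x,y,y',z,z')\mapsto(x,y,y',z+z')$ sends $P$ onto this set, and its fibers are the intervals $\{z : f(x,y)\le z\le w-g(x,y')\}$, which are compact and contractible. Proper base change and the Vietoris–Begle theorem then identify the two pushforwards. This gives the isomorphism even before passing to $\Sh^+(M)$, after which we apply the $Cone(u_-)$ normalization.

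The main obstacle is the Künneth/properness step. Specifically, we must check that the tame compactification is compatible with the product: the sum $f\oplus g$ must be tame on the product of disk bundles, via Lemma~\ref{lem:tamesum}. We must also check that the $u_-$ normalizations match, i.e.\ that the image in $\Sh^+(M)$ kills exactly the contributions from $z\ll0$ on both sides. For the latter we would use that convolution with a sheaf constant near $-\infty$ and with negative microsupport again has negative microsupport, so the quotient is monoidal.
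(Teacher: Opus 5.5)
Your proposal is essentially the paper's proof. The paper also reduces to the unnormalized identity $q_*\S_{\mathrm{Above}(f)}\star q_*\S_{\mathrm{Above}(g)}\simeq q_*\S_{\mathrm{Above}(f\oplus g)}$, calling it immediate from the definitions; your K\"unneth step together with contracting the interval fibers $\{f\le z\le w-g\}$ is what that unpacks to. It then handles the $\mathrm{Cone}(u_-)$ normalization by noting that convolution descends to Tamarkin's quotient $\Sh^+$.

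One caution: literally restricting to a compact disk bundle does change $q_*\S_{\mathrm{Above}(f)}$. For $f=-y^2$ the sublevel sets are nonempty at every level, and the boundary introduces new microsupport near $z=\min_D f$. So the K\"unneth comparison is better checked stalkwise, or on the relative pairs $(\{f\le d\},\{f\le c\})$ that compute sections in $\Sh^+$, using only that tameness makes these homotopy-finite.
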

\begin{proof}
    The corresponding assertion
    $q_* \S_{Above(f)} \star q_* \S_{Above(g)} = q_* \S_{Above(f\oplus g)}$
    is immediate from the definitions.
    One can deal with the cone by $u_-$ either by hand (using the tameness) or instead by recalling that the convolution passes to Tamarkin's quotient $\Sh^+$.
\end{proof}

\begin{corollary} \label{stabilization effect on sheaf}
    $\s(f \oplus -y^2) = \s(f)[1/2]$, and $\s(f \oplus y^2) = \s(f)[-1/2]$.
\end{corollary}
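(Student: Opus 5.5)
The plan is to combine Lemma~\ref{s and sum} with the computation in the preceding Example, and then use that $\S_{M\times[0,\infty)}$ is the monoidal unit for convolution on sheaves with nonnegative microsupport.

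First we identify $f\oplus -y^2$. It is the function $M\times\R^n\times\R\to\R$, $(x,u,y)\mapsto f(x,u)-y^2$, so it is exactly the sum $f\oplus g$ with $g=-y^2: M\times\R_y\to\R$. This $g$ is fiberwise tame (it is quadratic at infinity), and so is the sum, by Lemma~\ref{lem:tamesum}. Hence Lemma~\ref{s and sum} applies and gives
\[
\s(f\oplus -y^2)\simeq \s(f)\ast\s(-y^2).
\]

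Next we insert the Example, which gives $\s(-y^2)=\S_{M\times[0,\infty)}[1/2]$. The shift can be pulled out of the convolution, since the convolution $a_*(F\otimes G)$ is exact in each variable: tensor product and pushforward are exact functors of stable categories. This gives
\[
\s(f)\ast \S_{M\times[0,\infty)}[1/2]\simeq (\s(f)\ast \S_{M\times[0,\infty)})[1/2].
\]
The sheaf $\s(f)$ has nonnegative microsupport, being the image of an object of $\Sh^+(M)$, and $\S_{M\times[0,\infty)}$ is the unit for convolution there. So $\s(f)\ast\S_{M\times[0,\infty)}\simeq\s(f)$, and we conclude $\s(f\oplus -y^2)\simeq \s(f)[1/2]$.

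The case $f\oplus y^2$ is identical, using $\s(y^2)=\S_{M\times[0,\infty)}[-1/2]$.

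The only point needing care is bookkeeping of the formal half-shifts in Definition~\ref{associated sheaf}. The fiber dimension goes from $n$ to $n+1$, and this is already accounted for by the $[1/2]$ factors: $\s(f)$ carries $[n/2]$, the factor $\s(\pm y^2)$ carries $[1/2]$ together with its own cone, and these add to $[(n+1)/2]$ as Lemma~\ref{s and sum} requires. I expect no real obstacle here; the main thing to check is that the convolution identity holds in Tamarkin's quotient compatibly with the cone on $u_-$, and that is exactly what Lemma~\ref{s and sum} asserts.
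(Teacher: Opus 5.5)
Your proposal is correct and is exactly how the paper intends this corollary to follow. The paper states it without further proof immediately after Lemma~\ref{s and sum} and the Example computing $\s(\mp y^2)=\S_{M\times[0,\infty)}[\pm 1/2]$, so the argument is what you wrote: convolve with these, pull the shift out of the exact convolution, and use that $\S_{M\times[0,\infty)}$ acts as the unit. You were also right to apply the unit property to $\s(f)$ as an object in the image of $\Sh^+(M)$, rather than to an arbitrary sheaf with nonnegative microsupport.
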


In particular $\s(f)$ is invariant by stabilization : $\s(f\oplus xy)=\s(f)$.

\begin{remark}
The monoidal structure $\star$ admits an internal Hom, for which one has
$\mathcal{H}om^\star(\s(f),\s(g)) = \s(-f \oplus g)$.
\end{remark}

Finally, as we have defined a semilocal generating function for $\Lambda$ as a function whose microsupport contains $\Lambda$ as a closed and open subset (Def. \ref{semilocal gf def}), we may restrict the microlocalization of such a function to $\Lambda$ and obtain a microsheaf on $\Lambda$. Since we have defined equivalence of semilocal generating functions by asking for fibered diffeomorphism near the critical locus after stabilization, the resulting microsheaf depends only on the equivalence class of the semilocal generating function.

The map $\mathbf{s}$ also determines a map between spaces of generating functions and sheaves by applying it to $M \times \Delta^n$ for all simplices $\Delta^n$.   We have now explained \eqref{eq:mainsquare}.

\section{The doubling trick}\label{sec:doubling}

\subsection{Doubling and reduction}\label{gen fn doubling}

Consider the function $D:\R^2\to \R$ defined by
\[D(t,w)=w^3-3tw=w(w^2-3t)\]
It has a single critical point at $t=0, w=0$ of index $1$ and with critical value $0$.
In fact it is globaly conjugate to the quadratic form $h(u,v)=uv$ via the diffeomorphism of $\R^2$
given by
\[u=w^2-3t, v=w.\]
In particular, $D$ is a tame function.

Now let us think of $D$ as a family of functions $(D_t)_{t\in\R}:\R\to \R$. We observe that $\frac{\del D}{\del w}(t,w)=3(w^2-t)$ vanishes transversely
along the parabola $\Sigma_D=\{w^2=t\}$. So $D_t$ has no critical points for $t<0$
and two critical points for $t>0$ with critical values $\pm 2 t^{3/2}$. Denote $\Lambda_D \subset J^1(\R)$
the Legendrian submanifold generated by $D$. In order for $D$ to be a genuine generating function for $\Lambda_D$
we need to check the tameness condition.
\begin{lem}
$D$ is a fiberwise tame function (with respect to the projection $p:(t,w)\mapsto t$).
\end{lem}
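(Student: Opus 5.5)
We have to exhibit a closed set $K\subset \R^2$, proper over $\R_t$, and a smooth positive function $a(t)$ such that the two maps
\[\{|D|\ge a(t)\}\to\{(t,z): |z|\ge a(t)\},\qquad \{|D|\le a(t)\}\setminus K\to\{(t,z):|z|\le a(t)\}\]
induced by $(t,w)\mapsto (t,D(t,w))$ are fibrations.

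The plan is to use the cubic structure in $w$ directly. For fixed $t$, $D_t(w)=w^3-3tw$ is a proper function of $w$ with $D_t(w)\to\pm\infty$ as $w\to\pm\infty$. Its only critical points are at $w=\pm\sqrt t$ (for $t>0$), with critical values $\mp 2t^{3/2}$. I would take
\[a(t)=2(1+t^2)\quad(\text{or any smooth } a>2\max(t,0)^{3/2}),\qquad K=\{(t,w): |w|\le C(1+|t|)\}\]
with $C$ large. $K$ is closed and its projection to $\R_t$ is proper, since its fibers are compact intervals varying continuously.

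For the first map, $|D_t|\ge a(t)$ lies beyond all critical values. So the map $(t,w)\mapsto(t,D(t,w))$ is a local diffeomorphism there, because $\partial_w D\ne0$ on that set. Moreover, on each half $\{D\ge a\}$ and $\{D\le -a\}$ it is a bijection onto its image, since $D_t$ is strictly monotone on each unbounded end beyond the critical values. It is therefore a diffeomorphism onto $\{|z|\ge a(t)\}$, in particular a fibration with point fibers.

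For the second map, I would show that for $C$ large the set $\{|D_t(w)|\le a(t)\}$ is contained in $K$. This follows because $|w|\ge C(1+|t|)$ forces $|w^3-3tw|\ge |w|(w^2-3|t|)\ge C(1+|t|)\cdot(C^2-3)(1+|t|)^2>a(t)$. The second source set is then empty, and the empty map is trivially a fibration.

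The only mildly delicate point is checking that this choice of $a$ and $C$ meets the paper's definition exactly, including smoothness of $a$ and closedness and properness of $K$; this is elementary. Alternatively, one could invoke the global conjugacy $D=h\circ\phi$ with $\phi(t,w)=(w^2-3t,w)$. That route is less direct, because $\phi$ is not fibered over $t$, so the direct estimate is preferable.
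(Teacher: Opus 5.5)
Your proof is correct and is essentially the paper's argument. Both choose $a$ above the critical values $\pm 2t^{3/2}$, so that $(t,w)\mapsto(t,D(t,w))$ is a diffeomorphism on $\{|D|\ge a\}$, and both take $K\supseteq\{|D|\le a\}$, so the second condition is vacuous. The only difference is that the paper takes $K=\{|D|\le a\}$ itself, which works for every admissible $a$. Your explicit $K=\{|w|\le C(1+|t|)\}$ requires $a$ of polynomial growth: it is fine for $a=2(1+t^2)$, but not for the parenthetical ``any smooth $a>2\max(t,0)^{3/2}$''.
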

\begin{proof}
The definition of fiberwise tameness is readily checked with any function $a:\R\to (0,+\infty)$ such that $a(t)>2t^{3/2}$ for $t>0$ and $A=\{(t,w), |D(t,w)|\leq a(t)\}$.
\end{proof}

Given a Legendrian submanifold $\Lambda\subset J^1(M)$, we can form a product Legendrian
$\Lambda\times \Lambda_D\subset J^1(M\times \R)$ whose reduction under the projection $M\times \R\to M$
is $\Lambda$ and whose reduction at slices $\{t=t_0>0\}$ are $2$-copies of $\Lambda$ (namely $\Lambda$ shifted by $\pm 2t_0^{3/2}$).

Given a generating function $f:M\times \R^n\to \R$ for $\Lambda\subset J^1(M)$, the function $g:M\times \R\times \R^n \times \R \to \R$ given by
\[g(x,t;u,w)=f(x;u)+D(t,w)\]
is fiberwise tame (with respect to the projection to $M\times \R$) per Lemma~\ref{lem:tamesum}, and generates $\Lambda\times \Lambda_D\subset J^1(M\times \R)$.
This defines a (simplicial) map
\[\DD:\GF_\Lambda(M)^*\to \GF_{\Lambda\times \Lambda_D}(M\times \R)^*.\]
Conversely if we have a generating function $g:M\times \R\times \R^n\to \R$ for $\Lambda\times \Lambda_D \subset J^1(M\times \R)$, we get a generating function $f$
for $\Lambda$ by reduction (i.e. viewing the $t$-coordinate as an auxiliary variable):
\[f(x;t,u)=g(x,t;u).\]
The tameness condition for $f$ follows from Lemma~\ref{lem:tamepushforward} applied to the fibration $q:M\times \R\to M$ given by $q(x,t)=x$.
Indeed the properness assumptions are met: the singular set of $g$ properly maps to $M\times \R$ (under the map $(x,f)$) and the singular set of $f$ is the intersection of the singular set of $g$ with the slice $\{t=0\}$ and hence properly maps to $M$.

So we have a (simplicial) map
\[\RR:\GF_{\Lambda\times \Lambda_D}(M\times \R)^*\to \GF_\Lambda(M)^*.\]

The composition $\RR\circ \DD$ transforms a generating function $f:M\times \R^n\to \R$ for $\Lambda$ into another one $F:M\times \R\times \R^n\times \R$ given by
\[F(x;t,u,w)=f(x;u)+D(t,w)\]
which is equivalent to the stabilization $\sigma f$ since $D$ is conjugate to the quadratic form $h=uv$. Hence we have:

\begin{proposition} \label{prop:RD}
The composition $\RR\circ \DD : \GF_\Lambda(M)^*\to \GF_\Lambda(M)^*$ is homotopic to the identity map.
\end{proposition}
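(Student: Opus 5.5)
The plan is to produce an explicit simplicial homotopy between $\RR\circ\DD$ and the identity, using two ingredients: the global conjugation of $D$ to $h$, and the fact that a fibered diffeomorphism of $M\times\R^{n+2}$ becomes isotopic to the identity after stabilization (Alexander trick, as in the proof of Proposition~\ref{prop:pi_0=equiv}).

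Fix the diffeomorphism $\phi:\R^2\to\R^2$, $\phi(t,w)=(w^2-3t,w)$, which satisfies $h\circ\phi=D$. For a $k$-simplex $f\in \GF_\Lambda(M)^*_k$, i.e.\ a generating function on $M\times\Delta^k\times\R^n$, we have
\[
\RR\DD(f)=f\oplus D=(\sigma f)\circ(\id\times\phi),
\]
with $\id\times\phi$ a fibered diffeomorphism over $M\times\Delta^k$ that is independent of $f$ and of the simplex coordinate.

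The first step is to connect $\phi$ to a linear map in a way that preserves the required properties at every time. Since $\phi(0)=0$, the Alexander isotopy $\phi_s(t,w)=\phi(st,sw)/s$ for $s\in(0,1]$, with $\phi_0=d_0\phi$, is a smooth path of diffeomorphisms of $\R^2$ from $\phi$ to the linear map $L(t,w)=(-3t,w)$.

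For every $s$ the function $(\sigma f)\circ(\id\times\phi_s)$ is obtained from $\sigma f$ by a fibered diffeomorphism. It is therefore again fiberwise tame and excellent, and it generates the same Legendrian $\Lambda\times\Delta^k$, with singular set $\phi_s^{-1}$ of the old one. This gives a path in $\GF_\Lambda(M)^*$ from $\RR\DD(f)$ to $(\sigma f)\circ(\id\times L)$.

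The second step handles the linear endpoint. Here $h\circ L(u,v)=-3uv$, which is $h$ composed with a linear map of determinant $-3<0$. Since $\GL_2(\R)$ is disconnected, we cannot simply rescale $L$ to the identity; this sign issue is the main obstacle. I would handle it by stabilizing once more, which is harmless because $\GF$ identifies $f$ with $\sigma f$. On $\R^4=\R^2\oplus\R^2$ the map $L\oplus L'$, with $L'$ a reflection preserving $h$ (for instance $(u,v)\mapsto(-u,-v)\circ$swap, or simply $(u,v)\mapsto(v,u)$), lies in the identity component and preserves $h^2$ up to the positive scalar absorbed by the isotopy $(u,v)\mapsto(\lambda u,v)$. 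Alternatively, replace $L$ by the $h$-preserving map $(t,w)\mapsto(-t,-w)$ up to a positive rescaling of one coordinate. Concretely, the path $(u,v)\mapsto(a_s u,v)$ with $a_s>0$ changes $h$ by a positive factor, which can be undone by the isotopy $\sqrt{a_s}$-rescaling. After these moves one is left with an element of the identity component of $O(h^k)$, which connects to $\id$ through maps preserving $h^k$. Composing gives a path from $(\sigma f)\circ(\id\times L)$ to $\sigma^k f$, which equals $f$ in $\GF$.

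The third step checks simpliciality. All paths used are by fixed diffeomorphisms of the auxiliary $\R^{2k}$ factor, independent of $f$ and of $\Delta^k$, so they commute with the face and degeneracy maps (pullbacks). Hence the construction $f\mapsto f\circ\Psi_s$, where $\Psi_s$ is the concatenated path, defines a simplicial homotopy $\GF_\Lambda(M)^*\times\Delta^1\to\GF_\Lambda(M)^*$. It can be realized as an honest simplicial map by sending a $k$-simplex $(f,\alpha:\Delta^k\to\Delta^1)$ to $f\circ\Psi_{\alpha(\cdot)}$, and this is still excellent with the fixed Legendrian $\Lambda\times\Delta^k$. The only points needing care are tameness under non-linear fibered diffeomorphisms with parameters, which follows from the definition since fibrations are preserved by fibered diffeomorphisms, and the sign bookkeeping described in the second step.
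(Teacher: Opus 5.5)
Your approach is the paper's. There, the proof is the single observation that $\RR\DD(f)=f\oplus D$ is equivalent to $\sigma f$ because $D=h\circ\phi$ with $\phi(t,w)=(w^2-3t,w)$. The upgrade from this fixed, $f$-independent equivalence to a homotopy is left implicit; it is the mechanism of Proposition~\ref{prop:pi_0=equiv}. Your Steps 1 and 3 supply that upgrade correctly. Indeed $\phi_s(t,w)=(sw^2-3t,w)$, so Step 1 is just the path $f+sw^3-3tw$. Composing with a parameter-dependent fibered diffeomorphism preserves the generated Legendrian $\Lambda\times\Delta^k$, because the fiber derivative vanishes on the singular set.

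Step 2, however, is wrong as written, and the obstacle it addresses is illusory.

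\begin{itemize}
\item The claim that $L\oplus L'$ preserves $h^2$ up to a positive scalar is false: $h^2\circ(L\oplus L')=-3u_1v_1+u_2v_2$.
\item $L=\mathrm{diag}(-3,1)$ is not $-\id$ up to a positive rescaling of one coordinate. Indeed $h\circ L=-3h$ is a negative multiple of $h$, and the determinants have opposite signs.
\end{itemize}

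The point you are missing is that no intermediate map needs to preserve $h$. Composing with any fibered diffeomorphism stays in $\GF_\Lambda(M)^*$, so you only need a path of linear isomorphisms from $L$ to some element of $O(h)$. Since $O(h)$ contains the swap $S(u,v)=(v,u)$, which has determinant $-1$, and $GL_2^-(\R)$ is connected, such a path exists with no extra stabilization. Concretely:
\begin{itemize}
\item rescale $L$ to $\mathrm{diag}(-1,1)$;
\item then follow $\mathrm{diag}(-1,1)\circ\mathrm{Rot}_\theta$ for $\theta\in[0,\pi/2]$, which ends at $S$.
\end{itemize}
The corresponding path of functions therefore ends at $f+h\circ S=\sigma f$.

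Your stabilized variant also works once it is justified correctly. Because $L'$ preserves $h$, you have $(\sigma^2 f)\circ(\id\times(L\oplus L'))=\sigma((\sigma f)\circ(\id\times L))$. Because $\det(L\oplus L')>0$, any path in $GL_4^+(\R)$ from $L\oplus L'$ to $\id$ does the job.

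Finally, the identity $f\oplus D=(\sigma f)\circ(\id\times\phi)$ hides a reordering of coordinates, since $t$ precedes $u\in\R^n$. This is a permutation of sign $(-1)^n=1$, as $n$ is even, so it is absorbed the same way.
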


The above discussion is equally valid for semi-local generating functions, so we also obtain:
\begin{proposition}
The composition $\RR\circ \DD : \mu\GF_\Lambda(M)^*\to \mu\GF_\Lambda(M)^*$ is homotopic to the identity map.
\end{proposition}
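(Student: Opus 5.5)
The plan is to reduce the semi-local statement to an explicit identification at the level of germs, exactly parallel to the global case in Proposition~\ref{prop:RD}. A point of $\mu\GF_\Lambda(M)^*$ is represented by a pair $(f,\Sigma)$ with $f:M\times\R^n\to\R$ a tame generating function and $\Sigma\subset\Sigma_f$ clopen with $i_f(\Sigma)=\Lambda$. I first make $\DD$ and $\RR$ precise on such pairs. Set $\DD(f,\Sigma)=(f\oplus D,\Sigma\times_{M}\Sigma_D)$, where $\Sigma_D=\{w^2=t\}$; this is clopen in $\Sigma_{f\oplus D}=\Sigma_f\times\Sigma_D$ and generates $\Lambda\times\Lambda_D$. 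Then set $\RR(g,\Sigma')=(g',\Sigma'\cap\{\text{slice } t\text{ free}\})$, where $g'(x;t,u)=g(x,t;u)$. The singular set of $g'$ is the part of $\Sigma_g$ lying over the critical locus of the $t$-reduction, and the clopen piece corresponding to $\Sigma'$ is carried along. Tameness of $g'$ comes from Lemma~\ref{lem:tamepushforward} as before, and properness of the relevant singular set over $M$ follows from properness of $\Lambda\times\Lambda_D$ over $M\times\R$ and the slice at $t=0$. All of these constructions are compatible with pullback along simplices, so they define simplicial maps.

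Next I compute $\RR\circ\DD(f,\Sigma)$. Its underlying function is $F(x;t,u,w)=f(x;u)+D(t,w)$, and its distinguished singular set is $\Sigma\times\{(0,0)\}$. Via the fixed global fiberwise diffeomorphism $\phi(t,w)=(w^2-3t,w)$, which conjugates $D$ to $h(u,v)=uv$, the function $F$ becomes $\sigma f=f\oplus h$, and $\Sigma\times\{(0,0)\}$ is carried to $\Sigma\times\{0\}$. So $\RR\circ\DD(f,\Sigma)=(\sigma f\circ(\id\times\phi),\Sigma\times 0)$. This is the image of the identity-class point $(f,\Sigma)\equiv(\sigma f,\Sigma\times 0)$ under the action of the fixed fibered diffeomorphism $\id\times\phi$. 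Moreover, the formula is natural in simplices, since $\phi$ does not depend on $x$ or on the simplex coordinates.

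The homotopy to the identity then comes from connecting $\phi$ to a linear map preserving $h$, and hence, after one more stabilization, to the identity. I use the Alexander trick from the proof of Proposition~\ref{prop:pi_0=equiv}. First, $\phi(0)=0$ and $d\phi_0$ is linear, so $\phi_s(y)=\phi(sy)/s$ is an isotopy from $\phi$ to $d\phi_0$. Second, in the stable orthogonal group, $d\phi_0\oplus B$ is isotopic to the identity for a suitable $B$ preserving $h$. Writing $\psi_s$ for the resulting path of fibered diffeomorphisms, independent of $x$ and of the simplex, the assignment $(f,\Sigma)\mapsto(\sigma^k f\circ(\id\times\psi_s),\Sigma\times 0)$ gives a simplicial homotopy $\Delta^1\times\mu\GF_\Lambda(M)^*\to\mu\GF_\Lambda(M)^*$. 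Each stage keeps the singular set equal to $\Sigma\times 0$, because $\psi_s$ fixes the origin and $h$ has its only critical point there, and the generated Legendrian stays $\Lambda$.

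The main subtle point is this last step. A path in the fibered diffeomorphism group transports $\sigma^k f$ along an edge of $\mu\GF_\Lambda(M)^*$, and I need this to be realized as a genuine simplicial homotopy rather than merely a path in each component. This works because the path is a single fixed path, applied uniformly to all simplices, so the product $\Delta^1\times\Delta^k$ poses no difficulty after triangulation. The Kan property of $\mu\GF_\Lambda(M)^*$ lets me pass from simplicial homotopies on prisms to homotopic maps of realizations. Finally, the semi-local identifications (germs near $\Sigma$ and stabilization) are respected at every stage, since all operations are global fibered diffeomorphisms fixing the distinguished singular set.
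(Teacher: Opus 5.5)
Your proposal is correct and follows the paper's argument: $\RR\circ\DD(f)=f\oplus D$ is carried to the stabilization $\sigma f$ by the fixed fiberwise diffeomorphism $(t,w)\mapsto(w^2-3t,w)$, and since this diffeomorphism fixes the origin, the same conjugation works near the distinguished singular set $\Sigma\times\{0\}$; this is all the paper means by ``equally valid for semi-local generating functions.'' Your Alexander-trick-plus-stabilization homotopy makes explicit, uniformly in simplices, the step from Proposition~\ref{prop:pi_0=equiv} that the paper leaves implicit. One detail: $d\phi_0=\mathrm{diag}(-3,1)$ has negative determinant, so the ``suitable'' $B\in O(h^{k-1})$ must be orientation-reversing, e.g.\ the swap $(u,v)\mapsto(v,u)$ on one $\R^2$ block.
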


\subsection{Rotation}

We want to show that the other composition $\DD\circ \RR$ is also homotopic to the identity. In view of Proposition~\ref{prop:RD}
it is equivalent to show $\RR$ is a homotopy equivalence.
For this purpose we use a rotation of $T^* \R$ of angle $\pi/2$ in order to map the cusped Legendrian $\{z=\pm 2t^3/2\}$
to a smooth graph $\{z=3t^3\}$. This graph has a generating function without auxiliary variable $C(t)=3t^3$, we denote $\Lambda_C$ the corresponding
Legendrian submanifold of $J^1(\R)$.
For $f:M\times \R \times \R^n \to \R$, we define
$h:M\times \R \times \R^n \times \R \to \R$ by the formula
\[h(x,t,v,s)=f(x,s,v)+st.\]
If $f$ is fiberwise tame with respect to the projection $(x,s,v)\mapsto (x,s)$, then $f(x,s,v)+ut$ is tame with respect to the projection $(x,s,v,u,t)\mapsto (x,s,u,t)$
by Lemma~\ref{lem:tamesum} and then $h$ is fiberwise tame with respect to $(x,t,v,s)\mapsto (x,t,s)$ by Lemma~\ref{lem:tamepullback} using pullback along the diagonal $u=s$.
Finally if $f$ generates $\Lambda\times \Lambda_D$, then $h$ is fiberwise tame with respect to $(x,t,v,s)\mapsto (x,t)$ by Lemma~\ref{lem:tamepushforward}
(the properness conditions are met) and generates $\Lambda\times \Lambda_C$.
Hence we obtain a map
\[\A:\GF_{\Lambda\times \Lambda_D}(M\times \R)^*\to \GF_{\Lambda\times \Lambda_C}(M\times \R)^*.\]
Similarly we define
\[\A':\GF_{\Lambda\times \Lambda_C}(M\times \R)^*\to \GF_{\Lambda\times \Lambda_D}(M\times \R)^*\]
by the formula
\[\A'(h)(x,t,v,s)=f(x,s,v)-st.\]

\begin{proposition}\label{prop:Aequiv}
The compositions $\A\circ \A'$ and $\A'\circ \A$ are homotopic to the identity. In particular, $\A$ is a homotopy equivalence.
\end{proposition}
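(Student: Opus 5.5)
The plan is to compute both compositions explicitly and recognise each as a stabilization followed by a fiberwise diffeomorphism, using the fact that $st$ is a quadratic form of signature $(1,1)$ in the pair of auxiliary variables.

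First compute $\A'\circ\A$. Start from $f(x,t;v)$ generating $\Lambda\times\Lambda_D$, viewed as a function of base variables $(x,t)$ and fiber variable $v$. Applying $\A$ gives $h(x,t;v,s)=f(x,s;v)+st$. Applying $\A'$ treats $(v,s)$ as the fiber variable and $t$ as the new base parameter; renaming, we obtain
\[
\A'\A(f)(x,t;v,s,r)=f(x,r;v)+sr-st=f(x,r;v)+s(r-t).
\]
Now make the fiberwise change of variables $r'=r-t$, keeping $v$ and $s$, which is a diffeomorphism fibered over $(x,t)$. The result is
\[
f(x,t+r';v)+sr' .
\]
To compare this with the stabilization $f(x,t;v)+sr'$, we deform it by the homotopy
\[
f_\lambda=f(x,t+\lambda r';v)+sr',\qquad \lambda\in[0,1].
\]
Along this family, the fiberwise critical equations are $\partial_s=r'=0$ together with $\partial_v f=0$. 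The $r'$-derivative $\lambda\,\partial_t f+s$ only determines $s$. So on the critical locus $r'=0$, and the generated Legendrian is $\Lambda\times\Lambda_D$ for every $\lambda$. This makes $f_\lambda$ a path in $\GF_{\Lambda\times\Lambda_D}(M\times\R)^*$, given that tameness holds uniformly. Finally $f_0=f\oplus sr'$ is the stabilization $\sigma f$, since $sr'$ is equivalent to $h$, which is identified with $f$ in our spaces. The same argument works for families over $\Delta^k$, so it gives a simplicial homotopy to the identity. The composition $\A\circ\A'$ is handled identically, using the sign-reversed formula. Since the two compositions are homotopic to the identity, $\A$ is a homotopy equivalence.

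The main obstacle is tameness along the homotopy $f_\lambda$. To handle it, I would view $f_\lambda$ as obtained from $f(x,u;v)+sr'$ by pulling back along $u=t+\lambda r'$. That expression is tame by Lemma~\ref{lem:tamesum}, and the pullback is tame by Lemma~\ref{lem:tamepullback}, exactly as in the argument that $\A$ is well defined. To keep the family over $[0,1]$ well behaved, I would check the properness conditions for Lemma~\ref{lem:tamepushforward} uniformly in $\lambda$. These hold because the singular set is always $\{r'=0,\ s=-\lambda\partial_tf\}$, which is a graph over $\Sigma_f$ and so maps properly to the base.
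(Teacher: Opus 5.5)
Your computation of $\A'\circ\A(f)$, the shift $r'=r-t$, the path $f_\lambda=f(x,t+\lambda r';v)+sr'$, and your description of its singular set are all correct, and the outline is the paper's. The gap is in the step you single out as the main obstacle, tameness along the path.

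Lemma~\ref{lem:tamepushforward} has two properness hypotheses, and you only verify the second one (properness of the full fiberwise singular set over $(x,t,\lambda)$). The first hypothesis requires the $v$-fiberwise singular set of $f(x,t+\lambda r';v)+sr'$, taken over the enlarged base $(x,t,s,r',\lambda)$, to map properly to $(x,t,\lambda,z)$. This fails. At $\lambda=0$ the fiber over $(x,t,0,z)$ is $\{(s,r',v) : (x,t,v)\in\Sigma_f,\ f(x,t;v)+sr'=z\}$, which is a finite union of hyperbolas in the $(s,r')$-plane. For any $\lambda$ you can let $r'\to\infty$ with $s=(z-f)/r'$, so the fiber is non-compact.

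The analogy with the well-definedness of $\A$ breaks down here. There, $t$ stays in the base and the level condition becomes a cubic in the $\Lambda_D$-coordinate. Here, both variables of the bilinear term are pushed into the fiber. For the same reason, this route could not even prove that the plain stabilization $f\oplus sr'$ is tame, which is why Lemma~\ref{lem:tamesum} is used for that.

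The missing ingredient is the paper's Hadamard division. Write $f(x,t+\rho;v)=f(x,t;v)+\rho\, g(x,t,v,\rho)$. Then your path is
\[ f_\lambda(x,t;v,s,r') = f(x,t;v)+r'\bigl(s+\lambda\, g(x,t,v,\lambda r')\bigr) = (\sigma f)\circ\Phi_\lambda , \]
where $\Phi_\lambda(v,s,r')=(v,\,s+\lambda g(x,t,v,\lambda r'),\,r')$ is a shear, fibered over $(x,t,\lambda)$, with $\Phi_0=\id$. Fiberwise tameness and the generated Legendrian are both invariant under fibered diffeomorphisms. Also, $\sigma f$ pulled back to the base $(x,t,\lambda)$ is tame by Lemmas~\ref{lem:tamesum} and~\ref{lem:tamepullback}. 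Hence the whole family is tame and generates $\Lambda\times\Lambda_D$ times the zero section of $T^*\Delta^1$, with no properness check needed; this also makes your separate singular-set computation unnecessary.

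This is exactly the paper's step $u'=u+g$. It exhibits $\A'\circ\A(f)$ as $\sigma f$ precomposed with the translation $r\mapsto r-t$ and a shear, both fibered diffeomorphisms isotopic to the identity. That is also what you should invoke to turn your change of variables $r'=r-t$ into a path.
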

\begin{proof}
Given a function $f(x,t;v)\in \GF_{\Lambda\times \Lambda_D}(M\times \R)^*$, the function $f'=\A'\circ \A(f)$ writes:
\[f'(x,t;v,s,u)=f(x,s,v)+su-ut.\]
We make a first change of variables $s'=s-t$ and use Hadamard's division Lemma to write
$f(x,s'+t,v)=f(x,t,v)+s'g(x,t,v,s')$ for some smooth function $g$, so that
\[f''(x,t;v,s',u)=f(x,t,v)+s'g(x,t,v,s')+s'u.\]
Then we set $u'=u+g(x,t,v,s')$ and obtain
\[f'''(x,t;v,s',u')=f(x,t,v)+s'u'\]
which is a stabilization of $f$. We can carry out all steps by a continuous
deformation from $f'$ to $f'''$, and moreover this works parametrically, so $\A'\circ\A$
is homotopic to the identity. The proof works verbatim for $\A\circ \A'$.
\end{proof}

Next we define
\[\II_0:\GF_{\Lambda\times \Lambda_C}(M\times \R)^*\to \GF_\Lambda(M)^*\]
the map induced by restriction to the slice $\{t=0\}$.
The key property is the following.

\begin{proposition}\label{prop:R=I0A}
We have $\RR=\II_0\circ \A$.
\end{proposition}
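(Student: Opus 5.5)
This should be a direct unwinding of the definitions, checked at the level of formulas on simplices; both sides are defined uniformly over $M\times\Delta^k$, so it is enough to compare them on a single generating function and note that the identification commutes with pullbacks. Let $g:M\times\R\times\R^n\to\R$ be a generating function for $\Lambda\times\Lambda_D$, with base variable $t$ and fiber variable $v\in\R^n$. I will write its arguments as $g(x,t,v)$.

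First compute $\A(g)$. By definition this is the function $h$ on $M\times\R\times\R^n\times\R$ given by
\[ h(x,t,v,s)=g(x,s,v)+st, \]
viewed as a generating function over $M\times\R$, with $(x,t)$ as base coordinates and $(v,s)$ as fiber coordinates.

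Next apply $\II_0$, i.e. restrict to the slice $\{t=0\}$. This gives the function on $M\times\R^n\times\R$
\[ (x;v,s)\mapsto g(x,s,v), \]
a function over $M$ with fiber coordinates $(v,s)$.

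Now compare with $\RR(g)$. By definition this is $g$ with the base coordinate $t$ reinterpreted as an auxiliary fiber variable, namely $(x;t,v)\mapsto g(x,t,v)$. This is literally the same function as $\II_0\circ\A(g)$: only the name of the variable changes ($s$ in place of $t$) and the fiber coordinates appear in a different order. A permutation of the fiber coordinates is a linear fibered diffeomorphism. Moreover, after stabilization, or by fixing the same ordering convention in both definitions, it is the identity in $\GF_\Lambda(M)^*$. So the equality holds on the nose once conventions are matched.

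It remains to check that this identification is compatible with the simplicial structure maps; it is, since both constructions commute with pullback along $\alpha:\Delta^k\to\Delta^l$. One should also confirm that $\II_0\circ\A$ indeed lands in $\GF_\Lambda(M)^*$, but this follows from the identification with $\RR(g)$, whose tameness and excellence were already established.

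The only point needing care is the bookkeeping of which variables are base and which are fiber, together with the coordinate-ordering convention. I do not expect a genuine obstacle. If the conventions differ, I would replace ``$=$'' by ``homotopic via a path of coordinate permutations,'' using the fact that $\G(M)$ acts through its linearization.
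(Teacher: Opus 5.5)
Your proposal is correct and is the same direct computation as the paper's proof: setting $t=0$ in $\A(g)(x,t,v,s)=g(x,s,v)+st$ kills the $st$ term and leaves $g(x,s,v)$, which is $\RR(g)$ with the variable renamed. The paper silently ignores the ordering of fiber coordinates; your aside that stabilization makes the permutation ``the identity'' is slightly off, since stabilization only lets you isotope it to the identity, which gives a homotopy rather than an equality. This is harmless, because only the homotopy class of $\RR$ is used afterwards.
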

\begin{proof}
It is a direct computation:
\[\II_0\circ \A(f)(x,s,v)=\A(f)(x,0,s,v)=f(x,s,v)+0\times s= f(x,s,v).\]
\end{proof}

\begin{proposition}\label{prop:I0}
The map $\II_0$ is a homotopy equivalence.
\end{proposition}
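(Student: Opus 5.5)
The Legendrian $\Lambda\times\Lambda_C\subset J^1(M\times\R)$ is the trace of a Legendrian isotopy. Its reduction at the slice $\{t=t_0\}$ is $\Lambda$ shifted in the $z$-direction by the constant $3t_0^3$, because $C(t)=3t^3$ has no auxiliary variable and $C'(t_0)$ contributes only the $T^*\R$ factor. So $\Lambda\times\Lambda_C$ fibers over $\R_t$, and each slice is the image of $\Lambda$ under the contactomorphism $z\mapsto z+3t_0^3$. My plan is to deduce Proposition~\ref{prop:I0} from Theorem~\ref{thm:spacelevelchekanov}, the space-level Chekanov--Chaperon--Th\'eret persistence. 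The only wrinkles are that the base is $\R$ rather than $[0,1]$, and that the isotopy is not compactly supported; the vertical translation, however, is harmless.

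First I will reduce from $\R$ to $[0,1]$, or to a neighborhood of $0$. I would like to avoid using the isotopy lifting theorem on all of $\R$. Instead I will identify $\GF_{\Lambda\times\Lambda_C}(M\times\R)^*$ directly with $\GF_\Lambda(M)^*$ via an explicit inverse to $\II_0$. Define $\mathcal{J}:\GF_\Lambda(M)^*\to\GF_{\Lambda\times\Lambda_C}(M\times\R)^*$ by
\[\mathcal{J}(f)(x,t;v)=f(x;v)+3t^3.\]
Tameness is immediate, since $f+3t^3$ remains fiberwise tame over $M\times\R$: take $a(x,t)=a(x)+3|t|^3$ and apply Lemma~\ref{lem:tamesum} to a fiber of dimension zero. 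The function $\mathcal{J}(f)$ also generates $\Lambda\times\Lambda_C$. Clearly $\II_0\circ\mathcal{J}=\id$.

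The remaining point is $\mathcal{J}\circ\II_0\simeq\id$. Given $g\in\GF_{\Lambda\times\Lambda_C}(M\times\R)^*$, I set
\[g_\lambda(x,t;v)=g(x,\lambda t;v)+3t^3-3\lambda^3t^3\]
for $\lambda\in[0,1]$. Each $g_\lambda$ is the pullback of $g$ along $t\mapsto\lambda t$, corrected by a function of $t$ alone. It therefore generates
\[\{(x,p,t,\tau,z)\ :\ (x,p,z-3\lambda^3t^3)\in\Lambda,\ \tau=9t^2\}=\Lambda\times\Lambda_C.\]
To check this, note that the $\tau$-component equals $\lambda\cdot 9\lambda^2t^2+9t^2-9\lambda^3t^2=9t^2$, and the $z$-component is $z_\Lambda+3\lambda^3t^3+3t^3-3\lambda^3t^3$, which is correct. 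We have $g_1=g$ and $g_0=\mathcal{J}(\II_0 g)$. This runs parametrically over simplices $\Delta^k$, since all formulas are natural under pullback, and so gives a simplicial homotopy.

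I expect the main obstacle to be verifying that each $g_\lambda$ is fiberwise tame and excellent, uniformly in $\lambda$ and including $\lambda=0$. Transversality of $\partial_v g_\lambda$ and the embedding property of $i_{g_\lambda}$ hold slice by slice, because the slice at $t$ is the slice of $g$ at $\lambda t$ plus a constant. For tameness, pullback along the map $(x,t)\mapsto(x,\lambda t)$ is covered by Lemma~\ref{lem:tamepullback}, applied to the family map $M\times\R\times[0,1]\to M\times\R$, $(x,t,\lambda)\mapsto(x,\lambda t)$. Adding a function of the base is handled by enlarging $a$. I will verify these hypotheses on the total family over $M\times\R\times[0,1]\times\Delta^k$, which is the parametric statement needed. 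If this direct route ran into trouble, the fallback would be Theorem~\ref{thm:spacelevelchekanov} on $[-T,T]$, together with an exhaustion argument.
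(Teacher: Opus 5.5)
Your proposal is correct and is essentially the paper's argument made explicit: the paper subtracts $C(t)=3t^3$ to view an element of $\GF_{\Lambda\times\Lambda_C}(M\times\R)^*$ as an $\R$-family of generating functions for $\Lambda$ and uses contractibility of $\R$, and your map $\mathcal{J}$ and rescaling homotopy $g_\lambda$ are exactly the inclusion of constant families and the contraction $t\mapsto\lambda t$ that realize this (so the opening plan via Theorem~\ref{thm:spacelevelchekanov} is not needed). One small slip: in your displayed description of what $g_\lambda$ generates, the condition should read $(x,p,z-3t^3)\in\Lambda$ rather than $(x,p,z-3\lambda^3t^3)\in\Lambda$, as your own $z$-component computation right afterwards shows.
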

\begin{proof}
Since $\Lambda_C$ is graphical over $\R$, all functions $(x,v)\mapsto f(x,t,v)-C(t)$ generate the same Legendrian $\Lambda$
and thus $\GF_{\Lambda\times \Lambda_C}(M\times \R)^*$ can be identified with the space of maps from $\R$ to $\GF_\Lambda(M)^*$, and hence
 restriction at $t=0$ is a homotopy equivalence.
\end{proof}
We conclude:

\begin{thm}\label{thm:GF_RD}
The maps $\RR$ and $\DD$ are inverse homotopy equivalences between the spaces $\GF_\Lambda(M)^*$ and $\GF_{\Lambda\times \Lambda_D}(M\times \R)^*$.
\end{thm}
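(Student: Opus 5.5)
The plan is to assemble the three preceding propositions. By Proposition~\ref{prop:R=I0A} we have the factorization $\RR=\II_0\circ \A$. Proposition~\ref{prop:Aequiv} shows that $\A$ is a homotopy equivalence, with homotopy inverse $\A'$. Proposition~\ref{prop:I0} shows that $\II_0$ is a homotopy equivalence. Since a composition of homotopy equivalences is a homotopy equivalence, $\RR$ is a homotopy equivalence between $\GF_{\Lambda\times\Lambda_D}(M\times\R)^*$ and $\GF_\Lambda(M)^*$.

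Next, Proposition~\ref{prop:RD} gives $\RR\circ\DD\simeq \id_{\GF_\Lambda(M)^*}$. Choose a homotopy inverse $\RR^{-1}$ of $\RR$. Then
\[\DD\simeq \RR^{-1}\circ\RR\circ\DD\simeq \RR^{-1}.\]
So $\DD$ is itself a homotopy inverse of $\RR$, and we also get $\DD\circ\RR\simeq\RR^{-1}\circ\RR\simeq\id$. This establishes the theorem. All statements are at the level of Kan complexes and their geometric realizations, so homotopy inverses exist and the usual manipulations apply.

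The only real content is in the inputs, and I would double-check two of them.

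First, the parametric version of Proposition~\ref{prop:Aequiv}. The Hadamard division and the change of variables $u'=u+g$ must be carried out continuously over simplices $M\times\Delta^k$, preserving tameness and excellence along the path from $f'$ to $f'''$. The change of variables is a fibered diffeomorphism, so the path can be realized inside $\GF$ as in the proof of Proposition~\ref{prop:pi_0=equiv}.

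Second, the identification in Proposition~\ref{prop:I0} of $\GF_{\Lambda\times\Lambda_C}(M\times\R)^*$ with maps $\R\to\GF_\Lambda(M)^*$. This uses that subtracting $C(t)$ is a homeomorphism of simplicial sets. It also uses that a family over $M\times\R\times\Delta^k$ is the same as an $\R$-family of $k$-simplices, which holds up to the simplicial-versus-smooth comparison. Restriction to $t=0$ is then a homotopy equivalence because $\R$ is contractible, for instance via the pullback along $t\mapsto \lambda t$ for $\lambda\in[0,1]$.

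I expect the main obstacle to be this last point: making the contraction of the $\R$-parameter honest. Concretely, one must check that rescaling $t$ preserves fiberwise tameness uniformly, with the tameness function $a$ adjusted accordingly. Everything else is formal.
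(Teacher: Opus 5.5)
Your proposal is correct and is essentially the paper's own proof: $\RR=\II_0\circ\A$ is a homotopy equivalence by Propositions~\ref{prop:Aequiv}, \ref{prop:I0} and \ref{prop:R=I0A}, and composing $\RR\circ\DD\simeq\id$ (Proposition~\ref{prop:RD}) with a homotopy inverse of $\RR$ gives $\DD\circ\RR\simeq\id$. The tameness concern you raise about rescaling $t\mapsto\lambda t$ is not a real obstacle: after subtracting $C(t)$, the rescaling is a pullback along $M\times\R\times[0,1]\to M\times\R$, and Lemma~\ref{lem:tamepullback} already covers that case.
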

\begin{proof}
By Proposition~\ref{prop:Aequiv}, Proposition~\ref{prop:I0} and Proposition~\ref{prop:R=I0A}, we learn that the map $\RR$ is
a homotopy equivalence. Now from Proposition~\ref{prop:RD}, we have $\RR\circ \DD \sim \id$, so composing with a homotopy inverse of $\RR$
we get $\DD\circ \RR \sim \id$.
\end{proof}

This also holds at the level of semi-local generating functions.
\begin{thm}\label{thm:muGF_RD}
The maps $\RR$ and $\DD$ induce inverse homotopy equivalences between the spaces $\mu\GF_\Lambda(M)^*$ and $\mu\GF_{\Lambda\times \Lambda_D}(M\times \R)^*$.
\end{thm}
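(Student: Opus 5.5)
The plan is to repeat, in the semi-local setting, the argument that proved Theorem~\ref{thm:GF_RD}. All the maps involved are given by explicit formulas in the generating function: $\DD$ adds $D(t,w)$, $\RR$ reinterprets $t$ as a fiber variable, $\A$ and $\A'$ add $\pm st$, and $\II_0$ restricts to $\{t=0\}$. Each of these formulas makes sense for a pair $(f,\Sigma)$ with $\Sigma\subset\Sigma_f$ closed and open. Concretely:
\begin{itemize}
\item $\DD(f,\Sigma)$ is the pair $f\oplus D$ together with $\Sigma\times\Sigma_D$.
\item For $\RR$, the singular set of the reduced function is the intersection with the slice $\{t=0\}$, i.e.\ $\Sigma\cap\{t=0\}$.
\item For $\A$, the singular set transforms by the linear symplectic reduction, which sends closed-open subsets to closed-open subsets.
\end{itemize}
Tameness of the underlying global functions is handled exactly as in the global case, using Lemmas~\ref{lem:tamesum}, \ref{lem:tamepullback} and \ref{lem:tamepushforward}. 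The properness hypotheses there only concern the whole singular set, and these are unchanged from the global case. I also need to check that these maps respect the identification of pairs that agree near $\Sigma$, so that they descend to $\mu\GF$. This holds because each formula is local in the fiber variables, once the relevant neighborhoods are transported.

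The steps then go in the same order as before. First, $\RR\circ\DD\simeq\id$ on $\mu\GF_\Lambda(M)^*$; this is the proposition already stated after Proposition~\ref{prop:RD}, since $D$ is conjugate to $h$. Second, $\A'\circ\A\simeq\id$ and $\A\circ\A'\simeq\id$, proved as in Proposition~\ref{prop:Aequiv}. Third, $\RR=\II_0\circ\A$, which is the same direct computation as in Proposition~\ref{prop:R=I0A}. Fourth, $\II_0$ is a homotopy equivalence on $\mu\GF_{\Lambda\times\Lambda_C}(M\times\R)^*$. Once these are in place, the formal argument from Theorem~\ref{thm:GF_RD} applies verbatim: $\RR$ is an equivalence, and composing $\RR\circ\DD\simeq\id$ with a homotopy inverse of $\RR$ gives $\DD\circ\RR\simeq\id$.

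The main care is needed in the second step. The coordinate changes $s'=s-t$ and $u'=u+g(x,t,v,s')$ are global fibered diffeomorphisms, so they transport the chosen component $\Sigma$ to the corresponding component of the stabilized function. The homotopy from $f'$ to $f'''$ is a path of fibered diffeomorphisms, and along it the singular set moves by the induced isotopy. Hence it defines a path in $\mu\GF$: the closed-open piece is carried along continuously, and this works parametrically over simplices.

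The fourth step is the point I expect to be the main obstacle. Because $\Lambda_C$ is graphical, for each $t$ the slice of a semi-local generating function for $\Lambda\times\Lambda_C$ is a semi-local generating function for $\Lambda$, shifted by $C(t)$. This identifies $\mu\GF_{\Lambda\times\Lambda_C}(M\times\R)^*$ with maps $\R\to\mu\GF_\Lambda(M)^*$. Making this identification precise requires the Kan property of $\mu\GF$ and the fact that $\mu\GF(M)^*\to\LL(M)^*$ is a Kan fibration. Alternatively, both sides are torsors over $\Map(\Lambda,\Z\times BO)$ by Theorem~\ref{thm:spacelevelgiroux-latour}, and the restriction $\II_0$ is equivariant for the action restricted along $\Lambda\times\{0\}\hookrightarrow\Lambda\times\R$. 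This restriction is an equivalence because $\Lambda\times\{0\}\hookrightarrow\Lambda\times\R$ is a homotopy equivalence. I would present the torsor argument as a cross-check, and give the direct argument by slices as the main proof.
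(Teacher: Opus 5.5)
Your route is the one the paper intends. The paper gives no separate proof of Theorem~\ref{thm:muGF_RD}; it only asserts that the rotation argument for Theorem~\ref{thm:GF_RD} carries over, and your four steps are exactly that argument.

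\textbf{Gap: well-definedness of $\RR$, $\A$, $\A'$ on $\mu\GF$.} The one place where you supply a specifically semi-local justification is wrong. You say $\RR$, $\A$, $\A'$ land in tame generating functions because the properness hypotheses of Lemma~\ref{lem:tamepushforward} ``concern the whole singular set, and these are unchanged from the global case.'' They are not unchanged.
\begin{itemize}
\item In the global case all of $\Sigma_g$ generates $\Lambda\times\Lambda_D$, and that is the only source of properness.
\item For a representative $(g,\Sigma)$ of a simplex of $\mu\GF_{\Lambda\times\Lambda_D}(M\times\R)^*$, only $\Sigma$ is controlled. $\Sigma_g\setminus\Sigma$ is constrained only by $g$ being an excellent tame generating function over $M\times\R$.
\item Nothing prevents $\Sigma_g\setminus\Sigma$ from containing a component $\Sigma'$ along which $\partial_t g\equiv 0$ for all $t$, e.g.\ one generating $\Lambda'\times 0_{J^1\R}$.
\item Such a $\Sigma'$ lies in the fiberwise critical set of $\RR(g)$ as a non-compact locus of dimension $\dim M+1$. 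So $\RR(g)$ violates the transversality condition and is not fiberwise tame over $M$.
\item Similarly, in $\A(g)(x,t,v,s)=g(x,s,v)+st$ the component $\Sigma'$ contributes the points $(x,0,v,s)$ with $(x,s,v)\in\Sigma'$. These are not proper over $M\times\R_t$, since $s$ is unbounded over a fixed $(x,0)$.
\end{itemize}
So, as written, these maps are not defined on $\mu\GF$.

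\textbf{Repair.} The fix is available, but it must be stated.
\begin{itemize}
\item Simplices of $\mu\GF$ are classes of pairs agreeing (stably) near $\Sigma$, and you already checked that the formulas are local. So define $\RR$, $\A$, $\A'$ on germs along $\Sigma$.
\item For germs the only properness needed is that of $\Sigma$ itself. $\Sigma\cap\{t=0\}\cong\Lambda$ is compact. $\Sigma_{\A(g)}\cong\Sigma$ is proper over $M\times\R_t$ because $\Lambda\times\Lambda_C$ is.
\item Transversality of the reduced germ along $\Sigma\cap\{t=0\}$ holds because the Lagrangian projection of $\Lambda_D$ meets $\{p_t=0\}$ transversely.
\item Then note that every such germ over $M\times\Delta^k$ admits some tame excellent extension. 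For example, take a generic one equal to $\|v\|^2$ at infinity, as in the remark after Definition~\ref{dfn:semilocalgf1}.
\item Existence of an extension is a property of the germ, not a choice. Face maps only see germs, so no coherence between extensions is needed.
\item The intermediate functions in your step~2 homotopy need the same treatment.
\end{itemize}

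\textbf{Smaller point.} Theorem~\ref{thm:spacelevelgiroux-latour} is stated for compact $\Lambda$. Your torsor cross-check for the non-compact $\Lambda\times\Lambda_C$ therefore needs an extension of it; your slice argument does not have this issue.
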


Finally we observe that for $t>0$, $\Lambda_D$ is (multi-)graphical over $\R$, and hence a semi-local generating function defined
for $t<\epsilon$ with $\epsilon >0$ can be extended to the whole $\R$ in a homotopically unique way. So setting
\[\GF^{\leq 0}_{\Lambda\times \Lambda_D}(M\times \R)^*:=\colim_{\epsilon >0} \GF_{\Lambda\times \Lambda_D}(M\times (-\infty,\epsilon))^*,\]
and similarly for the semi-local version with $\GF$ replaced by $\mu \GF$,
we have:
\begin{prop}\label{prop:muleq0}
The restriction map $\mu\GF_{\Lambda\times \Lambda_D}(M\times \R)^* \to \mu\GF^{\leq 0}_{\Lambda\times \Lambda_D}(M\times \R)^*$ is a homotopy equivalence.
\end{prop}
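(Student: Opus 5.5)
The plan is to exploit the fact that over the open region $\{t>0\}$ the Legendrian $\Lambda\times\Lambda_D$ is a disjoint union of two graphical copies of $\Lambda$: the sheets $\Lambda^\pm_t$, obtained by shifting $\Lambda$ by $\pm 2t^{3/2}$. So over $M\times(0,\infty)$ the Legendrian is the trace of a Legendrian isotopy, indeed a product up to a $t$-dependent vertical translation of each sheet. I will show that both the source and the target of the restriction map are homotopy equivalent to the space of semi-local generating functions over the slab $M\times(-\infty,\epsilon)$, for any fixed $\epsilon>0$. I will then check that the colimit over $\epsilon$ is essentially constant.

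For the first comparison, I fix $\epsilon>0$ and consider restriction
\[\mu\GF_{\Lambda\times\Lambda_D}(M\times\R)^*\to\mu\GF_{\Lambda\times\Lambda_D}(M\times(-\infty,\epsilon))^*.\]
I choose a diffeomorphism $\phi$ of $(0,\infty)$ onto $(0,\epsilon)$ which is the identity near $0$. Pulling back by $\mathrm{id}_M\times\phi$, corrected by the fiber-preserving contactomorphism of $J^1(M\times\R)$ that rescales the vertical shifts, identifies the Legendrian over $t\in(0,\infty)$ with the one over $(0,\epsilon)$. For semi-local generating functions the vertical shift of each sheet is harmless: adding a function $c(t)$ to $f$ near the component $\Sigma^\pm$ translates that sheet, and this can be done independently on each component. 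This is exactly where the semi-local setting matters, since a global $f$ cannot be shifted sheetwise. With this identification, the space over $M\times\R$ is equivalent to the space over $M\times(-\infty,\epsilon)$.

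Alternatively, and more robustly, I would invoke that $\mu\GF(M)^*\to\LL(M)^*$ is a Kan fibration. Over $[\epsilon/2,\infty)$ the Legendrian is the trace of an isotopy, so the Chekanov-type homotopy lifting shows that restriction from $M\times(-\infty,T)$ to $M\times(-\infty,\epsilon)$ is an equivalence for every $T>\epsilon$. Passing to the limit $T\to\infty$ then uses that $\mu\GF$ satisfies descent for increasing unions; this holds because generating functions over $M\times\R$ can be built compatibly on an exhaustion and the Kan fibration property makes the tower maps fibrations. I would use the first argument for the key step and this second one as the fallback.

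Next, for $\epsilon'<\epsilon$ the transition maps in the colimit are restrictions of the same type, hence equivalences by the same argument. A filtered colimit of equivalences in Kan complexes is equivalent to each term, so the composite
\[\mu\GF_{\Lambda\times\Lambda_D}(M\times\R)^*\to\mu\GF^{\leq0}_{\Lambda\times\Lambda_D}(M\times\R)^*\]
is an equivalence. The main obstacle is making the first step precise at the simplicial level, uniformly over $\Delta^k$. The rescaling or isotopy-lifting must be done parametrically, and the extension past $t=\epsilon$ must be homotopically unique, not merely possible. For this I would rely on the torsor description in Theorem~\ref{thm:spacelevelgiroux-latour}. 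Over $\{t>0\}$ the Gauss map data of the two sheets are pulled back from $\Lambda$, so $\Map(\Lambda\times\Lambda_D,\Z\times BO)$ restricts by an equivalence to the slab, since the inclusion of the slab is a homotopy equivalence of the underlying Legendrian. I would therefore conclude by comparing the two torsors, and hence the two spaces of null-homotopies of the stable Gauss map, under restriction.
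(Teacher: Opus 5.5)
Your main argument is essentially the paper's. The paper justifies the proposition only by remarking that over $t>0$ the Legendrian $\Lambda\times\Lambda_D$ consists of two sheets graphical over $\R_t$, so a semi-local generating function on $M\times(-\infty,\epsilon)$ extends in a homotopically unique way; your reparametrization $\phi$ combined with the sheetwise shifts $c_\pm$ near $\Sigma^\pm$ makes this explicit. To conclude that the \emph{restriction} map itself is the equivalence, and not merely that the two spaces are abstractly equivalent, use the straight-line homotopy $(1-s)\,\mathrm{id}+s\phi$ to show your map is a homotopy inverse to restriction.

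One correction: the union of the two sheets is \emph{not} the trace of a Legendrian isotopy in $J^1M$. The slices $\Lambda+2t^{3/2}$ and $\Lambda-2t^{3/2}$ intersect whenever $4t^{3/2}$ is the length of a Reeb chord of $\Lambda$; these are exactly the crossings exploited in Section~\ref{sec:crossing} and in the proof of Theorem~\ref{thm:main}. So your fallback via the Kan fibration $\mu\GF(M)^*\to\LL(M)^*$ is invalid as stated, and the same false claim appears in your opening paragraph. The fallback only works if you apply it to each sheet separately, which is precisely where the semi-local setting is needed.
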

Note that for a given closed Legendrian $\Lambda$, the above colimit is actually achieved for some small enough $\epsilon>0$.

\subsection{From semi-local to global via doubling}

We now turn to the main result of this section which allows to pass from semi-local generating functions
for a Legendrian $\Lambda$ to genuine generating functions for $\Lambda\times \Lambda_D$ over $M\times(-\infty,\epsilon)$ for small $\epsilon >0$.

\begin{thm}\label{thm:GFdoubling}
The map $\GF^{\leq 0}_{\Lambda\times \Lambda_D}(M\times \R)^* \to \mu\GF^{\leq 0}_{\Lambda\times \Lambda_D}(M\times \R)^*$ is a homotopy equivalence.
\end{thm}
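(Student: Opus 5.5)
The plan is to show that the forgetful map $\GF^{\leq 0}_{\Lambda\times \Lambda_D}(M\times \R)^* \to \mu\GF^{\leq 0}_{\Lambda\times \Lambda_D}(M\times \R)^*$ is a Kan fibration with contractible fibers. We already know it is a Kan fibration, since the maps $\GF\to\mu\GF$ are. So it suffices to fix a $k$-simplex $(f,\Sigma)$ of $\mu\GF^{\leq 0}$ over $M\times\Delta^k$, and show that the space of global extensions is contractible. Such an extension is a global generating function for $\Lambda\times\Lambda_D$ over $M\times(-\infty,\epsilon)$ agreeing with $f$ near $\Sigma$, up to stabilization. The key geometric input is that over $\{t<0\}$ the Legendrian $\Lambda\times\Lambda_D$ is empty. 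Over a neighbourhood of $\{t\le 0\}$ it is the cusp-shaped doubling, with the two sheets meeting along $\{t=0\}$ with opposite Morse indices (differing by one). I would therefore split the extension problem into two pieces: near $\Sigma$ the function is prescribed, and away from $\Sigma$ we need a fiberwise tame function without fiberwise critical points matching the prescribed germ.

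First I would normalize the germ near the cusp locus. By Giroux--Latour (Theorem~\ref{thm:spacelevelgiroux-latour}), after stabilization and a fibered diffeomorphism, the germ of $f$ near $\Sigma$ is homotopically uniquely of the form $f_0(x;u)+D(t,w)+q_{x,t}(y)$. Here $f_0$ is a semi-local generating function for $\Lambda$, $\Sigma_{f_0}\cong\Lambda$, and $q$ is a fiberwise quadratic form. Near the birth--death locus, the semi-local germ is a standard cusp family. The relevant statement I would prove is that the space of germs along $\Sigma$, together with a tubular neighbourhood, retracts onto the space of \emph{framings}, namely a vector bundle over $\Lambda$ with a quadratic form. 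The important point is that these framings are defined on $\Lambda$ itself rather than on the two sheets separately, because the sheets are glued along the cusp.

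Next I would construct an explicit global extension, which gives non-emptiness. Choose a tubular neighbourhood $\nu\Sigma\subset M\times(-\infty,\epsilon)\times\R^N$ on which the germ is $D(t,w)+Q(y)$ in local coordinates, with $Q$ the fiberwise quadratic form on a bundle $V$ over $\Lambda$. After stabilizing, $V\oplus V'$ becomes trivial, so the germ extends globally over $\Lambda$-directions. The essential observation is that $D$ itself is a global tame function conjugate to $uv$ with no critical points for $t<0$. Its sublevel sets therefore change from the "empty" behaviour to the cancelling pair in a controlled way. We then glue the local model $D+Q$ to a standard function at infinity, which is linear or quadratic in the complement, using a fiberwise cutoff. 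Tameness is checked with Lemma~\ref{lem:tamesum}, and the absence of extra critical points holds because in the collar the function is a small perturbation of one without fiberwise critical points.

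Finally, for contractibility I would prove the parametric version: extensions over $\partial\Delta^k$ extend over $\Delta^k$, which kills all homotopy groups of the fiber. The difference between two extensions lives on the complement of $\nu\Sigma$, where the functions have no fiberwise critical points and satisfy prescribed boundary and infinity behaviour. This is exactly the relative version of Proposition~\ref{prop:emptyleg}. There, functions without critical points were stably equivalent to a projection, with stabilizer stably equivalent to $\G$, and here one runs that argument relative to $\partial\nu\Sigma$. I expect this step to be the \textbf{main obstacle}. One must show that the space of critical-point-free tame functions on the complement of a tubular neighbourhood, with fixed germ on its boundary, is contractible after stabilization. This is a relative parametrized $h$-cobordism type question, of the kind connected to Waldhausen's spaces (cf. Remark~\ref{rem:waldhausen}). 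I would try to avoid it by using the doubling: because over $t<0$ everything is empty, and the cusps cancel, I would sweep in $t$ from $-\infty$ and use Moser's trick as in Proposition~\ref{prop:pi_0=equiv}. This reduces to the single cancellation model $D$, where the needed uniqueness is the standard uniqueness of birth--death families after stabilization.
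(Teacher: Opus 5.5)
You have the right frame: the map is a Kan fibration, so it suffices to show the fiber over a fixed semi-local germ $\phi$ is contractible. Your Step~1 normalization via Giroux--Latour is not needed, since the germ near $\Sigma$ is fixed in the fiber. The gap is that neither non-emptiness nor contractibility of that fiber is actually established.

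In Step~2, the claim that the glued function has no extra fiberwise critical points ``because in the collar it is a small perturbation of one without fiberwise critical points'' is unjustified. A cutoff interpolation between the local model and an unrelated function at infinity generally creates critical points. Extending the germ of $\phi$ to a global tame function without new critical points is exactly what must be proved.

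In Step~3 you correctly isolate the relative statement you need, but your proposed way around it does not work. Uniqueness of birth--death concerns the germ at the cusp locus, which is already fixed. The freedom and the obstructions lie in how that fixed germ sits inside a global critical-point-free function for $t<0$. Moser's trick produces equivalences between functions that already agree near the critical set; it does not contract the space of extensions. Also, behaviour at infinity is not prescribed in $\GF$, so ``fixed infinity behaviour'' is not the right relative problem.

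The missing idea is the paper's reduction to a gradient condition along $\Sigma_0=\Sigma_\phi\cap\{t=0\}$. The steps are:
\begin{enumerate}
\item Choose an embedded box $K=\psi(U\times[-a,a]\times[-\epsilon,\epsilon])$ around $\Sigma_0$. It should preserve $t$, lie inside the domain of $\phi$, and $\phi$ should be the projection $\phi_0(r(u))+z$ on its bottom face $\{t=-\epsilon\}$ and its side faces.
\item A retraction of $\{t\le\epsilon\}$ onto $\{t\le-\epsilon\}\cup K$ shows that an extension is determined up to contractible choice by its restriction to the slice $t=-\epsilon$. That restriction is a fiberwise critical-point-free tame function equal to this projection on $K\cap\{t=-\epsilon\}$.
\item By shrinking or dilating the box, this constraint is equivalent to prescribing the fiberwise gradient along $\Sigma_0$.
\end{enumerate}
Hence the fiber is the homotopy fiber of $E\to\Map(\Sigma_0,S^{n-1})$, $f\mapsto\nabla f|_{\Sigma_0}$. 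Here $E$, the space of fiberwise critical-point-free tame functions, is stably contractible by Proposition~\ref{prop:emptyleg}, and $\Map(\Sigma_0,S^{n-1})$ is $(n-\dim\Lambda-2)$-connected. Letting $n\to\infty$ gives a contractible, in particular non-empty, fiber.

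This is why no relative $h$-cobordism question arises. The constraint is concentrated on the $\dim\Lambda$-dimensional set $\Sigma_0$, not on a codimension-zero boundary, and it is absorbed by the connectivity of $S^{n-1}$.
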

\begin{proof}
We first consider the case where $\Lambda$ is empty. In this case, the right hand side is reduced to a single point so we need to prove
that the left hand side is contractible.
Since $\Lambda$ is empty, the restriction map at $t=0$ (or at any $t=t_0$ for $t_0\leq 0$) $\GF^{\leq 0}_\emptyset(M\times \R) \to \GF_\emptyset(M)$
is a homotopy equivalence. The result now follows from Proposition~\ref{prop:emptyleg}.

Another extreme case is when $M$ is a single point and $\Lambda$ consists of finitely many points $z_1,\dots,z_k$ in $\R=J^1(*)$.
Let $\phi\in \mu\GF^{\leq 0}_{\Lambda\times \Lambda_D}(\R)^*$, this is a germ of function $\phi:\R^n\times \R \to \R$
near finitely many points $(v_1,0),\dots,(v_k,0)$. For each of these points, we pick an embedding $\psi_i:D^{n-1} \times [z_i-a,z_i+a]\times[-\epsilon,\epsilon]\to \R^n\times \R$ whose image is contained in the domain of definition of $\phi$ ($a>0$ and $\epsilon>0$ can be taken arbitrarily small) and satisfies:
$\psi_i(0,z_i,0)=(v_i,0)$, $\psi_i(x,z,t)=(j(x,z,t),t)$, $\phi(\psi_i(x,z,-\epsilon))=z$, $\phi(\psi_i(x,z_i\pm a,t))=z_i\pm a$
and $\phi(\psi_i(x,z,t))=z$ for $|x|=1$. Denote $K_i$ the image of $\psi_i$.
We need to prove that the space of functions $f:\R^n\to \R$ which coincide with $\phi$ on each $K_i$ is highly connected (thus contractible in the limit $n\to \infty$).
We claim that restriction at $t=-\epsilon$ yields a homotopy equivalence between this space and the space of functions
$f:\R^n\to \R$ which have no critical points and agree with $\phi$ on $K_i\cap\{t=-\epsilon\}$, namely $f(\psi(x,z,-\epsilon))=z$.
Indeed by a suitable retraction of $\{t\leq \epsilon\}\subset \R^n\times \R$ onto $\{t\leq -\epsilon\}\cup \cup_i K_i$, we can deform
such functions until they coincide on $\{t\leq \epsilon\}$ provided they already coincide in $\{t\leq-\epsilon\}\cup \cup_i K_i$.
Finally we need to argue that the space $F$ of functions $\R^n\to \R$ without critical points which coincide with the projection $D^{n-1}\times [z_i-a,z_i+a]\to [z_i-a,z_i+a]$ in finitely many such disjoint cylinders embedded in $\R^n$, is highly connected when $n$ goes to infinity.
We claim first that this space is equivalent to the space of functions without critical points $f$ such that $\nabla f(v_i)=dz$.
Indeed, if $\nabla f (v_i)=dz$, we can deform slightly $f$ so that $f=z$ near $v_i$ (the center of $K_i$), then by dilating $K_i$ we can arrange $f=z$
on the whole $K_i$. From the space $E$ of all functions $\R^n\to \R$ without critical points, the collection of $\nabla f(v_i)$ thus defines a map to $(\R^n\setminus\{0\})^k\simeq (S^{n-1})^k$ whose fiber is homotopy equivalent to the space $F$.
The space $(S^{n-1})^k$ is $(n-2)$-connected and $E$ is also highly connected (see Proposition~\ref{prop:emptyleg}), therefore $F$ is highly connected.

We now turn to the general case. We consider the fiber over some semi-local generating function $\phi$ for $\Lambda\times \Lambda_D$,
namely the space of generating functions $f:M\times \R\times \R^n\to \R$ for $\Lambda\times \Lambda_D$ over $M\times (-\infty, \epsilon)$
which agree with $\phi$ near $\Sigma_\phi=\Sigma_f$. Denote $\Sigma_0=\Sigma_\phi\cap\{t=0\}$ and $\phi_0$ the restriction of $\phi$ to $\Sigma_0$.
We pick a tubular neighborhood $U\times [-a,a]$ of $\Sigma_0$ (denote $r:U\to \Sigma_0$ the corresponding projection) and
an embedding $\psi:U\times[-a,a]\times[-\epsilon,\epsilon]\to M\times \R^n\times \R$ such that
$\psi$ is the identity on $\Sigma_0\times\{0\}$, $\psi(u,z,t)=(*,*,t)$, $f(\psi(u,z,-\epsilon))=\phi_0(r(u))+z$ and $f(\psi(u,z,t))=\phi_0(r(u))+z$ for $u\in \del U$.
As in the previous case, the function $f$ is then determined up to contractible choice by its restriction to $t=-\epsilon$. The gradient $\nabla f$
along $\Sigma_0$ defines a map from the space of functions $f:M\times \R^n\to \R$ fiberwise without critical points to
the space $\Map(\Sigma_0, S^{n-1})$, both of which are highly connected, with fiber homotopy equivalent to the space we are considering, so the claim follows.
\end{proof}

\subsection{Doubling for sheaves} \label{sec: sheaf doubling}
We turn to the sheaf setting, where the doubling trick was first employed by Guillermou in \cite{Guillermou} (see also the exposition in \cite[Sec. 7]{Nadler-Shende}).

The construction is guided by the fact that it should be compatible with the map $\s$ from generating function to sheaves.
Let $D: \R^2 \to \R$, $D(t,w) = w^3-3tw$ be the function from Subsection~\ref{gen fn doubling}. As there, we regard it as a generating function over the $t$ line, so we have some $\s(D) \in \Sh^+(\R)$. After embedding $\Sh^+(\R_t) \hookrightarrow \Sh(\R_t \times \R_z)$, $\s(D)$ is a sheaf on $\R^2$ with positive microsupport in the $z$ direction.

Recall that, in general, there is a product $\star \, : \Sh^+(M) \times \Sh^+(N) \to \Sh^+(M \times N)$ given, after the embeddings $\Sh^+(M) \to \Sh(M \times \R)$ etc, by external tensor product in the $M, N$ factors and convolution in the $\R$ factors.

We define
\begin{align*}
    \DD: \Sh^+(M) & \to \Sh^+(M \times \R) \\
    F & \mapsto F \star \s(D)
\end{align*}
The standard estimate on pushforwards of microsupport shows that, as for generating functions, $ss(F \star \s(D)) \subset ss(F) \times \Lambda_D$.
So $\DD$ restricts to a map
\[\DD:\Sh^+_\Lambda(M)^*\to \Sh^+_{\Lambda\times \Lambda_D}(M\times \R)^*.\]

We define
$\mathcal{R}: \Sh^+(M \times \R_t) \to \Sh^+(M)$ as pushforward along the $\R_t$ factor.  The following are straightforward 6-functor manipulations:

\begin{lemma} \label{sheaf r after d}
    $\mathcal{R}  \circ \mathcal{D} = 1$
\end{lemma}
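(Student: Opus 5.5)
We need $\RR(\DD F) = \pi_{t*}(F\star \s(D)) \simeq F$ for $F\in\Sh^+(M)$, where $\pi_t : M\times\R_t\times\R_z\to M\times\R_z$. The plan is to pull the pushforward in $t$ inside the convolution and then compute the resulting sheaf on $\R_z$ directly.

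The first step is to commute $\pi_{t*}$ with the convolution. The convolution $F\star G$ is $a_*(F\boxtimes G)$, with $a$ adding the $z$-coordinates. Pushforwards compose, and the $t$-variable only appears in the second factor. By base change along the projections, together with the fact that pushforward along a product projection commutes with external tensor product with $F$ (using compactness in the $z$-direction, or the tameness substitute as in Lemma~\ref{gen fn sheaf support}), we get
\[
\pi_{t*}(F\star \s(D)) \simeq F\star (p_*\s(D)),
\]
where $p:\R_t\times\R_z\to\R_z$. Since $F$ lives in $\Sh^+$, working in Tamarkin's quotient lets us ignore the cone correction $u_-$. It is also convenient to replace $\star$ by its version with proper pushforward $a_!$; the two agree on $\Sh^+$ up to the standard identifications.

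The second step is to identify $p_*\s(D)\simeq \S_{[0,\infty)}$, the monoidal unit. Here I use the generating function side. $\s(D)$ is, up to the cone, $q_*\S_{\mathrm{Above}(D)}$ with $q:\R_t\times\R_w\times\R_z\to\R_t\times\R_z$. Pushing forward further along $t$ gives $q'_*\S_{\mathrm{Above}(D')}$, where $D'(t,w)=w^3-3tw$ is now viewed as a function on $\R^2$ over a point. This is exactly the sheaf $\s(D')$ of the function $D'$ over a point; the tameness needed is supplied by Lemma~\ref{lem:tamepushforward}, as in the construction of $\RR$ for generating functions. The shift by $n/2$ must also be tracked: $\s(D)$ carries shift $1/2$ for one fibre variable, and $\s(D')$ needs shift $1$ for two. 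So either the definition of $\RR$ absorbs a shift $[1/2]$, or I read the pushforward in $\Sh^+$ with the convention that matches the generating-function $\RR$; I will fix this convention explicitly. Since $D'$ is globally conjugate to $h(u,v)=uv$, $\s(D')\simeq\s(h)$. By Lemma~\ref{s and sum} and Corollary~\ref{stabilization effect on sheaf}, $\s(h)=\s(y^2)\star\s(-y^2)=\S_{[0,\infty)}[-1/2]\star\S_{[0,\infty)}[1/2]=\S_{[0,\infty)}$. Hence $F\star\S_{[0,\infty)}\simeq F$, with the last step being the unit property of convolution on $\Sh^+$.

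The main obstacle is the first step: justifying the commutation of $\pi_{t*}$ with convolution when $\pi_t$ is not proper. I will handle it by noting that $\s(D)$ is constant in $t$ outside a region that is controlled in $z$ (for $t<0$ it is the unit, and for $t\gg0$ it is a two-step sheaf supported in $z\geq -2t^{3/2}$). Alternatively, I can mimic the tameness trick: compactify the $t$-line fibrewise so that $\pi_t$ becomes proper and $\pi_{t*}=\pi_{t!}$, where the projection formula and base change hold. The shift bookkeeping in the second step is the other point requiring care.
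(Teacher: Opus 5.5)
Your argument is the same 6-functor computation the paper has in mind (the paper offers nothing beyond ``straightforward 6-functor manipulations''): move $\pi_{t*}$ past the convolution, identify $p_*\s(D)$ with the unit through $D'\cong h$, and apply the unit law. Your remark that $\RR$ needs a compensating half-shift is correct and is something the paper glosses over: $\RR$ on generating functions adds a fibre variable, whereas $\pi_{t*}$ leaves the underlying sublevel-set sheaf unchanged.

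There is one slip in your discussion of the commutation step. For $t<0$, $D_t$ has no critical points and $\Lambda_D$ is empty, so $\s(D)$ vanishes there; it is not the unit. This does no harm to the argument. The sheaf $\s(D)$ is a shift of $\S_Z$ with $Z=\{t>0,\ -2t^{3/2}\le z<2t^{3/2}\}$. Over any bounded $z$-window it is locally constant in $t$ outside a compact $t$-range, which is what you actually need for the commutation, and that holds. Computing directly, $p_*\s(D)$ comes out as a shift of $\S_{(-\infty,0)}$, which equals the unit only after passing to $\Sh^+$, as you anticipated.
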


To show that $\DD\circ \RR$ is also the identity, we can use the rotation operator $\A$
as we did for generating functions. Consider the function $A:\R^2\to \R$ defined by $A(s,t)=st$
and $\s(a)\in\Sh^+(\R\times \R)$ (it is a quantization of the rotation of angle $\pi/2$ of $\R^2$).
This kernel acts on $\Sh^+(M\times \R)$ and defines a map
\[\A:\Sh^+_{\Lambda\times \Lambda_D}(M\times \R)^*\to \Sh^+_{\Lambda\times \Lambda_C}(M\times \R)^*\]
Using the inverse rotation, we see that $\A$ is an equivalence.

Finally we have the restriction at $t=0$ operator $\II_0:\Sh^+(M\times \R)\to \Sh^+(M)$.

\begin{lemma}
We have
$\RR\circ \A=\II_0.$
\end{lemma}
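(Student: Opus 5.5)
The identity is a statement about integral kernels, so I would prove it at that level. I would write every functor involved as convolution with a sheaf on a product with an $\R_z$ factor, compose the kernels, and identify the composite kernel with that of $\II_0$ inside Tamarkin's quotient.

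First I would make the kernels explicit. Since $A(s,t)=st$ has no auxiliary fiber variables, $\s(A)$ is the constant sheaf $\S_{\{z\ge st\}}$ on $\R_s\times\R_t\times\R_z$, with no shift. Its image in $\Sh^+(\R_s\times\R_t)$ needs no cone correction, since nothing is nonconstant near $z=-\infty$ in a way that matters after the quotient.

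For $F\in\Sh^+(M\times\R_s)$, the rotated sheaf is obtained by pulling $F$ and $\S_{\{z\ge st\}}$ back to $M\times\R_s\times\R_t\times\R$, convolving in the $\R$-variables, and pushing forward along $s$. The functor $\RR$ then pushes forward along $t$. By commuting the two pushforwards, which is just composition of direct images, and using the projection formula for the convolution (pushforward in $t$ commutes with the addition map in the $z$ variables), I get
\[
\RR\circ\A(F)\simeq F\star_{s} K,\qquad K:=p_{t*}\,\S_{\{z\ge st\}}\in\Sh(\R_s\times\R_z),
\]
where $\star_s$ means tensor over $\R_s$, push forward along $s$, and convolve in $z$.

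Next I would compute $K$. For $s\neq0$, the fiber of $\{z\ge st\}$ over $(s,z)$ is a closed half-line in $t$. Sections over small neighborhoods are contractible, so $K$ has stalk $\S$ there. Over $s=0$ the fiber is all of $\R_t$ if $z\ge0$ and empty if $z<0$. Checking sections over small open boxes, using the fibration structure away from $s=0$, I expect
\[
K\simeq \mathrm{fib}\bigl(\S_{\R_s\times\R_z}\to \S_{\{0\}\times(-\infty,0)}\bigr).
\]
This identification is the main obstacle. The map $p_t$ is not proper, so base change does not directly give the stalks. To handle it I would first compactify $\R_t$, as in the footnote on tameness, replacing $\{z\ge st\}$ by its closure in $\R_s\times[-\infty,\infty]_t\times\R_z$ and checking that this changes nothing near the boundary. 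Alternatively, one can compute directly that $\Gamma$ over a box around $(0,z_0)$ with $z_0<0$ vanishes: the region is a union of two half-line families meeting nowhere over $s=0$, and the resulting pushout gives $0$ after taking the cone against $\S$.

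Finally I would pass to $\Sh^+$. The constant sheaf $\S_{\R_s\times\R_z}$ has zero microsupport, so it is killed by the quotient. Hence $K\simeq \S_{\{0\}\times(-\infty,0)}[-1]$ in $\Sh^+(\R_s)$. The triangle $\S_{(-\infty,0)}\to\S_{\R}\to\S_{[0,\infty)}$ on $\R_z$ gives $\S_{(-\infty,0)}[1]\simeq\S_{[0,\infty)}$ in $\Sh^+$, and therefore
\[
K\simeq \S_{\{0\}\times[0,\infty)}=i_{0*}\S_{[0,\infty)}.
\]
This is the unit for $\star$ placed at $s=0$. Convolving with it is precisely restriction to $\{s=0\}$, i.e. $F\mapsto F|_{M\times\{0\}}=\II_0(F)$.

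All identifications are natural in $F$ and compatible with products by $\Delta^k$, so they give the equality of maps of spaces of sheaves. As a consistency check, the same kernel computation is the sheaf shadow of the formula $\A(f)(x,0,s,v)=f(x,s,v)$ from Proposition~\ref{prop:R=I0A}, read through $\s$.
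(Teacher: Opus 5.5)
You want to reduce $\RR\circ\A$ to convolution with one kernel $K=p_{t*}\S_{\{z\ge st\}}$ on $\R_s\times\R_z$, and then recognize $K$ in $\Sh^+(\R_s)$ as the unit placed at $s=0$. That is a sensible way to fill in a lemma the paper states without proof. There is a genuine gap, though: the computation of $K$ at the one delicate locus is wrong.

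Take a small box $U$ around $(0,z_0)$ with $z_0<0$. As you say, $p_t^{-1}(U)\cap\{z\ge st\}$ is a disjoint union of two contractible families of half-lines, one over $s>0$ and one over $s<0$. Hence $\Gamma(U,K)\simeq\Hom(\text{two contractible pieces},\S)\simeq\S\oplus\S$, not $0$. Even after coning off the constant sheaf you are left with $\S$, not $0$.

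This is exactly the non-properness you flagged. The closure of $\{z\ge st\}$ in $\R_s\times[-\infty,+\infty]_t\times\R_z$ meets the locus $s=0,\ z<0$ in the two points $t=\pm\infty$. So a fiberwise compactification does not ``change nothing near the boundary''; the boundary is where the two summands live. Consequently $K$ is not $\mathrm{fib}(\S_{\R_s\times\R_z}\to\S_{\{0\}\times(-\infty,0)})$, whose stalk on that half-line is $0$. The map from the constant sheaf instead gives a triangle
\[
\S_{\R_s\times\R_z}\longrightarrow K\longrightarrow \S_{\{0\}\times(-\infty,0)}\longrightarrow \S_{\R_s\times\R_z}[1].
\]
Separately, your shift bookkeeping does not close up. Combining $K\simeq\S_{\{0\}\times(-\infty,0)}[-1]$ with $\S_{(-\infty,0)}[1]\simeq\S_{[0,\infty)}$ gives $\S_{\{0\}\times[0,\infty)}[-2]$, not $\S_{\{0\}\times[0,\infty)}$. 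Your unshifted conclusion comes from compensating slips.

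With the corrected triangle your method works, up to a shift. The constant sheaf dies in $\Sh^+(\R_s)$, so in your convention $K\simeq\S_{\{0\}\times(-\infty,0)}\simeq\S_{\{0\}\times[0,\infty)}[-1]$. The unnormalized kernels therefore give $\RR\circ\A\simeq\II_0[-1]$. You can check this on $F=\S_{\{z\ge0\}}$: there $\RR\circ\A(F)$ is the sublevel-set sheaf of the signature-$(1,1)$ form $st$, with stalk $\S\oplus\S$ for $z<0$ and $\S$ for $z\ge0$, just like $-y^2$ in the paper's example.

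The equality in the lemma holds only once the half-shift normalizations are built into $\A$ and $\RR$. On the generating-function side each adds a fiber variable. Also $\RR(\A f)(x;t,v,s)=f(x,s,v)+st$ is, by the Hadamard change of variables in the proof of Proposition~\ref{prop:Aequiv}, the signature-zero stabilization of $\II_0 f$, which the normalized $\s$ does not see.

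Two smaller points:
\begin{itemize}
\item Moving $p_{t*}$ past the tensor product with $F$ needs an argument. The projection formula concerns $!$-pushforward, and $p_{t*}\neq p_{t!}$ here.
\item Your consistency check concerns the other composite. The formula $\A(f)(x,0,s,v)=f(x,s,v)$ is the shadow of $\II_0\circ\A=\RR$, not of $\RR\circ\A=\II_0$.
\end{itemize}
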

Moreover $\II_0$ induces an equivalence $\Sh^+_{\Lambda\times \Lambda_C}(M\times \R)^* \to \Sh^+_\Lambda(M)^*$
since $\Lambda_C$ is graphical over $\R$.
So we obtain

\begin{thm}\label{thm:Sh_RD}
The maps $\RR$ and $\DD$ are inverse homotopy equivalences between $\Sh_{\Lambda\times \Lambda_D}(M\times \R)^*$ and $\Sh_\Lambda(M)^*$.
\end{thm}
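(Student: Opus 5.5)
The plan is to mirror the proof of Theorem~\ref{thm:GF_RD} in the sheaf setting, using the ingredients just assembled. Lemma~\ref{sheaf r after d} gives $\RR\circ\DD\simeq \id$ on $\Sh^+_\Lambda(M)^*$. So it suffices to show that
\[\RR:\Sh^+_{\Lambda\times\Lambda_D}(M\times\R)^*\to\Sh^+_\Lambda(M)^*\]
is a homotopy equivalence. Once that is known, composing $\RR\circ\DD\simeq\id$ with a homotopy inverse of $\RR$ yields $\DD\circ\RR\simeq\id$, exactly as in the generating function case.

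To see that $\RR$ is an equivalence, I will factor it through the rotation. The preceding lemma $\RR\circ\A=\II_0$ has its domain on $\Sh^+_{\Lambda\times\Lambda_C}$ rather than on $\Sh^+_{\Lambda\times\Lambda_D}$. I will therefore first check the correct form, namely that for $F$ microsupported in $\Lambda\times\Lambda_D$,
\[\II_0(\A F)\simeq \RR F.\]
This is a base-change computation. Convolving with the kernel of $st$ and then restricting to $t=0$ amounts to pushing forward along $s$, since the kernel restricted to $t=0$ is the unit $\S_{[0,\infty)}$ in the $z$-variable. Both operations are defined functorially on $M\times\Delta^k$, so the identity holds at the level of simplicial sets.

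The remaining two factors are then handled separately.

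\begin{itemize}
\item \emph{$\A$ is an equivalence.} Its inverse is convolution with the kernel of $-st$. The composite kernel $\s(st)\star\s(-st)$ is computed by Lemma~\ref{s and sum} together with the same Hadamard change of variables as in Proposition~\ref{prop:Aequiv}. This identifies it with $\s$ of a stabilized diagonal, i.e. the identity kernel.
\item \emph{$\II_0$ is an equivalence.} Since $\Lambda_C$ is a graph over $\R_t$, the Legendrian $\Lambda\times\Lambda_C$ is the trace of a Legendrian isotopy, namely the translation of $\Lambda$ by $C(t)$. Theorem~\ref{thm:spacelevelGKS} then gives that restriction to $t=0$ is a homotopy equivalence. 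Note that Theorem~\ref{thm:spacelevelGKS} is stated for compact cobordisms over $[0,1]$; I would apply it over each $[-T,T]$ and pass to the limit.
\end{itemize}

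The main obstacle I expect is making the microsupport estimates and proper base change precise for non-proper pushforwards along $\R_t$. First, $\RR$ must land in sheaves microsupported in $\Lambda$. Second, $\A$ must carry $\Lambda\times\Lambda_D$ into $\Lambda\times\Lambda_C$, which needs the reduction to be "proper" in the sense of Lemma~\ref{gen fn sheaf support}. Here I would use that $\Lambda$ is compact and that $\Lambda_D$ is empty for $t<0$ and multigraphical for $t>0$, so that sheaves are locally constant in $t$ for $t\ll0$. Working in Tamarkin's quotient $\Sh^+$ makes the contributions at infinity in $t$ negligible.
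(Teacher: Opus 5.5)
Your proposal is correct and is essentially the paper's argument. $\RR\circ\DD\simeq\id$ holds by Lemma~\ref{sheaf r after d}, and $\RR$ is an equivalence because it factors as $\II_0\circ\A$, with $\A$ inverted by the opposite rotation and $\II_0$ an equivalence because $\Lambda_C$ is graphical; you are right that the paper's lemma ``$\RR\circ\A=\II_0$'' should be read as $\RR=\II_0\circ\A$, as in Proposition~\ref{prop:R=I0A}.

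The only imprecision is in inverting $\A$. The composite kernel corresponds to the function $t(s-u)$ with $t$ as auxiliary variable. This is not fiberwise tame, since its critical locus $\{s=u\}\times\R_t$ is not proper, so the generating-function compatibilities and the Hadamard trick of Proposition~\ref{prop:Aequiv} do not literally apply. A direct computation of the pushforward of $\S_{\{z\geq t(s-u)\}}$ along $t$ instead gives the identity kernel up to a shift, which is enough.
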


\begin{lemma} \label{s intertwines}
    The map $\s$ intertwines the $\mathcal{R}, \mathcal{D}$ for generating functions with those for sheaves.
\end{lemma}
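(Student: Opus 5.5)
We need to show two things: $\s\circ\DD\simeq\DD\circ\s$ as maps $\GF_\Lambda(M)^*\to\Sh^+_{\Lambda\times\Lambda_D}(M\times\R)^*$, and $\s\circ\RR\simeq\RR\circ\s$ as maps $\GF_{\Lambda\times\Lambda_D}(M\times\R)^*\to\Sh^+_\Lambda(M)^*$. Both identifications must be natural in the generating function, so that they hold on $M\times\Delta^k$ compatibly with the pullback structure maps and hence give homotopies of simplicial maps.

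\textbf{The map $\DD$.} Here I use Lemma~\ref{s and sum}, applied in its external form over $M\times\R$. For $f$ on $M\times\R^n$ and $D$ on $\R_t\times\R_w$, the function $\DD f=f\oplus D$ lives over $M\times\R_t$. Its epigraph sheaf is the external convolution of $q_*\S_{\mathrm{Above}(f)}$ on $M\times\R_z$ with $q_*\S_{\mathrm{Above}(D)}$ on $\R_t\times\R_z$. The reason is that $\{f(x,u)+D(t,w)\le z\}$ is the preimage of $\{z_1+z_2\le z\}$ under $(z_1,z_2)\mapsto z_1+z_2$, fibred over the two epigraphs. Base change then identifies
\[q_*\S_{\mathrm{Above}(f\oplus D)}\simeq q_*\S_{\mathrm{Above}(f)}\star q_*\S_{\mathrm{Above}(D)}.\]
Proper base change needs properness, which tameness supplies via the fiberwise compactification used in Lemma~\ref{gen fn sheaf support}. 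The shift is consistent: $n/2+1/2=(n+1)/2$. Passing to Tamarkin's quotient takes care of the cones $u_-$, as in the proof of Lemma~\ref{s and sum}. This gives $\s(\DD f)\simeq\s(f)\star\s(D)=\DD(\s(f))$.

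\textbf{The map $\RR$.} For $g$ on $M\times\R_t\times\R^n$, the reduced function $\RR g$ is the same function viewed over $M$ with fibre $\R_t\times\R^n$. Let $p:M\times\R_t\times\R_z\to M\times\R_z$ be the projection. Then
\[q_*\S_{\mathrm{Above}(\RR g)}=p_*q_*\S_{\mathrm{Above}(g)}\]
by functoriality of pushforward. The shift changes from $n/2$ to $(n+1)/2$, a discrepancy of $1/2$. This has to match the normalization of the sheaf-theoretic $\RR$ (pushforward along $\R_t$), and I must check it against Lemma~\ref{sheaf r after d}. Consistency with $\RR\circ\DD$ forces the convention: by Corollary~\ref{stabilization effect on sheaf}, $\RR\DD$ of generating functions is a stabilization, which does not change $\s$. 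So the half-shift is absorbed into the definition of $\RR$ on sheaves, possibly through a degree convention on $\s(D)$.

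\textbf{Main obstacle.} The real work is the cone by $u_-$ together with non-properness. The $t$-direction is non-compact, and the pushforward $p_*$ has to commute with passage to Tamarkin's quotient $\Sh^+$. Concretely, I must show that $p_*$ of a sheaf with negative microsupport has negative microsupport, and that the constant-at-$-\infty$ part is preserved.

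For the first point I would use the pushforward microsupport estimate, with tameness of $\RR g$ (Lemma~\ref{lem:tamepushforward}) standing in for properness. For the second, I would check directly that $p_*$ sends the locally constant sheaf $F_-\boxtimes\S_\R$ for $g$ to the one for $\RR g$, since tameness makes sublevel sets stabilize uniformly for $z\ll0$.

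Everything above is functorial under pullback along $\Delta^k$, so the identifications assemble into simplicial homotopies.
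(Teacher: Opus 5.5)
Your route is the paper's: Lemma~\ref{s and sum} for $\DD$, and for $\RR$ the identity $q_*\S_{\mathrm{Above}(\RR g)}=p_*q_*\S_{\mathrm{Above}(g)}$ (the paper's ``functoriality of pushforward''). Your half-shift observation is correct, and the paper's one-line proof passes over it. If $\RR$ on sheaves is literally $p_*$, then $\s(\RR g)$ and $\RR\s(g)$ differ by a shift of $1/2$. So $\RR$ must be normalized as $p_*$ followed by a half-shift, and the same normalization is needed for Lemma~\ref{sheaf r after d}. The shift has to go into $\RR$ and not into $\s(D)$: renormalizing $\s(D)$ would break the exact compatibility for $\DD$ that Lemma~\ref{s and sum} gives you.

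One step of your ``main obstacle'' paragraph would fail, namely the claim that $p_*$ carries the sheaf $F_-\boxtimes\S_\R$ of $g$ to the corresponding sheaf of $\RR g$. Tameness gives no uniform stabilization along the non-compact $t$-line. Already for $g=D$ (which is $\DD$ of the zero function on $\R^0$), the tameness function must satisfy $a(t)>2t^{3/2}$.
\begin{itemize}
\item For each $t$, the set $\{D_t\le -c\}$ with $c\gg0$ (depending on $t$) is a half-line. Hence $F_-=\S_{\R_t}$ and $p_*(F_-\boxtimes\S_\R)=\S_\R$.
\item On the other hand, $\RR D$ is conjugate to $uv$, and $\{uv\le -c\}$ has two components, which gives $\S\oplus\S$ at $-\infty$.
\item Accordingly, $p_*$ of the cone representative of $\s(D)$ is, up to shift, $\S_{(-\infty,0)}$, which does not vanish near $-\infty$. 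The representative of $\s(\RR D)$ is a shift of $\S_{[0,\infty)}$. These two agree only in $\Sh^+$, where $\S_{(-\infty,0)}$ and $\S_{[0,\infty)}$ differ by a shift.
\end{itemize}
So you cannot compare representatives in $\Sh(M\times\R)$.

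The repair is easy. $p_*(F_-\boxtimes\S_\R)$ is pulled back from $M$ along $M\times\R\to M$. It is therefore microsupported in $\{\tau=0\}$ and is zero in Tamarkin's quotient. Hence the images of $p_*\s(g)$ and of $\s(\RR g)$ in $\Sh^+(M)$ are both the image of $p_*q_*\S_{\mathrm{Above}(g)}$, up to the half-shift, and you re-embed afterwards. For the intertwining, this special case is all you need. It replaces your general claim that $p_*$ preserves negatively microsupported sheaves, which tameness of $\RR g$ could not prove for arbitrary such sheaves anyway.
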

\begin{proof}
    For $\mathcal{D}$, this is a special case of Lemma \ref{s and sum}.  For $\mathcal{R}$, this follows from functoriality of pushforward.
\end{proof}

Finally, the sheaf counterpart of Theorem~\ref{thm:GFdoubling} is the following result of Guillermou.

\begin{theorem}  (\cite{Guillermou}, see also \cite{Nadler-Shende} for this  formulation) \label{thm:Shdoubling}
    Let $\Lambda \subset J^1 M$ be a Legendrian. Then for all sufficiently small $\epsilon$ and any $0 < \delta < \epsilon$, the composition of the following maps is an equivalence:
    $$\Sh_{\Lambda \times \Lambda_D}^+(M \times (-\infty, \epsilon))^* \to \Sh^+_{\Lambda_{\pm \delta}}(M \times \delta)^* \to \mu \Sh^+_\Lambda(M)^*.$$
    Here, the first map is the restriction to the slice over $\delta \in (0, \epsilon)$, and $\Lambda_{\pm \delta}$ are the two copies of $\Lambda$ in the symplectic reduction of $\Lambda \times \Lambda_D$ along this slice. The second map is the microlocalization along one of these copies.
\end{theorem}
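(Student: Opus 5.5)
The plan is to analyze the two maps in the composition separately and reduce the statement to Kashiwara–Schapira's microlocal cut-off and to the fact that, over $t>0$, $\Lambda\times\Lambda_D$ splits into two graphical sheets.

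\emph{Reduction to the first map.} Over $M\times(0,\epsilon)$ the Legendrian $\Lambda\times\Lambda_D$ is the disjoint union of two copies of $\Lambda$, translated by $\pm 2t^{3/2}$. Each copy is the trace of a Legendrian isotopy, so by Theorem~\ref{thm:spacelevelGKS} restriction to the slice $t=\delta$ from sheaves on $M\times(0,\epsilon)$ is a homotopy equivalence. For a sheaf $F$ on $M\times(-\infty,\epsilon)$, the behaviour near $t\le 0$ is then what must be controlled. Near $t=0$ the two sheets meet at the cusp edge.

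\emph{Local model at the cusp.} In the normal direction, $\Lambda\times\Lambda_D$ is locally $\Lambda\times(\text{cusp})$. The standard microlocal computation for the cusp shows two things:
\begin{itemize}
\item the microstalk of $F$ along the upper sheet is identified with the stalk jump;
\item $F$ is recovered from its microlocalization along one sheet, namely as a convolution of the form $G\star\s(D)$ near the cusp.
\end{itemize}
The cusp model for Legendrian surgery says that sheaves microsupported on a cusp-folded Legendrian are equivalent to local systems on the fold locus twisted by $\mu_\Lambda$. Concretely, I would construct an inverse
\[\mu\Sh^+_\Lambda(M)^*\to\Sh^+_{\Lambda\times\Lambda_D}(M\times(-\infty,\epsilon))^*.\]
Given a microsheaf $\mathcal F$, glue local constructions $\DD(F_U)$ from Theorem~\ref{thm:Sh_RD}: on small open sets $U\subset J^1M$, $\mathcal F$ lifts to an honest sheaf $F_U$ microsupported near $\Lambda\cap U$, after cutting off. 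Then show these local doubles are canonically independent of the choice of lift. The reason is that the two lifts differ by sheaves with microsupport away from $\Lambda\cap U$, which are killed in the $t\le0$ region once we pass to $\Sh^+$ with zero stalks at $-\infty$. Descent over a cover of $\Lambda$ then produces the global object, and the construction works parametrically over $\Delta^k$.

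\emph{Checking the inverse.} The composite $\mu\Sh\to\Sh\to\mu\Sh$ is the identity because the microlocalization of $\DD(F)$ along the upper sheet recovers that of $F$: both are given by the same convolution. The other composite requires fully faithfulness. For $F,G$ on $M\times(-\infty,\epsilon)$, $\Hom(F,G)$ should agree with $\Hom_{\mu}$ of their microlocalizations. Use the Sato–Kashiwara triangle
\[\Hom(F,G)\to\Gamma(\mu hom(F,G))\to\ldots\]
Here one must show the third term vanishes. It is governed by propagation of $\mathcal{H}om$ in the direction of $t\to-\infty$, where both sheaves are zero: $F$ vanishes for $t<0$ because $\Lambda_D$ is empty there. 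The microsupport estimate for $\mathcal Hom(F,G)$ at the cusp allows non-characteristic deformation in $t$.

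I expect this last step to be the main obstacle. The product $\Lambda\times\Lambda_D$ is self-interacting near the cusp, since the two sheets pair with Reeb chords of length $4t^{3/2}$, and these shrink to zero. One must argue that for $\epsilon$ small, smaller than the minimal Reeb chord length of $\Lambda$, no other chords between $\Lambda\times\Lambda_D$ and itself appear. Then the only contributions to $\mu hom$ come from the diagonal sheet pairs, which gives the required vanishing.
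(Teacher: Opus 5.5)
The paper does not prove this statement; it imports it from Guillermou and Nadler--Shende. Your outline has a genuine gap in the step that carries the content, the construction of the inverse $\mu\Sh^+_\Lambda(M)^*\to\Sh^+_{\Lambda\times\Lambda_D}(M\times(-\infty,\epsilon))^*$.

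\textbf{The inverse construction.} Sheaves on $M\times\R_z\times(-\infty,\epsilon)$ glue along open covers of the \emph{base}, not along covers of $\Lambda$ or of $J^1M$. Over a single base point, the fronts of different pieces of $\Lambda$, and the two sheets $\Lambda_{\pm\delta}$, overlap. ``Descent over a cover of $\Lambda$'' therefore presupposes that the doubled category is already a sheaf of categories on $\Lambda$. That is, it assumes doubles attached to pieces of $\Lambda$ with overlapping fronts do not interact. This is essentially the theorem, and it is exactly where the smallness of $\epsilon$ relative to Reeb chord lengths has to be used.

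The independence-of-lift argument also fails. Two cut-off lifts $F_U,F'_U$ differ by objects $K$ with $ss(K)\cap U=\emptyset$. Then $\DD(K)=K\star\s(D)$ is nonzero for $t>0$ and has microsupport in $ss(K)\times\Lambda_D$. Vanishing for $t\le 0$ holds for \emph{every} object $G\star\s(D)$, so it distinguishes nothing. Worse, the cut-off lifts $F_U$ carry microsupport outside $U$ over base regions shared with the rest of the front. So $\DD(F_U)$ is not even an object of $\Sh^+_{\Lambda\times\Lambda_D}$ there. Finally, appealing to ``the cusp model \ldots\ equivalent to twisted local systems on the fold locus'' restates the claim rather than supplying an input.

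\textbf{Full faithfulness.} This step is closer to workable but is misdescribed.
\begin{itemize}
\item $\mathcal{H}om(F,G)$ cannot propagate in $t$ from $t\ll 0$, since that would force $\Hom(F,G)=0$. In Sato's triangle $R\Gamma(D'F\otimes G)\to\Hom(F,G)\to R\Gamma(\mu hom(F,G))$, the term that must vanish is $R\Gamma(D'F\otimes G)$.
\item $\mu hom(F,G)$ lives on \emph{both} sheets. Its sections reduce to a single $\Gamma(\Lambda;\mu hom)$ only because the sheets join along the cusp edge into $\Lambda\times(\text{arc})$. At a single slice the cone is nonzero: for $F=G=\S_{[a,b)}$, $\Hom=\S$ while $\mu hom$ contributes two copies.
\item Reeb chords never contribute to $\mu hom$, which is supported on $ss(F)\cap ss(G)$. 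They enter through $ss(D'F\otimes G)\subset ss(F)^a\hat{+}ss(G)$.
\item At the cusp edge this estimate does contain $+dt$, the obstructing direction. Pair $\zeta(dz-p\,dx-3\sqrt t\,dt)$ on the upper sheet of $F$ with $\zeta(dz-p\,dx+3\sqrt t\,dt)$ on the lower sheet of $G$, taking $\zeta\sim t^{-1/2}$. Then $\zeta\cdot 4t^{3/2}\to 0$ while the sum tends to a positive multiple of $dt$.
\end{itemize}
So the non-characteristic deformation you invoke does not follow from the standard estimate. It needs an actual local computation at the cusp edge. For $M=\mathrm{pt}$ one checks directly that $D'F\otimes F$ is the constant sheaf on the open horn $\{|z|<2t^{3/2}\}$, which has no global sections, but the estimate does not see this.

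Even with full faithfulness repaired, you still need a sound essential surjectivity argument. For instance, show the left side is a sheaf of categories over $M$, compare it with the pushforward of $\mu\Sh_\Lambda$ under $\Lambda\to M$, and prove local essential surjectivity there. That is precisely the part your descent step skips.
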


\subsection{Legendrian submanifolds without Reeb chords}
At this point, we note the following application, which does not require
the results of the next section.

\begin{thm}\label{thm:nochords}
Let $\Lambda\subset J^1 M$ be a closed Legendrian which is contact isotopic to a Legendrian submanifold which projects injectively to $T^* M$.
The maps $\GF_\Lambda(M)^*\to \mu \GF_\Lambda(M)^*$ and $\Sh^+_\Lambda(M)^*\to \mu\Sh^+_\Lambda(M)^*$ are homotopy equivalences.
\end{thm}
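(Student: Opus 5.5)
First reduce to the chordless case. Theorem~\ref{thm:spacelevelchekanov} makes $\GF(M)^*\to\LL(M)^*$ a Kan fibration, and the semi-local map $\mu\GF(M)^*\to\LL(M)^*$ is one too. Theorem~\ref{thm:spacelevelGKS} gives the same for sheaves, and its microlocal analogue is clear because $\mu\Sh_\Lambda$ is governed by $\mu_\Lambda$, which is invariant under isotopy. Hence the homotopy types of both sides, and the maps between them, are transported along a Legendrian isotopy. So we may assume that $\Lambda$ itself projects injectively to $T^*M$, i.e. $\Lambda$ has no Reeb chords.

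For such $\Lambda$ we use the doubling apparatus. By Theorem~\ref{thm:GF_RD} and Theorem~\ref{thm:muGF_RD}, together with Proposition~\ref{prop:muleq0} and Theorem~\ref{thm:GFdoubling}, the problem becomes: show that the restriction
\[\GF_{\Lambda\times\Lambda_D}(M\times\R)^*\to\GF^{\le 0}_{\Lambda\times\Lambda_D}(M\times\R)^*\]
is a homotopy equivalence. The argument goes as follows.
\begin{itemize}
\item The slices of $\Lambda\times\Lambda_D$ at $t>0$ are the two copies $\Lambda_{\pm t^{3/2}}$, obtained by shifting $\Lambda$ by $\pm 2t^{3/2}$ in the $z$-direction.
\item Because $\Lambda$ has no Reeb chords, these two copies stay disjoint from each other for every $t>0$. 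The family $t\mapsto\Lambda_{+}\cup\Lambda_{-}$, $t\in[\epsilon,\infty)$, is therefore a genuine Legendrian isotopy, not just a cobordism.
\item Over $M\times[\epsilon,\infty)$, the Legendrian $\Lambda\times\Lambda_D$ is the trace of this isotopy. Theorem~\ref{thm:spacelevelchekanov} then says that generating functions over $M\times(-\infty,\epsilon]$ extend to all of $M\times\R$ uniquely up to contractible choice, which is exactly the statement that restriction is an equivalence.
\end{itemize}
The comparison maps then chain together as
\[\GF_\Lambda\simeq\GF_{\Lambda\times\Lambda_D}\simeq\GF^{\le0}\simeq\mu\GF^{\le0}\simeq\mu\GF_{\Lambda\times\Lambda_D}\simeq\mu\GF_\Lambda.\]
It remains to check that this composite agrees with the natural map $\GF_\Lambda\to\mu\GF_\Lambda$. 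This holds because $\DD$ and $\RR$ commute with the forgetful maps to semi-local germs.

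The sheaf statement follows the same pattern, using Theorem~\ref{thm:Sh_RD}, Theorem~\ref{thm:spacelevelGKS} and Theorem~\ref{thm:Shdoubling}. By Theorem~\ref{thm:Sh_RD},
\[\Sh^+_\Lambda(M)^*\simeq\Sh^+_{\Lambda\times\Lambda_D}(M\times\R)^*.\]
Since there are no chords, GKS extension along the isotopy for $t\ge\epsilon$ identifies this with $\Sh^+_{\Lambda\times\Lambda_D}(M\times(-\infty,\epsilon))^*$. Guillermou's theorem identifies the latter with $\mu\Sh^+_\Lambda(M)^*$. We must still check that the resulting composite is the microlocalization map. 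To see this, note that for $F\in\Sh^+_\Lambda(M)$, the restriction of $\DD F=F\star\s(D)$ to the slice $t=\delta$ is $F\star\s(D_\delta)$. Microlocalizing it along one copy recovers the microlocalization of $F$, up to the fixed shift coming from $\s(D_\delta)$, which is a constant invertible twist.

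The main obstacle is this last compatibility check, in both settings: showing that the chain of equivalences really is homotopic to the natural forgetful or microlocalization map, rather than merely establishing an abstract equivalence. For generating functions, I would do this directly. For $t<0$, or near the cusp, the germ of $\DD f$ along $\Sigma$ is determined by the germ of $f$ near $\Sigma_f$. One then needs the semi-local version of Proposition~\ref{prop:RD} to be compatible with restriction, and this is checked by hand from the formulas.

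A secondary subtlety is invoking Theorem~\ref{thm:spacelevelchekanov} on a non-compact isotopy interval $[\epsilon,\infty)$. The remedy is that for $t\ge\epsilon$ the isotopy is just a rescaling of the $z$-shift, so one can reparametrize to a compact interval and then extend by a trivial family.
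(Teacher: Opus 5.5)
Your proposal is correct and follows the paper's proof essentially verbatim. You reduce by isotopy invariance to the chordless case. You then note that $(\Lambda\times\Lambda_D)_t$ for $t>0$ is a Legendrian isotopy, so Theorem~\ref{thm:spacelevelchekanov} (resp.\ Theorem~\ref{thm:spacelevelGKS}) identifies $\GF_{\Lambda\times\Lambda_D}(M\times\R)^*$ with $\GF^{\le 0}_{\Lambda\times\Lambda_D}(M\times\R)^*$. Finally you chain this with $\DD$, Theorem~\ref{thm:GFdoubling}, Proposition~\ref{prop:muleq0} and $\RR$ (resp.\ Theorems~\ref{thm:Sh_RD} and~\ref{thm:Shdoubling}).

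The paper leaves implicit the compatibility of this composite with the natural forgetful/microlocalization map, which you flag. Your justification for it is the right one: $\DD$ and $\RR$ are natural with respect to passing to germs, and $\RR\circ\DD\simeq\id$.
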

\begin{proof}
The proof is identical in the generating function or sheaf setting, we write it for generating functions.
Both terms are invariant under Legendrian isotopy so we may assume that $\Lambda$ projects injectively to $T^*M$.
The family $(\Lambda\times \Lambda_D)_t$ for $t>0$ is then a Legendrian isotopy. Hence the homotopy lifting property
for generating functions (Theorem~\ref{thm:spacelevelchekanov}) implies that the restriction map
$$\GF_{\Lambda\times \Lambda_D}(M \times \R)^* \to \GF^{\leq 0}_{\Lambda\times \Lambda_D}(M\times \R)^* :=\colim_{\epsilon >0} \GF_{\Lambda\times \Lambda_D}(M\times (-\infty,\epsilon))^*$$
is a homotopy equivalence. Hence the following is a composition of homotopy equivalences, in view of Theorem~\ref{thm:GF_RD}, Theorem~\ref{thm:muGF_RD},
Proposition~\ref{prop:muleq0} and Theorem~\ref{thm:GFdoubling}:
\[\GF_\Lambda(M)^* \xrightarrow{\DD} \GF_{\Lambda\times \Lambda_D}(M\times \R)^* \to \GF^{\leq 0}_{\Lambda\times \Lambda_D}(M\times \R)^* \to \mu \GF_{\Lambda\times \Lambda_D}^{\leq 0}(M\times \R)^* \xrightarrow{\RR} \mu \GF_\Lambda(M)^*.\]
\end{proof}
\begin{rem}
Theorem~\ref{thm:nochords} implies our main result Theorem~\ref{thm:main} in this particular case.
\end{rem}
\begin{rem}
Combined with the semi-local classification of generating functions from Giroux-Latour (Theorem~\ref{thm:spacelevelgiroux-latour}), we conclude that $\GF_\Lambda(M)^*$
is either empty or homotopy equivalent to $\Map(\Lambda,\Z\times BO)$.
It is non empty if the stable Gauss map of $\Lambda$ is homotopically trivial (which is predicted by
the nearby Lagrangian conjecture, and proved e.g. for homotopy spheres in \cite{ACGK}).

It is known that the stable Gauss map composed with $J:U/O\simeq B(\Z\times BO)\to B(\Z\times BG)$ is trivial
for such Legendrians (see \cite{Jin} or \cite{AACK}). Hence $\mu\Sh^+_\Lambda(M)^*$ is non-empty, and homotopically equivalent to $\Map(\Lambda,\Z\times BG)$.
Hence we conclude that $\Sh^+_\Lambda(M)^*\simeq \Map(M,\Z\times BG)$ (a result also proved in \cite{Jin-Treumann}).
\end{rem}

\begin{rem}
In \cite{ACGK}, it is shown that Legendrian submanifolds as in Theorem~\ref{thm:nochords} always admit
a (tame) \emph{twisted} generating function. We believe the above proof generalizes to the twisted case.
\end{rem}

\begin{rem}
Theorem~\ref{thm:nochords} applies to the case where $\Lambda$ is contact isotopic to the zero-section and thus provides
another proof of Viterbo-Théret's uniqueness theorem.
\end{rem}

\section{The crossing problem}\label{sec:crossing}
\subsection{The main statement}
For $0\leq k \leq m$,  consider the following functions on $\R^m$:
\[z_{k, m}^+(x)=1+x_1^2+\dots+x_{m-k}^2-x_{m-k+1}^2-\dots-x_m^2\]
and
\[z_{k,m}^-(x)=- z_{k, m}^+(x).\]
Note that $z_{k,m}^+-z_{k,m}^-$ has a single critical point, which is non-degenerate of index $k$.
We write $\Lambda_{k,m}\subset J^1(\R^m)$ for the Legendrian given by the union of the jets of the above functions (its front projection is the union of their graphs).
We denote the positive half line as $\R_+ := [0, \infty)$, and write $\Lambda_{k+1,m+1}^+$ for restriction of $\Lambda_{k+1,m+1}$ to the half space $J^1(\R^m \times \R_+)$.
Note that slicing $\Lambda_{k+1,m+1}^+$ by the $x_{m+1}$-coordinate defines a Legendrian regular homotopy (i.e. a path of Legendrian immersions) with a single crossing (self-intersection)
corresponding to the single Reeb chord of $\Lambda_{k,m}$.

Restriction at $x_{m+1}=0$ defines maps
\[\GF_{\Lambda_{k+1,m+1}^+}(\R^{m+1}_+)\to \GF_{\Lambda_{k,m}}(\R^m) \quad\text{ and }\quad \Sh^+_{\Lambda_{k+1,m+1}^+}(\R^{m+1}_+)\to \Sh^+_{\Lambda_{k,m}}(\R^m)\]
which are compatible with the map $\s$ from generating functions to sheaves. The crossing problem refers
to the problem of extending an object (generating function or sheaf) past the crossing, i.e. from $\R^m$ to $\R^{m+1}_+$.
We will define obstructions valued in stable homotopy groups of spheres, the main observation is that
these obstructions are precisely the same on both side (if sheaves are considered over $\S$). In other words we have the following result which is the main goal of this section:

\begin{thm}\label{thm:crossing-gf=sh}
The commutative diagram

\begin{equation}\label{model-square}
\begin{tikzcd}
\GF_{L_{k+1,m+1}^+}(\R^m\times \R_+)^* \arrow[d]\arrow[r] & \arrow[d] \GF_{\Lambda_{k,m}}(\R^m)^* \\
\Sh^+_{L_{k+1,m+1}^+}(\R^m\times \R_+;\S)^* \arrow[r] & \Sh_{\Lambda_{k,m}}^+(\R^m;\S)^*.
\end{tikzcd}
\end{equation}
is a homotopy pullback square. The fibers of the horizontal maps over an object giving Maslov indices $j$ and $i$
to $z^+_{k,m}$ and $z^-_{k,m}$ respectively is either empty (if some obstruction in the stable homotopy group $\pi_{k+j-i-1}^S$ is non-vanishing),
or homotopy equivalent to
$\Omega^{k+j-i+\infty}S^{\infty}$ (if the obstruction vanishes).
\end{thm}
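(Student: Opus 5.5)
The plan is to compute the homotopy fibers of both horizontal maps in \eqref{model-square} explicitly, check that they are the same space, and then check that the vertical map $\s$ induces an equivalence between them.

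\emph{Reduction to a problem over a point.} Over $\R^m$, the Legendrian $\Lambda_{k,m}$ has two sheets and no crossings. By Theorem~\ref{thm:nochords}, together with Giroux--Latour and Jin, a point of $\GF_{\Lambda_{k,m}}(\R^m)^*$ is, up to contractible choice, a pair of semi-local germs. Since $\R^m$ is contractible, each germ amounts to a Morse index datum, i.e.\ an integer. The same holds for sheaves. Fix a component where $z^\pm$ receive Maslov indices $j$ and $i$. After stabilization, a generating function in this component is equivalent to
\[
  f(x;v)=\text{(fixed tame model with two fiberwise critical points of indices $j$ and $i$ and critical values $z^\pm_{k,m}(x)$)}.
\]
Extending over $\R^m\times\R_+$ means deforming this family across the single crossing at $x=0$, $x_{m+1}=0$. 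Using Theorem~\ref{thm:spacelevelchekanov} (respectively Theorem~\ref{thm:spacelevelGKS}) away from the crossing, I localize the extension problem near the origin. There it becomes a relative problem over the pair $(D^{m+1}_+,\,D^m)$, whose boundary data are already fixed.

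\emph{Identifying the fiber as an attaching-map problem.} On the generating function side, I fiberwise compactify using tameness. The sublevel set $\{f\le z\}$ changes from $\{f\le z^-\}$ to $\{f\le z^+\}$ by attaching an $i$-cell and then a $j$-cell. When the two critical values cross, the relative position of the two cells is recorded by the fiberwise attaching map. Parametrized over the sphere $\partial$-region $S^{k}$, which is the unstable sphere of the difference function $z^+-z^-$ inside $D^{m+1}$, the datum needed to extend is a null-homotopy of a family of stable maps
\[
  S^{j-1}\to S^{i}
\]
parametrized by $S^k$. Via the handle-slide and Cerf description, this is a map $S^{k}\to \Omega^{\infty}\Sigma^{\infty}S^{i-j+1}$, i.e.\ an element of $\pi^S_{k+j-i-1}$. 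The space of null-homotopies is then a torsor over $\Omega^{k+j-i+\infty}S^\infty$. On the sheaf side over $\S$, the analogous computation uses the GKS-type local model. A simple sheaf near the crossing is determined by its stalks $\S[\cdot]$ on the regions of the front together with a morphism between the two microstalk shifts, namely $\Hom(\S[-j],\S[-i])$ parametrized over $S^k$. This gives the same homotopy group and the same space of null-homotopies.

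\emph{Comparison.} I then show that $\s$ carries the attaching-map class of $f$ to the sheaf-side class. The reason is that $\s(f)$ records exactly the Spanier--Whitehead dual of the relative sublevel cohomotopies \eqref{relative sections of Sf}, and the connecting map between the two critical levels is the dual of the attaching map. Finite CW complexes are dualizable, so no information is lost. Hence the map on fibers is an equivalence, which gives both the pullback property and the identification of the fibers.

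The main obstacle is the generating function side. One must prove that every extension, up to contractible choice, comes from the stable attaching-map datum. Concretely, one must show that the space of tame fiberwise functions over $(D^{m+1}_+,D^m)$ with prescribed critical germs has the homotopy type of stable null-homotopies. I would attempt this by an $h$-principle/Cerf argument in the spirit of the proof of Theorem~\ref{thm:GFdoubling}. That means restricting to a collar, comparing gradient data to the space of functions without critical points, and using that these spaces are highly connected as $n\to\infty$.
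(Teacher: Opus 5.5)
Your overall strategy matches the paper's: identify the homotopy fibers $\E(f_0)$ and $\E(\s(f_0))$ with spaces of null-homotopies of a crossing obstruction, and match the two obstructions by Spanier--Whitehead duality via \eqref{relative sections of Sf}. Your sheaf-side sketch is in the spirit of Theorem~\ref{sheaf crossing obstruction}. However, your opening reduction is false and must be discarded.

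$\Lambda_{k,m}$ is not chord-free. The difference $z^+_{k,m}-z^-_{k,m}=2z^+_{k,m}$ has a critical point at $x=0$, so $\Lambda_{k,m}$ has exactly one Reeb chord: the one that shrinks to the double point of $\Lambda^+_{k+1,m+1}$. For $k\ge 1$ the front also crosses itself along $\{z^+_{k,m}=0\}$. Moreover $\Lambda_{k,m}$ is non-compact, so Theorem~\ref{thm:nochords} does not apply.

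In fact $\GF_{\Lambda_{k,m}}(\R^m)^*$ is not determined by Maslov data. The obstruction $\alpha(f_0)\in\pi^S_{k+j-i-1}$ is an invariant of $f_0$ that is not fixed by $(i,j)$. On the sheaf side, Lemma~\ref{sheaf precrossing characterization} exhibits the extra datum: a family of splittings over $S^{k-1}$, or for $k=0$ an arbitrary extension class. This is exactly why the fiber is allowed to be empty. If every $f_0$ with given indices were equivalent to a single model, emptiness would depend only on $(i,j)$. You must work over an arbitrary $f_0$; the pullback property only requires $\E(f_0)\to\E(\s(f_0))$ to be an equivalence for each $f_0$.

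The genuine gap is the generating-function side, which you flag yourself. Nothing in the proposal establishes $\E(f_0)\simeq\NN(\alpha(f_0))$, and your suggested route does not lead there. The argument of Theorem~\ref{thm:GFdoubling} is designed to show that a fiber becomes contractible as $n\to\infty$. Here the fiber is $\Omega^{k+j-i+\infty}S^\infty$ (or empty), so you would have to produce a nontrivial comparison map, which is precisely what is missing. The paper supplies three ingredients.

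\begin{enumerate}
\item The Cerf/Hatcher--Wagoner criterion, in Igusa's parametrized form relative to $\partial\Delta^k$. The two critical values can be pushed past each other if and only if the descending sphere of the upper critical point can be isotoped off the ascending sphere of the lower one in an intermediate level set.
\item The identification of $\alpha(f_0)$ as exactly the stable-homotopy obstruction to such a disjunction, plus a realization step by stabilization. Negative stabilization raises the connectivity of $\{f\le a\}$ and of $(\{f\le b\},\{f\le a\})$, so the stable null-homotopy comes from an honest homotopy. Positive stabilization raises codimension, so homotopy becomes isotopy. The fibration over the base then makes the disjunction fiberwise. This gives surjectivity on $\pi_0$ only.
\item A bootstrap to all homotopy groups. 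The paper implants $f$ on rays, $r^lf(x,y;u)=f(x_1,\dots,x_m,\sqrt{x_{m+1}^2+|y|^2};u)$, which gives $\E(r^lf)\simeq\Omega^l\E(f_0)$ compatibly with $\s$. Thus $\pi_l$ at level $(k,m)$ becomes $\pi_0$ at level $(k+l+1,m+l+1)$. Injectivity on $\pi_0$ then follows from surjectivity applied to $r(f_-,f_+)$, obtained by gluing two extensions along $x_{m+1}=0$.
\end{enumerate}

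Without a geometric realization step of this kind and such a bootstrap, your equivalence of fibers is asserted rather than proved on the generating-function side.
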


For $f_0 \in \GF_{L_{k,m}}(\R^m)^*$ and $F_0\in\Sh^+_{L_{k,m}}(\R^m)^*$, let us write $\E(f_0)$ (respectively $\E(F_0)$)
for the fiber of the top (resp. bottom) horizontal map in \eqref{model-square}. Theorem~\ref{thm:crossing-gf=sh}
implies that the map
\begin{equation}
\E(f_0)\to \E(\s(f_0))
\end{equation}
is a homotopy equivalence; in particular,
one is non-empty if and only the other one is non-empty.

\begin{example}
In the case where $k=0$ and $j=i$, Theorem~\ref{thm:crossing-gf=sh} says that the extension is always possible since $\pi^S_{k+j-i-1}= \pi^S_{-1}=0$
and the space of extensions (fiber of horizontal maps in \eqref{model-square}) is
$\Omega^\infty S^\infty$
whose $\pi_0$ is $\Z$: this corresponds to an algebraic count of handleslides.
\end{example}

\begin{example}
When $k+j\leq i$, the obstruction lies in
$\pi^S_{\le -1} = 0$, so the extension is always possible.  This recovers the well-known fact that we can always reorder critical points according to their index (and even
in $k$-parametric families if the index difference is at least $k$).
\end{example}

The theorem is a model of the singular moment in a Legendrian regular homotopy.  In fact, when combined with the homotopy lifting property for generating functions and sheaves, this model case allows us to deduce the corresponding result
for any Legendrian regular homotopy:

\begin{cor}\label{cor:regularhtpy}
Let $\Lambda\subset J^1(M\times [0,1])$ be a compact Legendrian cobordism that is the trace of a Legendrian regular
homotopy $(\Lambda_t)_{t\in [0,1]}$ with $\Lambda_0$ and $\Lambda_1$ embedded. Then restriction at $t=0$ defines a commutative diagram

\begin{equation}
\begin{tikzcd}
\GF_\Lambda(M\times[0,1])^* \arrow[d]\arrow[r] & \arrow[d] \GF_{\Lambda_0}(M)^* \\
\Sh^+_\Lambda(M\times[0,1];\S)^* \arrow[r] & \Sh^+_{\Lambda_0}(M;\S)^*.
\end{tikzcd}
\end{equation}
which is a homotopy pullback square.
\end{cor}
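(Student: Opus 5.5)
The plan is to reduce the general regular homotopy to the model crossing of Theorem~\ref{thm:crossing-gf=sh} and to isotopy pieces, where both horizontal maps are already equivalences.

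\textbf{Decomposition.} By genericity, a Legendrian regular homotopy with embedded ends has finitely many crossing times $0<t_1<\dots<t_N<1$. At each $t_i$ exactly one transverse double point appears; it corresponds to a Reeb chord of length passing through zero, and the local model is $\Lambda^+_{k+1,m+1}$ for some index $k$ (or its time reverse). Choose $0=s_0<t_1<s_1<\dots<t_N<s_N=1$ so that every $\Lambda_{s_j}$ is embedded. Restriction gives a chain of maps
\[\GF_\Lambda(M\times[0,1])^*\to \GF_{\Lambda|_{[0,s_{N-1}]}}(M\times[0,s_{N-1}])^*\to\dots\to \GF_{\Lambda_0}(M)^*,\]
and likewise for sheaves, all compatible with $\s$. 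Pasting homotopy pullback squares gives a homotopy pullback. It therefore suffices to treat a single interval $[s_{j-1},s_j]$ containing one crossing $t_j$.

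\textbf{Isotopy parts.} On any subinterval without crossings, Theorem~\ref{thm:spacelevelchekanov} and Theorem~\ref{thm:spacelevelGKS} show that restriction to an endpoint is a homotopy equivalence on both rows. Squares whose horizontal maps are equivalences are trivially homotopy pullbacks. We may thus shrink to a small interval $[t_j-\eta,t_j+\eta]$, and restriction to $t_j-\eta$ is the relevant map.

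\textbf{Localization to the model.} This is the main obstacle. Near the double point, a Legendrian isotopy (covered again by the lifting theorems) puts the family into the standard form $\Lambda^+_{k+1,m+1}$ inside a ball $B\times[t_j-\eta,t_j+\eta]$ of $M\times[0,1]$. Away from the projection of $B$, nothing happens, so the family is an isotopy there. I would argue that $\GF_\Lambda$ over the slab is a homotopy pullback of $\GF$ over (slab minus small ball), $\GF$ over (small ball times interval), and $\GF$ over their overlap, and that the same holds for sheaves. The sheaf version is formal descent, since sheaf spaces satisfy descent. For generating functions we need a gluing statement: given fiberwise tame functions that agree, up to a specified fibered diffeomorphism, on an overlap collar, patch them after stabilization. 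This uses that $\G(M)$ satisfies descent (it is $\Map(M,O)$) and that tameness is local over $M$, as shown in Appendix~\ref{app:fibinfty}. Granting this, the fiber of restriction over the slab equals the fiber of restriction for the local piece relative to its boundary collar. The collar part is again an isotopy and so contributes an equivalence. The remaining comparison of fibers is exactly $\E(f_0)\to\E(\s(f_0))$ for the model crossing, which Theorem~\ref{thm:crossing-gf=sh} identifies. Alternatively one could run a relative version of Theorem~\ref{thm:crossing-gf=sh} with boundary conditions on $\partial B$; I would try the descent argument first. Comparing fibers over every basepoint then yields the homotopy pullback.
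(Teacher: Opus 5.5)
Your outline is the paper's: perturb to finitely many generic crossings, use Theorems~\ref{thm:spacelevelchekanov} and~\ref{thm:spacelevelGKS} on crossing-free intervals, paste squares, and reduce each crossing to Theorem~\ref{thm:crossing-gf=sh}. The gap is in the step you flag as the main obstacle: your localization does not reach the model.

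Descent over a ball $B\subset M$ only cuts in the base. Over $B$ the front of $\Lambda_t$ generally contains other sheets besides the two crossing branches. By contrast, $\Lambda^+_{k+1,m+1}$ consists of exactly those two branches. This happens in the very case the corollary is used for. In the doubling regular homotopy behind Theorem~\ref{thm:main}, a crossing of $\Lambda+2t^{3/2}$ with $\Lambda-2t^{3/2}$ comes from a Reeb chord of $\Lambda$ between sheets $a$ (below) and $b$ (above) over some $x_0$. The crossing is between the upper copy of $a$ and the lower copy of $b$, while the lower copy of $a$ and the upper copy of $b$ still lie over $x_0$. So the local extension problem over $B\times[t_j-\eta,t_j+\eta]$ is not the one treated in Theorem~\ref{thm:crossing-gf=sh}, and your claim that what remains is exactly $\E(f_0)\to\E(\s(f_0))$ does not follow. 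Your genericity conditions also omit the one the paper relies on for localization: no other branch of the front passes through the crossing point.

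What you need is localization near the crossing point in the front, which is what the paper's ``small ball around the crossing point'' means. For sheaves this is available by descent on $(M\times[0,1])\times\R$. A generating function, however, cannot be cut off in the $z$-direction. So no descent statement over $M$ reduces you to the two-sheet model, even granting the gluing lemma you assume (which the paper neither proves nor uses).

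Instead you must show that the crossing problem for generating functions only sees the two fiberwise critical points that cross. Choose $a<b<c$ supported near the crossing and inside a band containing no other critical values. Then the following are insensitive to the other critical points:
\begin{itemize}
\item the spaces $\{f\le c\}/\{f\le b\}$ and $\{f\le b\}/\{f\le a\}$;
\item the obstruction $\alpha(f)$;
\item the disjunction argument in the proof of Theorem~\ref{thm:gfcrossing}, since stabilization still supplies the connectivity and codimension it needs.
\end{itemize}
The same holds for the computation in Theorem~\ref{sheaf crossing obstruction}. The comparison of the two obstructions via $\Hom(\,\cdot\,,\S)$ then goes through as in the model case.
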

\begin{proof}
We can perturb the Legendrian regular homotopy relative to $t=0$ and $t=1$ so that it has finitely
many transverse self-crossings arising at distinct parameters $0<t_1<\dots<t_l<1$. Moreover we can assume
that at each self-intersection, the crossing arises along two branches of $\Lambda_t$ which are smooth graphs
and no other branches of the front of $\Lambda_t$ passes through the crossing point. Hence each crossing of $\Lambda_{t_i}$
for $i=i,\dots,l$ can be modeled after the regular homotopy $(\Lambda_{k+1,m+1}^+)_{x_{m+1}}$ for some
integers $k$. Away from a small ball around the crossing point, $(\Lambda_t)_{t\in[0,1]}$ is a Legendrian isotopy so
Theorem~\ref{thm:spacelevelchekanov} and Theorem~\ref{thm:spacelevelGKS} apply. Hence, for small $\epsilon >0$, the fiber of
$\GF_\Lambda(M\times[0,t_{i+1}+\epsilon]) \to \GF_\Lambda(M\times [0,t_{i}+\epsilon])$ is isomorphic to
the corresponding fiber in the model case of $\Lambda_{k+1,m+1}^+$, and the result follows by applying
Theorem~\ref{thm:crossing-gf=sh} several times.
\end{proof}

The remainder of this section gives the proof of Theorem \ref{thm:crossing-gf=sh}.

\subsection{Reduction to surjectivity on $\pi_0$}

Theorem~\ref{thm:crossing-gf=sh} asserts that for any $f_0\in \GF_{\Lambda_{k,m}}(\R^m)^*$
the map $\E(f_0)\to \E(\s(f_0))$ is a homotopy equivalence. Employing the symmetries of the model Legendrian
submanifolds $\Lambda_{k,m}$, we show here that it is enough to prove surjectivity on $\pi_0$ for all $k,m$.

Given $f\in \E(f_0)$ and $l\in \N$, we define
$r^lf \in \GF_{\Lambda_{k+1+l,m+1+l}}(\R^{m+1+l})^*$ by the formula
\[r^l f(x,y;u)=f(x_1,\dots,x_m,\sqrt{x_{m+1}^2+|y|^2};u)\]
which geometrically means that we implant the function $f$ on each ray in $\R^{1+l}$ (with coordinates $(x_{m+1},y)$). For instance for $l=0$,
this is just the function $f$ mirrored along the hyperplane $x_{m+1}=0$.
There is a small smoothness issue along the origin in $\R^{l+1}$ which we will ignore (we could deal with it by requiring elements of $\E(f_0)$
to conform to a special model near $x_{m+1}=0$).
Similarly for $F_0\in \Sh^+_{\Lambda_{k,m}}(\R^m)^*$ and $F\in \E(\F_0)$, we define $r^l F \in \Sh^+_{\Lambda_{k+1+l,m+1+l}}(\R^{m+1+l})^*$ as

\begin{equation}
r^l F=p^* F
\end{equation}
where $p: \R^{m+1+l}\times \R\to \R^{m+1}\times \R$ is given by
\[p(x_1,\dots,x_m,x_{m+1},y,u)=(x_1,\dots,x_m,\sqrt{x_{m+1}^2+|y|^2},u).\]

There are homotopy equivalences
\begin{align}
\E(r^l f)\to \Omega^l \E(f_0)\\
\E(r^l F)\to \Omega^l \E(F_0)
\end{align}
where the based loop space $\Omega^l$ can be seen explicitly as space of maps $(D^l,\del D^l)\to (\E(f_0),f)$
where $D^l$ is the disk $\{(x_{m+1},\dots,x_{m+2+l}), x_{m+1}^2+\dots+x_{m+2+l}^2=1, x_{m+2+l}\geq 0\}$).
We obtain for each $l\in \N$ a commutative square
\begin{equation}
\begin{tikzcd}
\E(r^l f)\arrow[r,"\sim"] \arrow[d] & \Omega^l \E(f_0) \arrow[d] \\
\E(r^l \s(f))\arrow[r,"\sim"]&\Omega^l \E(\s(f_0)).
\end{tikzcd}
\end{equation}

And so passing to $\pi_0$;
\begin{equation}
\begin{tikzcd}
\pi_0\E(r^l f)\arrow[r,"\sim"] \arrow[d] & \pi_l \E(f_0) \arrow[d] \\
\pi_0\E(r^l \s(f))\arrow[r,"\sim"]&\pi_l \E(\s(f_0)).
\end{tikzcd}
\end{equation}

Hence Theorem~\ref{thm:crossing-gf=sh} follows if we can show that $\E(f_0)\to \E(\s(f_0))$ is an isomorphism
on $\pi_0$ (for any $k,m$ and $f_0\in  \GF_{\Lambda_{k,m}}(\R^m)^*$). In fact, injectivity on $\pi_0$ also follows
from surjectivity on $\pi_0$ using the following small variation
on the above construction.

Given $f_0\in \GF_{\Lambda_{k,m}}(\R^m)^*$,  $f_-, f_+ \in \E(f_0)$, we define $r(f_-,f_+)\in \GF_{\Lambda_{k+1,m+1}}^*$
by the formula
\[r(f_-,f_+)(x_1,\dots,x_m,x_{m+1};u)=\begin{cases}f_+(x_1,\dots,x_m,x_{m+1};u) \text{ if } x_{m+1}\geq 0 \\
f_-(x_1,\dots,x_m,-x_{m+1};u) \text{ if }x_{m+1}\leq 0.\end{cases}\]
We proceed similarly with sheaves to define $r(F_-,F_+)$ for $F_0 \in \Sh^+_{\Lambda_{k,m}}(\R^m)^*$ and $F_-,F_+\in \E(F_0)$.
Now $\E(r(f_-,f_+))$ is non-empty if and only if $[f_-]=[f_+]\in \pi_0 \E(f_0)$ and similarly for sheaves.
So if $\s(f_-)=\s(f_+)\in \pi_0\E(\s(f_0))$ and $\pi_0\E(r(f_-,f_+))\to \pi_0\E(r(\s(f_-),\s(f_+)))$ is surjective,
we conclude that $\E(r(f_-,f_+))$ is non-empty and thus $|f_-]=|f_+]\in \pi_0 \E(f_0)$.
So we obtain injectivity of $\pi_0\E(f_0)\to \pi_0\E(\s(f_0))$ in this way.

\subsection{The crossing obstruction}

Let us fix functions $a,b,c : \R^m\to \R$ defined by
\[a(x)=-\chi(\|x\|),\quad  b(x)=0, \quad c(x)=\chi(\|x\|),\]
where $\chi:[0,\infty)\to [0,2]$ is non-increasing, compactly supported and satisfies $\chi(r)=2$ for $r\leq 1$.
The space of such functions is convex and hence the precise choice of $\chi$ will be irrelevant. We depict the graphs of these functions along with the $Z_{\pm}$ in Figure \ref{abc figure}.

\begin{figure}
    \centering
\includegraphics{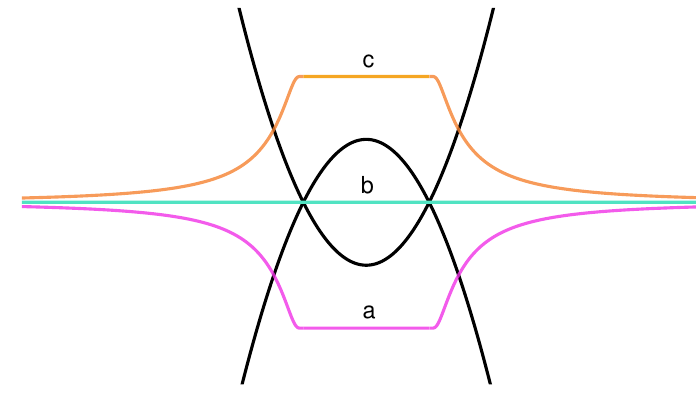}
    \caption{In color: the graphs of the functions $a, b, c$. In black: the graphs of $Z_+$ and $Z_-$.  Here $k=m=1$.}
    \label{abc figure}
\end{figure}

Given a generating function $f$ for $\Lambda_{k,m}$ we abbreviate
\[\{f \le a\} :=\{(x,v)\in\R^m\times \R^n,f(x,v)\leq a(x)\}\]
and similarly $\{f\leq b\}$, $\{f\leq c\}$.

\begin{dfn}\label{dfn:crossing-obstruction}
The \emph{crossing obstruction} of $f\in \GF_{\Lambda_{k,m}}(\R^m)^*$ is the map (of spectra)
\[\alpha(f):\{f \le c\} /\{ f \le b\} \to \Sigma(\{f \le b\} /\{ f \le a\})\]
from the Puppe sequence
\[\{f \le b\} /\{ f \le a\}\to \{f \le c\} /\{ f \le a\} \to \{f \le c\} /\{ f \le b\}\to \Sigma(\{f \le b\} /\{ f \le a\}) \to \cdots \]
\end{dfn}

If we add a dimension in the auxillary space and stabilize $f$ to $f+t^2$, then each term in the sequence is replaced
by a homotopy equivalent space. Alternatively, if we replace $f$ by $f-t^2$, then each term is suspended once
(all Morse indices are shifted up by one). In particular, as a map of spectra, $\alpha(f)$ is indeed defined
for $f\in\GF_{\Lambda_{k,m}}$ (recall the latter space is defined as a colimit under stabilization).

If the fiberwise critical points of $f$ have index $i$ and $j$ (corresponding to $z_{k,m}^-$ and $z_{k,m}^+$ respectively),
then there are homotopy equivalences
\[\{f\le c\}/\{f \le b\}\simeq S^{j+k}\quad \text{and}\quad \{f\le b\}/\{f \le a\}\simeq S^i.\]
Under these identifications (which for a fixed $f$ is determined up to homotopy by a choice of orientations of the descending manifolds
of the fiberwise critical points of $f$), we may view $\alpha(f)$ as a map
\[S^{j+k}\to S^{i+1}\]
or stably as an element of $\Omega^{j+k-i-1+\infty}S^\infty$.
Correspondingly the space $\NN(\alpha(f))$ of all stable null-homotopies of $\alpha(f)$ is either empty
or a torsor over $\Omega^{j+k-i+\infty}S^\infty$.

Now suppose given $f: \R^m\times  \R^n \to \R$
generating $\Lambda_{k,m}$ and an extension $f_+:\R^m\times \R_+\times \R^n \to \R$ of $f$ which generates $\Lambda_{k+1,m+1}^+$.
We preserve the notations $a, b, c, \alpha$ for their half-space counterparts; so we have a map
\[\alpha(f_+):\{f_+ \le c\}/\{f_+ \le b\} \to \Sigma(\{f_+ \le b\}/\{f_+ \le a\})\]
which fits into a commutative diagram

\begin{equation}\label{eq:nullhomotopy1}
\begin{tikzcd}
\{f \le c\}/\{f \le b\} \arrow[r,"\alpha(f)"]\arrow[d] &\Sigma(\{f \le b\}/\{f \le a\})\arrow[d] \\
\{f_+ \le c\}/\{f_+ \le b\} \arrow[r,"\alpha(f_+)"]& \Sigma(\{f_+ \le b\}/\{f_+ \le a\}).
\end{tikzcd}
\end{equation}

Now we observe that $\{f_+ \le c\}/\{f_+ \le b\}$ is a cone over $\{f \le c\}/\{f \le b\}$, while the inclusion $\{f \le b\}/\{f \le a\}\to \{f_+ \le b\}/\{f_+ \le a\}$
is a homotopy equivalence. Indeed, $\{f_+\leq c\}/\{f_+\leq b\}$ deformation retracts onto the union $\Delta^+$ of fiberwise stable disks
of the critical point corresponding to the branch $z_{k+1,m+1}^+$, and $\Delta^+$ itself a cone on $\Delta^+\cap\{x_{m+1}=0\}$ which is a deformation retract of $\{f\leq c\}/\{f\leq b\}$. Similarly the union $\Delta^-$ of the stable disks of the critical point corresponding to $z_{k+1,m+1}^-$ in $\{f_+\leq b\}/\{f_+\leq a\}$
is a cylinder on $\Delta^-\cap\{x_{m+1}=0\}$, and the latter is a deformation retract of $\{f\leq b\}/\{f\leq a\}$.

Hence the diagram \eqref{eq:nullhomotopy1} gives a null-homotopy of
the crossing obstruction $\alpha(f)$.

\begin{dfn}
The null-homotopy of $\alpha(f)$ induced by the extension $f_+$ is the map $\alpha(f_+)$.
This defines a map $\E(f_0)\to \NN(\alpha(f_0))$.
\end{dfn}

\begin{theorem} \label{thm:gfcrossing}
For any $f_0\in \GF_{\Lambda_{k,m}}(\R^m)^*$, the crossing obstruction map $\E(f_0)\to \NN(\alpha(f_0))$ is an isomorphism on $\pi_0$.
\end{theorem}
\begin{proof}
As argued in the previous subsection, it is enough to show surjectivity on $\pi_0$.

    The basic point is that
    we want to show that certain ascending and descending spheres can be disjoined after an isotopy.  Essentially by definition,
    $\alpha$ measures the obstruction for doing this up to stable homotopy, and the stable homotopy can be realized by an isotopy after sufficient stabilization of the generating function.  We now explain in more detail.

    It is instructive to consider first the case $m=0$.  Then $f$ is just a single Morse function, and we are discussing the problem of varying $f$ so that the critical values of two critical points cross.
    A necessary and sufficient condition for crossing the critical values is the existence
    of a descending sphere of $p$ which is disjoint from an ascending sphere of $q$ in some intermediate level set $\{f=b\}$, see e.g. \cite{Cerf-stratification, Hatcher-Wagoner}.
    Now the descending sphere $S^{j-1}\to \{f=b\}$ is homotopic to a sphere disjoint from the ascending sphere of $q$ if and only if the composition $S^{j-1}\to \{f=b\}\to \{f\leq b\}$ factors up to homotopy  through a map $S^{j-1}\to \{f \leq a\} \hookrightarrow \{f \leq b\}$.  By construction of $\alpha$,  this is true up to stable homotopy iff $\alpha(f) = 0$.  This stable homotopy can be realized by an actual homotopy if the pair $(\{f\leq b\},\{f\leq a\})$ and $\{f\leq a\}$ are sufficiently connected.   We claim that this can be achieved by sufficient negative stabilization of $f$.  Indeed, this is obvious for $(\{f\leq b\},\{f\leq a\})$, as the Morse indices are all shifted upwards.  As for  $\{f\leq a\}$, observe that the contractible space $\R^n$ is obtained from $\{f\leq a\}$ by attaching cells of index greater than say $l\geq 0$, so $\{f \leq a\}$ is  at least $(l-2)$-connected, and similarly $\{ f - t^2\leq a\}$ will be at least $(l-1)$-connected.\footnote{This is the only place in this article where we use that $f$ is defined on $\R^n$ as opposed to an arbitrary manifold.}
    Finally, a homotopy of submanifolds can be realized by an isotopy if the codimension is sufficiently large; we raise the codimenion by positive stabilization.
    All in all, vanishing of $\alpha$ implies we may cross the critical values after sufficient stabilization. Moreover by construction
the null-homotopy $\alpha(f_+)$ of $\alpha(f)$ induced by the constructed extension $f_+$ is homotopic to
the null-homotopy of $\alpha(f)$ that we have used to construct $f_+$.

    Let us consider the general case. Consider the descending sphere $\phi:S^{j+k-1}\to \{(x,v),f(x,v)=b(x)\}$.
    The intersection $S^{j+k-1} \cap \{x \in \partial \Delta^k\}$ is a $S^{k-1}$; on the complement of this sphere, the map $\phi$ fibers as
    $\phi_x:S^{j-1}\to \{v,f(x,v)=b(x)\}$.
    Similarly the ascending sphere of the lower critical point is described as a family $\psi_x:S^{n-i-1}\to \{v, f(x,v)=b(x)\}$. For $x\in \del \Delta\simeq S^{k-1}$, $\phi_x$ and $\psi_x$ are disjoint since the critical values are equal.
    The crossing is possible if and only if there is a disjunction isotopy, namely an isotopy of $\phi_x$ which makes it disjoint from $\psi_x$ and which is fixed for $x\in \del \Delta$ \cite[Theorem 4.11]{igusa88}.

    As before, $\alpha(f)$ is the obstruction to stable-homotopy-disjoining the ascending and descending spheres $\phi$ and $\psi$; after stabilizing, we may realize this by a homotopy, which, again after stabilizing, can be taken to fix
    the already-disjoint behavior over $\del \Delta^k$.

    Because of the fibration structure, the existence of such a disjoining homotopy implies the existence of a fiberwise such homotopy. Indeed, let $E = \{f \le a\} \setminus \psi(S^{n-i-1})$, and consider the fibration $\pi: E \to M = \R^m$.  Let $\phi(t): S^{j+k+1} \to E$ be the witness to the homotopy disjoining, with $\phi_0$ the original $\phi$.  But then in the complement of the $S^k$ (along which $\phi(t)$ is constant), the map $\pi \circ \phi(t)$ is a fibration, which we may therefore lift along $\phi$ to a fiberwise homotopy-disjoining $\phi'(t)$ (i.e. so that $\phi'_x(t)$ disjoins $\phi_x(t)$ from $\psi_x(t)$).

    Finally, as before, by stabilization we may raise the connectivity of the relevant ambient spaces and the codimension of the embedding, hence ensure that the  homotopy is realized by an isotopy.
\end{proof}

\subsection{Crossing for sheaves}\label{subsec:crossingsheaves}

Unlike in the remainder of this article, in this subsection we switch to cohomological degree conventions, because this is more common in the sheaf literature and also because in this subsection we will make no choice or mention of the category of coefficients $\mathcal{C}$.  Here we will officially work with and use notation for $\Sh(M \times \R)$; the results of course restrict to $\Sh^+(M)$ (by viewing it as a full subcategory).

\subsubsection{Crossing of microsupport lines}

We consider sheaves on $\R^2_{t, z}$ whose microsupport away from the zero section is contained in the upward conormals to the lines $z=t$ and $z = -t$.
Let us denote the microsupport condition as $ \times$, and correspondingly the category of sheaves by $Sh_{\times}({\R^2_{t, z}})$.

Given such a sheaf, one can show by considering non-characteristic propagation that there are maps between the stalks in the various regions, as indicated in Figure \ref{crossing diagram}.  (The $t$ axis is horizontal, and the $z$ axis is vertical.)
In fact, the  category $Sh_{\times}({\R^2_{t, z}})$  is equivalent to the category of diagrams as indicated which are cartesian/co-cartesian \cite{STZ}.

\begin{figure}
    \centering
\includegraphics{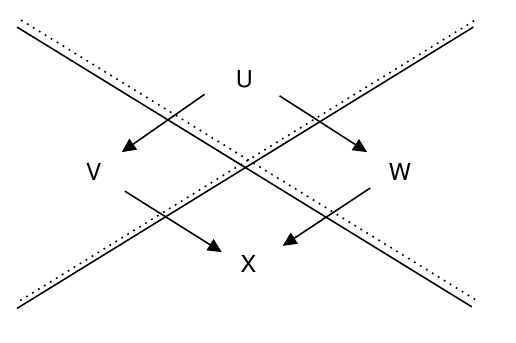}
    \caption{Sheaves on a crossing.}
    \label{crossing diagram}
\end{figure}

There is a natural restriction map
$Sh_{\times}({\R^2_{t, z}}) \to Sh_{\times}(\R_{t < 0} \times \R_z) \cong Sh_{\times}(\{-1\} \times \R_z)$.  We are interested in the image and the fibers.  By the above characterization, this is equivalent to asking about the image and fibers of the restriction on diagram categories
$$[X = V \oplus_U W] \mapsto [U \to V \to X].$$

These may be characterized as follows:
\begin{lem} \label{0d sheaf split}
    Isomorphism classes of extensions of a diagram $U \to V \to X$ to cartesian diagrams of the form $X = V \oplus_U W$ are in bijection with isomorphism classes of splittings of the exact triangle
    \begin{equation} \label{splitme}
        Cone(V \to X) \to Cone(U \to X) \to Cone(V \to U) \xrightarrow{[1]}
    \end{equation}
\end{lem}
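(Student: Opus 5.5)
The plan is to work entirely in the stable $\infty$-category $\mathcal{C}$. There, cartesian and cocartesian squares coincide, and taking fibers is inverse to taking cofibers. Fix the diagram $U \to V \to X$. By the octahedral axiom we have the cofiber sequence
$$Cone(U \to V) \xrightarrow{\ i\ } Cone(U \to X) \xrightarrow{\ p\ } Cone(V \to X),$$
which is the triangle \eqref{splitme} up to rotation and the paper's labelling of the third term. A splitting of this triangle is then the same thing as a retraction $r$ of $i$, i.e. $r \circ i \simeq \id$, and equivalently a section of $p$.

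\textbf{Step 1: reformulate extensions.} An extension of $U \to V \to X$ to a square $X = V \oplus_U W$ amounts to an object $W$ with a map $w: W \to X$ and a lift of $U \to X$ to a map $U \to W$, together with the homotopy making the triangle commute. Since $\mathcal{C}$ is stable, the pair $(W, w)$ is equivalent to its cofiber data $X \to C := Cone(W \to X)$, and we recover $W$ as the fiber of $X \to C$. Under this dictionary, a lift of $U \to X$ through $W$ is the same as a nullhomotopy of the composite $U \to X \to C$. Because $Cone(U \to X)$ is the universal recipient of such a nullhomotopy, this is in turn the same as a map $q: Cone(U \to X) \to C$ under $X$.

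\textbf{Step 2: characterize cartesianness.} The square $U, V, W, X$ is cartesian iff the induced map on horizontal cofibers $Cone(U \to V) \to Cone(W \to X) = C$ is an equivalence. I would check that this induced map is exactly $q \circ i$, by naturality of the octahedron applied to $U \to W \to X$ and $U \to V \to X$. Hence cartesian extensions correspond to pairs $(C, q)$ for which $q \circ i$ is an equivalence. Replacing $q$ by $(q \circ i)^{-1} \circ q$ normalizes $C = Cone(U \to V)$, and the normalized $q$ is precisely a retraction $r$ of $i$, i.e. a splitting of the triangle.

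\textbf{Step 3: construct the inverse.} Conversely, given a retraction $r$, set $W := \mathrm{fib}\big(X \to Cone(U \to X) \xrightarrow{r} Cone(U \to V)\big)$. The lift $U \to W$ comes from the canonical nullhomotopy of $U \to X \to Cone(U \to X)$. Step 2 then shows that the resulting square is cartesian. The two constructions are mutually inverse because fiber and cofiber are inverse equivalences between arrows into $X$ and arrows out of $X$. Isomorphisms of extensions restricting to the identity on $U \to V \to X$ correspond to isomorphisms of the data $(C, q)$ under $X$, so we obtain a bijection on isomorphism classes. In fact this gives an equivalence of spaces, which will be useful later for the fiber computations in Theorem~\ref{thm:crossing-gf=sh}.

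The main obstacle is the bookkeeping of homotopies in Step 2. One must verify that the comparison map on cofibers really is $q \circ i$, with compatible coherence data, rather than merely agreeing up to some unspecified homotopy. I would handle this by writing the square as a functor from $\Delta^1 \times \Delta^1$, extending it to a $3 \times 3$ diagram of cofibers, and reading off $i$ and $q$ from that diagram. This extension uses only the standard fact that in a stable category a square is cartesian iff its total cofiber vanishes.
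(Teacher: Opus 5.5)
Your argument is correct and is essentially the paper's. The paper replaces $U,V,W,X$ by their cones from $U$ to reduce to $U=0$, and then observes that a cartesian square with $U=0$ is a decomposition $X=V\oplus W$, i.e.\ a splitting of $V\to X$. That is exactly your retraction $r$ of $i\colon Cone(U\to V)\to Cone(U\to X)$, with $W$ recovered as the fiber of $X\to Cone(U\to X)\xrightarrow{r}Cone(U\to V)$. Your version makes the reduction explicit: it handles that $U\to W$ is part of the unknown data, and it correctly reads the triangle as $Cone(U\to V)\to Cone(U\to X)\to Cone(V\to X)$. It also gives the space-level statement that the paper implicitly relies on later.
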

\begin{proof}
    The problem is unchanged upon replacing $U, V, W, X$ with their cones with respect to the given map from $U$.  Now the assertion is that
    splittings of $0 \to V \to X$ are in bijection with decompositions $X = V \oplus W$, which is tautologically true.
\end{proof}

Note that, in general,  the obstruction to the splitting of an exact triangle $A \to B \to C$ is the vanishing of the connecting map $C \to A[1]$, and the isomorphism classes of splittings are in bijection with homotopy classes of null-homotopies of $C \to A[1]$, hence form a torsor for $\Hom(C, A[2])$, etc.

Finally let us note that the situation discussed above
differs by a diffeomorphism from the extension problem from $\Lambda_{0,1}$ to $\Lambda_{1,2}^+$; said diffeomorphism induces an equivalence of sheaf categories.  We chose this alternate model solely to draw a simpler picture.

\subsubsection{General case}

\begin{lem} \label{sheaf precrossing characterization}
    Automorphism classes of objects in $\Sh_{\Lambda_{k,m}}(\R^m\times\R_z)$ are in bijection with automorphism classes of diagrams $U \to V \to W$ equipped with a locally constant family of splittings of this constant sequence over $\partial D^k$.
\end{lem}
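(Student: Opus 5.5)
We plan to prove the lemma by making the geometry of $\Lambda_{k,m}$ explicit and then reducing to the one-dimensional picture of Lemma~\ref{0d sheaf split} via non-characteristic propagation. The front of $\Lambda_{k,m}$ in $\R^m\times\R_z$ is the union of the graphs of $z^+_{k,m}$ and $z^-_{k,m}$. These two graphs cut $\R^m\times\R_z$ into three regions: below $z^-$, between the two, and above $z^+$. Because $z^+-z^-$ has a single critical point, with value $2>0$, the graphs are disjoint. Hence the front has no crossings, and every stratum is contractible: the two sheets are graphs over $\R^m$, and each region fibers over $\R^m$ with fiber an interval or a half-line.

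The first step is to use contractibility. A sheaf microsupported on this front is locally constant on each region, so it is determined by its stalks $U$, $V$, $W$ (bottom, middle, top) together with the restriction maps $U\to V\to W$ given by the upward propagation across each smooth sheet. Here we use the description of \cite{STZ}, or directly \cite[Prop.~5.4.5]{Kashiwara-Schapira} for sheaves microsupported on a smooth hypersurface with positive conormal. Since the stalks along $M\times\{-\infty\}$ vanish in $\Sh^+$, the bottom stalk may in fact be normalized. So, before accounting for the $x$-dependence, the data are simply a constant diagram $U\to V\to W$.

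The second step, which carries the content, is to see where the splitting datum over $\partial D^k$ arises. This is the step I expect to be the main obstacle, since one has to pin down what the microsupport condition imposes beyond the diagram $U\to V\to W$. The key observation is that the upward conormals of the two sheets together constrain restriction maps between open sets meeting both sheets. Consider the region where $z^+-z^-$ is small, say along the negative directions of $z^+-z^-$: this is the $k$-disk $D^k$ of the unstable directions, and far out along it the two sheets approach each other. I would analyze the sheaf on a neighborhood $B$ of a point of the front lying over $x\in\partial D^k$ (far out), in a chart where the two sheets are nearly parallel. There, the restriction from an open set containing both sheets to the region below both must exist compatibly. Equivalently, using the model of Lemma~\ref{0d sheaf split} rotated by the diffeomorphism mentioned after it, the sheaf over $\partial D^k$ looks like the $t\to\pm\infty$ ends of the crossing picture, where the two sheets can be independently displaced. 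Concretely, after applying the quantized contact isotopy of \cite{GKS} (Theorem~\ref{sheaf homotopy lifting}) that pushes the two sheets past one another over the complement of a neighborhood of the critical point, the sheaf over each $x\in\partial D^k$ becomes an object of the crossed side. By Lemma~\ref{0d sheaf split} it then carries a splitting of \eqref{splitme}, which, after normalizing $U$ to the cone, is a splitting of $V\to W$. Local constancy in $x$ yields a locally constant family of such splittings over $\partial D^k$.

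The third step is the converse gluing. Given $U\to V\to W$ and a locally constant family of splittings over $\partial D^k$, we construct the sheaf on $D^k\times\R_z$ from the constant diagram, and on the collar near $\partial D^k$ from the split object, where the two sheets are independent. The two constructions are glued along the overlap using the given splittings, and the result is propagated to all of $\R^m$ by the product structure in the remaining $m-k$ directions, along which $z^\pm$ are monotone. Checking that the two assignments are mutually inverse on isomorphism classes reduces, by the same locality, to Lemma~\ref{0d sheaf split} applied pointwise together with contractibility of the strata. This identifies automorphism classes of objects on both sides, as the lemma asserts.
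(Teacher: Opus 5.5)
\textbf{There is a genuine gap, and it starts with your first geometric claim.}

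Write $x=(x',x'')\in\R^{m-k}\times\R^k$. Then
\[
z^+_{k,m}-z^-_{k,m}=2\bigl(1+|x'|^2-|x''|^2\bigr).
\]
A single nondegenerate critical point of index $k\ge 1$ with positive critical value does not make this function positive. It is negative on $\{|x''|^2>1+|x'|^2\}$. The critical point only tells you that $\Lambda_{k,m}$ is embedded with a single Reeb chord. It does not tell you that the front has no crossings.

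In fact the two sheets cross transversally along the hypersurface
\[
\R^m_{=}=\{|x''|^2=1+|x'|^2\}\cong \R^{m-k}\times S^{k-1}\simeq \partial D^k .
\]
There are therefore four chambers, not three: below both sheets, $z^-<z<z^+$, $z^+<z<z^-$, and above both. So your step~1 conclusion, that the data are only a constant three-term diagram, is false for $k\ge 1$; it is correct only when $k=0$.

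Your step~2 then tries to manufacture the splitting from a GKS isotopy that ``pushes the sheets past one another''. This cannot work, for two reasons.
\begin{itemize}
\item A quantized contact isotopy induces an equivalence of sheaf categories. If the front really had no crossings, such an isotopy could never produce extra splitting data.
\item A compactly supported Legendrian isotopy cannot make two disjoint graph sheets cross. It would have to pass through a Reeb chord of length zero, which is a non-embedded moment.
\end{itemize}
So steps~2--3 do not establish the lemma.

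\textbf{The missing idea} is that the crossing locus is already present in $\Lambda_{k,m}$, and it is what carries the $\partial D^k$-family. The argument that works (this is the paper's) goes as follows.
\begin{enumerate}
\item Decompose $\R^m=\R^m_>\sqcup\R^m_=\sqcup\R^m_<$ according to the sign of $z^+-z^-$.
\item Over $\R^m_>$, a ball bundle over $\R^{m-k}$ and hence contractible, the sheets are disjoint graphs. The sheaf there is a constant sequence of stalks, bottom $\to$ middle $\to$ top.
\item A neighborhood of $\R^m_=$ is an $\R^m_=$-family of the transverse crossing of Lemma~\ref{0d sheaf split}. Extending the sheaf from $\R^m_>$ across $\R^m_=$ therefore amounts to a locally constant family of splittings of \eqref{splitme} over $\R^m_=\simeq\partial D^k$. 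The stalk in the new chamber $z^+<z<z^-$ is only a local system there, and it is determined by the splitting.
\item Over $\R^m_<$ the sheets are again disjoint, and this region retracts onto a collar of $\R^m_=$. Hence restriction from $\R^m\times\R_z$ to a neighborhood of $\overline{\R^m_>}\times\R_z$ is an equivalence, and $\R^m_<$ contributes no further data.
\end{enumerate}
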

\begin{proof}
    We write $\R^m = \R^m_{>} \sqcup \R^m_{=} \sqcup \R^m_{<}$ according as $z_+ > z_-$ or $z_+ = z_-$ or $z_+ < z_-$.  Observe that $\R^m_>$ retracts to the interior of $\Delta$, while $\R^m_=$ retracts to $\partial \Delta$.

    It is clear that $\Sh_{\Lambda_{k,m}}(\R^m_{>} \times \R_z)$ is equivalent to the category of sequences $U \to V \to W$.  Indeed, setup is diffeomorphic to the product of a contractible factor and a line with microsupport at the positive conormals to two points.

    Let us now consider $\Sh_{\Lambda_{k,m}}(Nbd(\overline{\R^m_{>}}) \times \R_z)$, i.e. how the category changes after
    including a neighborhood of the hypersurface $\R^m_{=}$.  Note this is a $\R^m_{=}$-family worth of the geometry discussed in Lemma \ref{0d sheaf split}.  So, the category of such sheaves is equivalent to the category of sequences $U \to V \to W$, along with a local system of splittings of said sequence along $\R^m_{=}$.  As we have already noted, the $\R^m_=$ retracts to $\partial \Delta$.

    Finally, restriction induces an equivalence $\Sh_{\Lambda_{k,m}}(\R^m \times \R_z) = \Sh_{\Lambda_{k,m}}(Nbd(\overline{\R^m_{>}}) \times \R_z)$.
\end{proof}

\begin{lem} \label{sheaf family splitting problem}
    Fix $F \in \Sh_{\Lambda_{k,m}}(\R^m \times \R_z)$ corresponding to $U \to V \to W$ with some fixed splitting $\sigma$ over $S^{k-1} = \partial D^k$.  Then
    isomorphism classes of extensions of $F$ to
    $\widetilde{F} \in \Sh_{\Lambda_{k+1,m+1}^+}(\R^{m+1}_+ \times \R_z)$ are in bijection with isomorphism classes of extensions of $\sigma$ over $D^k$.
\end{lem}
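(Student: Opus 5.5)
We decompose the half-space $\R^{m+1}_+$ in the same way as in the proof of Lemma~\ref{sheaf precrossing characterization}, now adding one parameter. Write $\R^{m+1}_+ = \R^{m+1}_{+,>} \sqcup \R^{m+1}_{+,=} \sqcup \R^{m+1}_{+,<}$ according to the sign of $z^+_{k+1,m+1}-z^-_{k+1,m+1}$.

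The difference $z^+_{k+1,m+1}-z^-_{k+1,m+1}$ is a Morse function of index $k+1$. Restricted to the half space $x_{m+1}\ge 0$, its level set $\{=0\}$ is a hypersurface that is diffeomorphic to a half-space bundle. Its boundary, at $x_{m+1}=0$, is $\R^m_{=}$, which retracts to $\partial D^k = S^{k-1}$. The whole hypersurface $\R^{m+1}_{+,=}$ retracts onto a $k$-disk $D^k$ whose boundary is that $S^{k-1}$. Concretely, the locus where $1+|x'|^2 - |x''|^2 - x_{m+1}^2 = 0$ with $x_{m+1}\ge 0$ retracts to the hemisphere $\{|x''|^2+x_{m+1}^2=1,\ x_{m+1}\ge0\}$. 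Likewise, $\R^{m+1}_{+,>}$ retracts onto the interior region, which is contractible, and it contains $\R^m_>$.

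With this in hand, we redo the argument of Lemma~\ref{sheaf precrossing characterization} over $\R^{m+1}_+$ in three steps.

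First, on $\R^{m+1}_{+,>}\times\R_z$ the microsupport consists of two disjoint graphical sheets over a contractible base. Hence sheaves there are sequences $U\to V\to W$, and restriction to $\R^m_>$ is an equivalence, because the inclusion is a homotopy equivalence of contractible regions with product geometry.

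Second, on a neighborhood of the closure of $\R^{m+1}_{+,>}$, the crossing geometry of Lemma~\ref{0d sheaf split} appears in a family over $\R^{m+1}_{+,=}$. By that lemma applied parametrically, extensions across the hypersurface are the same as a locally constant family of splittings of the triangle (\ref{splitme}) over $\R^{m+1}_{+,=}$. A local system over a space is determined up to homotopy by its restriction to a deformation retract, so this is a family of splittings over $D^k$. Restricting to $x_{m+1}=0$ recovers the family $\sigma$ over $S^{k-1}$.

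Third, restriction from $\R^{m+1}_+\times\R_z$ to the neighborhood of $\overline{\R^{m+1}_{+,>}}$ is an equivalence. The argument is the same as the final step of Lemma~\ref{sheaf precrossing characterization}: in the region $z^+<z^-$ the two sheets no longer interact, and non-characteristic propagation applies.

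Combining the three steps, an extension $\widetilde F$ is equivalent to the datum of the constant sequence $U\to V\to W$ (determined by $F$) together with a family of splittings over $D^k$ restricting to $\sigma$ on $\partial D^k$. Passing to isomorphism classes gives the bijection.

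The main obstacle is the second step. One must make the parametric version of Lemma~\ref{0d sheaf split} precise as an equivalence of categories, not merely a bijection on isomorphism classes, so that the local system of splittings over the hypersurface is well defined. One must also check that the boundary condition at $x_{m+1}=0$ is compatible with the retraction of $\R^{m+1}_{+,=}$ onto $(D^k,S^{k-1})$. I would handle this by working with the diagram description of \cite{STZ} locally over small balls in the hypersurface and gluing with sheaf descent. Both sides form sheaves of spaces over $\R^{m+1}_{+,=}$ that are locally constant, and a homotopy equivalence of pairs induces an equivalence of their sections relative to the boundary.
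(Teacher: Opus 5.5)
Your proof is correct and rests on the same observation as the paper's. Over the half-space, the crossing locus $\{z^+_{k+1,m+1}=0\}$ is contractible and retracts onto a $k$-disk whose boundary is the $S^{k-1}$ of Lemma~\ref{sheaf precrossing characterization}, so splittings over it that restrict to $\sigma$ are exactly extensions of $\sigma$ over $D^k$. The paper compresses this into one line: the front of $\Lambda^+_{k+1,m+1}$ is the planar crossing times a contractible factor, so its sheaf category is $\Sh_{\times}(\R^2_{t,z})$. Your region-by-region rerun of Lemma~\ref{sheaf precrossing characterization} additionally makes explicit that the retraction is compatible with restriction to $x_{m+1}=0$, which the paper leaves implicit.
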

\begin{proof}
    Note that the front of $\Lambda_{k+1,m+1}^+$ is just the transverse intersection of two graphs along  a contractible locus, i.e. overall it is diffeomorphic to the product of $\Sh_{\times}({\R^2_{t, z}})$ with a trivial factor, hence determines the same category of sheaves.
\end{proof}

Let us now recall a general lemma about extending splittings of exact triangles

\begin{lem} \label{some homological algebra}
   Suppose in the following square, the rows and columns are exact triangles:
$$
 \begin{tikzcd}
    A \ar[r] \ar[d] & B \ar[r] \ar[d] & C \ar[d] \\
    A' \ar[r] \ar[d] & B' \ar[r, "y'"] \ar[d, "\pi"] & C' \ar[d] \\
    A'' \ar[r]  & B'' \ar[r]  & C''
\end{tikzcd}
$$
    Suppose given a splitting of the middle exact triangle, e.g. by a section $\gamma': C' \to B'$ of the map $y: B' \to C'$.  Then there is an obstruction $o(\gamma'): C \to A''$ which vanishes iff all three horizontal sequences split.
\end{lem}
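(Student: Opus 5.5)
The plan is to define the obstruction $o(\gamma')$ explicitly from the section $\gamma'$ and the column triangles, and then check both implications by diagram chasing in the stable category. Throughout I read ``all three horizontal sequences split'' as ``split compatibly with the vertical maps'', i.e.\ $\gamma'$ restricts to a section $\gamma$ of $y:B\to C$ and descends to a section $\gamma''$ of $B''\to C''$. This compatible reading is the one needed in Lemma~\ref{sheaf family splitting problem}.

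\textbf{Construction of $o(\gamma')$.} Write $c:C\to C'$ and $c':C'\to C''$ for the right column maps, $\pi:B'\to B''$ and $\beta:B\to B'$ for the middle column maps, and $y'':B''\to C''$. Consider the composite
\[ C \xrightarrow{c} C' \xrightarrow{\gamma'} B' \xrightarrow{\pi} B''. \]
Its image in $C''$ is
\[ y''\pi\gamma' c = c' y'\gamma' c = c' c, \]
using commutativity of the lower right square and $y'\gamma'=\id$. Now $c'c$ comes with a canonical null-homotopy, since the right column is an exact triangle. Transporting this null-homotopy through the homotopy $y'\gamma'\simeq\id$ gives a null-homotopy of $y''\circ(\pi\gamma' c)$. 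By exactness of the bottom row, this yields a lift of $\pi\gamma' c$ to a map $o(\gamma'):C\to A''$. Everything is functorial in the $\infty$-categorical setting, so $o(\gamma')$ is well defined once the data above (including the homotopy $y'\gamma'\simeq \id$) is fixed.

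\textbf{Vanishing implies compatible splittings.} Suppose $o(\gamma')\simeq 0$.
\begin{enumerate}
\item Composing with $A''\to B''$, we get that $\pi\gamma' c$ is null, together with a null-homotopy compatible with the one of $c'c$.
\item By exactness of the middle column, $\gamma' c$ then lifts to a map $\gamma:C\to B$ with $\beta\gamma\simeq\gamma' c$.
\item To see that $\gamma$ is a section, compute $c\,y\gamma\simeq y'\beta\gamma\simeq y'\gamma' c\simeq c$. So $y\gamma-\id_C$ lies in the image of $\Hom(C,C''[-1])$.
\item This ambiguity is exactly the ambiguity in choosing the lift in the previous step. The compatibility of null-homotopies built into the definition of $o$ is what lets us choose $\gamma$ so that $y\gamma\simeq\id_C$ exactly.
\item Passing to cofibers of the vertical maps, the pair $(\gamma,\gamma')$ induces $\gamma'':C''\to B''$. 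The relation $y''\gamma''\simeq\id$ follows from functoriality of cofibers applied to the morphism of triangles.
\end{enumerate}

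\textbf{Compatible splittings imply vanishing.} If $\gamma$ and $\gamma''$ exist compatibly with $\gamma'$, then $\pi\gamma' c\simeq\pi\beta\gamma\simeq 0$, via the canonical null-homotopy of $\pi\beta$. This null-homotopy is compatible with the one used to define $o(\gamma')$, and so $o(\gamma')\simeq 0$.

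\textbf{Main obstacle.} The delicate step is the coherence of null-homotopies in step 4 of the forward direction. A bare homotopy-category argument only shows that $\pi\gamma' c$ is null, and that $y\gamma=\id$ up to an element of $\Hom(C,C''[-1])$. To get an honest section one must use the specific null-homotopy recorded in $o(\gamma')$. I would handle this cleanly by passing to fibers. First, replace the middle row by $A'\oplus C'$ using $\gamma'$. Then the question becomes whether the subobject $B\subset B'$ (the fiber of $\pi$) is compatible with this direct sum decomposition. Concretely, the relevant obstruction is the off-diagonal component $C\to A''$ of the induced map on fibers/cofibers, and this off-diagonal component is precisely $o(\gamma')$.
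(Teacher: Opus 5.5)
Your proposal is correct and follows the paper's argument. The paper defines $o(\gamma')$ by the same chase $C\to C'\xrightarrow{\gamma'}B'\xrightarrow{\pi}B''$, lifted to $A''$ because its image in $C''$ vanishes, and then states (``similar considerations show'') that null-homotopies of $o(\gamma')$ correspond to compatible lifts $\gamma$, with $\gamma''$ obtained by functoriality of cones; your ``split compatibly with $\gamma'$'' reading is the one the paper implicitly uses, and your step 4 only spells out the coherence the paper leaves unstated, namely that a null-homotopy of $o(\gamma')$ also identifies the induced null-homotopy of $y''\pi\gamma'c$ with the canonical one of $c'c$, which is what forces $y\gamma\simeq\id_C$.
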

\begin{proof}
    Let us attempt to define a map $\gamma: C \to B$.  So consider an element $c \in C$.  Take its image $c' \in C'$; we should then try and lift $\gamma'(c')$ from $B$ to $B'$.  The obstruction to doing this is $\pi(\gamma'(c))$.  By commutativity, the image of this element in $C''$ vanishes, hence it lifts canonically to some $o(c) \in A''$.

    This defines the desired map $o: C \to A''$, and similar considerations show that null-homotopies of this map correspond to splittings $\gamma: C \to B$, which then induce corresponding $\gamma'': C'' \to B''$ by functoriality of cones.  (We of course work in the setting of stable categories.)
\end{proof}

\begin{cor}
   Suppose given a map $A \to B$ and a family of splittings of said map over $\partial D^k$, inducing in particular $\gamma': \Gamma(\partial D^k, B) \to \Gamma(\partial D^k, A)$.  The obstruction to extending the splitting over $D^k$ is the obstruction map $o(\gamma'): Cone(A \to B)[1-k] \to A$ associated to the following square:
   $$
 \begin{tikzcd}
    A[1-k] \ar[r] \ar[d] & B[1-k] \ar[r] \ar[d] & Cone(A\to B)[1-k] \ar[d] \\
    \Gamma(S^{k-1}, A) \ar[r] \ar[d] & \Gamma(S^{k-1}, B) \ar[r, "y'"] \ar[d] & \Gamma(S^{k-1}, Cone(A \to B)) \ar[d] \\
     A \ar[r]  & B \ar[r]  & Cone(A \to B)
\end{tikzcd}
$$
\end{cor}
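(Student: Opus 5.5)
The plan is to deduce the corollary directly from Lemma~\ref{some homological algebra}, applied to the displayed $3\times 3$ square. The first step is to check that the square has exact rows and columns. The rows are obtained by applying the exact functors $(-)[1-k]$, $\Gamma(S^{k-1},-)$ and the identity to the triangle $A\to B\to Cone(A\to B)$, so they are exact.

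For the columns, I use the cofiber sequence $S^{k-1}_+ \to D^k_+ \simeq \mathrm{pt}_+ \to S^k$, or more directly the decomposition $S^{k-1}_+\simeq S^0\vee S^{k-1}$ in spectra. Applying $\Gamma(-,E)$ to it gives, for any constant coefficient $E$, a split exact triangle
\[
E[1-k] \to \Gamma(S^{k-1},E) \to E.
\]
The second map is restriction to a base point, and the first is the fiber inclusion, i.e.\ cochains relative to the base point. This is natural in $E$, so the three columns are exact and compatible with the rows, and the square commutes.

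Next I identify the middle row with the data in hand. A locally constant family of splittings of $A\to B$ over $\partial D^k=S^{k-1}$ is the same as a splitting of the map of global sections $\Gamma(S^{k-1},A)\to\Gamma(S^{k-1},B)$ as a map of $\Gamma(S^{k-1},\S)$-modules. Forgetting the module structure, it gives in particular the section $\gamma'$ of $y'$.

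Lemma~\ref{some homological algebra} then produces $o(\gamma')\colon Cone(A\to B)[1-k]\to A''=A$. By the lemma, this map vanishes exactly when the top and bottom rows split compatibly with $\gamma'$. The remaining, and main, point is to check that such compatible splittings are precisely extensions of the family of splittings over $D^k$. An extension over $D^k$ is a single splitting of $A\to B$, since $D^k$ is contractible. Its restriction to $S^{k-1}$ must agree with $\gamma'$ up to homotopy, meaning that $\gamma'$ minus the constant family is null in the fiber term. That fiber term is exactly the $[1-k]$-shifted top row.

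Concretely, I would decompose $\gamma'$ under the splitting $\Gamma(S^{k-1},-)\simeq(-)\oplus(-)[1-k]$. Its constant component is a splitting of the bottom row. Its off-diagonal component is a map $Cone(A\to B)\to A[1-k]$, equivalently $Cone(A\to B)[k-1]\to A$, up to shift conventions. This off-diagonal component is, up to the identifications above, the map $o(\gamma')$. An extension over the disk exists iff this component can be homotoped to zero, which gives the claim.

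The delicate step is this bookkeeping of shifts, and I would verify the degree by the case $k=1$. There $S^0$ is two points, and the obstruction should be the difference of two splittings, which lies in $\Hom(Cone(A\to B),A)$. This matches the formula $Cone(A\to B)[1-k]\to A$ with $k=1$.
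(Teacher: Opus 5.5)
Your route is the paper's own: its proof is the single sentence that the corollary is immediate from Lemma~\ref{some homological algebra}. However, the step you call the main point fails for the square as displayed. The ``up to shift conventions'' in your concrete paragraph also hides a real discrepancy, not a convention.

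\textbf{The lemma's obstruction vanishes on the displayed square.} With the columns you specify (reduced cochains $E[1-k]\to\Gamma(S^{k-1},E)$, then restriction to a base point $x_0$), the lemma extracts the lift to $A$ of
\[
\mathrm{Cone}(A\to B)[1-k]\to\Gamma(S^{k-1},\mathrm{Cone}(A\to B))\xrightarrow{\gamma'}\Gamma(S^{k-1},B)\to B .
\]
But $\gamma'$ is induced by a map of constant sheaves on $S^{k-1}$. So it commutes with restriction to $x_0$ and preserves reduced sections. It therefore restricts to a compatible splitting of the top row and induces one (namely $s_{x_0}$) on the bottom row. This obstruction thus vanishes for \emph{every} locally constant family, extendable or not.

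Already for $k=1$ this fails: $\gamma'=s_0\oplus s_1$ on $\Gamma(S^0,-)=(-)\oplus(-)$ preserves the summand at the non-base point, so all three rows split compatibly. Extending over $D^1$, however, requires $s_0\simeq s_1$. Take $A=\mathrm{Cone}=\S$, $B=\S\oplus\S$, $s_0=(0,\mathrm{id})$, $s_1=(\mathrm{id},\mathrm{id})$; then $s_0\not\simeq s_1$. So ``compatible splittings of the outer rows $=$ extensions over $D^k$'' is false here. Your $k=1$ check compares only degrees, so it cannot see this.

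\textbf{The degree is also off.} Your concrete paragraph does find the right invariant: the component of $\gamma'$ from the constant summand $\mathrm{Cone}(A\to B)$ into the reduced summand $B[1-k]$, lifted to $A[1-k]$. This is the class of the family in $\pi_{k-1}$ of the space of splittings, which is a torsor over $\Omega^\infty\Hom(\mathrm{Cone}(A\to B),A)$. But it is the \emph{other} off-diagonal entry from the one the lemma sees. It lives in $[\mathrm{Cone}(A\to B)[k-1],A]$, not $[\mathrm{Cone}(A\to B)[1-k],A]$, and for $k\ge 2$ these genuinely differ. For example, $\mathrm{Cone}=\S$ and $A=\S[k-1]$ give $\pi_0\S=\Z$ versus $\pi_{2-2k}\S=0$.

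\textbf{What works.} Apply Lemma~\ref{some homological algebra} to the flipped square, with columns $E\to\Gamma(S^{k-1},E)\to E[1-k]$: pullback of constants, then the cofiber $\Gamma(D^k,S^{k-1};E)[1]$. Then $A\to B\to\mathrm{Cone}(A\to B)$ is the top row. A compatible splitting of the top row is a single splitting $s$ with $\gamma'$ homotopic to the constant family on $s$, i.e.\ an extension over $D^k$. The lemma's obstruction is then $\mathrm{Cone}(A\to B)\to A[1-k]$.

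This is exactly the form in which the paper later uses the result: ``a map $Cone(V\to X)\to V[1-k]$'' in the proof of Theorem~\ref{sheaf crossing obstruction}. The displayed square and the target $\mathrm{Cone}(A\to B)[1-k]\to A$, taken literally, are wrong, and an argument along your lines cannot establish them for $k\ge 2$.
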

\begin{proof}
    Immediate from Lemma \ref{some homological algebra}.
\end{proof}

\begin{figure}
    \centering
    \includegraphics{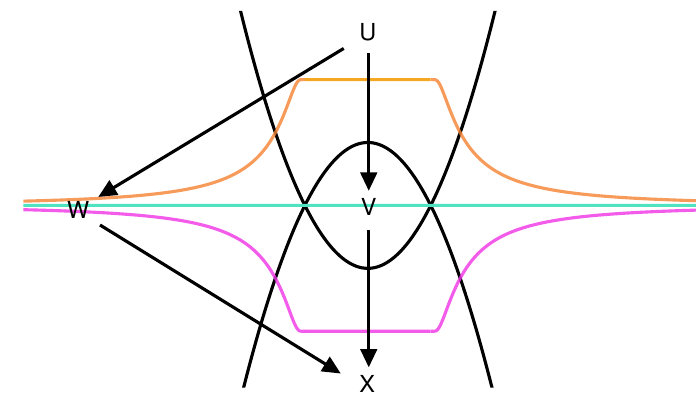}
    \caption{Schematic depiction of data associated to an object of $\Sh_{\Lambda_{k,m}}(\R^m \times \R_z)$.  Note that $U, V, X$ are all objects of the coefficient category, whereas $W$ is a local system of such objects on the region described (which is homotopy equivalent to $\partial \Delta^k$).}
    \label{precrossing figure}
\end{figure}

Consider now an object $F \in \Sh_{\Lambda_{k,m}}(\R^m  \times  \R_z)$. We wish to describe the obstruction to extending $F$ to some $\widetilde{F} \in \Sh_{\Lambda_{k+1,m+1}^+}(\R^{m+1}_+  \times  \R_z)$ directly in terms of the sections of $F$.  To this end, consider the regions depicted in Figure \ref{precrossing figure}.  We may consider the restriction maps
$\Gamma(\{z \le c\}, F) \to \Gamma(\{z \le b\}, F) \to \Gamma(\{z \le a\}, F)$.
From these we form the exact triangle on the opposite side of the octahedron:
$$\Gamma(\{z \le c\}, \{z \le b\}, F) \to \Gamma(\{z \le c\}, \{z \le a\}, F) \to \Gamma(\{z \le b\}, \{z \le a\}, F)$$
Here, for $S \subset T$ we write $\Gamma(T, S, F) := Cone(\Gamma(T, F)) \to \Gamma(S, F))$ for the relative sections.

\begin{theorem} \label{sheaf crossing obstruction}
    The space of extensions of an object $F \in \Sh_{\Lambda_{k,m}}(\R^m  \times  \R_z)$ to some $\widetilde{F} \in \Sh_{\Lambda_{k+1,m+1}^+}(\R^{m+1}_+  \times  \R_z)$
    is isomorphic to the space of null-homotopies of the connecting homomorphism
    $\alpha(F): \Gamma(\{z \le b\}, \{z \le a\}, F) \to \Gamma(\{z \le c\} \{z \le b\}, F)[1]$.
\end{theorem}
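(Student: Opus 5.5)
We would prove Theorem~\ref{sheaf crossing obstruction} by combining Lemma~\ref{sheaf family splitting problem} with the Corollary following Lemma~\ref{some homological algebra}. Then we check that the resulting obstruction map is, up to a canonical identification, the connecting map $\alpha(F)$ of the relative-sections triangle.

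The first step is to encode $F$ via Lemma~\ref{sheaf precrossing characterization}. We fix the diagram $U \to V \to W$ (stalks in the region $\R^m_>$) together with the family of splittings $\sigma$ over $\R^m_= \simeq \partial D^k \simeq S^{k-1}$. By Lemma~\ref{sheaf family splitting problem}, extensions $\widetilde F$ correspond to extensions of $\sigma$ over $D^k$. Running the argument with $M\times\Delta^n$ in place of $\R^m$, as explained for spaces of sheaves, promotes this to an equivalence of spaces. By Lemma~\ref{0d sheaf split}, the relevant exact triangle has the shape $A \to B \to Cone(A\to B)$, namely the triangle \eqref{splitme} built from the two microstalks: the lower branch contributes $A$ and the upper branch contributes $Cone(A\to B)$.

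The Corollary then identifies the space of extensions with the space of null-homotopies of the obstruction map
$$o(\gamma'): Cone(A\to B)[1-k] \to A.$$
By the remark after Lemma~\ref{0d sheaf split}, this space is a torsor for the appropriate Hom.

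The second step is to compute the relative sections over the regions $\{z\le c\}\supset\{z\le b\}\supset\{z\le a\}$ of Figure~\ref{precrossing figure}.
\begin{itemize}
\item The region between $a$ and $b$ meets only the lower branch $z^-$, over the ball where $\chi=2$. Microlocal Morse theory (non-characteristic propagation) gives $\Gamma(\{z\le b\},\{z\le a\},F)\simeq A$, the microstalk along $z^-$.
\item The region between $b$ and $c$ meets the upper branch $z^+$ exactly over the region where $z^+ > 0$, which retracts to $D^k$ relative to its boundary $S^{k-1}$. Hence $\Gamma(\{z\le c\},\{z\le b\},F)\simeq$ the microstalk along $z^+$ tensored with relative cochains of $(D^k,S^{k-1})$, i.e.\ $Cone(A\to B)$ shifted by $k$.
\end{itemize}
Under these identifications, $\alpha(F)$ becomes a map $A \to Cone(A\to B)[k+1]$ up to the conventions on shifts, or dually $Cone(A\to B)[\cdot]\to A[\cdot]$. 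After matching degrees, its source and target coincide with those of $o(\gamma')$.

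The main obstacle is showing that the two maps agree, not merely that they have the same source and target. The plan is to realize the $3\times 3$ diagram of the Corollary geometrically. Its middle row, $\Gamma(S^{k-1},\cdot)$, comes from sections over the neighborhood of $\R^m_=$, where the splitting $\sigma$ lives. Its top and bottom rows come from sections over the interior ball and at a point. We would write $\Gamma(\{z\le c\},\{z\le a\},F)$ as a pullback over the decomposition $\R^m = \overline{\R^m_>}\cup Nbd(\R^m_=)$ and compare the octahedron defining $\alpha(F)$ with the construction in the proof of Lemma~\ref{some homological algebra}. There, the lift of $\gamma'(c')$ is exactly the boundary map from $(c,b)$-relative sections to $(b,a)$-relative sections.

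If a direct chain-level comparison is too messy, the fallback is a universality argument. Both constructions are natural in $F$ and compatible with restriction to slices. Both vanish exactly when an extension exists; for $o(\gamma')$ this is Lemma~\ref{some homological algebra}, and for $\alpha(F)$ it follows because an extension $\widetilde F$ makes $\Gamma(\{z\le c\},\{z\le b\},\widetilde F)$ a cone, exactly as in \eqref{eq:nullhomotopy1}. We would then reduce to the universal case $k=0$, which is Lemma~\ref{0d sheaf split}, and obtain general $k$ by the suspension trick of the previous subsection ($r^l$ giving $\Omega^l$). This reduction identifies the two obstructions and their spaces of null-homotopies.
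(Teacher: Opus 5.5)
Your main line is the paper's. Lemmas~\ref{sheaf precrossing characterization} and~\ref{sheaf family splitting problem} turn extensions of $F$ into extensions of a splitting over $D^k$, and Lemma~\ref{some homological algebra} produces the obstruction. One then computes the relative sections and matches the connecting map with the obstruction by comparing the two diagram chases. The paper's \v{C}ech computation of $\Gamma(\{z\le c\},F)$, $\Gamma(\{z\le b\},F)$, $\Gamma(\{z\le a\},F)$ in terms of $U,V,W,X$ and $\Gamma(\partial D^k,-)$ is your Mayer--Vietoris over $\overline{\R^m_>}\cup Nbd(\R^m_=)$.

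However, the bookkeeping behind that comparison is wrong as you set it up. You let $A$ be the lower-branch microstalk and $Cone(A\to B)$ the upper one. Then $o(\gamma')$ runs from the upper term to the lower one, while your own computation makes $\alpha(F)$ run from $\Gamma(\{z\le b\},\{z\le a\},F)\simeq A$ to a shift of the upper term. Maps in opposite directions cannot ``have the same source and target''. No duality flips one into the other: the coefficients here are arbitrary, and even for dualizable objects, dualizing $A\to C$ gives $C^\vee\to A^\vee$.

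The fix is the paper's first move. Cone off the constant sheaf with stalk $U$, the stalk above both branches (this is why $\Gamma(\{z\le c\},F)=U$). Then $U=0$, and the triangle to split is $V\to X\to Cone(V\to X)$. This is the octahedral triangle $Cone(U\to V)\to Cone(U\to X)\to Cone(V\to X)$, whose sub-object is the \emph{upper} microstalk, not the first term of \eqref{splitme} as displayed. With $A=V$, the splitting obstruction and $\alpha(F)$ are both maps $Cone(V\to X)\to V[1-k]$. Each goes from the lower microstalk to the upper one twisted by the cochains of $(D^k,S^{k-1})$.

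Your fallback does not repair this, for three reasons.
\begin{itemize}
\item Two natural maps that vanish under the same conditions need not agree.
\item Simultaneous vanishing says nothing about the spaces of null-homotopies.
\item The $r^l$ construction applies only to objects of the form $r^lF$ and compares extension spaces, $\E(r^lF)\simeq\Omega^l\E(F_0)$. So it neither reaches an arbitrary $F$ from $k=0$ nor identifies $\alpha(F)$ with $o(\gamma')$.
\end{itemize}

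Your computation of the relative sections also rests on the wrong criterion. Which branches pass through the slabs does not govern them, and your incidence claim is false for $k\ge1$: over $\R^m_<$ near $\R^m_=$, $z^+$ passes through $\{a<z\le b\}$ and $z^-$ through $\{b<z\le c\}$. What governs them is tangency of the interpolating graphs $g_s$ with the front, i.e.\ critical points of $z^\pm-g_s$ at level $0$, or equivalently the \v{C}ech computation.

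Doing that count exposes a caveat shared with the paper's \v{C}ech formula for $\Gamma(\{z\le a\},F)$. Since $a$ is flat near the origin, $z^--a$ has a critical point of index $m-k$ there. So $\Gamma(\{z\le b\},\{z\le a\},F)$ is the lower microstalk twisted by the cochains of $(D^{m-k},S^{m-k-1})$, and the untwisted identification is right only when $m=k$. For example, take $k=0$, $m=1$, and $F=\k_{\{z^-\le z<z^+\}}$ over a field: one gets $\alpha(F)=0$, yet $F$ does not extend. For $m>k$ you would need to adapt $a,b,c$, or the model, before the argument goes through.
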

\begin{proof}
    We proceed with reference to Figure \ref{precrossing figure}.

    The sheaf $F$ admits a map from the constant sheaf with stalk $U$; replacing $F$ with the cone by this map preserves the hypothesis and conclusion.  Thus without loss of generality, we may assume $U = 0$.  Then, according to Lemma \ref{sheaf precrossing characterization}, the map $V \to X$ sits in the setting of Lemma \ref{some homological algebra}, and from that Lemma we obtain a map $Cone(V \to X) \to V[1-k]$, characterizing extensions of splittings, and hence, by Lemma \ref{sheaf family splitting problem}, extensions of $F$.  It remains to show that this map agrees with the connecting homomorphism $\alpha(F)$.

    We compute by \u Cech cover (returning for the moment to the case when $U$ need not be zero):
    \begin{eqnarray*}
    \Gamma(\{f \le c\}, F) & = & U \\
    \Gamma(\{f \le b\}, F) & = & Cone(\Gamma(\partial D^k, U) \oplus V \to  \Gamma(\partial D^k, V))[1] \\
    \Gamma(\{f \le a\}, F) & = & Cone(\Gamma(\partial D^k, W) \oplus X \to \Gamma(\partial D^k, X))[1]
    \end{eqnarray*}
    (For $\{f \le b\}$ it is helpful to first perturb $b = 0$ to $b = \epsilon$; one can easily see this does not change sections by noncharacteristic propagation.)

    We return to the $U=0$ setting.  Evidently
    $\Gamma(\{z \le c\}, \{z \le b\}, F) = V[1-k]$.  Since
    $\Gamma(\partial D^k, V) \oplus \Gamma(\partial D^k, W) \to \Gamma(\partial D^k, X)$ is null-homotopic by the property of sheaves microsupported in the crossing, we find  $\Gamma(\{z \le b\}, \{z \le a\}, F) = Cone(V \to X)[1]$. Finally, the diagram chase to define the connecting map
    $\alpha(F)$ is  the same as that we used to construct the splitting extension obstruction map in Lemma \ref{some homological algebra}, so these maps agree.
\end{proof}

\subsection{Proof of Theorem \ref{thm:crossing-gf=sh}}

Theorems \ref{thm:gfcrossing} and \ref{sheaf crossing obstruction} show that, for each $f_0\in \GF_{L_{k,m}}(\R^m)$, both spaces of extensions $\E(f_0)$ and $\E(\s(f_0))$ are homotopy equivalent to spaces of null-homotopies of certain maps $\alpha(f)$ and $\alpha(\s(f))$. By Equation \eqref{relative sections of Sf}, the corresponding maps are interchanged by $\Hom(\,\cdot\,, \S)$. This duality is invertible since everything in sight is a finite CW complex, hence induces a bijection on isomorphism classes. $\square$

\section{Proofs of the main results}\label{sec:mainproofs}
\begin{proof}[Proof of Theorem~\ref{thm:main}]
Fix a closed Legendrian $\Lambda\subset J^1(M)$. Recall from Section~\ref{sec:doubling} that $\Lambda\times \Lambda_D$ is a Legendrian submanifold of $J^1(M\times \R)$
whose slices for fixed $t\in \R$ are empty for $t<0$ and equal to $\Lambda_t=(\Lambda\pm 2t^{3/2})$ for $t>0$. In particular, for small $\epsilon >0$ and large $C>0$,
$(\Lambda_t)_{0<t\leq \epsilon}$ and $(\Lambda_t)_{t\geq C}$ are Legendrian isotopies while $(\Lambda_t)_{\epsilon\leq t\leq C}$ is a Legendrian regular homotopy.
For short we denote $\GF^{\leq t_0}=\colim_{\eta >0} \GF_{\Lambda\times \Lambda_D}(M\times (-\infty,t_0+\eta))$ and similarly for $\Sh$.
In particular, Corollary~\ref{cor:regularhtpy} and Theorem~\ref{thm:spacelevelchekanov} applies to show that the diagram
\[\begin{tikzcd}
 \GF^{\leq +\infty}\simeq \GF^{\leq C}\arrow[r]\arrow[d] & \GF^{\leq 0}  \arrow[d]\\
\Sh^{\leq +\infty}\simeq \Sh^{\leq C} \arrow[r] & \Sh^{\leq 0}.
\end{tikzcd}\]
is a homotopy pullback square.
The spaces on the left-hand side are equivalent to $\GF_\Lambda$ and $\Sh^+_\Lambda$ (see Theorem~\ref{thm:GF_RD} and Theorem~\ref{thm:Sh_RD}), while on the right-hand side
we have $\mu\GF_\Lambda$ and $\mu\Sh^+_\Lambda$ (see Theorem~\ref{thm:muGF_RD}, Theorem~\ref{thm:GFdoubling} and Theorem~\ref{thm:Shdoubling}).
\end{proof}

\begin{proof}[Proof of Theorem~\ref{thm: Z to S}]
In the regular homotopy $(\Lambda_t)_{t>0}$ obtained from the doubling trick (see Section~\ref{sec:doubling}),
the crossings are in one-to-one correspondence with the Reeb chords of the original Legendrian submanifold $\Lambda$.
Moreover for a Reeb chord of degree $k$, the corresponding crossing obstruction $\alpha$ (see Section \ref{sec:crossing})
is valued in $\pi_{k-1}^S$ and the space of possible crossings is either empty (if $\alpha\neq 0$) or has $\pi_0$ equal to $\pi_k^S$.

If $\Lambda$ has no Reeb chords of degree larger than $1$, the crossing obstructions are valued in $\pi_l^S$ with $l\leq 0$
which is the trivial group or $\Z$ for $l=0$. In the case $l=0$, the obstruction is detected by homology with $\Z$-coefficients
and hence, by Theorem \ref{sheaf crossing obstruction}, always vanishes if a sheaf over $\Z$ is given. Hence we can always lift from $\Z$ to $\S$ in this case, i.e.
the map \eqref{eq:mainfiberedZtoS} is $0$-connected.

If further $\Lambda$ has no Reeb chords of positive degree, then the space of extensions past each crossing is connected
or has $\pi_0$ equal to $\pi_0^S$ which again is detected by homology with $\Z$-coefficients and hence the lift from $\Z$ to $\S$ is unique.
More precisely the space of extensions in the case of integer coefficients has $\pi_0$ isomorphic to $\Z$
and all higher homotopy groups are zero (obstructions are valued in the trivial group $\pi_k \{\pm 1\}$, $k>0$)
this translates into $1$-connectedness of the map \eqref{eq:mainfiberedZtoS}.
Similarly if all Reeb chords have degree at most $1-l$, then the map \eqref{eq:mainfiberedZtoS} is $l$-connected.
\end{proof}

\section{Examples and applications}\label{sec: examples}
\subsection{Generalities and $n$-dimensional Legendrian unknots}

Let $\Lambda\subset J^1 M$ be any closed Legendrian submanifold. Recall the stable Gauss map $\gamma_\Lambda : \Lambda \to U/O\simeq B(\Z\times BO)$
and its J-homomorphism image $J\circ\gamma : \Lambda\to B(\Z\times BG)$.
The spaces of microlocal objects: $\mu\GF_\Lambda(M)^*$ is either empty (if $\gamma$ is non trivial) or a torsor over $\Map(\Lambda, \Z\times BO)$
and $\mu \Sh^+_\Lambda(M)^*$ is either empty (if $J\circ\gamma$ is non trivial)
or a torsor over $\Map(\Lambda,\Z\times BG)$  (see Theorem~\ref{thm:spacelevelgiroux-latour} and Theorem~\ref{thm:spaceleveljin}).
Assume $\gamma_\Lambda$ is trivial, then according to Theorem~\ref{thm:main} the spaces $\GF_\Lambda(M)^*$ and $\Sh^+_\Lambda(M)^*$ thus sit in the
pullback square
\begin{equation}\label{eq:maintrivialgauss}
\begin{tikzcd}
\GF_\Lambda(M)^* \arrow[r]\arrow[d]&\Map(\Lambda,\Z\times BO)\arrow[d]\\
\Sh^+_\Lambda(M)^* \arrow[r]& \Map(\Lambda,\Z\times BG).
\end{tikzcd}
\end{equation}
The fiber of the right vertical map (and thus also of the left vertical map) can be identified with the space $\Map(\Lambda, G/O)$ (recall $G/O$
is the homotopy fiber of the map $BO\to BG$ and is a classifying space for stable vector bundles equipped with a spherical trivialization).

\begin{example}[Lifting for Legendrian spheres]\label{ex:liftingspheres}
Suppose $\Lambda$ is an $n$-sphere (or even a homotopy $n$-sphere) with trivial stable Gauss map.
Then \eqref{eq:maintrivialgauss} provides an exact sequence:
\[\pi_n G/O \to \pi_0 \GF_\Lambda(M)^* \to \pi_0 \Sh^+_\Lambda(M)^* \to \pi_{n-1} G/O.\]
From the following table
\begin{equation}\label{eq:pi_nG/O}
    \begin{array}{l|cccccccccccc}
        n & 0 & 1 & 2 & 3 & 4 & 5 & 6 & 7 & 8 & 9 & 10\\ \hline
        \pi_n(G/O) & 0 & 0 & \Z/2 & 0 & \Z & 0 & \Z/2 & 0 & \Z\oplus \Z/2 &(\Z/2)^3  & \Z/2 \oplus \Z/3 \\

    \end{array}
\end{equation}
we learn for instance that for $n=1$ any sheaf over $\S$ lifts to a generating function and uniquely so, for $n=2$ any sheaf lifts and in at most two different ways
and for $n=3$ there are potentially sheaves that do not lift to generating functions (see Example~\ref{ex:3-spherenotlifting} below for an example of such a sheaf in a different context)
but
if they do, the lift is unique, etc.
\end{example}

\begin{example}[Legendrian unknot]
Let $\Lambda\subset J^1 M$ be (Legendrian isotopic to) the Legendrian unknot $S^n$ (whose front in a Darboux chart $\R^{2n+1}$
is a flying saucer) with $n\geq 1$.
In this case an element $F\in\Sh_\Lambda^+(M)^*$ is determined (up to contractible choice) by its stalk at any point inside
the flying saucer. Hence $\Sh_\Lambda^+(M)^* \simeq \Pic(\S)\simeq\Z\times BG$ and the map $\Sh_\Lambda^+(M)^* \to \mu\Sh_\Lambda^+(M)^*$
corresponds to the inclusion of constant maps $\Z\times BG \to \Map(\Lambda,\Z\times BG)$.
Identifying $\Lambda$ with $S^n$ and using the loop space structure of $\Z\times BG$ we find a splitting

\[\Map(\Lambda,\Z\times BG)\simeq \Z\times BG \times \Omega^n (BG)\]
and similarly for $BO$ (here $\Omega^n$ denotes the iterated based loop space). Hence, according to Theorem~\ref{thm:main}, $\GF_\Lambda^*$ is the homotopy pullback of the following square
\begin{equation}
\begin{tikzcd}
\GF_\Lambda^* \arrow[r]\arrow[d]&\Z\times BO \times \Omega^n (BO)\arrow[d]\\
\Z \times BG\arrow[r]& \Z\times BG \times \Omega^n (BG),
\end{tikzcd}
\end{equation}
from which we obtain a homotopy equivalence
\begin{equation}\label{eq:unknot}
\GF_\Lambda^*\simeq \Z\times BO\times \Omega^n (G/O)
\end{equation}
and in particular the classification of generating functions up to equivalence (see Proposition~\ref{prop:pi_0=equiv}):
\[\pi_0\GF_\Lambda^* \simeq \Z\times \pi_n (G/O).\]
The $\Z$-factor corresponds to a shift of grading (stabilization by a non-degenerate quadratic form
of non-zero signature). For $n=1$ we recover the fact that the Legendrian unknot has a unique generating function (up to shift),
a result of Jordan and Traynor \cite{Jordan-Traynor}. For $n=2$ we find two inequivalent generating
functions for the Legendrian unknot, and more examples can be read from \eqref{eq:pi_nG/O}.

For $n=0$, $\Lambda$ consists of two points $a<b$ in $\R=J^1(*)$. Let us denote $\GF_{\Lambda,0}(*)^*$ and $\Sh_{\Lambda,0}^+(*)^*$
the union of connected components corresponding to sheaves with stalk isomorphic to zero at any $z>b$.
In terms of generating functions, this corresponds to the space of functions $f$ on $\R^n$ with two critical points $p$ and $q$,
say with $f(p)=a$ and $f(q)=b$, that are in ``algebraic cancellation position''.
From our main result, we get in this case
\begin{equation}\label{eq:0-unknot}
\GF_{\Lambda,0}^* \simeq \Z\times BO\times G/O
\end{equation}
which can be understood as follows. For $f\in \GF_{\Lambda,0}^*$, we consider the Hessian of $f$ at $p$ and $q$ which define
(up to contractible choice) eigenspaces $E^+_p$, $E^-_p$ and $E^+_q$, $E^-_q$. The dimension of $E^-_p$
is one less than that of $E^-_q$ (because $p$ and $q$ are in algebraic cancellation position)
and there is a natural isomorphism $\theta:E^-_p\simeq E^-_q$ as stable spherical fibrations, given concretely
by our crossing obstruction map $\alpha(f)$. In the splitting \eqref{eq:0-unknot}, the $\Z\times BO$ factor
corresponds to the virtual vector bundle $E^+_p-E^-_p$ and the $G/O$ factor corresponds to
the virtual vector bundle $E^-_p-E^-_q$ with the above spherical trivialization $\theta$.

Similar considerations for $n\geq 1$ can be used to explain the homotopy equivalence \eqref{eq:unknot}. The difference being
that near the cusp locus of $\Lambda$, the spherical trivialization $\theta$ is lifted to a trivialization
as a vector bundle, since $p$ and $q$ are in ``geometric cancellation position'' there.
\end{example}

\begin{example}\label{ex:3-spherenotlifting}
Let $\Lambda \subset J^1(\R^3)$ be the $2$-copy of the three dimensional Legendrian unknot. Since $\pi_3 (\Z\times BG)\simeq \pi_2^S\simeq \Z/2$,
there are two sheaves $F_1$, $F_2$ over $\S$ for $\Lambda$ (up to shift) coming from microlocal sheaves on the unknot (see Theorem~\ref{thm:Shdoubling}).
However, by Theorem \ref{thm:spacelevelgiroux-latour}, microlocal generating functions are classified by $\pi_3(\Z\times BO)\simeq \pi_2 O=0$, so at most one of these sheaves can possibly lift; in fact, one does, since the $2$-copy necessarily admits a generating function.
\end{example}

\begin{remark}\label{rem:cerfuniquenessofdeath}
The uniqueness of generating functions (up to shift) for the $1$-dimensional Legendrian unknot can be thought of
as a poorman's version of Cerf's ``uniqueness of death'' lemma which states this result unstably with precise dimension
and connectivity conditions. In particular it implies that, for a $1$-dimensional Legendrian link in $J^1(\R)$, if we try
to construct a generating function progressively ``from left to right'', the extension over each right cusp, if possible,
is unique (up to homotopy).
\end{remark}

\begin{remark}\label{rem:hatcherwaldhausen}
The space $\GF(\ast)$ has many components, a component of which, denoted $\HH_\infty$, consists of functions $f$ for which $\{-c\leq f\leq c\}$
is a (necessarily trivial) stable $h$-cobordism (for large $c>0$). The map $\GF_{\Lambda,0}(*)^*\to\GF(\ast)$ takes values in $\HH_\infty$.
This is intimately related to the Hatcher-Waldhausen map $G/O\to \HH_\infty$ (see \cite{kragh_HW_2018}).
\end{remark}

\subsection{Dimension one}

We now focus on the case of a Legendrian link $\Lambda$ in $J^1(M)$ with $M=\R$ or $M=S^1$.
In this low dimension the map
\begin{equation}\mu\GF_\Lambda(M)^*\overset{\sim}{\longrightarrow}  \mu\Sh_\Lambda^+(M;\S)^*
\end{equation}
is $1$-connected (even an isomorphism on $\pi_1$). Our main result Theorem~\ref{thm:main} therefore implies that the map
\begin{equation}
\GF_\Lambda(M)^*\overset{\sim}{\longrightarrow} \Sh_\Lambda^+(M;\S)^*
\end{equation}
is $1$-connected, and in particular induces an isomorphism
\begin{equation}
\pi_0\GF_\Lambda(M)^*\overset{\sim}{\longrightarrow} \pi_0\Sh_\Lambda^+(M;\S)^*.
\end{equation}
The classification of generating functions up to equivalence is thus completely reduced to the classification
of sheaves over $\S$ in this case.

The map
\begin{equation}\mu\Sh_\Lambda^+(M;\S)^*\overset{\sim}{\longrightarrow}  \mu\Sh_\Lambda^+(M;\Z)^*
\end{equation}
is $1$-connected.
By Theorem~\ref{thm: Z to S}, the image of the map
\begin{align}
\pi_0\Sh_\Lambda^+(M;\S)^*\overset{\sim}{\longrightarrow}  \pi_0\Sh_\Lambda^+(M;\Z)^*
\end{align}
therefore
contains the subset of $\pi_0\Sh_\Lambda^+(M;\Z)$ which assigns a degree at most $1$ to each Reeb chord
(if $\Lambda$ is connected, this condition on the degree of Reeb chords is independent of the sheaf and depends only on $\Lambda$).
So any such sheaf over $\Z$ lifts to a sheaf over $\S$, and then further to a generating function.

In fact we can get a stronger result in the case $M=\R$ by slicing our Legendrian from left to right
instead of using the doubling trick. Indeed it is natural in this case to try and extend objects progressively from left to right past each bifurcation:
left cusps (births), double points (crossings) and right cusps (deaths).
By a Legendrian isotopy, we can assume that they appear in this order from left to right
and by applying Ng's resolution procedure (again a Legendrian isotopy), we can further make sure that the Reeb chords of $\Lambda$
are in bijection with crossings (and have degree $j-i$ if the crossing concerns branches of Maslov potential $j$ and $i$,
the branch of potential $j$ being above the other one before the crossing) and right cusps (which have degree $1$).
In this situation the assumption that all Reeb chords have degree at most $1$ is fulfilled if all crossings
concern branches of the same index. We say that a Maslov potential satisfying the latter condition is \emph{admissible}. This holds for instance for Legendrian braid closures (see e.g. \cite{Kalman} for a discussion of these Legendrians).
\begin{thm} \label{strong thm b}
Let $\Lambda$ be a Legendrian link in $J^1(\R)$ with a generic front projection and an admissible Maslov potential $\mu$.
Then the map $\pi_0\Sh_{\Lambda,\mu}^+(M;\S)^* \to \pi_0\Sh_{\Lambda,\mu}^+(M;\Z)^*$ is an isomorphism,
where the index $\mu$ means we restrict to objects inducing the Maslov potential $\mu$.
\end{thm}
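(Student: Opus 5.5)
We slice the front of $\Lambda$ from left to right, as in the discussion preceding the statement, and compare step by step how objects over $\S$ and over $\Z$ extend past each bifurcation. After a Legendrian isotopy we arrange, following Ng's resolution, that all left cusps come first, then all crossings, then all right cusps. By Theorem~\ref{thm:spacelevelGKS}, restriction to vertical slices between consecutive bifurcations is a homotopy equivalence, for both coefficient rings. So it suffices to prove that, at each bifurcation, the map on $\pi_0$ of extension spaces is a bijection. More precisely, for each bifurcation we want the square formed by restriction and base change $-\otimes_\S \Z$ to induce, over each fixed object on the left, a bijection between $\pi_0$ of the fibres over $\S$ and over $\Z$. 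Once a fibre over $\S$ is non-empty it is a torsor, so comparing torsors then gives the bijection on $\pi_0$ after composing all the steps. We should in fact carry a slightly stronger inductive statement: the map $\Sh^+(\text{left part};\S)^*\to \Sh^+(\text{left part};\Z)^*$ is a bijection on $\pi_0$ and a surjection on $\pi_1$. This is needed because extension classes are only defined up to automorphisms of the restricted object.

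\textbf{Left cusps.} Before the first left cusp the only object is zero. A left cusp is a birth of the flying-saucer type, and its extension space is the space of choices of the new microstalk pair, i.e.\ an element of $\Pic$. Once the Maslov potential $\mu$ is fixed, the shift is fixed, so over $\S$ this space is $BG$-like and connected, and over $\Z$ it is $B\{\pm1\}$-like and connected. Hence $\pi_0$ is a point on both sides. The comparison $BG\to B\{\pm 1\}$ is the determinant map, which is surjective on $\pi_1$ since $\pi_0 G=\{\pm1\}$. This starts the induction.

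\textbf{Crossings.} By the model of Theorem~\ref{thm:crossing-gf=sh} and Theorem~\ref{sheaf crossing obstruction} in the case $m=0$, $k=0$, the obstruction to crossing is a stable map $S^j\to S^{i+1}$, and the space of extensions is a torsor over $\Omega^{j-i+\infty}S^\infty$. Admissibility gives $i=j$. Then the obstruction lies in $\pi^S_{-1}=0$ and vanishes. The extension space over $\S$ is $\Omega^\infty S^\infty$, with $\pi_0=\Z$, while over $\Z$ it is $\Omega^\infty H\Z$, with $\pi_0=\Z$. The comparison map is the Hurewicz map, an isomorphism on $\pi_0$ and surjective on $\pi_1$ because $\pi_1(H\Z)=0$. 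This is the familiar count of handleslides.

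\textbf{Right cusps.} The Reeb chord of a right cusp has degree $1$. The extension problem is uniqueness of death: we must identify the two microstalks through a trivialization of the cone between the merging branches. The obstruction is that a certain map between shifts of $\S$ of degree difference $0$ (after accounting for the cusp) be an equivalence. The choices form a torsor under $\Omega\Pic$-type data, restricted to the component determined by $\mu$. I expect this step to be the main obstacle. Invertibility of a degree-$0$ self-map of $\S$ is detected by $\pi_0^S=\Z\to\Z$, so existence agrees over $\S$ and $\Z$. For uniqueness, the choices over $\S$ form $G=GL_1(\S)$ up to homotopies, and over $\Z$ they form $\{\pm1\}$. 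The map $\pi_0 G\to\{\pm1\}$ is a bijection, so $\pi_0$ agrees. However, the higher homotopy $\pi_1 G=\pi_1^S=\Z/2$ is killed over $\Z$, so the inductive $\pi_1$-surjectivity hypothesis can fail from this step on. This is harmless only because all right cusps come last and nothing further is glued afterwards. I would check carefully that the automorphism ambiguity at each right cusp (from $\pi_1$ of the restricted space) acts compatibly on both sides, using the $\pi_1$-surjectivity inherited from the crossing stages. Assembling the steps gives the isomorphism on $\pi_0$.
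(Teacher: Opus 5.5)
Your strategy (sweep the front from left to right and compare the extension spaces over $\S$ and over $\Z$ at each bifurcation) is the paper's, and your left-cusp and crossing steps are correct. Your extra inductive hypothesis ($\pi_0$-bijective and $\pi_1$-surjective) is a genuine improvement: it is exactly what is needed to assemble fiberwise $\pi_0$-bijections into a $\pi_0$-bijection of total spaces, which the paper leaves implicit. The problem is the right-cusp step, where you misidentify the extension space.

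There is nothing to choose at a right cusp. Near the cusp, the complement of the front has two components: the inside, and a contractible outside region joining the regions above and below. A sheaf there is therefore a stalk $B$ inside, a constant stalk $A$ outside, and maps $A\to B\to A$ across the two branches composing to the identity. Suppose the restriction to the left has outer stalks $A_1,A_2$ and composite $c\colon A_1\to A_2$ across the two branches. Then an extension is an object $A$ with equivalences $\alpha\colon A\simeq A_1$, $\beta\colon A\simeq A_2$ and a homotopy $c\alpha\simeq\beta$. This space is contractible if $c$ is an equivalence and empty otherwise. Your existence criterion is right: $c$ is an equivalence iff a degree-$0$ map between invertible objects is $\pm1$, which $\Z$ detects. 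This is the sheaf form of ``uniqueness of death'' (Remark~\ref{rem:cerfuniquenessofdeath}), which the paper invokes.

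Your $GL_1(\S)$-torsor would give two non-equivalent extensions over $\S$ past every right cusp, so your $\pi_0$ comparison there comes out right only by accident. Your $\pi_1$ worry is also backwards: a fiber with trivial $\pi_1$ on the $\Z$ side makes $\pi_1$-surjectivity automatic rather than threatening it. With contractible-or-empty fibers, and emptiness detected over $\Z$, your inductive hypothesis passes through a right cusp wherever it occurs.

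So the ordering you impose is not needed, and as written it is harmful, because admissibility is a condition on the crossings of the front you start from. Reordering can force a cusp (branches of potential $p$ and $p+1$) past strands of potential $q$. This creates new crossings whose degrees (for instance $q-p-1$ and $q-p$) cannot all vanish. It is unavoidable, for example, for a small flying saucer lying between two equal-potential strands to the left of their crossing. Your crossing step only treats $i=j$, and at a crossing of degree $1$ the comparison genuinely fails fiberwise: $\pi_0=\pi_1^S=\Z/2$ over $\S$, while the fiber is connected over $\Z$. Drop the isotopy and sweep the given front in its own order, as the paper does; every local step is valid wherever the bifurcation sits.
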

\begin{proof}
We try to lift an element $F\in\Sh_{\Lambda,\mu}^+(M;\Z)$ to $\tilde{F}\in \Sh_{\Lambda,\mu}^+(M;\S)$ progressively
from left to right.

At each left cusp, the extension is always possible and unique up to a shift in grading (this is in fact a particular case
of Theorem~\ref{thm:Shdoubling}). At each right cusp, the extension is possible if the critical points about to cancel
are in cancellation position, a fact detected by homology with $\Z$-coefficients. Furthermore, if possible, the extension
past the right cusp is unique (see Remark~\ref{rem:cerfuniquenessofdeath}). Finally the problem of extension at each crossing has been analysed in Section~\ref{sec:crossing}, under our assumption on the Maslov potential, the crossings of branches of the same index
are always possible and classified by $\pi_0^S$ in both cases (for $\Z$ or $\S$ coefficients). The claim follows.
\end{proof}

\begin{example}\label{ex:trefoil}
The sheaves for Legendrian braid closures have been studied in \cite{STZ, STWZ}; their isomorphism classes are the  points of certain spaces of configurations of flags. For example, for the Legendrian trefoil, one can reason as in \cite[Ex. 6.38]{STZ} to find that the $\Z$-sheaves (up to grading shift) are in bijection with the integer solutions to the equation $1+ x + z + xyz=0$.\footnote{This equation is the same as the equation for the augmentation variety of the trefoil, but this identification doesn't follow from the known augmentation/sheaf comparisons (either \cite{NRSSZ} or \cite{GPS3} + \cite{Ekholm-Lekili}) because both \cite{NRSSZ} and \cite{Ekholm-Lekili} require field coefficients.  It would be
good to have a version of these results over $\Z$ (and even better to have a version over $\S$).
}
    These are readily classified:
    $(x, 0,1-x)$, $(-1, 1, z)$, $(x, 1, -1)$,
    $(1, -2, 2)$, $(2, -2, 1)$, $(1, -3, 1)$.
    Thus generating functions, sheaves over $\Z$ and sheaves over $\S$ for the Legendrian trefoil are all in bijection with this (infinite) set.
In terms of generating functions, the integers $x,y,z$ can be understood as an algebraic count of handleslides
that happen before each crossing.
\end{example}

The map $\pi_0\GF_\Lambda(M)^*\to \pi_0\mu\GF_\Lambda(M)^*$ is typically not surjective.
In the $1$-dimensional case, the spaces of microlocal objects $\mu\GF_\Lambda(M)^*$, $\mu\Sh_\Lambda^+(M;\S)^*$ and $\mu \Sh_\Lambda(M;\Z)^*$ are not empty if and only if the first Maslov class $\mu_1(\Lambda)\in H^1(\Lambda;\Z)$ vanishes, and, if so, their $\pi_0$ are all isomorphic
and torsors over $H^0(\Lambda;\Z)\oplus H^1(\Lambda;\{\pm 1\})$.
The $H^0(\Lambda;\Z)$-summand acts by shifting the grading (individually on each component of $\Lambda$) while the $H^1(\Lambda;\{\pm 1\})$-summand acts on microsheaves by tensor with a rank one $\Z$ local system.

For a Legendrian link with generic front projection (only cusps and crossings) and vanishing first Maslov class, we may give a trivialization of the second factor of the torsor, i.e. a map $\pi_0\mu\GF_\Lambda(\R)^*\to H^1(\Lambda;\{\pm 1\})$ equivariant with respect to the action of $H^1(\Lambda;\{\pm 1\})$,
 as follows. For $f\in \mu\GF_\Lambda(\R)^*$ we choose orientations
of the local stable manifold of each fiberwise critical point of $f$ along each branch of the front in such a way
that they are compatible at each left cusp (i.e. the Morse differential is $1$ just after the cusp with respect
to these orientations). At each right cusp the orientations may agree or disagree, so we get a sign $\epsilon_i$,
and the product $\prod_i\epsilon_i$ where $i$ ranges over all right cusps of a given component gives the promised map.
We can check that the map is independent of the choices of orientations.
However the above map $\pi_0\mu\GF_\Lambda(\R)^*\to H^1(\Lambda;\{\pm 1 \})$ is not invariant under
Legendrian isotopy; in particular, a swallowtail move (i.e. Reidemeister I) on one component introduces a new right cusp with sign $-1$.

\begin{example}\label{ex:mumonunknot}
For the Legendrian unknot, with front a flying saucer lying above say an interval $[0,1]$, given orientations of the stable manifolds
as described above, we see that at the right cusp the Morse differential must be $1$, since this Morse
differential is well-defined for all $t\in]0,1[$ and must therefore be constant. So the microlocal monodromy is $1$.
Now if we do a swallowtail move, we get a front with one crossing (of degree $0$) and the classification of $\pi_0\GF_\Lambda(M)^*$
involves only one handleslide $x\in \Z$ which has to satisfy $x=1$ (for the object to extend at the right cusps).
The signs at the right cusps are $1$ and $-1$ so the microlocal monodromy is $-1$.
\end{example}

We illustrate with some  closures of $2$-strand Legendrian braids.
For the hopf link (i.e. $2$ crossings), the classification of objects (with admissible Maslov potential $\mu$) yields
\[\pi_0\GF_{\Lambda,\mu}(M)^*=\{(x,y)\in \Z^2, 1+xy=\pm 1\}/\pm 1\]
where the quotient identifies $(x,y)$ with $(-x,-y)$. The sign at each right cusp is $1+xy$ and therefore the
microlocal monodromy is $(1,1)$ or $(-1,-1)$ (so half of the possible
microlocal monodromies are realized by genuine objects).

For the trefoil (i.e. $3$ crossings), keeping track of Morse differentials after the various handleslides $x,y,z \in \Z$,
we see that
\[\pi_0\GF_{\Lambda}(M)^*=\{(x,y,z)\in \Z^3, x+z+xyz=\pm 1\}/\pm 1\]
and the sign at the right cusps are $x+z+xyz$ and $-(x+z+xyz)$ so the product is $-1$ independently of $(x,y,z)$.
To compare with the previous case and with Example~\ref{ex:trefoil}, observe that $\{(x,y,z)\in \Z^3, x+z+xyz=\pm 1\}/{\pm 1}$ is in canonical bijection with
$\{(x,y,z)\in \Z^3, x+z+xyz=1\}$.

The next example (i.e. $4$ crossings) is a (2,4)-torus link and the classification (with admissible Maslov potential $\mu$) reads

\[\pi_0\GF_{\Lambda,\mu}(M)^*=\{(x,y,z,t)\in \Z^4, 1+xy+tx+tz+xyzt=\pm 1\}/\pm 1.\]
The sign at each right cusp is $1+xy+tx+tz+xyzt$ so again the microlocal monodromy is $(1,1)$ or $(-1,-1)$.

It appears that only half of the possible microlocal monodromies can be realized by a genuine generating function, this is indeed
a known result of Akhmetev-Cencelj-Repovs  and reproved by Pushkar-Tyomkin.\footnote{We thank Misha Tyomkin for pointing this out.}
In our language this reads as follows.
\begin{thm}\cite{akhmetev_algebraic_2005,pushkar_enhanced_2021} \label{micromonodromies of gf}
Let $\Lambda$ be a Legendrian link in $J^1(\R)$ with a generic front, and denote $k\in \N$ the number
of double points of this front. Then the product of the microlocal monodromies of each component of $\Lambda$,
associated to a generating function $f\in \GF_\Lambda(\R)^*$ equals $(-1)^k$.
\end{thm}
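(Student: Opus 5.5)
We prove Theorem~\ref{micromonodromies of gf} by sweeping the front from left to right, as in the proof of Theorem~\ref{strong thm b}, and tracking a single sign: the orientation class of the Morse data of $f_t$. First, by a Legendrian isotopy that is generic at the level of fronts (composed of Reidemeister II and III moves and their inverses), we arrange that the left cusps, crossings and right cusps occur at distinct values of $t$. The product of the microlocal monodromies, computed as in the paragraph preceding Example~\ref{ex:mumonunknot}, is then $\prod_i \epsilon_i$ over all right cusps. Swallowtail moves change this product, so they are excluded. The number $k$ of double points changes only by even amounts under Reidemeister II and III, so $(-1)^k$ is unaffected. We must also check that $\prod_i\epsilon_i$ is unchanged by Reidemeister II and III. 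For this it is cleaner to prove the theorem directly for an arbitrary generic front, so we do not rely on invariance.

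The invariant we track is as follows. For $t$ generic, $f_t$ is a Morse function on $\R^n$ with critical points on the branches of the front above $t$. Fixing orientations of the descending disks, the Morse complex over $\Z$ computes $H_*(\{f_t\le c\},\{f_t\le -c\})$. Its homology is zero: outside the support of $\Lambda$ the stalks vanish, and the pair is a trivial $h$-cobordism after stabilization, which has trivial Whitehead torsion. Hence the complex is acyclic over $\Z$ and has a Reidemeister torsion $\tau_t\in\{\pm1\}$ relative to the chosen orientations. The sign $\pm 1$ is well defined because the complex is acyclic and based over $\Z$ with unimodular torsion.

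We then compute how $\tau_t$ changes at each event. Handleslides between crossings (Cerf theory for the family $f_t$) change the basis by elementary matrices, so they leave $\tau$ unchanged. At a left cusp we choose the new orientations so that the new differential is $+1$, which multiplies $\tau$ by $1$. At a right cusp the cancelling pair has differential $\epsilon_i$, and removing it multiplies $\tau$ by $\epsilon_i$. At a crossing (Section~\ref{sec:crossing}) two critical points exchange their order in the filtration by critical values. The orientations are carried along continuously, so the change is the sign of the transposition of two basis vectors. The resulting factor is $-1$ when the two branches have the same index, since the permutation is then an odd reordering inside one chain group. When the indices differ, the basis within each degree is unchanged; we will need to check that the torsion convention attaching ordering to the filtration still produces $-1$. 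The natural way to handle this is to define $\tau$ using the ordered basis by critical value, with the degree-sign conventions of a simple-homotopy filtration, so that any swap contributes $-1$. Since $\tau=1$ at $t\ll0$ (empty complex) and at $t\gg0$, comparing the two ends gives $\prod_i\epsilon_i\cdot(-1)^k=1$.

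I expect the main obstacle to be the crossing step. One must show that the sign contributed there is exactly $-1$ regardless of the indices and of the handleslide data. Equivalently, one must fix a torsion convention in which reordering the filtration contributes one factor of $-1$ per transposition. A fallback is to restrict to admissible Maslov potentials, where all crossings are same-index, and then use the fact that Reidemeister moves change $k$ by an even number to handle the general case.
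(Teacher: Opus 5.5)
The paper does not reprove this statement. It cites Akhmetev--Cencelj--Repov\v{s} and Pushkar--Tyomkin, and notes that it also follows from the sheaf version via the sheaf--augmentation dictionary and Leverson's theorem. A direct Morse-torsion argument like yours would therefore be a different route, but as written it has a genuine gap in exactly the local rules you state.

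\textbf{The local rules fail for the $\tau$ you define.} Your $\tau_t$ is ordinary torsion, which only sees the ordering inside each $C_i$. Consequently:
\begin{itemize}
\item a crossing of two different indices leaves $\tau$ unchanged;
\item a cusp does not contribute just $1$ or $\epsilon_i$, because inserting or removing the pair $p,q$ in the value-ordered bases of $C_d,C_{d+1}$ costs a sign depending on how many critical points of those indices lie above the cusp.
\end{itemize}
Taken literally, your rules prove $\prod_i\epsilon_i=(-1)^{k_0}$, where $k_0$ is the number of equal-index crossings. This is false.

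Here is a counterexample. Take two flying saucers $U_1,U_2$, with $U_2$ further left. Each has critical points of index $m$ on its lower branch and $m+1$ on its upper branch. Arrange that the lower strand of $U_2$ passes just below the left cusp of $U_1$, crosses both branches of $U_1$, and stays above $U_1$ until the right cusp of $U_2$. This is obtained from a crossingless picture by pushing a cusp through a strand. The two branches at a cusp have potentials $m$ and $m+1$, so the two crossings have degrees $0$ and $1$; hence $k=2$ but $k_0=1$. Yet $\epsilon_1\epsilon_2=1$:
\begin{itemize}
\item in the branch labelling, $\partial\colon\langle q_1,q_2\rangle\to\langle p_1,p_2\rangle$ is the identity right after the births, by your left-cusp normalization;
\item handleslides preserve its determinant;
\item when $(p_2,q_2)$ dies, $q_1$ lies below $p_2$, so $\langle\partial q_1,p_2\rangle=0$ and the determinant equals $\epsilon_1\epsilon_2$.
\end{itemize}
In this example it is your birth rule that fails. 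The point $q_1$ is inserted below $q_2$ in $C_{m+1}$, so $\tau=(\det\partial)^{\pm1}$ in value-ordered bases changes sign at the birth of $U_1$.

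\textbf{The deferred step is the whole content.} You cannot settle it by picking a convention in which every swap costs $-1$ while keeping the cusp rules, because changing the ordering convention also changes the cusp contributions. One way to make the argument work is to use $\det(\partial+s\colon C_{\mathrm{ev}}\to C_{\mathrm{od}})$, where $s$ is a chain contraction and $C_{\mathrm{ev}},C_{\mathrm{od}}$ are each totally ordered by critical value. Then:
\begin{itemize}
\item a crossing contributes $-1$ exactly when its degree is even;
\item a cusp with $A_c$ strands above it contributes $(-1)^{A_c}$, times $\epsilon_c$ at right cusps.
\end{itemize}
You would still have to prove the front identity $\sum_c A_c\equiv\#\{\text{odd-degree crossings}\}\pmod 2$. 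This is doable, for instance by tracking the parity of the number of pairs (odd-potential strand above even-potential strand) and using that over each $x$ half the strands have even potential. But it is absent from your proposal.

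\textbf{The fallback does not close the gap.} The Maslov potential is dictated by the Morse indices of $f$, and a non-admissible one generally cannot be made admissible by isotopy. For example, Theorem~\ref{strong thm b} together with Example~\ref{ex:hopflink} rules this out for the Hopf link with $j=i+2$. Reaching a convenient front may also require Reidemeister~I moves, which change $k$ by one.
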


The corresponding result for augmentations was established by Leverson  \cite{leverson_augmentations_2016} concerning augmentations of the Chekanov-Eliashberg
DGA of Legendrian links. In this context the microlocal monodromy on each component is analogous to the value
$\epsilon(t_i)$ of an augmentation $\epsilon$ on the formal generator $t_i$ corresponding to
the $i$-th component of $\Lambda$ (here the DGA is defined with coefficients in $\Z[t_1,\dots,t_s]$).
Leverson's result asserts that $\epsilon(t_1)\cdots\epsilon(t_s)=(-1)^s$ for all augmentations $\epsilon$.
The corresponding result for sheaves then follows from \cite{NRSSZ},\footnote{While \cite{NRSSZ} assumes field coefficients, the property that a given $\mathbb{Z}$-sheaf has microlocal monodromy $-1$ can be checked after extension of scalars to $\mathbb{Q}$, or for that matter, after reduction mod any prime greater than 2.}
from which we may also deduce Theorem \ref{micromonodromies of gf}.

\vspace{2mm}
We turn now to consider cases where Theorem \ref{thm: Z to S} (and Theorem \ref{strong thm b}) do not apply.

\begin{example}\label{ex:hopflink}
Consider the Hopf link whose front is a $2$-copy of a flying saucer and a Maslov potential $\mu$
which is not admissible, namely it gives Maslov potential $j$ and $j+1$ to the top copy, $i$ and $i+1$ to
the bottom copy and $j>i+1$.
The crossings (and the corresponding Reeb chords after applying Ng's resolution)
therefore have degree $k=j-i-1>0$ and $-k=i+1-j<0$. We claim that
\[\pi_0\GF_{\Lambda,\mu}(M)^*\simeq \pi_0\Sh_{\Lambda,\mu}^+(M;\S)^*\simeq \pi_k^S.\]
Indeed when trying to extend a generating function (or a sheaf over $\S$) from left to right, at the first
crossing there is no obstruction (or rather the obstruction in $\pi_{k-1}^S$ vanishes) since the critical points
are just born in different places, and the spaces of crossings has $\pi_0$ equal to $\pi_k^S$. At the second crossing
the obstruction is valued in the trivial group and $\pi_0$ of the space of crossings is a torsor over
the trivial group (since $\pi_{-k}^S=\pi_{-k-1}^S=0$). The choice that has been made at the first crossing does not affect the Morse
differentials (it is invisible to homology with $\Z$-coefficients) and therefore the extension is possible at
the right cusps (the Morse differential is $1$). On the other hand we have
\[\pi_0 \Sh_\Lambda^+(M;\Z)=0\]
since the choice in $\pi_k^S$ with $k>0$ does not exist for sheaves over $\Z$.
We learn that the map
\[\pi_0\Sh_\Lambda^+(M;\S)\to \pi_0\Sh_\Lambda^+(M;\Z)\]
is not injective in general and that already for the Hopf link the full classification of generating functions
involves all stable homotopy groups of spheres.
\end{example}

For any Legendrian link $\Lambda \subset J^1(\R)$,  it follows from \cite[Thm. C]{Fuchs-Rutherford} (there attributed to \cite{chekanov-pushkar-combinatorics}) that if a generating sheaf $F \in \Sh^+_\Lambda(\R,k)^*$ exists for some field $k$, then $\Lambda$ admits a generating function $f$.  In general, said result says nothing about the relationship between $F$ and $\s(f)$, except when $k=\Z/2$, where, by examining their proof, we find:

\begin{prop}\label{prop:mod2lift}
For any Legendrian link $\Lambda \subset J^1(\R)$, the maps
\[\pi_0 \GF_\Lambda(\R)^* \to \pi_0 \Sh^+_\Lambda(\R;\Z/2)\]
and
\[\pi_0 \Sh^+_\Lambda(\R;\Z)^* \to \pi_0\Sh^+_{\Lambda}(\R;\Z/2)\]
are surjective.
\end{prop}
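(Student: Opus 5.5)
The plan is to run the left-to-right extension procedure from the proof of Theorem~\ref{strong thm b}, now with $\Z/2$ coefficients as the target. The argument rests on the combinatorial description of $\Z/2$-sheaves on a one-dimensional front, via ruling-type data, as in \cite{Fuchs-Rutherford, chekanov-pushkar-combinatorics}. First put $\Lambda$ in generic position by a Legendrian isotopy: left cusps, then crossings, then right cusps, with distinct $x$-coordinates. Both sides are isotopy invariant (Theorem~\ref{thm:spacelevelchekanov} and Theorem~\ref{thm:spacelevelGKS}), so this costs nothing. The second map factors the first through $\pi_0\Sh^+_\Lambda(\R;\Z)^*$, using $\pi_0\GF_\Lambda(\R)^*\to\pi_0\Sh^+_\Lambda(\R;\S)^*\to\pi_0\Sh^+_\Lambda(\R;\Z)^*\to\pi_0\Sh^+_\Lambda(\R;\Z/2)^*$. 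Hence it suffices to prove that the first map is surjective. Surjectivity of the second map then follows, because the image of the first map is contained in the image of the second.

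Fix $F\in\Sh^+_\Lambda(\R;\Z/2)^*$, and record its Maslov potential (a $\Z$-grading, which $F$ determines). Over a generic slice $x=x_0$, a simple $\Z/2$-sheaf on the points of $\Lambda_{x_0}$ is the same as a filtered $\Z/2$-complex with one generator per branch. Up to filtered isomorphism this is encoded by a pairing of some branches, i.e. a normal form with differential given by a partial matching. I would build a generating function $f$ slice by slice so that its fiberwise Morse complex, reduced mod $2$ and with handleslides recorded, reproduces the $\Z/2$ filtered complex of $F$ at each slice up to filtered isomorphism. The key steps are as follows.

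\textbf{Left cusps.} Introduce a birth, i.e. a pair of cancelling critical points of the prescribed indices. Its Morse differential is $1$, matching the $\Z/2$ data, and Theorem~\ref{thm:Shdoubling} together with its generating-function analogue shows there is no choice to make.

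\textbf{Crossings of branches of indices $j$ and $i$.} In the sheaf, passing the crossing may require a preceding filtered change of basis, which is a handleslide with coefficient in $\Z/2$. For the generating function I realise a handleslide with integer coefficient $\pm1$ lifting it, or $0$. After the handleslide the crossing obstruction of Theorem~\ref{thm:crossing-gf=sh}, which lies in $\pi^S_{k+j-i-1}$, must vanish. The sheaf crossing condition (Theorem~\ref{sheaf crossing obstruction} with $\Z/2$ coefficients) only controls the mod-$2$ Hurewicz image of this obstruction. I would use the normal form, as in the Chekanov--Pushkar/Fuchs--Rutherford proof, to arrange that the two crossing strands are never paired with each other at the crossing. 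The two critical points then belong to unlinked summands of a split complex. I expect the obstruction can then be killed outright by choosing the geometric handleslides so that the relevant descending and ascending spheres are disjoint. This is the step I expect to be the main obstacle: one must show that the stable class $\alpha(f)$ vanishes, not merely its $\Z/2$ shadow. My first attempt would be to use the ruling/normal-form splitting of the sublevel spectra into a wedge of elementary cones, so that the connecting map between the two relevant summands is zero by construction.

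\textbf{Right cusps.} Here the two strands must be paired with Morse differential $\pm1$, which is a unit over $\Z$. Using that the $\Z/2$ differential is $1$, I would choose integer handleslide coefficients, correcting by further $\pm1$ slides against lower pairs, so that the integer coefficient is $\pm1$. The death then exists by Cerf's lemma, in the form of Remark~\ref{rem:cerfuniquenessofdeath}.

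Finally, by Lemma~\ref{gen fn sheaf support} and the slice description, $\s(f)\otimes\Z/2$ and $F$ have the same filtered complexes and the same handleslide/gluing data at every slice. Since $\Z/2$-sheaves on a front are classified by exactly this data (the STZ description in terms of diagrams), they are isomorphic, which proves surjectivity.
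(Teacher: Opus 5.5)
You get the reduction to the first map and the left-to-right strategy right, but the proposal does not close the step you flag yourself.

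At a crossing of strands of potentials $j>i+1$, the obstruction $\alpha(f)\in\pi^S_{j-i-1}$ is a stable class that homology with $\Z$ or $\Z/2$ coefficients cannot see; this is the phenomenon behind Example~\ref{ex:hopflink}. Knowing that the mod-2 (or even integral) filtered Morse complex is in normal form ``up to filtered isomorphism'' therefore does not give $\alpha(f)=0$. A spectrum-level splitting into elementary cones is the right kind of statement. However, you give no mechanism that carries such a splitting through births, deaths, the null-homotopies chosen at crossings, and especially the handleslides needed at equal-potential crossings.

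The same gap affects your right-cusp step. Suppose the integral count between the dying pair has become, say, $3$. Then no handleslide just before the cusp can change it, because nothing lies between the two points in the filtration. So ``correcting by further $\pm1$ slides'' only works if a normal form with entries in $\{0,\pm1\}$ has been kept all along, and that is again the missing invariant.

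The idea the paper uses, following Fuchs--Rutherford, is to build $f$ together with a \emph{simple} gradient-like vector field $X$. Away from bifurcation values, $\crit(f_x)$ is partitioned into pairs of index difference one, each joined by a unique $X$-trajectory, with no other trajectories between critical points. This geometric condition is preserved at every move:
\begin{itemize}
\item \emph{Births:} the new pair is placed in a small box with no trajectories to the other critical points.
\item \emph{Deaths:} the count between the dying pair is $0$ or $1$ by simplicity, and $F$ forces it to be odd. Hence it is $1$, and the pair cancels.
\item \emph{Crossings with $j\neq i+1$:} the two points are unpaired, so there are no trajectories between them.
\item \emph{Crossings with $j=i+1$:} $F$ forces an even count, hence $0$.
\item \emph{Equal-potential crossings:} the chosen $\Z/2$-extension is realized by one handleslide before and one after the crossing, which restores simplicity.
\end{itemize}
In both $j\neq i$ crossing cases there are literally no connecting trajectories. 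The critical values can then be rearranged geometrically with $X$ unchanged, and the stable obstruction $\alpha$ never has to be computed.

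Your proposed splitting of sublevel spectra is the homotopical shadow of this invariant. Using it instead would require proving that it survives each of these moves, which amounts to redoing the construction above. With the invariant in hand, the rest of your outline goes through: matching $\s(f)\otimes\Z/2$ with $F$ slice by slice, where the extension past a crossing is unique for unequal potentials and one of two for equal potentials.
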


\begin{proof}
Because of the factorization
\[\pi_0 \GF_\Lambda(\R)^* \to \pi_0 \Sh^+_\Lambda(\R;\Z) \to \pi_0 \Sh^+_\Lambda(\R;\Z/2)\]
it is enough to prove that the composition is surjective.

We now recall the construction from \cite{Fuchs-Rutherford}.
Assume without loss of generality that the front of $\Lambda$ is generic. Let $F\in \Sh^+(\Lambda;\Z/2)$.
We try and construct a generating function $f$ lifting $F$ from left to right, starting with the function
$f:\R\times \R\times \R^n\to \R$ given by $f(x;u,v)=u$ for $x<<0$. We will construct $f$ together with a gradient vector
field $X$ such that, for $x$ away from a small neighborhood of the bifurcation values $x_i$ (where $\Lambda$ has a crossing or a birth/death),
there is a partition of $\crit(f_x)$ in pairs which are joined by a unique $X$-trajectory. Let us call
$X$ a \emph{simple} gradient vector field.

We now explain how to construct this lifting $(f,X)$ past each bifurcation.
\begin{itemize}
\item At each birth, we simply introduce a new pair of critical points in a small box ($n$ is chosen large enough)
in such a way that there are no $X$-trajectories between these critical points and the other ones.

\item At each death, the sheaf $F$ tells us that the number of trajectories between the critical
points about to annihilate is odd, but since it must be $0$ or $1$ by the property of $X$, it is $1$.
Hence we can cancel the critical points.

\item At each crossing of different Maslov potential $i\neq j$ ($j$ being above $i$ before the crossing).
If $j \neq i+1$, there is no $X$-trajectory between the critical points and therefore we can simply extend $f$
without changing the vector field $X$. If $j=i+1$, the sheaf $F$ tells us that the number of $X$-trajectories
between these critical points is even, but it is $0$ or $1$ by construction, so it is $0$
and the crossing is possible (again with fixed $X$). This will still lift the sheaf $F$ since the extension
of the sheaf past the crossing is unique anyway.

\item  At each crossing of equal Maslov potential, there are two possible extensions of the sheaf
past the crossing differing by a ``$\Z/2$-valued handleslide''. The given $F$ is one of these two extensions
and is determined by whether or not the pairing
of critical values is preserved or not past the crossing. In either case, we can
realize it by a generating function (essentially doing one handleslide before the crossing
and one handleslide after) to obtain a generating function $f$ lifting $F$ (see \cite{Fuchs-Rutherford})
and a simple gradient field $X$.
\end{itemize}
\end{proof}

\begin{remark}
The original formulation in \cite{Fuchs-Rutherford} asserted that one could lift a ``graded normal ruling'' rather than a $\Z/2\Z$-sheaf.  A graded normal ruling is a fixed-point-free involution of the smooth locus of the front of $\Lambda \subset J^1(\R)$, which preserves the projection to $\R$, and satisfying certain explicit conditions at the cusps and crossings.  Given a generating sheaf $F \in \Sh^+_\Lambda(\R,k)$ for $k$ a field, one constructs a graded normal ruling as follows: for $x \in
R$, consider the restriction $F_x \in \Sh^+(\mathrm{point}) \hookrightarrow \Sh_{\tau \ge 0}(\R)$.  One checks that necessarily $F_x = \bigoplus k_{[a,b)}$, where the collection of $(x,a), (x,b)$ are the coordinates of points over $x$ in the front of $\Lambda$.  The graded normal ruling for the generating sheaves associated to the construction in Proposition \ref{prop:mod2lift} are given by the ``partition of $\crit(f_x)$ in pairs which are joined by a unique $X$-trajectory''.

Sheaves over $\Z/2$ carry more information than the induced graded normal ruling.
For example, for the trefoil $\Lambda \subset J^1(\R)$, there are, up to shift, $5$ elements of $\pi_0 \Sh^+_\Lambda(\R; \Z/2)^*$ \cite[Example 6.38]{STZ}.  However $\Lambda$ only has $3$ graded normal rulings (up to shift).
\end{remark}

\begin{remark}
The map
$\pi_0\Sh^+_\Lambda(\R;\Z)^* \to \pi_0 \Sh^+_\Lambda(\R;\Z/5)^*$
is typically not surjective. One reason is that the microlocal monodromy has to be $\pm 1$
for a $\Z$-sheaf while it can be any non-zero element of $\Z/5$ for a $\Z/5$-sheaf.
For example, there is a choice of Maslov potential for the Hopf link $\Lambda \subset J^1(\R)$ such that, for any ring $k$, the elements of $F\in \Sh^+_\Lambda(\R;k)^*$ are in bijection with pairs $(x,y)\in k^2$ satisfying $1+xy \neq 0$, and have microlocal monodromy $1+xy$.  For $k = \Z/5$, one can take $x=1$ and $y=2$; then the corresponding object has microlocal monodromy $3$, hence  cannot be lifted to a $\Z$-sheaf.
\end{remark}

The generating functions and sheaves over $\Z$ produced in the proof of Proposition~\ref{prop:mod2lift}.
are very particular.  We remain ignorant as regards the following:

\begin{question}
For a Legendrian link $\Lambda \subset J^1(\R)$, is the map
\[\pi_0\GF_\Lambda(\R)^*\to \pi_0\Sh^+_\Lambda(\R;\Z)^*\]
always surjective ?
\end{question}

\appendix

\section{Fiberwise tame functions} \label{app:fibinfty}

Here we collect some properties of fiberwise tame functions.
Recall the following standard definition.
\begin{dfn}
A map of manifolds $f:X\to Y$ is a \emph{fibration} if for each $y\in Y$ there exists an open neighborhood $U$ of $y$
and a diffeomorphism $\varphi:f^{-1}(U) \to U\times f^{-1}(y)$ such that $f=\pr_1\circ\varphi$.
\end{dfn}

The image of a fibration is open and closed, but we do not require it to be all of $Y$. 

\begin{dfn}
A function $f: E \to \R$ is {\em fiberwise tame} with respect to a fibration $p:E\to M$ if there is a closed set $K\subset E$ and a smooth function $a:M\to (0,+\infty)$
such that the projection $K\to M$ is proper and the maps
\[\{e\in E, |f(e|\geq a(p(e))\} \to \{(x,z)\in M\times \R, |z|\geq a(x)\},\]
and
\[\{e\in E, |f(e|\leq a(p(e))\} \setminus K \to \{(x,z)\in M\times \R, |z|\leq a(x)\}\]
are fibrations.

We denote $\Sigma_{(p,f)}$ the set of critical points of $(p,f):E\to M\times \R$ or, equivalently, the set of fiberwise critical points of $f$.
\end{dfn}

\begin{rem}
The definition appears (at least with $M$ a single point) in \cite{Eliashberg-Gromov}
under the name "fibration at infinity".
\end{rem}

\begin{rem}
Some properties:
\begin{itemize}
\item The notion of fiberwise tameness is invariant by left composition with fibered diffeomorphism of $M\times \R$ and by right composition with fibered diffeomorphisms of $E$.
\item The set $\Sigma_{(p,f)}$ maps properly to $M$.
\item If $p:E\to M$ is proper any function $f:E\to \R$ is fiberwise tame with respect to $p$.
\item The map $(p,f):E\to M\times \R$ is a fibration if and only if $f$ is fiberwise tame with respect to $p$ and $\Sigma_{(p,f)}$ is empty.
\item The subset $K$ can be taken to be a smooth codimension $0$ submanifold with boundary fibered over $M$.
\end{itemize}
\end{rem}

The notion of fiberwise tameness is clearly invariant under pullback:
\begin{lem}\label{lem:tamepullback}
If $M'\to M$ is any map and $f:E\to \R$ a fiberwise tame functions with respect to a fibration $p:E\to M$, then $f$ restricted to
$E\times_M M'$ is also fiberwise tame with respect to the fibration $E\times_M M'\to M'$.
\end{lem}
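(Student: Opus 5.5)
The plan is to unwind the definition of fiberwise tameness directly. Write $p':E':=E\times_M M'\to M'$ for the pulled-back fibration, $\phi:M'\to M$ for the given map, $\tilde\phi:E'\to E$ for the induced map, and $f'=f\circ\tilde\phi$. Given the data $(K,a)$ witnessing tameness of $f$, I will take $K':=\tilde\phi^{-1}(K)=K\times_M M'$ and $a':=a\circ\phi$. If $a\circ\phi$ is only continuous or $\phi$ is not smooth, I will replace it by a smooth function squeezed in a suitable way; first I will just check things with $a'=a\circ\phi$ when $\phi$ is smooth.

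The first check is properness of $K'\to M'$. This is immediate, since proper maps are stable under base change: $K'\to M'$ is the base change of the proper map $K\to M$ along $\phi$.

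The second check concerns the two maps in the definition. The set $\{e'\in E' : |f'(e')|\ge a'(p'(e'))\}$ is exactly the fiber product of $\{e\in E : |f(e)|\ge a(p(e))\}$ with $\{(x',z)\in M'\times\R : |z|\ge a'(x')\}$ over $\{(x,z)\in M\times\R : |z|\ge a(x)\}$, via the map $\phi\times\id$. The analogous identification holds for the region $|f|\le a$ with $K$ removed, since $K'$ is the preimage of $K$. So both maps are base changes of fibrations, and a locally trivial fibration pulls back to a locally trivial fibration: given a trivialization over $U$, pull it back over the preimage of $U$.

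The only genuine subtlety, and the main obstacle, is the regularity bookkeeping. When $\phi$ is merely continuous, or when the sets carry boundary along $|z|=a$, the pullbacks should be read as fibrations of manifolds with boundary, or $a'$ should be chosen smooth. To handle this I would first enlarge $K$ so that $f$ is a fibration on a slightly larger region; this is possible because $K$ can be taken to be a codimension-$0$ submanifold, per the remark above. Then any smooth $a'$ with $|a'-a\circ\phi|$ small works, because the maps remain fibrations after restricting to nearby transverse boundaries.
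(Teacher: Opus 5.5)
Your proposal is correct and uses the same base-change argument the paper has in mind; the paper states the lemma as ``clearly'' true and gives no proof. Taking $K'=K\times_M M'$ and $a'=a\circ\phi$, properness of $K'\to M'$ and the two fibration conditions all follow because proper maps and locally trivial fibrations are stable under pullback. Your final regularity paragraph is unnecessary: $E\times_M M'$ is only a manifold, so the statement only makes sense, when $\phi$ is smooth, and then $a\circ\phi$ is already smooth and positive and the smooth local trivializations pull back directly.
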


Concerning pushforward, we have the following lemma.
\begin{lem}\label{lem:tamepushforward}
Let $q:M\to M'$ be a fibration, $f:E\to \R$ be a fiberwise tame function with respect a fibration $p:E\to M$ and $p'=q\circ p$.
If $\Sigma_{(p,f)} \to M'\times \R$ and $\Sigma_{(p',f)}\to M'$ are proper then $f$ is also fiberwise tame with respect to the fibration $p'$.
\end{lem}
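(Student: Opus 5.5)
We have to prove Lemma~\ref{lem:tamepushforward}: for $f$ fiberwise tame with respect to $p':=q\circ p$, we must produce a closed set $K'\subset E$, proper over $M'$, and a function $a':M'\to(0,\infty)$ satisfying the two fibration conditions relative to $p'$. The idea is to absorb everything non-fibered into $K'$, using the properness hypotheses. Concretely, take $K'$ to be a fibered codimension zero submanifold containing $K$ near $\Sigma_{(p,f)}$, together with the part of $E$ over a neighborhood of $(p,f)(\Sigma_{(p,f)})$. Then away from $K'$ the map $(p,f)$ is a submersion, and we can compose with $q\times\id$ to get a submersion to $M'\times\R$. Properness of $\Sigma_{(p,f)}\to M'\times\R$ is what allows $K'$ to be proper over $M'$ after intersecting with a slab $|f|\le a'$.

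The steps, in order, are as follows.

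First, choose $a'$. Because $\Sigma_{(p',f)}$ is proper over $M'$, $f$ is bounded on its part over compact subsets of $M'$. Take $a'(y)$ larger than $|f|$ on $\Sigma_{(p',f)}\cap p'^{-1}(y)$, and also larger than $\sup a$ on the relevant compact pieces of $M$ where $K$ and $\Sigma_{(p,f)}$ live (these pieces are compact by properness).

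Second, show that $\{|f|\ge a'\}\to\{|z|\ge a'\}$ is a fibration. On this region, $(p,f)$ is already a fibration onto $\{|z|\ge a\}$, since $a'\ge a$ wherever it matters. Over the region of $M$ far from the image of $\Sigma_{(p,f)}$, $(p,f)$ is a fibration onto all of $M\times\R$ after removing $K$. Composing a fibration with the fibration $q\times\id$ gives a fibration, except that $q$ restricted to subsets of $M$ need not be a fibration. To handle this, we use an Ehresmann-type argument: we construct a vector field lifting argument, i.e.\ horizontal lifts of vector fields on $M'\times\R$ that are complete thanks to the tameness conditions at infinity in both the $M$-fibers and the $E$-fibers.

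Third, treat the middle region $\{|f|\le a'\}\setminus K'$ in the same way: $(p',f)$ is a submersion there, and we again want horizontal lifts that are complete.

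The main obstacle is completeness, i.e.\ controlling trajectories escaping to infinity in the $q$-fibers. I would construct lifts in two stages. First lift a vector field on $M'\times\R$ to $M\times\R$ via a connection for $q$, chosen to be tangent to $\partial\{|z|\le a\}$ and compatible with the boundary of a proper neighborhood of $p(K)\cup p(\Sigma_{(p,f)})$. Then lift it through $(p,f)$ using tameness, with the lift tangent to $\partial K$ and to the level sets $|f|=a$. Flow lines then stay in controlled regions, and properness of the $\Sigma$'s ensures that the leftover noncompact directions are genuinely fibered.
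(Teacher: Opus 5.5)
There is a genuine gap in how you use the properness hypotheses. Your first two steps assume that the parts of $M$ carrying $K$ and $\Sigma_{(p,f)}$ are compact over compact subsets of $M'$. That is what lets you choose $a'$ with $a'\circ q\ge a$ ``wherever it matters'', so that the outer region $\{|f|\ge a'\}$ is covered by the known fibration $\{|f|\ge a\}\to\{|z|\ge a\}$. But the hypothesis only says that $\Sigma_{(p,f)}$ is proper over $M'\times\R$, not over $M'$, and $p(K)$ is typically all of $M$.

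Look at the case the lemma is actually used for: the reduction $\RR$, with $g(x,t;u,w)=f(x;u)+D(t,w)$ and $q$ forgetting $t$. The $p$-fiberwise critical values are $z\mp 2t^{3/2}$, which tend to $\pm\infty$ along each $q$-fiber. So the original tameness bound $a$ is unbounded on $q$-fibers, and for every choice of $a'$ (as soon as $\Lambda\neq\emptyset$) the region $\{|f|\ge a'\}$ contains $p$-fiberwise critical points. There $(p,f)$ is not even a submersion. Hence neither your claim that $(p,f)$ is already a fibration on this region nor any two-stage horizontal lift through $(p,f)$ can work.

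Three smaller problems point the same way. Your $K'$ contains the full $(p,f)$-preimage of a neighborhood of $(p,f)(\Sigma_{(p,f)})$, so it is not proper even over $M$, since level sets of $f$ are non-compact in the fibers (e.g.\ $f=uv$). A lift of $\partial_z$ through $q\times\mathrm{id}$ cannot be tangent to $\{|z|=a\}$ unless $a$ is constant along $q$-fibers, which can only be arranged over a region proper over $M'$. And a ``proper neighborhood of $p(K)\cup p(\Sigma_{(p,f)})$'' does not exist in general.

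The missing idea is that $p$-fiberwise critical points must be allowed to survive outside $K'$. They are handled not by composing $(p,f)$ with $q\times\mathrm{id}$, but by the absence of $p'$-fiberwise critical points together with properness over $M'$, i.e.\ Ehresmann applied to $(p',f)$ directly. This is what the paper does:
\begin{itemize}
\item Choose $N\subset M$ proper over $M'$ with $p(\Sigma_{(p',f)})\subset\mathrm{int}(N)$. Enlarge the $p$-tameness bound $b$ so that $b=a'\circ q$ on $N$ and $b\ge a'\circ q$ everywhere (the opposite inequality to yours). Set $K'=A\cap p^{-1}(N)$.
\item In the slab $\{|f|\le a'\}$, use three ingredients. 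Off $A$, use the $p$-fibration. Take $N'\supset N$ proper over $M'$ containing $p(\Sigma_{(p,f)}\cap\{|f|\le a'\})$; in $A$ over $M\setminus N'$ there are no $p$-critical points. On the part of $A$ over $N'\setminus\mathrm{int}(N)$, apply Ehresmann to $(p',f)$.
\item Treat the outer region slab by slab. For each $c>a'$, the $p$-critical points in $\{a'\le |f|\le c\}$ lie over some $N'_c$ proper over $M'$ on which there are no $p'$-critical points. So $(p',f)$ is a proper submersion there. Off $N'_c$, the $p$-tameness and the properness of $A$ over $M$ apply.
\end{itemize}
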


\begin{proof}
We prove it in the case where $M'$ is a single point. We have to find a compact codimension $0$ submanifold $K\subset M$ and $a>0$ such that $\{|f|\leq a\}\setminus K \to [-a,a]$ and $\{|f|\geq a\}\to \R\setminus(-a,a)$ are fibrations.

The fiberwise tameness provides $b
:M\to (0,+\infty)$ and a codimension $0$ subbundle $A\subset E$ of $E\to M$ such that $\{|f|\leq b\}\setminus A\to \{|z|\leq b\}\subset M\times \R$ and $\{|f|\geq b\}\to \{|z|\geq b\}\subset M\times \R$ are fibrations.

Since $\Sigma_{(p',f)}$ is proper over $M'$ we may  find a  codimension $0$ submanifold $N\subset M$ which properly maps to $M'$ and such that $p(\Sigma_f)\subset \mathrm{int}(N)$.
Up to increasing $b$ on $N$ we may assume that it is equal to $a\circ q$ on $N$ for some function $a:M'\to (0,+\infty)$ and $b\geq a\circ q$ on $M$. We set $K=A\cap p^{-1}(N)$ and now check that $f$ satisfies the tameness condition with $a$ and $K$.

We already know that $\{|f|\leq b\} \setminus A \to \{(m,z)\in M\times \R, |z|\leq a(m)\}$ is a fibration, we need to take care of the region $A\setminus K$. By assumption $\Sigma_{(p,f)}\to M'\times\R$ is proper, so $\Sigma_{(p,f)}\cap\{|f|\leq a\}$ is contained in $p^{-1}(N')$ for some codimension $0$ submanifold $N$' containing $N$ in its interior and properly mapping to $M'$. Above the region $M\setminus N'$, the function $f$ has no fiberwise critical points on the subbundle $A$, therefore $A\cap p^{-1}(M\setminus N')$ is fibered over $\{(m',z)\in M'\times \R, |z|\leq a(m')\}$. Over the region $N'\setminus N$, 
the function $f$ may have $p$-fiberwise critical points on the subbundle $A$ but by assumption it has no $p'$-fiberwise critical points, so by properness of $A\cap p^{-1}(N'\setminus \mathrm{int}(N))$ over $M'$, the latter region is also fibered over its image. It follows that $\{|f|\leq a\}\setminus K \to \{(m',z)\in M'\times \R, |z|\leq a(m')\}$ is a fibration.

We now pick $c>a$ and show that $\{a\leq f\leq c\}\to \{(m',z), a(m')\leq z\leq c(m')\}$ is a fibration. We already know that $\{b\geq f\leq c\}$ is fibered over $\{b\leq z\leq c\}\subset M\times \R$. Since $\Sigma_{p,f}\to M'\times \R$ is proper, we may find a codimension $0$ submanifold $N'$ of $M$ which properly maps to $M'$ and such that $\Sigma_{p,f}\cap\{a\leq f \leq c\}$ is contained in $p^{-1}(N')$. Above $M\setminus N'$ the function $f$ has no $p$-fiberwise critical points on the subbundle $A\cap \{a\leq f\leq b\}$ so it is a fibration there. Above the region $N'$ the function $f$ has no $p'$-fiberwise critical points on $A\cap\{a\leq f \leq c\}$ so it is also a fibration there. It follows that $\{a\leq f\leq c\}\to \{(m',z), a(m')\leq z\leq c(m')\}$.
\end{proof}

The preceding lemma can be typically used with the map $q$ from $M$ to a point to guarantee that a fiberwise tame function is also tame as a function of all variables.

Another useful fact is the stability of fiberwise tame functions under addition:
\begin{lem}\label{lem:tamesum}
Let $p:E\to M$ and $p':E'\to M'$ be fibrations and $f:E\to \R$, $f':E'\to \R$ be fiberwise tame functions (with respect to $p$ and $p'$ respectively).
Then the sum $f+f':E\times E'\to \R$ is also fiberwise tame with respect to the projection $p\times p' : E\times E'\to M\times M'$.
\end{lem}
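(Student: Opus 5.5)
We need to prove Lemma \ref{lem:tamesum}: if $f$ is fiberwise tame for $p:E\to M$ and $f'$ for $p':E'\to M'$, then $f\oplus f'$ is fiberwise tame for $p\times p'$.

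The plan is to reduce to a statement about a single fibration over $\R^2$ and then push forward along the addition map $(z,z')\mapsto z+z'$. First, choose the data $(K,a)$ for $f$ and $(K',a')$ for $f'$ given by the definition. By the remark following the definition, we may take $K$ and $K'$ to be smooth codimension $0$ subbundles. Replacing $a$ by a larger function if necessary, we may also assume that the fiberwise critical points of $f$ lie in $K\cap\{|f|<a\}$, and similarly for $f'$. The map
\[
(p\times p', f, f'): E\times E'\to M\times M'\times \R^2
\]
then has the following structure. It is a fibration away from the region where both factors are in the "compact part", i.e.\ away from $(K\cap\{|f|\le a\})\times(K'\cap\{|f'|\le a'\})$. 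This holds because on each of the complementary pieces one factor is a fibration over its $(x,z)$-plane, and a product of a fibration with an arbitrary map is a fibration over the product of the respective images. The fiberwise critical points of $f\oplus f'$ are exactly the products $\Sigma_{(p,f)}\times\Sigma_{(p',f')}$, and these lie in that compact region.

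Next, set $A(x,x')=2(a(x)+a'(x'))$, and let $\tilde K$ be the union of $K\times K'$ intersected with $\{|f|\le a\}\times\{|f'|\le a'\}$ together with the regions where $|f|$ or $|f'|$ is large but $|f+f'|\le A$. The latter is handled by a truncation: over the region $|f|\ge a$ we can use the fibration of $f$ in the $z$-direction to trivialize. I would first attempt to show directly that $\{|f+f'|\ge A\}\to\{|z|\ge A\}$ is a fibration. The argument is that on this set at least one of $|f|\ge a$ or $|f'|\ge a'$ holds, and the sum map $\R^2\to\R$ restricted to the images is a submersion whose level sets are transverse to the "boundary" corners. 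We then glue the local trivializations by a standard Ehresmann-type argument. Concretely, we construct a vector field $X$ on $E\times E'$, lifting $\partial_z$ for $f+f'$, which is horizontal for $p\times p'$. We do this using the lifts $X_f$ (satisfying $df(X_f)=1$ and $dp(X_f)=0$), available where $f$ is a fibration, and $X_{f'}$ similarly. Taking a partition of unity $\rho X_f+(1-\rho)X_{f'}$ subordinate to the cover $\{|f|>a\}$, $\{|f'|>a'\}$ gives $d(f+f')(X)=1$.

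The main obstacle is completeness of the flow of $X$, which is what makes it define a genuine fibration (trivialization) and not just a submersion. Over the set where $|f|>a$, the vector field $X_f$ has complete flow in the tame sense only along the $f$-direction, and mixing with $X_{f'}$ could let trajectories escape to infinity in the $E$-fibers in finite time. To control this, I would choose $X_f$ to be the vector field induced by a fixed trivialization $\{|f|\ge a\}\cong \{|z|\ge a\}\times F$, so that flowing changes only the $z$ coordinate. Then along a trajectory of the mixed field, the $E$-component moves only in the $z$-coordinate of that trivialization (or stays fixed where $\rho=0$), and the $z$-values stay bounded in finite time. This prevents escape, except at the interface $|f|=a$. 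We handle the interface by choosing $\rho=1$ near $\{|f'|\le a'\}$ and $\rho=0$ near $\{|f|\le a\}$, which is possible since $A$ is large. For the middle region $\{|f+f'|\le A\}\setminus\tilde K$ we use the same vector field. Properness of $\tilde K$ over $M\times M'$ follows from properness of $K$ and $K'$ together with the bound $|f|\le A+a'$ there, which confines the $\{|f|\ge a\}$ part to a compact range of $z$ times a proper fiber. That bound in turn requires $K$ to be enlarged to $K\cup\{a\le|f|\le C\}\cap K$-type sets. This last bookkeeping step is where I expect most of the fiddliness.
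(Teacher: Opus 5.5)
There is a genuine gap in how you handle the slab $\{|f+f'|\le A\}$, that is, in the choice of the new proper set $\tilde K$.

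As you define it, $\tilde K$ contains $(\{|f|\ge a\}\cup\{|f'|\ge a'\})\cap\{|f+f'|\le A\}$, and this set is not proper over $M\times M'$. Two things go wrong. The fibres of $\{|f|\ge a\}\to\{|z|\ge a\}$ are typically non-compact (indefinite quadratic or linear at infinity). And where $f$ and $f'$ are both large with opposite signs, there is no bound $|f|\le A+a'$. For example, take $M=M'=\mathrm{pt}$, $f(v)=v^2$, $f'(w)=-w^2$; this set contains $\{v=w,\ v^2\ge a\}$.

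If you retreat to the core $\hat K\times\hat K'$, with $\hat K=K\cap\{|f|\le a\}$, the complement in the slab is not a fibration over $[-A,A]$ once $A>a+a'$. In the same example, tameness forces $\hat K\times\hat K'=[-\sqrt a,\sqrt a]\times[-\sqrt{a'},\sqrt{a'}]$. Then $\{v^2-w^2=z\}$ minus this rectangle is two lines for $a<z\le A$, but four rays for $z$ slightly below $a$.

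Your vector field does not rescue this. First, $\rho X_f+(1-\rho)X_{f'}$ lives on $\{|f|>a\}\cup\{|f'|>a'\}$, which in general does not cover the slab minus the core. Second, extending $X_f$ over $\{|f|\le a\}\setminus K$ fails: trajectories of $(X_f,0)$ rising from $\{f<-a\}$, with $e'\in\hat K'$ (where you are forced to take $\rho=1$), can hit $\hat K\times\hat K'$ in finite time. Separately, your claim that $(p\times p',f,f')$ is a fibration off the core is false. On $\{|f|>a\}\times K'$ it is a fibration times $(p',f')$, and $(p',f')$ has critical points.

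The missing idea is to make the new proper set invariant under the flow you trivialize with, and to let it reach all the way up to the new threshold. The paper does this as follows:
\begin{itemize}
\item It takes fiberwise gradients $X,X'$ of $f,f'$ adapted to $A,A'$: tangent to the side of $\partial A$, entering along $f=-a$, exiting along $f=a$.
\item Then $X+X'$ is a single gradient-like field for $f+f'$, with no partition of unity needed.
\item It defines $B$ as the saturation of $A\times A'$ under this flow, inside $\{|f+f'|\le a+a'\}$.
\item Now $B$ has its top and bottom on $\{f+f'=\pm(a+a')\}$, and its side is made of flow lines.
\item So the normalized flow trivializes both $\{|f+f'|\le a+a'\}\setminus B$ and $\{|f+f'|\ge a+a'\}$, with threshold $a+a'$.
\end{itemize}
In the example above, with Euclidean gradients, $B=\{|vw|\le\sqrt{aa'},\ |v^2-w^2|\le a+a'\}$, and its side $\{|vw|=\sqrt{aa'}\}$ is flow-invariant. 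Your truncation step should have been this saturation of the core, not the inclusion of whole regions where $|f|$ or $|f'|$ is large.

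Your outer-region argument is a workable substitute for $X+X'$ there, at least in the $z$-direction; it uses normalized lifts in product structures and bounded speed in the $(f,f')$-plane. But without the saturation step, the middle region does not close up.
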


\begin{proof}
By assumption we have, $a, a'>0$ and proper subbundles $A$ and $A'$ of $E$ and $E'$ respectively
such that $\{|f|\leq a\}\setminus A\to \{|z|\leq a\}$,  $\{|f'|\leq a'\}\setminus A'\to \{|z|\leq a'\}$, $\{|f|\geq a\} \to \{|z|\geq a\}$ and $\{|f'|\geq a\} \to \{|z|\geq a'\}$ are fibrations.
We pick complete fiberwise gradient vector files $X$ and $X'$ for $f$ and $f'$ respectively which
are adapted to $A$ and $A$', namely $X$ points inward transversely along $A\cap\{z=-a\}$, outward along $A\cap\{z=a\}$ and is tangent to $\del A$ in the remaining parts of $\del A$.
All fiberwise critical points of $f+f'$ are contained in $A\times A'$, and we may therefore saturate $A\times A'$ along the flow of $X+ X'$ to obtain a subbundle $B\subset \{|f|\leq a+a'\}\subset E\times E'$
with boundary decomposed in three parts lying respectively in $\{f+f'=a+a'\}$, $\{f+f'=-a-a'\}$
and the remaining part fibered over $[-a-a',a+a']$. Using the flow of $X+X'$, we can then check that $f+f'$ satisfies the fiberwise tameness condition with $B$ and $a+a'$.
\end{proof}

\begin{cor}\label{cor:tamefiberedsum}
Let $p:E\to M$ and $p':E'\to M$ be fibrations and $f:E\to \R$, $f':E'\to \R$ be fiberwise tame functions (with respect to $p$ and $p'$ respectively).
Then the fibered sum $f+f':E\times_M E'\to \R$ is also fiberwise tame with respect to the induced projection $E\times_M E' \to M$.
\end{cor}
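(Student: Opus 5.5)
The plan is to reduce to the fiber product case of Lemma~\ref{lem:tamesum} by pulling back along the diagonal, using Lemma~\ref{lem:tamepullback}. First apply Lemma~\ref{lem:tamesum} to $p:E\to M$ and $p':E'\to M$. This shows that $f+f'$ on $E\times E'$ is fiberwise tame with respect to $p\times p':E\times E'\to M\times M$. Next consider the diagonal map $\delta:M\to M\times M$. The pullback $(E\times E')\times_{M\times M} M$ is canonically $E\times_M E'$, and the induced projection is the map $E\times_M E'\to M$. The restriction of $f+f'$ to this pullback is exactly the fibered sum $(e,e')\mapsto f(e)+f'(e')$. Lemma~\ref{lem:tamepullback}, applied with $M'=M$ and the map $\delta$, then gives that the fibered sum is fiberwise tame.

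The only point to check is that the pullback lemma really applies in this setting. We need that the projection $E\times_M E'\to M$ is a fibration, which holds because pullbacks of fibrations are fibrations. We also need that the tameness data pull back correctly. The function $a(x,x')$ and the closed set $K\subset E\times E'$ (proper over $M\times M$) pull back to $a(x,x)$ and to $K\cap(E\times_M E')$. The latter is proper over $M$, since it is a closed subset of the preimage of the diagonal. The two maps required to be fibrations in the definition are pullbacks, along $\delta\times\id_\R$, of the corresponding fibrations over subsets of $M\times M\times\R$. They therefore remain fibrations.

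I expect no real obstacle. At most one has to notice that the sets $\{|z|\geq a\}$ and $\{|z|\leq a\}$ pull back to the analogous sets over $M$, which is immediate from their definitions.
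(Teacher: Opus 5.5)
Your proposal is correct and is exactly the paper's argument: apply Lemma~\ref{lem:tamesum} to get fiberwise tameness of $f+f'$ over $M\times M$, then restrict along the diagonal $M\to M\times M$ using Lemma~\ref{lem:tamepullback}. Your check that the tameness data pull back correctly is a reasonable justification of the pullback lemma, which the paper states without proof. That data consists of $a\circ\delta$, the set $K\cap(E\times_M E')$ (proper over $M$ by base change of a proper map), and the two fibrations.
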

\begin{proof}
It follows directly from Lemma~\ref{lem:tamesum} and Lemma~\ref{lem:tamepullback} using pullback along the diagonal map $M\to M\times M$.
\end{proof}

Finally we record the following result which says that a $1$-parameter deformation of a (fiberwise) tame function is equivalent to a compactly supported deformation.

\begin{prop}\label{prop:tamedeformation}
Let $p:E\to M$ be a fibration and $f:E\times[0,1]\to \R$ a fiberwise tame function with respect to $p\times \id:E\times [0,1]\to M\times[0,1]$,
then there exists a closed subset $A\subset E$ such that $A\to M$ is proper and a diffeomorphism $\varphi:E\times [0,1]\to E\times [0,1]$
fibered over $M\times [0,1]$ such that $f\circ \varphi(x,t)=f(x,0)$ for $x\not\in A$.
\end{prop}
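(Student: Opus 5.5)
The plan is a parametric Moser argument, in which tameness lets us push the deformation into a region proper over $M$. Write $f_t=f(\cdot,t)$. I will look for a time-dependent vector field $X_t$ on $E$, tangent to the fibers of $p$, with flow $\varphi_t$, such that
\[\frac{\del f_t}{\del t}+X_t\cdot f_t=0\]
outside a closed set $A$ that is proper over $M$. Then $f_t\circ\varphi_t=f_0$ off $A$, and $\varphi(x,t)=(\varphi_t(x),t)$ is the required fibered diffeomorphism.

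First I apply the definition of fiberwise tameness for $p\times\id$ over $M\times[0,1]$. This gives a closed set $K\subset E\times[0,1]$, proper over $M\times[0,1]$, and a function $a(x,t)$. By compactness of $[0,1]$ I may take $a=a(x)$ independent of $t$, after enlarging it. I set $A_0=\pi_E(K)$, which is proper over $M$ since $[0,1]$ is compact.

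Off $K$, the map $(p\times\id,f):E\times[0,1]\setminus K\to M\times[0,1]\times\R$ is a submersion, and in fact a fibration on each of the two regions $\{|f|\ge a\}$ and $\{|f|\le a\}\setminus K$. On each region I choose a connection, i.e.\ a horizontal lift $\tilde\partial_t$ of the vector field $\partial_t$ on $M\times[0,1]\times\R$ that preserves the $M$- and $\R$-coordinates. Its $E$-component $-X_t$ is fiber-tangent and solves $\partial_t f_t+X_t f_t=0$, because $f$ is constant along $\tilde\partial_t$.

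These two lifts must be glued along the level $|f|=a$. I would handle this by using a single fibration on the union $\{|f|\ge a\}\cup(\{|f|\le a\}\setminus K)$, which I expect is locally trivial after a slight adjustment of $K$. Alternatively, the gluing is done with a partition of unity, since the solutions of this linear equation form an affine space.

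Then I cut off. Choose a closed $A\supset A_0$ with $A_0\subset\mathrm{int}(A)$, still proper over $M$, and a bump function equal to $1$ outside $A$ and $0$ near $A_0$. Define the global field as this bump function times $X_t$, and zero on $A_0$. The equation holds outside $A$.

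The main obstacle is completeness of the flow. The lift of $\partial_t$ for a fibration over a noncompact base need not be complete, and the fibers of $E$ are noncompact, so flow lines could escape to infinity in time less than $1$. I would handle this in one of two ways. The first is to use the Ehresmann lemma in its form for complete connections, e.g.\ by choosing horizontal lifts bounded for a complete fiberwise metric. The second is to integrate the local trivializations, which exist over compact $t$-intervals because $[0,1]$ is compact, and patch them. Either way we obtain fiber-preserving diffeomorphisms $\varphi_t$ of the region off $K$ with $f_t\circ\varphi_t=f_0$. These are then interpolated to the identity near $A_0$ by the cut-off, and the flow is complete there because the cut-off field vanishes near $A_0$.
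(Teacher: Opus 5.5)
Your proposal is essentially the paper's proof: a vertical Moser field $X_t$ with $\partial_t f_t+df_t(X_t)=0$, obtained by lifting $\partial_t$ through the two fibrations of the tameness definition (with $a$ made $t$-independent by compactness of $[0,1]$), cut off near a set proper over $M$ and integrated up to time $1$ --- the paper simply asserts the gluing and completeness that you discuss. One caution: your first gluing option fails, because $\{|f|\ge a\}\cup(\{|f|\le a\}\setminus K)$ is in general not a fibration for any admissible $K$ (for a quadratic form of mixed signature, $K$ must meet $\{|f|=a\}$, where the union has a corner), so keep the partition-of-unity gluing and check completeness for the glued field; also note that $f_t\circ\varphi_t=f_0$ holds off the closed, still $M$-proper, set of points whose trajectories meet $A$, not off $A$ itself.
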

\begin{proof}
We write $f(e,t)=f_t(e)$ for $(e,t) \in E\times [0,1]$.
Pick a subbundle $A$ of $E$ which properly maps to $M\times[0,1]$ and $a:M\times [0,1]\to (0,\infty)$ as in the definition of fiberwise tameness. We may assume that $a$ is independent of $t$ by compactness of $[0,1]$. Using the fibration properties from the definition of fiberwise tameness we can find a vector field $Y_t=(X_t,1)$ on $E\times [0,1]$ which can be integrated up to time $1$ and such that $dp(X_t)=0$ everywhere and $df_t(X_t)+ \frac{df_t}{dt}=0$ away from $A$. The time-dependent vector field generates an isotopy $(\varphi_t)_{t\in [0,1]}$ defined by $\varphi_0=\id$ and $\frac{d \varphi_t}{dt}=X_t\circ\varphi_t$. Integrating this equation we find $f_t\circ \varphi_t = f_0$ away from some closed set proper over $M$, so the fibered diffeomorphism of $E\times [0,1]$ defined by $(x,t)\mapsto (\varphi_t(x),t)$ has the desired properties.
\end{proof}

\bibliographystyle{plain}
\bibliography{biblio}

\end{document}